\numberwithin{equation}{section}
\numberwithin{equation}{section}
\newcommand{\pd}[2]{\frac {\partial #1}{\partial #2}}
\newcommand{\al}{\alpha}
\newcommand{\bb}{\beta}
\newcommand{\la}{\lambda}
\newcommand{\La}{\Lambda}
\newcommand{\oo}{\omega}
\newcommand{\Om}{\Omega}
\newcommand{\dd}{\delta}
\newcommand{\Na}{\nabla}
\def\ga{\gamma}
\newcommand{\ee}{\epsilon}
\newcommand{\si}{\sigma}
\newcommand{\Te}{\Theta}
\newcommand{\te}{\theta}
\newcommand{\beq}{\begin{equation}}
\newcommand{\eeq}{\end{equation}}
\newcommand{\beqs}{\begin{eqnarray*}}
\newcommand{\eeqs}{\end{eqnarray*}}
\newcommand{\beqn}{\begin{eqnarray}}
\newcommand{\eeqn}{\end{eqnarray}}
\newcommand{\beqa}{\begin{array}}
\newcommand{\eeqa}{\end{array}}
\def\td{\tilde}
\def\RR{{\mathbb R}}
\def\NN{{\mathbb N}}
\def\ri{\rightarrow}
\def\un{\underline}
\def\no{{\nonumber}}
\def\si{\sigma}
\def\pbp{\sqrt{-1}\partial\bar\partial}
\def\tr{{\rm tr}}
\def\vol{{\rm vol}}
\def\cA{{\mathcal A}}
\def\cH{{\mathcal H}}
\def\cP{{\mathcal P}}
\def\ka{{\kappa}}
\newtheorem{prop}{Proposition}[section]
\newtheorem{theo}[prop]{Theorem}
\newtheorem{lem}[prop]{Lemma}
\newtheorem{claim}[prop]{Claim}
\newtheorem{cor}[prop]{Corollary}
\newtheorem{defi}[prop]{Definition}
 \def\ExtendSymbol#1#2#3#4#5{\ext@arrow 0099{\arrowfill@#1#2#3}{#4}{#5}}
 \def\ExtendSymbol#1#2#3#4#5{\ext@arrow 0099{\arrowfill@#1#2#3}{#4}{#5}}
\title{Existence of twisted Calabi flow  and  deformation from the $J$-flow to Calabi flow }
\author{Jie, He \footnote{Supported by Mathematics Tianyuan Fund of NSFC grant No. 12426674.} \quad and \quad Haozhao Li \footnote{Supported by NSFC grant No. 12426669, No. 12471058,  the CAS Project for Young Scientists
in Basic Research (YSBR-001), and the Fundamental Research Funds
for the Central Universities.}}
\begin{document}
\bibliographystyle{plain}


\maketitle

\begin{abstract}
In this paper, we study a family of twisted Calabi flows connecting the $J$-flow and Calabi flow on a compact K\"ahler manifold with a constant scalar curvature (cscK) metric. We show that for any initial data the twisted Calabi flow near the $J$-flow has long time existence and converges smoothly to the  cscK metric. Moreover, we show that if a twisted Calabi flow has long time existence and converges, then the nearby twisted Calabi flow with the same initial data also has long time existence and converges. These results imply the openness of the continuity method to study Chen's long time existence conjecture on (twisted) Calabi flow on cscK manifolds.

\vskip 2.5mm
\noindent {\bf Keywords.} Twisted Calabi flow, constant scalar curvature, geometric flow.

\vskip 2.5mm
\noindent {\bf Mathematics Subject Classification 2020:} 32Q15, 58E11, 53E40.

\end{abstract}

\tableofcontents

\section{Introduction}

Let $(M^n, g)$ be a compact K\"ahler manifold of complex dimension $n$. A
family of K\"ahler metrics $\oo_{\varphi(t)}(t\in [0, T])$ in the same K\"ahler class $[\oo_g]$ is called a solution of twisted
Calabi flow, if the K\"ahler potential $\varphi_s(t)$ satisfies the equation
\beq
\pd {\varphi_s(t)}t=s(R(\oo_{\varphi_s(t)})-\un R)+(1-s)(n-\tr_{\varphi_s}\oo_g), \label{eq:000}
\eeq where $R(\oo_{\varphi_s(t)})$ denotes the scalar curvature of the metric $\oo_{\varphi_s(t)}$, $\un R$ denotes the average of the scalar curvature and $\tr_{\varphi_s}\oo_g=g_{\varphi_s}^{i\bar j}g_{i\bar j}.$  It was conjectured by Chen in \cite{[Chen2]} that the twisted Calabi flow with any initial K\"ahler potential exists for all time. It is natural to study Chen's conjecture by the continuity method. In this paper, we  show the long time existence and convergence of twisted Calabi flow near the $J$-flow,
 which can be viewed as the first step toward Chen's conjecture via the continuity method.

The twisted Calabi flow is an interpolation between Calabi flow and the $J$-flow.
The Calabi flow was introduced by Calabi in \cite{[Cal1]} as a decreasing flow of  Calabi energy, and it is expected to be
an
effective tool to find constant scalar curvature metrics in a K\"ahler class. In
Riemann surfaces, the long time behavior and convergence is
completely solved by Chrusciel \cite{[Chru]},    Chen
\cite{[Chen]} and   Struwe \cite{[Stru]} independently by different methods. Pook \cite{[Pook]} studied the twisted Calabi flow on Riemann surfaces.
In a series of papers \cite{[ChenHe1]}\cite{[ChenHe2]}\cite{[ChenHe3]}\cite{[He4]}\cite{[He5]}, Chen and He studied the  existence
and convergence of Calabi flow under various curvature conditions on K\"ahler manifolds. There are more interesting results on the existence and convergence of Calabi flow, see  Tosatti-Weinkove  \cite{[TW]}, Szekelyhidi \cite{[Sz]}, Streets
\cite{[St1]}\cite{[St2]} , Berman-Darvas-Lu \cite{[BDL]}\cite{[BDL2]}, Huang \cite{[Huang]} etc.
A breakthrough was made by Chen-Cheng in \cite{[CC1]} and they showed that the Calabi flow always exists as long as the scalar curvature is bounded.
 In \cite{[LZZ]} Li-Zhang-Zheng showed the long time existence under the bounded $L^p(p>n)$  scalar curvature condition.

 The $J$-flow introduced by Donaldson  \cite{[Don]} in the setting of moment maps and by Chen  \cite{[Chen-Mabuchi]} as the gradient flow of the $J$ functionals. Chen \cite{[Chen0]} showed that the $J$-flow has long time existence for any initial data. Weinkove \cite{[Wenk1]}\cite{[Wenk2]} and Song-Weinkove \cite{[SW]} studied the convergence of the $J$-flow and give a necessary and sufficient condition  for the convergence of $J$-flow in higher dimensions. For more results on the $J$-flow  and  the $J$-equation, see Fang-Lai-Song-Weinkove \cite{[FLSW]},  Lejmi-Szekelyhidi \cite{[LeSz]} and G. Chen \cite{[ChenG]} etc.

\subsection{The statements of main results}

In \cite{[Chen2]} X. X. Chen proposed the conjecture that the twisted Calabi flow (\ref{eq:000}) exists globally for any smooth initial K\"ahler potential.
To study Chen's conjecture via the continuity method, for any $\psi_0\in \cH(\oo_g)$ we define
\beqs
I_{\psi_0}&=&\{s'\in [0, 1]\;|\;\hbox{The solution of} \;(\ref{eq:000}) \; \hbox{with the initial data}\; \psi_0\; \hbox{exists for} \\&&  \hbox{ all time and converges smoothly to a twisted cscK metric for any}\;s\in [0, s'] \}.
\eeqs
To show Chen's conjecture, it suffices to show that $I_{\psi_0}=[0, 1]$ for any $\psi_0\in \cH(\oo_g)$. For $s=0$,  the equation (\ref{eq:000}) is the $J$-flow, which has long time existence and convergence by Song-Weinkove \cite{[SW]} since the $J$-equation $\tr_{\varphi}\oo_g=n$ has a solution $\varphi=0$.  We would like to ask if the twisted Calabi flow (\ref{eq:000}) has long time existence and convergence when $s$ is small. The first main result of this paper answers this question.

\smallskip
\begin{theo}\label{theo:main1}
Let $(M, g)$ be a compact K\"ahler manifold with a cscK metric
$\oo_g.$  For any $\psi_0\in \cH(\oo_g)$, there exists $s_0\in (0, 1]$ such that for any $s\in (0, s_0)$ the twisted Calabi flow (\ref{eq:000}) with the initial K\"ahler potential $\psi_0$ exists for all time and converges exponentially to the cscK metric $\oo_g$.

\end{theo}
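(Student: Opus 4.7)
My plan is a perturbative argument: first use the convergence of the $J$-flow (the $s = 0$ case of (\ref{eq:000})) to drive the initial potential into an arbitrarily small neighborhood of the cscK reference $\oo_g$ on a fixed time interval, and then apply a nonlinear stability analysis for (\ref{eq:000}) at $\oo_g$ that is uniform in $s$. The underlying linear observation is that, at a cscK metric, the linearization of the right-hand side of (\ref{eq:000}) at $\varphi = 0$ equals
\[
L_s\psi \;=\; -s\,\cD^*\cD\psi \,+\, (1-s)\,\DD_g\psi,
\]
where $\cD = \bar\p\nabla^{1,0}$ is the Lichnerowicz operator and $\DD_g$ is the complex Laplacian. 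Both summands are self-adjoint and non-positive, so $L_s$ has a spectral gap on the orthogonal complement of its kernel for every $s \in [0,1]$. This uniformity in $s$ is what will allow a single small $s_0$ to work for every $s \in (0, s_0)$.

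The first step is to run the $J$-flow from $\psi_0$. Since $\varphi \equiv 0$ solves $\tr_\varphi \oo_g = n$, Song--Weinkove produces a smooth global solution $\varphi_0(t)$ with $\oo_{\varphi_0(t)} \to \oo_g$ exponentially in every $C^k$ norm; after a time-dependent normalization we may assume $\varphi_0(t) \to 0$. Given small $\epsilon > 0$ and large $k$, pick $T$ so that $\|\varphi_0(T)\|_{C^{k,\alpha}} < \epsilon/2$. For $s > 0$ short-time existence of (\ref{eq:000}) follows from standard fourth-order parabolic theory, and on $[0,T]$ the equation (\ref{eq:000}) is an $O(s)$ perturbation of the $J$-flow. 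Writing $w_s = \varphi_s - \varphi_0$ and subtracting the two equations produces a parabolic equation for $w_s$ with a source of order $O(s)$; a Gronwall-type argument together with parabolic Schauder estimates on the compact interval $[0, T]$ should yield $\|w_s\|_{C^{k,\alpha}([0,T]\times M)} \le C(T, \psi_0)\,s$, so $\|\varphi_s(T)\|_{C^{k,\alpha}} < \epsilon$ for all sufficiently small $s$.

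The second step is a nonlinear stability analysis of (\ref{eq:000}) at $\oo_g$, with initial datum $\varphi_s(T)$. I would expand the right-hand side of (\ref{eq:000}) as $L_s\psi + Q_s(\psi)$, where $Q_s$ collects quadratic and higher-order nonlinearities controlled by a power of $\|\psi\|_{C^{4,\alpha}}$, and combine the uniform spectral gap of $L_s$ with parabolic Schauder estimates to run an exponentially-weighted energy argument in the spirit of \cite{[ChenHe1]}. The target conclusion is: if $\|\varphi_s(T)\|_{C^{k,\alpha}} < \epsilon$ with $\epsilon$ small \emph{independently} of $s$, then $\varphi_s(t)$ exists for all $t \geq T$, remains in the $2\epsilon$-ball, and decays exponentially to $0$ in every $C^k$ norm; in particular $\oo_{\varphi_s(t)} \to \oo_g$ exponentially. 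Combined with the first step this yields $s_0 = s_0(\psi_0)$.

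The main obstacle is making every estimate uniform in $s$ as $s \downarrow 0$. Equation (\ref{eq:000}) is fourth-order parabolic for $s > 0$ but degenerates to second order at $s = 0$, so the naive fourth-order Schauder constants will blow up. One must therefore regard the $J$-flow term $(1-s)(n - \tr_\varphi \oo_g)$ as the dominant parabolic operator and absorb the $s$-small scalar curvature term as a perturbation, while still extracting a uniform exponential decay rate at the linear level. This delicate interplay between a strictly-parabolic second-order leading part and a small fourth-order correction governs both the comparison estimate of the first step and the stability estimate of the second step, and demands a careful choice of function spaces so that the stability radius $\epsilon$ and the decay rate remain bounded below uniformly in $s \in [0, s_0]$. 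If $\oo_g$ admits nontrivial holomorphic vector fields, the kernel of $L_s$ is enlarged beyond the constants and the stability argument must be performed modulo $\Aut(M)$.
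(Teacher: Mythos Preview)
Your two-step strategy---run the $J$-flow to some time $T$, compare $\varphi_s$ with $\varphi_0$ on $[0,T]$, then invoke a uniform-in-$s$ stability result near $\oo_g$---is exactly the architecture of the paper's proof (Theorem~\ref{theo:001a} plus Theorem~\ref{theo:002}, assembled in Section~\ref{sec5}). You also correctly isolate the central difficulty: the fourth-order Schauder machinery degenerates as $s\downarrow 0$.

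The gap is that neither of your proposed implementations survives this degeneration. For the finite-time comparison, after the time rescaling $\tau=st$ the linearized equation reads $\partial_\tau w+\Delta^2_{\varphi_0}w=s^{-1}f+s^{-1}a^{k\bar l}w_{k\bar l}$; the $L^2$ estimate is indeed uniform in $s$ (the second-order term has a good sign), but the passage from $L^2$ to $C^{4,1,\ga}$ costs a factor $s^{-\ka}$ with $\ka>1$ (Lemma~\ref{lem004}). Since $L_s(\varphi_0)$ is only $O(s)$, a direct Gronwall/Schauder argument yields a bound of order $s^{1-\ka}\to\infty$, not $Cs$. The paper's remedy is to build, by a formal Taylor expansion in $s$, an approximate solution $\td\varphi_{N,s}=\varphi_0+\sum_{j=1}^N\frac{s^j}{j!}u_j$ with $\|L_s(\td\varphi_{N,s})\|_{C^{0,0,\ga}}\le Cs^{N+1}$ (Lemma~\ref{lem007}); choosing $N>\ka$ the residual beats the Schauder loss and the contraction (Lemma~\ref{lem013}) closes. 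Your Gronwall sketch is missing this device.

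For the stability step, the paper states explicitly (just after Theorem~\ref{theo:002}, and quantitatively in Theorem~\ref{theo:main3b}) that the Chen--He energy argument produces a stability neighborhood of size $C(g)s^{p}(1-s)$ with $p\ge 6n+2$, which collapses as $s\to 0$; so ``an exponentially-weighted energy argument in the spirit of \cite{[ChenHe1]}'' does not give the required uniformity. The paper instead repeats the approximate-solution-plus-contraction scheme in weighted H\"older spaces $C^{4,1,\ga}_{\eta s}$, after replacing $L_s$ by a zero-average modification $\td L_s$ so that the linearized inverse can be bounded on $[0,\infty)$ (Lemmas~\ref{lem:B005}--\ref{lem:B007}).

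One minor point: your worry about $\Aut(M)$ is unnecessary for $s\in(0,1)$. From $\langle L_s\psi,\psi\rangle=-s\|\cD\psi\|^2-(1-s)\|\nabla\psi\|^2$ one sees that $\ker L_s$ consists only of constants, regardless of holomorphic vector fields; the twist by $(1-s)(n-\tr_\varphi\oo_g)$ kills the extra kernel directions that would appear at $s=1$.
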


Theorem \ref{theo:main1} shows that there exists $s_0>0$ such that $[0, s_0]\subset I_{\psi_0}.$ Since the cscK metric $\oo_g$ is also a twisted cscK metric,  the limit twisted cscK metric in Theorem \ref{theo:main1} is exactly  $\oo_g$ by the uniqueness of twisted cscK metrics(cf. Berman-Darvas-Lu \cite{[BDL]}).
The next result shows that if the twisted Calabi flow (\ref{eq:000}) has long time existence and converges at $s=s_0>0$, so does $s$ when $|s-s_0|$ is small.

\begin{theo}\label{theo:main3}
Let $(M, g)$ be a compact K\"ahler manifold with a cscK metric
$\oo_g.$
 For any $\psi_0\in \cH(\oo_g)$, if the twisted Calabi flow (\ref{eq:000}) at $s=s_0\in (0, 1)$ with the initial data $\psi_0$ exists for all time and converges smoothly to a twisted cscK metric, there exists $\dd>0$ such that the  flow (\ref{eq:000}) at any $s\in (s_0-\dd, s_0+\dd)\cap (0, 1)$ with the initial data $\psi_0$ exists for all time and converges exponentially to the cscK metric $\oo_g$.
\end{theo}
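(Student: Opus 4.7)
The plan is to combine continuous dependence on the parameter $s$ over a large but finite time interval with a local stability result at $\oo_g$. Since $R(\oo_g)=\un R$ and $\tr_g\oo_g=n$, the metric $\oo_g$ is itself a twisted cscK metric for every $s\in[0,1]$, so by uniqueness of twisted cscK metrics in $[\oo_g]$ (Berman--Darvas--Lu \cite{[BDL]}), the limit of the twisted Calabi flow at $s_0$ must be $\oo_g$. Given $k\ge 1$ and $\epsilon>0$, I therefore fix $T_0$ so large that, after normalizing by a time-dependent additive constant, $\|\varphi_{s_0}(T_0)\|_{C^{k,\alpha}}<\epsilon$. Since (\ref{eq:000}) is quasilinear parabolic and depends smoothly on $s$, and $\varphi_{s_0}$ is smooth on $[0,T_0]$, standard continuous dependence on parameters (apply parabolic Schauder estimates to the equation for $\eta_s:=\varphi_s-\varphi_{s_0}$, whose source is of order $|s-s_0|$, together with a Gr\"onwall bootstrap) yields $\dd_1>0$ such that for $|s-s_0|<\dd_1$ the flow $\varphi_s$ exists on $[0,T_0]$ with $\|\varphi_s(T_0)\|_{C^{k,\alpha}}<2\epsilon$ after the same normalization.

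The heart of the argument is a local stability result: for each $s\in(0,1)$ there exists $\epsilon_s>0$ such that any solution of (\ref{eq:000}) whose initial K\"ahler potential $\psi$ satisfies $\|\psi\|_{C^{k,\alpha}}<\epsilon_s$ exists for all time and converges exponentially to $\oo_g$. Linearizing (\ref{eq:000}) at $\oo_g$ gives
\begin{equation*}
\partial_t\dot\varphi = L_s\,\dot\varphi,\qquad L_s := -s\,\cD^*\cD + (1-s)\,\Delta_g,
\end{equation*}
where $\cD$ is the Lichnerowicz operator and I have used $R(\oo_g)=\un R$ and $\tr_g\oo_g=n$. Both $-\cD^*\cD$ and $\Delta_g$ are non-positive self-adjoint elliptic operators, and for $s\in(0,1)$ one has $\ker L_s=\ker\cD^*\cD\cap\ker\Delta_g=\RR\cdot 1$: the Laplacian kills the non-constant holomorphy potentials that may live in $\ker\cD^*\cD$. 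Hence on the subspace of zero-mean functions $L_s$ has a strictly positive spectral gap $\lambda_s>0$, depending continuously on $s$ and bounded below on any compact subinterval of $(0,1)$. Combining exponential decay of the linear semigroup with a nonlinear perturbation argument (Duhamel's formula and a contraction mapping, or energy estimates in the spirit of Theorem \ref{theo:main1}) upgrades this into exponential decay of the nonlinear flow, with constants uniform in $s$ near $s_0$.

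Choosing $\epsilon$ in the first step smaller than $\epsilon_s/2$ uniformly for $s$ near $s_0$ and applying the local stability result to the flow restarted at time $T_0$ yields long-time existence and exponential convergence to $\oo_g$ for all $s\in(s_0-\dd,s_0+\dd)\cap(0,1)$ with a possibly smaller $\dd\le\dd_1$. The main technical obstacle is the local stability step: one has to carry out the nonlinear stability analysis with constants uniform in $s$, which requires uniform higher-order a priori estimates for the fourth-order plus second-order quasilinear flow (\ref{eq:000}) near the equilibrium $\oo_g$. The scheme should closely parallel the proof of Theorem \ref{theo:main1}, the crucial difference being that the smallness parameter is now $\|\varphi_s(T_0)\|_{C^{k,\alpha}}$ rather than $s$ itself.
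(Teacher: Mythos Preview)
Your proposal is correct and follows essentially the same two-step strategy as the paper: first establish finite-time continuous dependence on $s$ (the paper does this via a contraction mapping argument, Theorem~\ref{theo:main3a}, while you invoke Schauder estimates plus Gr\"onwall, which is equally valid here since $s$ stays bounded away from $0$), then apply a local stability result for the twisted Calabi flow near $\oo_g$ (the paper cites the Chen--He type stability from \cite{[HL]}, stated as Theorem~\ref{theo:main3b}, whose proof is precisely the linearization and spectral-gap argument you sketch). The identification of the limit as $\oo_g$ via Berman--Darvas--Lu uniqueness and the observation that the stability radius $\ee_0$ is uniform for $s$ in a compact subinterval of $(0,1)$ are also the same key points used in the paper.
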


Theorem \ref{theo:main3} shows that   if $s_0>0$ satisfies $s_0\in I_{\psi_0}$, then there exists $\dd>0$ such that $(s_0-\dd, s_0+\dd)\subset I_{\psi_0}$. Combining Theorem \ref{theo:main1} with Theorem \ref{theo:main3}, we have

\begin{cor}On  a compact K\"ahler manifold $(M, g)$ with a cscK metric $g,$ for any $\psi_0\in \cH(\oo_g)$ the set $I_{\psi_0}$ is  open in $[0, 1]$.
\end{cor}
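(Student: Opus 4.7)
The plan is to deduce openness directly from Theorems~\ref{theo:main1} and~\ref{theo:main3}. The key structural observation is that $I_{\psi_0}$ is a down-set in $[0,1]$: if $s'\in I_{\psi_0}$, then by definition every $s\in[0,s']$ also has the property that the flow with initial data $\psi_0$ exists for all time and converges, so every $s''\in[0,s']$ lies in $I_{\psi_0}$. Consequently, to establish openness it suffices to check, for each $s^*\in I_{\psi_0}$, that a small right-sided neighborhood is also contained in $I_{\psi_0}$, i.e., that there exists $\delta>0$ with $[s^*,s^*+\delta)\cap[0,1]\subset I_{\psi_0}$.

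I would then split into three cases. First, when $s^*=0$, Theorem~\ref{theo:main1} yields some $s_0>0$ such that the twisted Calabi flow with initial data $\psi_0$ has long time existence and exponential convergence for every $s\in(0,s_0)$; combined with the long time existence and convergence of the $J$-flow at $s=0$ (Song--Weinkove \cite{[SW]}, since $\omega_g$ itself solves the $J$-equation $\mathrm{tr}_{\varphi}\omega_g=n$ at $\varphi=0$), this gives $[0,s_0)\subset I_{\psi_0}$, an open neighborhood of $0$ in $[0,1]$. Second, for $s^*\in(0,1)\cap I_{\psi_0}$, the flow at $s=s^*$ converges smoothly to a twisted cscK metric, so Theorem~\ref{theo:main3} supplies $\delta>0$, which I would shrink so that $(s^*-\delta,s^*+\delta)\subset(0,1)$, such that for every $s$ in this interval the flow converges exponentially. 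For any $s''\in(s^*-\delta,s^*+\delta)$, the requirement ``the flow converges for all $s\in[0,s'']$'' splits into $s\le s^*$ (handled by $s^*\in I_{\psi_0}$ and the down-set property) and $s\in(s^*,s'')$ (handled by Theorem~\ref{theo:main3}), so $s''\in I_{\psi_0}$. Third, if $s^*=1$ lies in $I_{\psi_0}$ then necessarily $I_{\psi_0}=[0,1]$, which is open in $[0,1]$ trivially.

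Since the analytic heart of the argument is entirely absorbed by Theorems~\ref{theo:main1} and~\ref{theo:main3}, there is no genuine obstacle at this stage; the only point demanding care is the handling of the boundary values $s=0$ and $s=1$, where Theorem~\ref{theo:main3} does not directly apply, and this is exactly what Theorem~\ref{theo:main1} (for the endpoint $s^*=0$) and the trivial case $I_{\psi_0}=[0,1]$ (for $s^*=1$) are designed to cover.
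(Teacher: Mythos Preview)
Your proposal is correct and follows essentially the same approach as the paper, which simply asserts that the corollary follows by combining Theorems~\ref{theo:main1} and~\ref{theo:main3}. Your explicit observation that $I_{\psi_0}$ is a down-set and your careful case split at the endpoints $s^*=0$ and $s^*=1$ make precise what the paper leaves implicit, but the logic is the same.
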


  The closeness of $I_{\psi_0}$ will be difficult, and it will imply Chen's conjecture on the cscK case.

\subsection{Outline of the proofs}

Theorem \ref{theo:main1} is motivated by the work of Hashimoto \cite{[Hashi]},  Zeng \cite{[Zeng]} on twisted cscK metrics and Fine \cite{[Fine]} on cscK metrics.  In \cite{[Hashi]}, Hashimoto  proved the existence of twisted cscK metrics
\beq
s(R(\varphi)-\un R)+(1-s)(n-\tr_{\varphi}\chi)=0
\eeq for  small $s$, and in \cite{[Zeng]} Zeng independently proved the case of $\chi=\oo_g$.  The methods of these works are similar and rely on the construction of approximate solutions of the linearized equation of twisted cscK metrics. Motivated by this method, we use the parabolic version to prove Theorem \ref{theo:main1}. We outline the proof as follows.

\emph{Step 1.} Given any K\"ahler potential $\psi_0$ and any $T>0$, there exists $s_0>0$ such that for $s\in (0, s_0)$ the solution of  twisted Calabi flow (\ref{eq:000}) with the initial data $\psi_0$ exists on $M\times [0, T]$. To prove this, we consider the twisted Calabi flow operator
\beq
L_s(\varphi):=\pd {\varphi}t-s(R(\varphi)-\un R)-(1-s)(n-\tr_{\varphi}\oo_g).
\eeq
Let $\varphi_0$ be the solution of the $J$-flow with the initial K\"ahler potential $\psi_0$, i.e. $L_0(\varphi_0(x, t))=0.$ Then $\|L_s(\varphi_0)\|_{C^{0, 0, \ga}(M\times [0, T], g)}\leq Cs. $
      We modify $\varphi_0$ to be $\td \varphi$ such that $\td \varphi$ satisfy
      \beq
      \|L_s(\td \varphi)\|_{C^{0, 0, \ga}(M\times [0, T], g)}\leq Cs^N
      \eeq for any given integer $N>0.$
 The linearized operator
$
DL_s|_{\td \varphi}:  C_0^{4, 1, \ga}(M\times [0, T], g)\ri C^{0, 0, \ga}(M\times [0, T], g)
$
is injective and surjective. Moreover, we have an upper bound of $ \|(DL_s|_{\td \varphi})^{-1}\|$.  Next, we show that the map $\Psi_f$  defined by
  \beq\Psi_f :  \varphi \ri \varphi+(DL_s|_{\td \varphi})^{-1}(f-L_s(\varphi))
\eeq is a contraction, which implies  the existence of the solution of $L_s(\varphi)=0$ by the fixed point theory.

\emph{Step 2.} There exists $\dd_0, s_0>0$ such that if the initial given K\"ahler potential $\psi_0$ satisfies $\|\psi_0\|_{C^4}\leq \dd_0$, then for any $s\in (0, s_0)$ the solution of  twisted Calabi flow (\ref{eq:000}) with the initial data $\psi_0$ exists on $M\times [0, \infty)$. Compared with the proof in Step 1, we have the following difficulties:
\begin{enumerate}
  \item[$(a).$] The H\"older norms of  $\td \varphi(x, t)$ constructed in Step 1 on $M\times [0, T)$ is infinity. To overcome this difficulty, we assume that the initial K\"ahler potential is sufficiently small such that we can show that $\varphi_0(x, t)$ and $\td \varphi_0(x, t)$ decay exponentially.
  \item[$(b).$] It is difficult to prove that the operators $L_s$ and $DL_s$ are invertible between H\"older spaces with functions decaying exponentially. To overcome this difficulty, we modify the operator by
      \beq
      \td L_s(\varphi)=L_s(\varphi)-c_s(\varphi, \td \varphi_0),
      \eeq where $c_s(\varphi, \td \varphi_0)$ is a  constant in $t$. We can choose $c_s(\varphi, \td \varphi_0)$ such that $ \td L_s(\varphi) $ and  $ D\td L_s(\varphi) $ map  exponentially decaying functions to exponentially decaying functions.
\end{enumerate}
The rest proof of Step 2 is similar to that in Step 1. We show that a modified operator $\td \Psi_f$ is a contraction and the solution of twisted Calabi flow exists on $M\times [0, \infty)$ by the fixed point theory.

 \emph{Step 3.} Theorem \ref{theo:main1} follows from Step 1 and Step 2. In fact, for given $\psi_0$ the $J$-flow with the initial data $\psi_0$ exists for all time and converges smoothly to $0$. By Step 2 there exists $T>0$ such that the norm of $\varphi_0(x, t)$ is bounded by $\dd_0/2$. On the other hand, by Step 1 for small $s$ the solution $\varphi_s(x, t)$ of twisted Calabi flow (\ref{eq:000}) exists and is sufficiently close to $\varphi_0(x, t)$ on $M\times [0, T]$. Thus, for small $s$ the norm of $\varphi_s(x, T)$ is bounded by $\dd_0$. By Step 2 the twisted Calabi flow with the initial data $\varphi_s(x, T)$ exists for all time $t\in [T, \infty)$ and converges exponentially to the cscK metric $\oo_g$.

 The proof of Theorem \ref{theo:main3} is similar to that of Theorem \ref{theo:main1}. Assume that the solution $\varphi_{s_0}(x, t)$ of twisted Calabi flow for $s=s_0\in (0, 1)$ exists for all time and converges. Then we replace the
 background K\"ahler potential $\td \varphi(x, t)$ by $\varphi_{s_0}(x, t)$ and the rest of the proof is similar to that of Theorem \ref{theo:main1}.\\

The organization of the paper is as follows. In Section 2 we introduce the twisted Calabi flow and its linearized equation. In Section 3 we show that the twisted Calabi flow near the $J$-flow exists for a given time interval. In Section 4 we show that there exists a uniform neighborhood near the cscK metric such that for any small $s$ the twisted Calabi flow has long time solution and converges. In Section 5, we combine the results in Section 3 and Section 4 to show Theorem \ref{theo:main1}. In Section 6 we modified the estimates in Section 3 and Section 4 to show Theorem \ref{theo:main3}. In the appendix, we recall the Schauder estimates and existence results of linear fourth-order and second-order parabolic equations from Metsch \cite{[Me]}, He-Zeng \cite{[HZ]} and Lieberman \cite{[Lieb]},  and show some necessary interpolation inequalities and existence results.

\section{Preliminaries}\label{sec2}
Let $(M, g)$ be a compact K\"ahler manifold with a given K\"ahler metric
$\oo_g$. We define  the space of K\"ahler potentials $\cH(\oo_g) $ by
\beq
\cH(\oo_g)=\{\varphi\in C^4(M)\;|\; \oo_g+\pbp\varphi>0\}.\no
\eeq
Let $\psi_0\in \cH(\oo_g)$.  Consider  a family of twisted Calabi flow with $s\in [0, 1]$
\beq
\pd {\varphi_s}t=s(R(\varphi_s)-\un R)+(1-s)(n-\tr_{\varphi_s}\oo_g),\quad \varphi_s(x, 0)=\psi_0. \label{eq:A0000a}
\eeq
We define the operator   $L_s$ by
\beq
L_s(\varphi)=\pd {\varphi}t-s(R(\varphi)-\un R)-(1-s)(n-\tr_{\varphi}\oo_g). \no
\eeq
Then the equation (\ref{eq:A0000a}) can be written as
\beq
L_s(\varphi_s)=0,\quad  \varphi_s(0)=\psi_0. \no
\eeq
When $s=0$, the equation (\ref{eq:A0000a}) becomes the $J$-flow:
\beq
\pd {\varphi_0}t=n-\tr_{\varphi_0}\oo_g,\quad \varphi_0(x, 0)=\psi_0. \label{eq:A0003}
\eeq

The equation (\ref{eq:A0003}) admits a solution $\varphi_0(x, t)$ for all time $t>0$ and the metrics $\oo_{\varphi_0(x, t)}$ converge smoothly to $\oo_g$ by Weinkove \cite{[Wenk2]}.  We can normalize $\varphi_0(x, t)$ such that $\lim_{t\ri +\infty}\varphi_0(x, t)=0.$ Assume that $\varphi(x, t)$ is a family of K\"ahler potentials. By direct calculation, the derivative of $L_s$ at $\varphi(x, t)$ can be written as
\beq
DL_s|_{\varphi}(u)=\pd {u}t+s(\Delta^2_{ \varphi}u+Ric_{\varphi, i\bar j
}u_{j\bar i})-(1-s)g_{\varphi}^{i\bar l}g_{\varphi}^{k\bar j}g_{i\bar j}u_{k\bar l}.\no
\eeq Therefore, the linearized equation $DL_s|_{\varphi}(u)=f$ can be written as
\beq
\frac 1s\pd {u}t+\Delta^2_{ \varphi}u=s^{-1}f+\frac {1-s}sg_{\varphi}^{i\bar l}g_{\varphi}^{k\bar j}g_{i\bar j}u_{k\bar l}-Ric_{\varphi, i\bar j
}u_{j\bar i},\quad \forall \;(x, t)\in M\times [0, T].\label{eq:A0004}
\eeq
To use the parabolic theory of fourth order, we define
\beqn \tau&:=&st,\quad w(x, \tau):=u(x, t),\quad \td f(x, \tau):=f(x, t),\no\\
a_{l\bar k}&:=&-sRic_{\varphi, l\bar k}+(1-s)g_{\varphi}^{i\bar l}g_{\varphi}^{k\bar j}g_{i\bar j}.\label{eq:A0006}
\eeqn
Thus, (\ref{eq:A0004}) is equivalent to the equality
\beq
\pd {w}{\tau}+\Delta^2_{\varphi}w=h(x, \tau):=s^{-1}\td f+s^{-1}a_{l\bar k}w_{k\bar l},\quad (x, \tau)\in M\times [0, sT]. \label{eq:A0007a}
\eeq Observe that $a_{l\bar k}$ can be written as
\beq
a_{l\bar k}=(1-s)g_{\varphi}^{k\bar l}-(1-s)g_{\varphi}^{i\bar l}g_{\varphi}^{k\bar j}\varphi_{i\bar j}
-sRic_{\varphi, l\bar k}, \label{eq:A0007b}
\eeq

To study the equation (\ref{eq:A0007a}), we will use the  parabolic theory of the fourth-order parabolic equation
\beq
\pd {u}{t}+\Delta^2_{g}u=f.\label{eq:A00b}
\eeq
The  Schauder estimates of higher-order parabolic equation are known in literatures;  see,   for instance, Q. Han \cite{[Han]}, Boccia \cite{[Bo]},  Dong-Zhang \cite{[DZ]}. In \cite{[HZ]} He-Zeng show the existence and Schauder estimates of (\ref{eq:A00b}) in weighted H\"older spaces, and in \cite{[Me]} Metsch shows such results in usual H\"older spaces. In the appendix, we collect these results from He-Zeng \cite{[HZ]} and Metsch \cite{[Me]} for fourth-order parabolic equations, and the results from Lieberman \cite{[Lieb]} for second-order parabolic equations. Moreover, we will show necessary interpolation inequalities and give an alterlative proof for the existence of solutions in usual H\"older spaces to fourth-order parabolic equations by using He-Zeng's method in the appendix.

\section{Finite time existence of twisted Calabi flow}
In this section, we prove that the twisted Calabi flow exists on a given finite time interval when $s$ is small, which is the first part of the proof of Theorem \ref{theo:main1}.

\begin{theo}\label{theo:001a}
Let $(M, g)$ be a compact K\"ahler manifold with a K\"ahler metric
$\oo_g$.   For  $\psi_0\in \cH(\oo_g)$, we denote by $\varphi_s(x, t)$
 the solution   of  (\ref{eq:000}) for $s\in [0, 1]$ with the initial K\"ahler potential $\psi_0$. For any $T>0$ and any $\psi_0\in \cH(\oo_g)$, there exists $s_0\in (0, 1]$ such that for any $s\in (0, s_0)$ the solution   $\varphi_s(x, t)$ exists for $t\in [0, T]$ and satisfies
 \beq
\|\varphi_s(x, t)-\varphi_0(x, t)\|_{C^{4, 1, \ga}(M\times [0, T], g)}\leq C(n, g, T, \varphi_0)s. \label{eq:A032}
\eeq

\end{theo}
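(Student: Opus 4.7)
The plan is to follow the strategy outlined in the introduction (Step 1), implementing a perturbation-and-contraction argument centered at the $J$-flow solution $\varphi_0(x,t)$ restricted to $M\times[0,T]$. Since $\varphi_0$ exists and is smooth on this finite time interval by Weinkove \cite{[Wenk2]}, all relevant Hölder norms of $\varphi_0$ on $M\times[0,T]$ are controlled in terms of $n, g, T$ and $\psi_0$. Computing $L_s(\varphi_0) = s(R(\varphi_0) - \un R) - s\cdot 0$ (using $L_0(\varphi_0)=0$), one immediately sees that $\|L_s(\varphi_0)\|_{C^{0,0,\ga}(M\times[0,T],g)} \le Cs$, so $\varphi_0$ is a zeroth-order approximate solution.

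The first main step is to upgrade $\varphi_0$ to an approximate solution $\td\varphi = \varphi_0 + s\eta_1 + s^2\eta_2 + \cdots + s^{N-1}\eta_{N-1}$ with $\|L_s(\td\varphi)\|_{C^{0,0,\ga}} \le Cs^N$ for any prescribed $N$. Plugging the ansatz into $L_s$ and matching powers of $s$ yields, at each order, a linear parabolic equation for $\eta_k$ of the form $\partial_t\eta_k - (1-s)(\text{second-order linearization of the }J\text{-operator at }\varphi_0)\eta_k = F_k$ with zero initial data, where $F_k$ is built from the previously determined $\eta_0,\ldots,\eta_{k-1}$ and from $R(\varphi_0)-\un R$. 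This is a second-order linear parabolic equation with smooth coefficients on a compact manifold and a finite time interval, so its solvability in $C^{2,1,\ga}$ (and in fact in $C^{k,\ga}$ for higher $k$) follows from the Lieberman theory quoted in the appendix. Iterating gives the desired $\td\varphi$.

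The second step is to invert the linearization $DL_s|_{\td\varphi}$ on the space of functions vanishing at $t=0$. Using the rescaling $\tau=st$, $w(x,\tau)=u(x,t)$ and the coefficient $a_{l\bar k}$ defined in (\ref{eq:A0006}), the linearized equation becomes the fourth-order parabolic equation (\ref{eq:A0007a}) on $M\times[0,sT]$. Applying the Schauder theory for fourth-order parabolic equations from He-Zeng \cite{[HZ]} / Metsch \cite{[Me]} collected in the appendix, together with the explicit structural identity (\ref{eq:A0007b}) and an interpolation argument to absorb the second-order term $s^{-1}a_{l\bar k}w_{k\bar l}$ into the principal part on a sufficiently short time interval, gives the invertibility of $DL_s|_{\td\varphi}:C_0^{4,1,\ga}(M\times[0,T],g)\to C^{0,0,\ga}(M\times[0,T],g)$ together with a bound on $\|(DL_s|_{\td\varphi})^{-1}\|$ that is (at worst) polynomial in $s^{-1}$. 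The key point I need to verify carefully is that this bound grows like $s^{-m}$ for some \emph{fixed} $m$ independent of $s$, because this is what will be beaten by the $s^N$ smallness of $L_s(\td\varphi)$.

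The final step is to run the Newton-type fixed-point map
\[
\Psi:\varphi \longmapsto \varphi - (DL_s|_{\td\varphi})^{-1}L_s(\varphi)
\]
on a small ball around $\td\varphi$ in $C^{4,1,\ga}_0(M\times[0,T],g)$. Writing $\varphi = \td\varphi + v$ and expanding $L_s(\td\varphi + v) = L_s(\td\varphi) + DL_s|_{\td\varphi}v + Q_s(v)$ with $Q_s$ a quadratic-type remainder, the standard estimate $\|Q_s(v_1)-Q_s(v_2)\|_{C^{0,0,\ga}} \le C(\|v_1\|+\|v_2\|)\|v_1-v_2\|_{C^{4,1,\ga}}$ (valid for $s\in[0,1]$ with constants depending only on $n,g,T$ and on the a priori bound for $\td\varphi$) combines with the bounds from the previous steps: choosing $N$ large enough that $s^N$ beats the inverse bound $\|(DL_s|_{\td\varphi})^{-1}\|$, one obtains that $\Psi$ maps a ball of radius $\rho_s = o(1)$ as $s\to 0$ into itself and is a contraction there. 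The fixed point $\varphi_s$ satisfies $L_s(\varphi_s)=0$ and, by construction, $\|\varphi_s - \td\varphi\|_{C^{4,1,\ga}} = o(s)$ while $\|\td\varphi - \varphi_0\|_{C^{4,1,\ga}} \le Cs$, which yields the estimate (\ref{eq:A032}). The main obstacle I anticipate is bookkeeping the $s$-dependence in Step 2: the fourth-order principal part is degenerate as $s\to 0$, so one must trade the $s^{-1}$ coefficients against the $s^N$ smallness, and this is where the freedom in choosing $N$ large is essential.
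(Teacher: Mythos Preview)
Your proposal is correct and follows essentially the same route as the paper: construct the approximate solution $\td\varphi = \varphi_0 + \sum_j s^j u_j/j!$ by solving a sequence of second-order linear parabolic equations (the paper's Lemma~\ref{lem007}), bound $\|(DL_s|_{\td\varphi})^{-1}\|$ polynomially in $s^{-1}$ (Lemma~\ref{lem004}), and run the contraction $\Psi_f$ on a ball of radius $\delta s^{\alpha}$ around $\td\varphi$ (Lemma~\ref{lem013} and Section~\ref{sec3}). One clarification on your Step~2: the way the paper handles the dangerous $s^{-1}a_{l\bar k}w_{k\bar l}$ term is not by ``short time interval'' alone but by an $L^2$ energy estimate (Lemma~\ref{lem:A004}) that exploits the sign of $a_{l\bar k}$ from \eqref{eq:A0007b} to get $\|w(\cdot,\tau)\|_{L^2}\le Ce^{Cs^{-1}\tau}\sup\|f\|_{L^2}$, which on $[0,sT]$ yields a $T$-dependent but $s$-independent bound; this $L^2$ control is then fed into the Schauder estimate together with the interpolation inequality you mention (choosing $\epsilon\sim s$) to obtain $\|(DL_s|_{\td\varphi})^{-1}\|\le Cs^{-\kappa}$.
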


We remark that in Theorem \ref{theo:001a}, we don't assume the existence of cscK metrics, and $T$ cannot be chosen to be $\infty$ since the estimates on $[0, \infty)$ will be infinity by Lemma \ref{lem:A004} and the proof in Section \ref{sec3} doesn't work.   To prove Theorem \ref{theo:001a}, we first construct the approximate solution $\td \varphi_{N, s}$, and we show that the linearized operator of $L_s$ at $\td \varphi_{N, s}$ is invertible. Then we construct a contraction map and obtain a solution of $L_s(\varphi)=0$ by the fixed point theory.

\subsection{The linearized equation}

In this subsection, we construct an approximate solution $\td \varphi_{N, s}$ such that $L_s(\td \varphi_{N, s})$ has higher order estimates with respect to $s$.
 Let $T>0, \psi_0\in \cH(\oo_g)$ and $\varphi_0(x, t)$ be the solution of the $J$-flow (\ref{eq:A0003}) on $M\times [0, \infty)$ with the initial data $\psi_0$, i.e. $L_0(\varphi_0)=0$. For any integer $N>0$, we define
\beq
\td \varphi_{N, s}(x, t):=\varphi_0(x, t)+\sum_{j=1}^N\, \frac {s^j}{j!} u_{j}(x, t), \label{eq:A026}
\eeq where $u_{j}(x, t)$ are  functions on $M\times [0, T].$ We  write
\beq  \psi_j:=\frac {\partial^j}{\partial s^j}\td \varphi_{N, s}  \no \label{Eq:A002}\eeq
and we denote by
$Q_{a, b}$ any type of finite many contractions of $ \Na^k\psi_l$ for
$2\leq k\leq a$ and $1\leq l\leq b$ with respect to the metric $g_{\td \varphi_{N, s}}$ i.e.
\beq
Q_{a, b}=\sum_{2\leq k_i\leq a, 1\leq l_i\leq b}\,c^{k_1k_2\cdots k_p}_{l_1l_2\cdots l_p}\Na^{k_1}\psi_{l_1}*
\Na^{k_2}\psi_{l_2}*\cdots \Na^{k_p}\psi_{l_p},\no
\eeq where $ c^{k_1k_2\cdots k_p}_{l_1l_2\cdots l_p}$ are some constants. By direct calculation, we have
\beqn
\pd {}s\Na^k\psi_l&=&\Na^k\psi_{l+1}+Q_{k+1, l},\label{eq:D2}\\
\pd {}sQ_{a, b}&=&Q_{a+1, b+1}. \label{eq:D3}
\eeqn

We would like to choose $u_j$ such that $L_s(\td \varphi_{N, s})$ has higher order  estimates with respect to $s$.
First, we have the following expansion of $R(\td \varphi_{N, s})$.

\begin{lem}\label{lem:A002} For $j\geq 2$, we have
\beqn
\frac {\partial^j}{\partial s^j}R(\td \varphi_{N, s})&=&-\Delta_{\td \varphi_{N, s}}^2\psi_j-Ric_{\td \varphi_{N, s}, \bb\bar \al
}\psi_{j, \al\bar \bb}+Ric_{\td \varphi_{N, s}
}*Q_{j+1, j-1}+Q_{j+3, j-1}.
\label{eq:A055}
\eeqn

\end{lem}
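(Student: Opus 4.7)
The approach is induction on $j\ge 2$, relying on the standard first variation of the scalar curvature $DR|_{\varphi}(u)=-\Delta_{\varphi}^{2}u-Ric_{\varphi,\bb\bar\al}u_{\al\bar\bb}$, together with the first variations $\partial_{s}g_{\td\varphi_{N,s}}^{i\bar j}=-g^{i\bar k}g^{l\bar j}\psi_{1,k\bar l}$ and $\partial_{s}Ric_{\td\varphi_{N,s},\bb\bar\al}=-\partial_{\bb}\partial_{\bar\al}(g_{\td\varphi_{N,s}}^{k\bar l}\psi_{1,k\bar l})$. Expanding the latter shows $\partial_{s}Ric\subset Q_{4,1}$, since every $\psi_{1}$-factor appearing has at most four partial derivatives. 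The only additional structural tools needed are the transformation rules (\ref{eq:D2})--(\ref{eq:D3}) and the elementary product rule $Q_{a,b}*Q_{c,d}\subset Q_{\max(a,c),\max(b,d)}$, immediate from the definition.

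For the base case $j=2$, I would differentiate $\partial_{s}R=-\Delta^{2}\psi_{1}-Ric_{\bb\bar\al}\psi_{1,\al\bar\bb}$ once more in $s$. Using $\partial_{s}\psi_{1,\al\bar\bb}=\psi_{2,\al\bar\bb}+Q_{3,1}$ from (\ref{eq:D2}), the two principal terms at $j=2$ appear together with a residue $Ric*Q_{3,1}$ from $Ric\cdot\partial_{s}(\psi_{1,\al\bar\bb})$. Since the coefficients of $\Delta^{2}$ on functions depend on $g_{\td\varphi_{N,s}}$ and at most its second partial derivatives, differentiating them in $s$ brings down at most a fourth-order derivative of $\psi_{1}$, so $[\partial_{s},\Delta^{2}]\psi_{1}\subset Q_{4,1}\subset Q_{5,1}$; likewise $(\partial_{s}Ric)*\Na^{2}\psi_{1}\subset Q_{4,1}*Q_{2,1}\subset Q_{5,1}$. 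This yields (\ref{eq:A055}) at $j=2$.

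For the inductive step, I would apply $\partial_{s}$ to the four summands on the right-hand side of (\ref{eq:A055}) at level $j$. The terms $\partial_{s}(-\Delta^{2}\psi_{j})$ and $\partial_{s}(-Ric_{\bb\bar\al}\psi_{j,\al\bar\bb})$ produce the desired principal terms at level $j+1$ together with errors $[\partial_{s},\Delta^{2}]\psi_{j}\subset Q_{4,j}\subset Q_{j+4,j}$, $Ric*Q_{3,j}\subset Ric*Q_{j+2,j}$, and $(\partial_{s}Ric)*\Na^{2}\psi_{j}\subset Q_{4,1}*Q_{2,j}\subset Q_{j+4,j}$. For the third summand, (\ref{eq:D3}) gives $Ric*\partial_{s}Q_{j+1,j-1}=Ric*Q_{j+2,j}$, while the extra piece $(\partial_{s}Ric)*Q_{j+1,j-1}$ is absorbed into $Q_{j+4,j}$ by the product rule. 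The fourth summand contributes $\partial_{s}Q_{j+3,j-1}=Q_{j+4,j}$ directly from (\ref{eq:D3}). Reassembling yields exactly the stated formula at level $j+1$.

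The main obstacle is the bookkeeping: at each step one must verify that every commutator and variation produced above lies in the stated $Q$-class by tracking simultaneously the maximum derivative order on any single factor and the maximum $\psi$-index appearing. No deep analytic input enters; the content of the lemma is really the stability of the schematic $Q_{a,b}$-structure under successive $s$-differentiation, given the mild coefficient dependence of $\Delta^{2}$ and $Ric$ on the metric.
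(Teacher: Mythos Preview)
Your proposal is correct and follows essentially the same approach as the paper: both argue by induction using the rules (\ref{eq:D2})--(\ref{eq:D3}) together with the first variations of $\Delta^{2}_{\td\varphi_{N,s}}$ and $Ric_{\td\varphi_{N,s}}$. The only difference is organizational---the paper splits the argument into two separate sub-inductions, one for $\partial_{s}^{j}(\Delta^{2}_{\td\varphi_{N,s}}\psi_{1})$ and one for $\partial_{s}^{j}(Ric_{\td\varphi_{N,s},\bb\bar\al}\psi_{1,\al\bar\bb})$, whereas you run a single induction directly on the full formula (\ref{eq:A055}).
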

\begin{proof}For $j=1$ we have
\beq
\frac {\partial}{\partial s}R(\td \varphi_{N, s})=-\Delta_{\td \varphi_{N, s}}^2\psi_1-Ric_{\td \varphi_{N, s}, \bb\bar \al
}\psi_{1, \al\bar \bb}.\no
\eeq
 Then we calculate
\beqn
\frac {\partial^{j+1}}{\partial s^{j+1}}R(\td \varphi_{N, s})&=&-\frac {\partial^{j}}{\partial s^{j}}\Big(\Delta_{\td \varphi_{N, s}}^2\psi_1\Big)-\frac {\partial^{j}}{\partial s^{j}}\Big(Ric_{\td \varphi_{N, s}, \bb\bar \al
}\psi_{1, \al\bar \bb}\Big). \label{eq:A0008}
\eeqn

We calculate each term of the right-hand side of (\ref{eq:A0008}). Note that
\beqn
\pd {}s\Big(\Delta_{\td \varphi_{N, s}}^2\psi_j\Big)&=&\Big(\pd {}s\Delta_{\td \varphi_{N, s}}\Big)\Delta_{\td \varphi_{N, s}}\psi_j+\Delta_{\td \varphi_{N, s}}\Big(\pd {}s(\Delta_{\td \varphi_{N, s}}\psi_j)\Big)\no\\
&=&\Na^2\psi_1*\Na^4\psi_j+\Delta_{\td \varphi_{N, s}}^2\psi_{j+1}+\Delta_{\td \varphi_{N, s}}\Big(\Na^3\psi_1*\Na \psi_j\Big)\no\\
&=&\Delta_{\td \varphi_{N, s}}^2\psi_{j+1}+Q_{5, j}.\no
\eeqn
This implies that
\beq
\frac {\partial^j}{\partial s^j}\Big(\Delta_{\td \varphi_{N, s}}^2\psi_1\Big)=
\Delta_{\td \varphi_{N, s}}^2\psi_{j+1}+Q_{j+4, j}.  \label{eq:D9}
\eeq

 We claim that
\beqn
\frac {\partial^j}{\partial s^j}\Big(Ric_{\td \varphi_{N, s}, \bb\bar \al
}\psi_{1, \al\bar \bb}\Big)&=&Ric_{\td \varphi_{N, s}, \bb\bar \al
}\psi_{j+1, \al\bar \bb}+Ric_{\td \varphi_{N, s}
}*Q_{j+2, j}+Q_{j+3, j}. \label{eq:D1}
\eeqn
We can check that (\ref{eq:D1}) holds for $j=1$. Assume that (\ref{eq:D1}) holds for $j\geq 1$. Then
\beqn
\frac {\partial^{j+1}}{\partial s^{j+1}}\Big(Ric_{\td \varphi_{N, s}, \bb\bar \al
}\psi_{1, \al\bar \bb}\Big)&=&\pd {}s\Big(Ric_{\td \varphi_{N, s}, \bb\bar \al
}\psi_{j+1, \al\bar \bb}\Big)\no\\&&+\pd {}s\Big(Ric_{\td \varphi_{N, s}
}*Q_{j+2, j}\Big)+\pd {}sQ_{j+3, j}.\label{eq:D7}
\eeqn
By direct calculation and using (\ref{eq:D2})-(\ref{eq:D3}), we have
\beqn
\pd {}s\Big(Ric_{\td \varphi_{N, s}, \bb\bar \al
}\psi_{j+1, \al\bar \bb}\Big)&=&Ric_{\td \varphi_{N, s}, \bb\bar \al
}\psi_{j+2, \al\bar \bb}+Ric_{\td \varphi_{N, s}
}*Q_{3, j+1}+Q_{4, j+1}, \label{eq:D4}\\
\pd {}s\Big(Ric_{\td \varphi_{N, s}
}*Q_{j+2, j}\Big)&=&Ric_{\td \varphi_{N, s}
}*Q_{j+3, j+1}+Q_{j+2, j},  \label{eq:D5}\\
\pd {}sQ_{j+3, j}&=&Q_{j+4, j+1}.   \label{eq:D6}
\eeqn
Combining (\ref{eq:D4})-(\ref{eq:D6}) with (\ref{eq:D7}), we have
\beqn
\frac {\partial^{j+1}}{\partial s^{j+1}}\Big(Ric_{\td \varphi_{N, s}, \bb\bar \al
}\psi_{1, \al\bar \bb}\Big)&=&Ric_{\td \varphi_{N, s}, \bb\bar \al
}\psi_{j+2, \al\bar \bb}+Ric_{\td \varphi_{N, s}
}*Q_{j+3, j+1}+Q_{j+4, j+1}.\label{eq:D8}
\eeqn
Thus, (\ref{eq:D1}) is proved.

Combining (\ref{eq:D8}) with (\ref{eq:D9}), we have
\beqn
\frac {\partial^{j+1}}{\partial s^{j+1}}R(\td \varphi_{N, s})&=&-\Delta_{\td \varphi_{N, s}}^2\psi_{j+1}+Q_{j+4, j}-Ric_{\td \varphi_{N, s}, \bb\bar \al
}\psi_{j+1, \al\bar \bb}+Ric_{\td \varphi_{N, s}
}*Q_{j+2, j}+Q_{j+3, j}\no\\
&=&-\Delta_{\td \varphi_{N, s}}^2\psi_{j+1}-Ric_{\td \varphi_{N, s}, \bb\bar \al
}\psi_{j+1, \al\bar \bb}+Ric_{\td \varphi_{N, s}
}*Q_{j+2, j}+Q_{j+4, j}.\no
\eeqn The lemma is proved.

\end{proof}

Similarly, for $j\geq 2$ we have
 \beq
\frac {\partial^j}{\partial s^j}\tr_{\td \varphi_{N, s}}\oo_g=-g_{\td \varphi_{N, s}}^{\al\bar \dd}g_{\td \varphi_{N, s}}^{\eta\bar \bb}g_{\eta\bar \dd}\psi_{j, \al\bar \bb}+Q_{j+1, j-1}.\label{eq:A001}
\eeq Combining Lemma \ref{lem:A002} with the equality (\ref{eq:A001}), we have the expansion of $L_s(\td \varphi_{N, s}). $
\begin{lem}\label{lem:A003}We have
\beqs
\pd {}sL_s(\td \varphi_{N, s})&=&\frac {\partial}{\partial t}\psi_1-(1-s) g_{\td \varphi_{N, s}}^{\al\bar \dd}g_{\td \varphi_{N, s}}^{\eta\bar \bb}g_{\eta\bar \dd}\psi_{j, \al\bar \bb}+s\Delta_{\td \varphi_{N, s}}^2\psi_1\\&&+sRic_{\td \varphi_{N, s}, \bb\bar \al
}\psi_{1, \al\bar \bb}-(R(\td \varphi_{N, s})-\un R)+(n-\tr_{\td \varphi_{N, s}}g),
\eeqs and for $j\geq 2$ we have
 \beqn
\frac {\partial^j}{\partial s^j}L_s(\td \varphi_{N, s})
&=&\pd {\psi_j}t-(1-s)g_{\td \varphi_{N, s}}^{\al\bar \dd}g_{\td \varphi_{N, s}}^{\eta\bar \bb}g_{\eta\bar \dd}\psi_{j, \al\bar \bb}+ s\Delta_{\td \varphi_{N, s}}^2\psi_j+sRic_{\td \varphi_{N, s}, \bb\bar \al
}\psi_{j, \al\bar \bb}\no\\&&
+Q_{j+3, j-1}+Ric(\td \varphi_{N, s})*Q_{j+1, j-1}.\no
\eeqn

\end{lem}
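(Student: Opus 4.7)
The plan is to proceed by direct differentiation of $L_s(\td\varphi_{N,s})$ in the variable $s$, treating the two pieces $-s(R(\td\varphi_{N,s})-\un R)$ and $-(1-s)(n-\tr_{\td\varphi_{N,s}}\oo_g)$ by the Leibniz rule and then substituting the known expansions from Lemma \ref{lem:A002} and formula (\ref{eq:A001}). The $j=1$ identity is the base case, and the formula for $j\ge 2$ follows by the same mechanism with careful bookkeeping of the schematic remainders $Q_{a,b}$.

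For $j=1$, I would simply write
\begin{equation*}
\pd{}{s}L_s(\td\varphi_{N,s})=\pd{}{s}\bigl(\pd{\td\varphi_{N,s}}{t}\bigr)-(R(\td\varphi_{N,s})-\un R)-s\pd{R(\td\varphi_{N,s})}{s}+(n-\tr_{\td\varphi_{N,s}}\oo_g)+(1-s)\pd{}{s}(\tr_{\td\varphi_{N,s}}\oo_g),
\end{equation*}
substitute $\partial_s R=-\Delta^2_{\td\varphi_{N,s}}\psi_1-Ric_{\td\varphi_{N,s},\beta\bar\alpha}\psi_{1,\alpha\bar\beta}$ (the base case of Lemma \ref{lem:A002}) and the $j=1$ instance of (\ref{eq:A001}), and collect terms. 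This immediately yields the first displayed formula.

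For $j\ge 2$, apply Leibniz once more: since $s\mapsto s$ and $s\mapsto 1-s$ each have at most one non-trivial derivative,
\begin{equation*}
\pd^j_s\bigl[-s(R-\un R)\bigr]=-s\pd^j_s R-j\,\pd^{j-1}_s R,\qquad \pd^j_s\bigl[-(1-s)(n-\tr_{\td\varphi_{N,s}}\oo_g)\bigr]=(1-s)\pd^j_s\tr_{\td\varphi_{N,s}}\oo_g-j\,\pd^{j-1}_s\tr_{\td\varphi_{N,s}}\oo_g.
\end{equation*}
Substituting Lemma \ref{lem:A002} into $\partial_s^j R$ produces the four principal terms $s\Delta_{\td\varphi_{N,s}}^2\psi_j+sRic_{\td\varphi_{N,s},\beta\bar\alpha}\psi_{j,\alpha\bar\beta}$ together with remainders $sRic*Q_{j+1,j-1}+sQ_{j+3,j-1}$, while (\ref{eq:A001}) contributes the term $-(1-s)g_{\td\varphi_{N,s}}^{\alpha\bar\delta}g_{\td\varphi_{N,s}}^{\eta\bar\beta}g_{\eta\bar\delta}\psi_{j,\alpha\bar\beta}+(1-s)Q_{j+1,j-1}$.

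The main bookkeeping step is then to check that the lower-order pieces $-j\,\partial_s^{j-1}R$ and $-j\,\partial_s^{j-1}\tr_{\td\varphi_{N,s}}\oo_g$, as well as the auxiliary remainders above, all fit inside $Q_{j+3,j-1}+Ric(\td\varphi_{N,s})*Q_{j+1,j-1}$. Applying Lemma \ref{lem:A002} at level $j-1$ gives
\begin{equation*}
\pd^{j-1}_s R=-\Delta^2_{\td\varphi_{N,s}}\psi_{j-1}-Ric_{\td\varphi_{N,s},\beta\bar\alpha}\psi_{j-1,\alpha\bar\beta}+Ric_{\td\varphi_{N,s}}*Q_{j,j-2}+Q_{j+2,j-2},
\end{equation*}
and each summand is a contraction of at most $j+3$ derivatives of some $\psi_l$ with $1\le l\le j-1$, so is absorbed into $Q_{j+3,j-1}+Ric*Q_{j+1,j-1}$; the analogous check for $\partial_s^{j-1}\tr_{\td\varphi_{N,s}}\oo_g$ via (\ref{eq:A001}) is simpler since no Ricci factor appears. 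I expect this indexing verification to be the only non-routine obstacle, since the computation is otherwise a mechanical application of the product rule once the two expansion lemmas are in hand.
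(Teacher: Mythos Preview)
Your proposal is correct and follows essentially the same route as the paper: differentiate $L_s(\td\varphi_{N,s})$ in $s$ using the Leibniz rule (only two terms survive since $s$ and $1-s$ are linear), then substitute Lemma~\ref{lem:A002} and (\ref{eq:A001}) at levels $j$ and $j-1$, and finally absorb all subprincipal pieces into $Q_{j+3,j-1}+Ric(\td\varphi_{N,s})*Q_{j+1,j-1}$. The only remark is that when you invoke Lemma~\ref{lem:A002} ``at level $j-1$'' you should note that for $j=2$ this is the base formula $\partial_s R=-\Delta^2_{\td\varphi_{N,s}}\psi_1-Ric_{\td\varphi_{N,s},\beta\bar\alpha}\psi_{1,\alpha\bar\beta}$ (with no $Q$-remainder), not the $j\ge 2$ statement of that lemma; this is harmless since the absorption into $Q_{j+3,j-1}$ is then trivially satisfied.
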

\begin{proof}
Direct calculation shows that
\beqn
\pd {}sL_s(\td \varphi_{N, s})&=&\frac {\partial}{\partial t}\psi_1+(1-s)\frac {\partial}{\partial s}\tr_{\td \varphi_{N, s}}\oo_g-s\pd {}sR(\td \varphi_{N, s})\no\\&&-(R(\td \varphi_{N, s})-\un R)+(n-\tr_{\td \varphi_{N, s}}g)\no\\
&=&\frac {\partial}{\partial t}\psi_1-(1-s) g_{\td \varphi_{N, s}}^{\al\bar \dd}g_{\td \varphi_{N, s}}^{\eta\bar \bb}g_{\eta\bar \dd}\psi_{1, \al\bar \bb}+s\Big(\Delta_{\td \varphi_{N, s}}^2\psi_1+Ric_{\td \varphi_{N, s}, \bb\bar \al
}\psi_{1, \al\bar \bb}\Big)\no\\&&-(R(\td \varphi_{N, s})-\un R)+(n-\tr_{\td \varphi_{N, s}}g). \no
\eeqn
For $j\geq 2$, we have
\beqn
\frac {\partial^j}{\partial s^j}L_s(\td \varphi_{N, s})&=&\pd {\psi_j}t+(1-s)\frac {\partial^j}{\partial s^j}\tr_{\td \varphi_{N, s}}\oo_g-j\frac {\partial^{j-1}}{\partial s^{j-1}}\tr_{\td \varphi_{N, s}}\oo_g \no\\ &&
-s\frac {\partial^j}{\partial s^j}R(\td \varphi_{N, s})-js \frac {\partial^{j-1}}{\partial s^{j-1}}R(\td \varphi_{N, s}). \label{eq:A001a}
\eeqn
Combining  Lemma \ref{lem:A002} with  (\ref{eq:A001})-(\ref{eq:A001a}), we have
\beqs &&
\frac {\partial^j}{\partial s^j}L_s(\td \varphi_{N, s})\\
&=&\pd {\psi_j}t+(1-s)\Big(-g_{\td \varphi_{N, s}}^{\al\bar \dd}g_{\td \varphi_{N, s}}^{\eta\bar \bb}g_{\eta\bar \dd}\psi_{j, \al\bar \bb}+Q_{j+1, j-1}\Big)\\
&&-j\Big(-g_{\td \varphi_{N, s}}^{\al\bar \dd}g_{\td \varphi_{N, s}}^{\eta\bar \bb}g_{\eta\bar \dd}\psi_{j-1, \al\bar \bb}+Q_{j, j-2}\Big)\\
&&-s\Big(-\Delta_{\td \varphi_{N, s}}^2\psi_j-Ric_{\td \varphi_{N, s}, \bb\bar \al
}\psi_{j, \al\bar \bb}+Ric_{\td \varphi_{N, s}
}*Q_{j+1, j-1}+Q_{j+3, j-1}\Big)\\
&&-js\Big(-\Delta_{\td \varphi_{k, s}}^2\psi_{j-1}-Ric_{\td \varphi_{N, s}, \bb\bar \al
}\psi_{j-1, \al\bar \bb}+Ric_{\td \varphi_{N, s}
}*Q_{j, j-2}+Q_{j+2, j-2}
\Big)\\
&=&\pd {\psi_j}t-(1-s)g_{\td \varphi_{N, s}}^{\al\bar \dd}g_{\td \varphi_{N, s}}^{\eta\bar \bb}g_{\eta\bar \dd}\psi_{j, \al\bar \bb}+ s\Delta_{\td \varphi_{N, s}}^2\psi_j+sRic_{\td \varphi_{N, s}, \bb\bar \al
}\psi_{j, \al\bar \bb}\\&&
+Q_{j+3, j-1}+Ric(\td \varphi_{N, s})*Q_{j+1, j-1}.
\eeqs
The lemma is proved.

\end{proof}

Combining Lemma \ref{lem:A002} with Lemma \ref{lem:A003}, we can construct the high-order approximate solution $\td \varphi_{N, s}$ of the equation $L_s(\varphi)=0$ with respect to $s$.
\begin{lem}\label{lem007} Fix $\ga\in (0, 1)$ and integer $N\geq 1$.  Let $\varphi_0(x, t)$ be the solution of the $J$-flow (\ref{eq:A0003}) on $M\times [0, T]$.
 There exist $\{u_{j}(x, t)\}_{j=1}^N$ such that the function $\td \varphi_{N, s}$ defined by (\ref{eq:A026}) satisfies
\beqn
\|\td \varphi_{N, s}(x, t)-\varphi_0(x, t)\|_{C^{4+\ga}(M\times [0, T], g)}&\leq& C(\te, \Te,  T)s, \no\\
\|L_s(\td \varphi_{N, s}(x, t))\|_{C^{ \ga}(M\times [0, T], g)}&\leq& C(\te, \Te, T)s^{N+1},\no
\eeqn where $\te$ and $\Te$ are two constants satisfying the following conditions on $M\times [0, T]$
\beq
\oo_{\varphi_0(x, t)}\geq \te \oo_g, \quad \|\varphi_0\|_{C^{N'+\ga}(M\times [0, T], g)}\leq \Te.\label{eq:A100z}
\eeq Here $N'$ is an integer depending only on $N$, and the parabolic H\"older norm $\|\cdot\|_{C^{k+\ga}(M\times [0, T], g)}$ is defined in Definition \ref{defi:B001} the appendix.

\end{lem}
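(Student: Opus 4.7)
The plan is to determine $u_1, \ldots, u_N$ inductively so that the Taylor expansion in $s$ of $L_s(\td \varphi_{N,s})$ around $s = 0$ vanishes to order $N$, and then read off the required estimates from Taylor's remainder formula. First observe that $L_0(\td \varphi_{N,s})|_{s=0} = L_0(\varphi_0) = 0$ by the definition of $\varphi_0$ as a $J$-flow solution, so the zeroth-order term already vanishes. By Lemma \ref{lem:A003}, imposing $\partial_s^{j} L_s(\td \varphi_{N,s})|_{s=0} = 0$ for $j = 1, \ldots, N$ leads, at each step, to a linear second-order parabolic equation of the form
\[
\pd{u_j}{t} - g_{\varphi_0}^{\al\bar\dd}\,g_{\varphi_0}^{\eta\bar\bb}\,g_{\eta\bar\dd}\, u_{j, \al\bar\bb} \;=\; F_j\bigl[\varphi_0, u_1, \ldots, u_{j-1}\bigr],
\]
where $F_j$ is an explicit polynomial in covariant derivatives of $\varphi_0$ and of the previously constructed $u_1, \ldots, u_{j-1}$ (coming from the $Q_{j+3, j-1}$ and $Ric(\td\varphi_{N,s})*Q_{j+1, j-1}$ contributions in Lemma \ref{lem:A003}, together with the explicit inhomogeneous $R(\varphi_0) - \un R$ and $n - \tr_{\varphi_0}g$ terms at $j=1$). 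The principal symbol $g_{\varphi_0}^{\al\bar\dd} g_{\varphi_0}^{\eta\bar\bb} g_{\eta\bar\dd}$ is Hermitian positive with lower bound depending only on $\theta$ from $\oo_{\varphi_0} \geq \theta\oo_g$, so this is a uniformly parabolic scalar equation with H\"older coefficients controlled by $\Theta$.

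Next, I would impose the initial condition $u_j(x, 0) = 0$, which is consistent with $\td \varphi_{N,s}(x, 0) = \psi_0$ for every $s$, and invoke the second-order parabolic Schauder theory of Lieberman recalled in the appendix to obtain a unique solution $u_j \in C^{k_j + \ga}(M \times [0, T], g)$ with a bound
\[
\|u_j\|_{C^{k_j + \ga}(M \times [0, T], g)} \;\leq\; C(\theta, \Theta, T).
\]
This is done iteratively in $j$: the right-hand side $F_j$ is H\"older-continuous once $\varphi_0$ and $u_1, \ldots, u_{j-1}$ are sufficiently regular, and each inductive step consumes a bounded number of extra derivatives of $\varphi_0$, which is precisely what the hypothesis $\|\varphi_0\|_{C^{N' + \ga}} \leq \Theta$ with $N' = N'(N)$ is designed to absorb. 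The first claimed bound $\|\td \varphi_{N,s} - \varphi_0\|_{C^{4+\ga}} \leq Cs$ is then immediate from the definition $\td \varphi_{N,s} = \varphi_0 + \sum_{j=1}^{N} \frac{s^j}{j!} u_j$ and the established $C^{4+\ga}$ control on each $u_j$, provided $k_j \geq 4$.

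Finally, with $u_1, \ldots, u_N$ in hand, I would apply Taylor's formula with integral remainder,
\[
L_s(\td \varphi_{N,s}) \;=\; \sum_{j=0}^{N} \frac{s^{j}}{j!} \, \partial_s^{j} L_s(\td \varphi_{N,s}) \big|_{s=0} \;+\; \frac{s^{N+1}}{N!} \int_0^1 (1-\tau)^N \, \partial_s^{N+1} L_s(\td \varphi_{N,s})\big|_{s=\tau s} \, d\tau.
\]
By construction every term in the finite sum vanishes, and the remainder is bounded in $C^{\ga}$ by applying Lemma \ref{lem:A003} at order $j = N+1$ and invoking the already established H\"older bounds on the $u_l$ and on $\varphi_0$. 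This yields $\|L_s(\td \varphi_{N,s})\|_{C^{\ga}(M \times [0, T], g)} \leq C(\theta, \Theta, T)\, s^{N+1}$, completing the proof. The main obstacle is purely bookkeeping: the symbol $Q_{j+3, j-1}$ shows that the right-hand side $F_j$ at step $j$ requires roughly $j+3$ derivatives of $u_1, \ldots, u_{j-1}$, so the regularity budget $N' = N'(N)$ must be chosen large enough that the Schauder bootstrap closes at every step and the $C^{\ga}$ bound on the remainder makes sense. There is no substantive analytic difficulty beyond this, since the only PDE being solved is the uniformly parabolic linear equation above.
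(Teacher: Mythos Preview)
Your proposal is correct and follows essentially the same route as the paper: set up the Taylor expansion of $L_s(\td\varphi_{N,s})$ in $s$, use Lemma \ref{lem:A003} to reduce the vanishing of each $s$-derivative at $s=0$ to a linear uniformly parabolic equation for $u_j$ with zero initial data, solve these inductively via the second-order Schauder theory in the appendix, and bound the integral remainder. The only cosmetic difference is that the paper routes the a priori bound on $u_j$ through an explicit $L^2$ estimate (Lemma \ref{lem:A204}) followed by parabolic Moser iteration (Theorem \ref{theo:A4}) before invoking the H\"older estimates of Theorem \ref{theo:A3}, whereas you appeal directly to Schauder theory; both accomplish the same thing.
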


\begin{proof}
For any smooth function $f(s)$, we have the expansion
\beq f(s)=f(0)+f'(0)s+\cdots+\frac {f^{(N)}(0)}{N!}s^N+\int_0^s\,
\frac {f^{(N+1)}(\tau)}{N!}(s-\tau)^N\,d\tau.\label{eq:A054}\no
\eeq
Let $f(s):=L_s(\td \varphi_{N, s}).$ We would like to find functions
$u_1, u_2, \cdots, u_N$
 such that
\beqn
f(0)&=&f'(0)=\cdots=f^{(N)}(0)=0, \label{eq:A202}\\
u_j(x, 0)&=&0,\quad \forall\,x\in M, \quad j\in {1, 2, \cdots, N}.\label{eq:A070x}
\eeqn
By Lemma \ref{lem:A003} the equalities (\ref{eq:A202}) are equivalent to
\beqn
&&\pd {u_1}t-g_{\varphi_0}^{i\bar l}g_{\varphi_0}^{k\bar j}g_{i\bar j}u_{1, k\bar l}-\Big(R(\varphi_0)-\un R-n+\tr_{\varphi_0}\oo_g\Big)=0,\label{eq:A201ya}\\
&&\pd {u_j}t-g_{\varphi_0}^{\al\bar \dd}g_{\varphi_0}^{\eta\bar \bb}g_{\eta\bar \dd}u_{j, \al\bar \bb}+Q_{j+3, j-1}|_{s=0}+Ric(\varphi_0)*Q_{j+1, j-1}|_{s=0}=0.\no\\ \label{eq:A201y}
\eeqn where $2\leq j\leq N. $
By the standard second-order parabolic theory, we can find smooth functions $u_1, u_2, \cdots, u_N$  satisfying (\ref{eq:A201ya}), (\ref{eq:A201y}) and (\ref{eq:A070x}).
  Moreover, by Lemma \ref{lem:A204} below  the $L^2$ norm of $u_j$ is bounded and by Theorem \ref{theo:A4} the $L^{\infty}$  norm of $u_j$ is bounded. Therefore, by Theorem \ref{theo:A3} all higher order H\"older norms $\|u_j\|_{C^{k+\ga}(M, g)}$ can be bounded by the H\"older norms of $\varphi_0$.
In particular, $f^{(N+1)}(\tau)(\tau\in [0, s])$ is bounded and we have
 \beqs
\|f(s)\|_{C^{ \ga}(M\times [0, T], g)}&=&\Big|\int_0^s\,
\frac {f^{(N+1)}(\tau)}{N!}(s-\tau)^N\,d\tau\Big|_{C^{ \ga}(M\times [0, T], g)}\\
&\leq& C(\|\varphi_0\|_{C^{N'+\ga}}, T)s^{N+1},
\eeqs where $N'>0$ is an integer depending only on $N$ and $n$. The number $N'$ can be calculated explicitly, but we don't need it in the paper. The lemma is proved.

\end{proof}

The following result gives $L^2$ estimates of second-order parabolic equations.

\begin{lem}\label{lem:A204}Assume that
\begin{enumerate}
  \item[(1).] $\varphi_0(x, t)$ is a family of K\"ahler potentials satisfying
  (\ref{eq:A100z}) for two constants $\te$ and $\Te$;
  \item[(2).] $u(x, t)$ satisfies the equation on $M\times [0, T]$
  \beq
  \pd ut=b_{l\bar k}u_{k\bar l}+h,\quad u(0)=0, \no
  \eeq where $b_{l\bar k}=g_{\varphi_0}^{i\bar l}g_{\varphi_0}^{k\bar j}g_{i\bar j}$.
\end{enumerate}  Then we have
\beq
\|u(x, t)\|_{L^2(M, \oo_{\varphi_0(\tau)})}\leq C(\te, \Te, T)\max_{\tau\in [0, t]}\|h(x, \tau)\|_{L^2(M, \oo_{\varphi_0(\tau)})}.\no
\eeq
\end{lem}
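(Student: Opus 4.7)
The plan is a standard energy estimate combined with Gr\"onwall's inequality. Set
\[
E(t) := \int_M u(\cdot,t)^2 \,\oo_{\varphi_0(t)}^n,
\]
so that $E(0)=0$ by the initial condition $u(\cdot,0)=0$. The target is a differential inequality of the form
\[
E'(t) \leq C(\te,\Te)\,E(t) + \|h(t)\|^2_{L^2(\oo_{\varphi_0(t)})},
\]
from which the claim follows by integration.

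First I would differentiate $E(t)$. Since $\frac{\partial}{\partial t}\oo_{\varphi_0}^n = \bigl(\DD_{\varphi_0}\pd{\varphi_0}{t}\bigr)\oo_{\varphi_0}^n$ and $\pd{\varphi_0}{t} = n - \tr_{\varphi_0}\oo_g$ along the $J$-flow, the hypothesis (\ref{eq:A100z}) with $N'$ large enough gives a pointwise bound $|\DD_{\varphi_0}\pd{\varphi_0}{t}| \leq C(\te,\Te)$, so the contribution from differentiating the measure is at most $C(\te,\Te)\,E(t)$. The remaining piece of $E'(t)$ is
\[
2\int_M u\bigl(b_{l\bar k}u_{k\bar l} + h\bigr)\,\oo_{\varphi_0}^n,
\]
and the source term is controlled by $2\int_M uh\,\oo_{\varphi_0}^n \leq E(t) + \|h(t)\|^2_{L^2(\oo_{\varphi_0(t)})}$.

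The core of the argument is the integration by parts in the principal part. Unfolding $b_{l\bar k}=g_{\varphi_0}^{i\bar l}g_{\varphi_0}^{k\bar j}g_{i\bar j}$, assumption (\ref{eq:A100z}) yields uniform two-sided bounds on the eigenvalues of $b_{l\bar k}$ (of order $\te^{-2}\|g\|$ and $\te^2\|g^{-1}\|^{-1}$) together with a $C^1$ bound on both $b_{l\bar k}$ and $\log\det g_{\varphi_0}$, all controlled by $\te$ and $\Te$. Stokes' theorem on the closed manifold $M$ then gives
\[
\int_M u\,b_{l\bar k}u_{k\bar l}\,\oo_{\varphi_0}^n = -\int_M b_{l\bar k}\,u_k u_{\bar l}\,\oo_{\varphi_0}^n + R,
\]
where the remainder $R$ is a sum of integrals involving $u$, at most one derivative of $u$, and bounded derivatives of $b_{l\bar k}$ and of the volume form. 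The leading integral is non-positive, and Cauchy--Schwarz absorbs $|R|$ into $\frac12\int_M b_{l\bar k}u_k u_{\bar l}\,\oo_{\varphi_0}^n + C(\te,\Te)\,E(t)$. Dropping the negative remaining gradient integral produces the advertised inequality.

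Finally, Gr\"onwall combined with $E(0)=0$ gives
\[
E(t) \leq \int_0^t e^{C(\te,\Te)(t-\tau)}\|h(\tau)\|^2_{L^2(\oo_{\varphi_0(\tau)})}\,d\tau \leq C(\te,\Te,T)\max_{\tau\in[0,t]}\|h(\tau)\|^2_{L^2(\oo_{\varphi_0(\tau)})},
\]
and taking square roots yields the lemma. The only mildly delicate point is the bookkeeping of the lower-order terms in the integration by parts, but this is routine once the $C^1$ bounds on $b_{l\bar k}$ and $\log\det g_{\varphi_0}$ are read off from the $C^{N'+\ga}$ control on $\varphi_0$.
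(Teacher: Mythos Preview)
Your proposal is correct and follows essentially the same route as the paper: differentiate $\int_M u^2\,\oo_{\varphi_0}^n$, integrate the principal term by parts to get $-\int_M b_{l\bar k}u_k u_{\bar l}\,\oo_{\varphi_0}^n$ plus a lower-order remainder controlled by $C(\te,\Te)|u||\Na u|$, absorb the cross term into the negative gradient piece via Young's inequality, and conclude by Gr\"onwall. The only cosmetic difference is that the paper sets $E(t)=\|u\|_{L^2}$ rather than its square, arriving at $E'\leq C(\te,\Te)E+\|h\|_{L^2}$; also note that you do not actually need the $J$-flow equation for $\pd{\varphi_0}{t}$, since the parabolic $C^{N'+\ga}$ bound in (\ref{eq:A100z}) already controls $\Delta_{\varphi_0}\pd{\varphi_0}{t}$ directly.
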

\begin{proof}By the assumption $(1)$, we have $b_{i\bar j}\geq \la\oo_{\varphi_0}$ for a constant $\la:=\la(\Te)>0.$ Direct calculation shows that
\beqn
\frac d{dt}\int_M\;u^2\,\oo_{\varphi_0}^n&=& \int_M\; \Big(2u(b_{l\bar k}u_{k\bar l}+h)+
u^2\,\Delta_{\varphi_0}\pd {\varphi_0}t\Big)\,\oo_{\varphi_0}^n\no\\
&=&\int_M\; \Big(-2b_{l\bar k}u_ku_{\bar l}-2b_{l\bar k, \bar l}uu_k+2uh+
u^2\,\Delta_{\varphi_0}\pd {\varphi_0}t\Big)\,\oo_{\varphi_0}^n\no\\
&\leq&\int_M\; \Big(-2\la|\Na u|^2+C(\te, \Te)|u||\Na u|+2uh+
u^2\,\Delta_{\varphi_0}\pd {\varphi_0}t\Big)\,\oo_{\varphi_0}^n\no\\
&\leq&\int_M\; \Big(-\la|\Na u|^2+C(\te, \Te, \la)u^2\Big)\,\oo_{\varphi_0}^n+2\|u\|_{L^2(M, \oo_{\varphi_0})}\|h\|_{L^2(M, \oo_{\varphi_0})},\no\\\label{eq:A203}
 \eeqn where we used the inequality
 $$C(\te, \Te)|u||\Na u|\leq \la|\Na u|^2+C(\la, \te, \Te)u^2. $$
Let $E(t)=\|u\|_{L^2(M, \oo_{\varphi_0})}$. Then (\ref{eq:A203}) implies that
\beq
E'(t)\leq C(\Te, \te)E(t)+\|h\|_{L^2(M, \oo_{\varphi_0})}.\no
\eeq
Therefore, we have
\beq
E(t)\leq C(\te, \Te, T)\max_{\tau\in [0, t]}\|h(\cdot, \tau)\|_{L^2(M, \oo_{\varphi_0(\tau)})}.\no
\eeq The lemma is proved.

\end{proof}

\subsection{The $L^2$ estimates}

In this subsection, we give $L^2$ estimates of the linearized equation of the twisted Calabi flow on a given finite time interval, and  show that the linearized equation of twisted Calabi flow is invertible by using the  Schauder estimates of fourth-order parabolic equations.

 Let $T>0$ and consider the equation of $w(x, \tau)$
\beqn
\pd {w}{\tau}+\Delta^2_{\td \varphi}w&=&s^{-1}f(x, \tau)+s^{-1}a_{l\bar k}(x, \tau)w_{k\bar l},\quad (x, \tau)\in M\times [0, T],\label{eq:B003}\\
w(x, 0)&=&0 \label{eq:B004}
\eeqn where $a_{l\bar k}$ is defined by (\ref{eq:A0007b}).
First, we show that  the $L^2$ norm of $w$ is bounded by a constant depending on $T$.
\begin{lem}\label{lem:A004}Let $\td \varphi$ be a family of K\"ahler potentials satisfying the following conditions   on $M\times [0, T]$
\beq
\oo_{\td \varphi(x, t)}\geq \te \oo_g, \quad \|\td \varphi\|_{C^{N'+\ga}(M\times [0, T], g)}\leq \Te.\label{eq:A100}
\eeq
There exists $s_0\in (0, 1)$ depending only on $\te$ and $\Te$ satisfying the following property.
If $w(x, \tau)$ is a solution of (\ref{eq:B003})-(\ref{eq:B004}) with $s\in (0, s_0)$,
there exists a constant $C(\te,  \Te  )$ such that for any $\tau\in [0, T]$
\beq
\|w(\cdot, \tau)\|_{L^2(M, \oo_{\td \varphi(\tau)})}\leq C(\te, \Te)e^{C(\te, \Te)s^{-1}\tau}\sup_{t\in [0, \tau]}\|f(\cdot, t)\|_{L^2(M, \oo_{\td \varphi(t)})}. \no
\eeq

\end{lem}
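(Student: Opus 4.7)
The plan is to run a standard $L^2$ energy estimate on $E(\tau):=\int_M w^2\,\omega_{\td\varphi(\tau)}^n$, with careful attention to the factor $s^{-1}$ in front of the second-order term $a_{l\bar k}w_{k\bar l}$ in (\ref{eq:B003}), which is what will force the exponential factor $e^{Cs^{-1}\tau}$ in the final bound. Differentiating in $\tau$ and using (\ref{eq:B003}) together with $\partial_\tau\omega_{\td\varphi}^n=(\Delta_{\td\varphi}\partial_\tau\td\varphi)\,\omega_{\td\varphi}^n$ gives
\beqs
E'(\tau)=-2\!\int_M\! w\,\Delta_{\td\varphi}^2 w\,\omega_{\td\varphi}^n+\tfrac{2}{s}\!\int_M\! wf\,\omega_{\td\varphi}^n+\tfrac{2}{s}\!\int_M\! w\,a_{l\bar k}w_{k\bar l}\,\omega_{\td\varphi}^n+\!\int_M\! w^2(\Delta_{\td\varphi}\partial_\tau\td\varphi)\,\omega_{\td\varphi}^n.
\eeqs
The first term equals $-2\int(\Delta_{\td\varphi}w)^2\,\omega_{\td\varphi}^n\le 0$ by self-adjointness of $\Delta_{\td\varphi}$ in $L^2(\omega_{\td\varphi}^n)$ and can be dropped; the fourth is bounded by $C(\Theta)E(\tau)$ using the hypothesis (\ref{eq:A100}) on $\td\varphi$; and Cauchy--Schwarz bounds the $f$-term by $(2/s)\sqrt{E(\tau)}\,\|f(\tau)\|_{L^2}$.

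The central step is the second-order term. By (\ref{eq:A0006}), $a_{l\bar k}=(1-s)g_{\td\varphi}^{i\bar l}g_{\td\varphi}^{k\bar j}g_{i\bar j}-sRic_{\td\varphi,l\bar k}$, and under (\ref{eq:A100}) the first tensor is uniformly positive definite with a lower bound depending only on $\theta,\Theta$. Once $s<s_0(\theta,\Theta)$ is small enough to absorb the Ricci correction, $a_{l\bar k}$ is therefore uniformly positive definite with bound $\lambda(\theta,\Theta)>0$; this is exactly the role of the hypothesis on $s_0$. Integrating by parts in $\bar l$, with the additional terms from $\partial a_{l\bar k}$ and $\partial\log\det g_{\td\varphi}$ bounded by $C(\Theta)$, gives
\beqs
\int_M w\,a_{l\bar k}w_{k\bar l}\,\omega_{\td\varphi}^n=-\int_M a_{l\bar k}w_k w_{\bar l}\,\omega_{\td\varphi}^n+R,\qquad|R|\le C(\theta,\Theta)\|w\|_{L^2}\|\nabla w\|_{L^2}.
\eeqs
Combining $a_{l\bar k}w_k w_{\bar l}\ge\lambda|\nabla w|_{\td\varphi}^2$ with Young's inequality to absorb the cross term into the good one then yields $\tfrac{2}{s}\int w\,a_{l\bar k}w_{k\bar l}\,\omega_{\td\varphi}^n\le -(\lambda/s)\|\nabla w\|_{L^2}^2+C(\theta,\Theta)s^{-1}E(\tau)$.

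Dropping the nonpositive gradient term yields the differential inequality $E'(\tau)\le(C/s)E(\tau)+(2/s)\sqrt{E(\tau)}\,\|f(\tau)\|_{L^2}$. Setting $G:=\sqrt{E}$ reduces this to the first-order linear inequality $G'\le(C/2s)G+(1/s)\|f\|_{L^2}$, and since $G(0)=0$, Gronwall's lemma gives $G(\tau)\le C(\theta,\Theta)\,e^{C(\theta,\Theta)s^{-1}\tau}\sup_{t\in[0,\tau]}\|f(t)\|_{L^2}$, which is the claim. The main technical subtlety is the competition between the large coefficient $s^{-1}$ and the sign of the leading piece of $a_{l\bar k}$: the large factor would be fatal if $a_{l\bar k}$ were indefinite, and the smallness of $s$ (encoded by $s_0$) is precisely what guarantees positivity and hence the usability of the energy inequality.
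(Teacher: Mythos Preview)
Your proof is correct and follows essentially the same approach as the paper: differentiate $\int_M w^2\,\omega_{\td\varphi}^n$, drop the biharmonic term $-2\int(\Delta_{\td\varphi}w)^2$, integrate the second-order term by parts and use the positivity of $a_{l\bar k}$ for small $s$ (absorbing the cross term from $\partial_{\bar l}a_{l\bar k}$ via Young's inequality), and then apply Gronwall to the resulting first-order inequality for $\sqrt{E}$. The paper carries out exactly these steps with the same estimates; the only cosmetic difference is that it writes the positivity constant as $(1-s)C(\Theta)-C(\theta,\Theta)s-\varepsilon$ rather than your $\lambda(\theta,\Theta)$.
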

\begin{proof}By direct calculation, we have
\beqn
\pd {}{\tau}\int_M\;w^2\oo_{\td \varphi}^n&=&\int_M\;2w\pd w{\tau}\,\oo_{ \td \varphi}^n+w^2
\pd {}{\tau}\oo_{ \td \varphi}^n\no\\
&\leq&\int_M\;2w\Big(-\Delta^2_{ \td \varphi}w+\frac 1sf
+\frac 1s a_{l\bar k}w_{k\bar l}\Big)\oo_{ \td \varphi}^n+C(\Te)\int_M\;w^2\oo_{ \td \varphi}^n\no\\
&\leq&-2\int_M\;(\Delta_{ \td \varphi}w)^2\oo_{ \td \varphi}^n+2s^{-1}\int_M\;a_{l\bar k}w_{k\bar l}w\,\oo_{\td \varphi}^n\no\\
&&+2s^{-1}\int_M\;fw+C(\Te)\int_M\;w^2\oo_{ \td \varphi}^n.\label{eq:A016a}
\eeqn
Note that by (\ref{eq:A100}) we have $g_{i\bar j}\geq C(\Te)g_{\td \varphi}$, and
by (\ref{eq:A0006}) we have
\beqn
a_{l\bar k}w_kw_{\bar l}&=&-sRic_{\td \varphi, l\bar k}w_kw_{\bar l}+(1-s)g_{\td \varphi}^{i\bar l}g_{\td \varphi}^{k\bar j}g_{i\bar j}w_kw_{\bar l}\no\\
&\geq&(1-s)C(\Te)|\Na w|^2_{\td \varphi}-sC(\te, \Te)|\Na w|^2_{\td \varphi},\no
\eeqn which implies that
\beq
\int_M\;a_{l\bar k}w_{k}w_{\bar l}\,\oo_{\td \varphi}^n
\geq\Big((1-s)C(\Te)-C(\te, \Te)s\Big)\int_M\;|\Na w|^2\,\oo_{\td \varphi}^n.\no
\eeq
Moreover,  by (\ref{eq:A100}) we have
\beq
|a_{l\bar k, \bar l}w_{k}w|\leq\ee |\Na w|^2+C(\te, \Te, \ee)w^2.\no
\eeq
Thus, we have
\beqn &&
\int_M\;a_{l\bar k}w_{k\bar l}w\,\oo_{\td \varphi}^n=-\int_M\;a_{l\bar k}w_{k}w_{\bar l}\,\oo_{\td \varphi}^n-\int_M\;a_{l\bar k, \bar l}w_{k}w\,\oo_{ \td \varphi}^n\no\\
&\leq&-((1-s)C(\Te)-C(\te, \Te)s-\ee) \int_M\;|\Na w|^2\,\oo_{ \td \varphi}^n+C(\te,  \Te, \ee )\int_M\;w^2\,\oo_{ \td \varphi}^n.\no\\&&\label{eq:A017a}
\eeqn  For small $s$ and $\ee$ depending only on $\Te$ and $\te$, we have $\la:=(1-s)C(\Te)-C(\te, \Te)s-\ee>0.$
Combining (\ref{eq:A016a}) with (\ref{eq:A017a}), we have
\beqn
\pd {}{\tau}\int_M\;w^2\oo_{ \td \varphi}^n&\leq&
-2\int_M\;(\Delta_{\td \varphi}w)^2\oo_{\td \varphi}^n-\la s^{-1}\int_M\;|\Na w|^2\,\oo_{ \td \varphi}^n\no\\&&+
 {C(\te,  \Te) }s^{-1}\int_M\;w^2\,\oo_{ \td \varphi}^n+2s^{-1}\Big(\int_M\,f^2\,\oo_{ \td \varphi}^n\Big)^{\frac 12}\Big(\int_M\;w^2\,\oo_{ \td \varphi}^n\Big)^{\frac 12}\no\\
&\leq&C(\te, \Te)s^{-1}\int_M\;w^2\,\oo_{ \td \varphi}^n+2s^{-1}\Big(\int_M\,f^2\,\oo_{ \td \varphi}^n\Big)^{\frac 12}\Big(\int_M\;w^2\,\oo_{ \td \varphi}^n\Big)^{\frac 12}. \no\\&&\label{eq:A018a}
\eeqn
Let
$ E(\tau)= \Big(\int_M\;w^2\oo_{\td \varphi}\Big)^{\frac 12}.
$
The inequality (\ref{eq:A018a}) implies that
\beq
E'(\tau)\leq C(\te,  \Te)s^{-1} E(\tau)+s^{-1}\|f(\cdot, \tau)\|_{L^2(M, \oo_{\td \varphi(\tau)})}.\no
\eeq
Thus, we have
\beq
E(\tau)\leq C(\te,  \Te)e^{C(\te, \Te)s^{-1}\tau}\sup_{t\in [0, \tau]}\|f(\cdot, t)\|_{L^2(M, \oo_{\td \varphi(t)})}.  \no
\eeq
The lemma is proved.

\end{proof}

To estimate the fourth-order parabolic equation, we introduce the following H\"older spaces.
\begin{defi}Let  $\ga\in (0, 1), k\in \NN, b>a\geq 0$ and $\psi\in \cH(\oo_g)$.  We  define  $C^{k, [k/4], \ga }(M\times [a, b], g, \psi)$ the space of functions $f\in C^{k, [k/4], \ga }(M\times [a, b], g)$ with $f(x, 0)=\psi(x)$ for all $x\in M$. Here the space $C^{k, [k/4], \ga }(M\times [a, b], g)$ is defined in Definition \ref{defi:A001} and Definition \ref{defi:A002} in the appendix.

\end{defi}

\begin{lem}\label{lem004}  Let $\td \varphi$ the function and $s_0$ the constant in Lemma \ref{lem:A004}. Then for any $s\in (0, s_0)$, the operator
\beq
DL_s|_{\td \varphi}:  C^{4, 1, \ga}(M\times [0, T], g, 0)\ri C^{0, 0, \ga}(M\times [0, T], g)\no
\eeq
is injective and surjective. Moreover, we have
\beq
\|(DL_s|_{\td \varphi})^{-1}\|\leq C(\te, \Te, n, g, T) s^{-\ka}\no
\eeq where $\ka=\ka(\ga, n)>1.$

\end{lem}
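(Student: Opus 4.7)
The plan is to work in the rescaled time $\tau=st$ with $w(x,\tau):=u(x,t)$, under which $DL_s|_{\td\varphi}(u)=f$ becomes the standard fourth-order parabolic equation (2.7)
$$\pd{w}{\tau}+\Delta^2_{\td\varphi}w=s^{-1}\td f+s^{-1}a_{l\bar k}w_{k\bar l},\qquad w(\cdot,0)=0,$$
on $M\times[0,sT]$. Under the hypothesis (3.5), $a_{l\bar k}$ has a bounded $C^{0,0,\ga}$ norm (independent of $s$) and $\p_\tau+\Delta^2_{\td\varphi}$ is a uniformly parabolic operator with Hölder coefficients. Hence surjectivity will follow from the linear existence theory for fourth-order parabolic equations on compact K\"ahler manifolds recalled in the appendix (from He--Zeng and Metsch): for any $\td f\in C^{0,0,\ga}$ there is a solution $w\in C^{4,1,\ga}(M\times[0,sT])$, and reverting to $(x,t)$ gives $u\in C^{4,1,\ga}(M\times[0,T],g,0)$ solving $DL_s|_{\td\varphi}(u)=f$. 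Injectivity is immediate: if $DL_s|_{\td\varphi}(u)=0$, then Lemma \ref{lem:A004} applied with $\td f=0$ gives $\|w\|_{L^2}\equiv 0$, so $u\equiv 0$.

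The quantitative inverse bound $s^{-\ka}$ is the harder part, and its proof splits into three pieces. First, applying Lemma \ref{lem:A004} on the shortened interval $\tau\in[0,sT]$ yields
$$\|w(\cdot,\tau)\|_{L^2}\le C(\te,\Te,T)\sup_{t\in[0,\tau]}\|\td f(\cdot,t)\|_{L^2},$$
because the exponential $e^{C(\te,\Te)s^{-1}\tau}$ there is bounded by $e^{C(\te,\Te)T}$ when $\tau\le sT$. This cancellation, coming from the shrinking of the parabolic time window in $\tau$, is what ultimately allows a polynomial (rather than exponential) bound in $s^{-1}$. Second, I would bridge this $L^2$ estimate to an $L^\infty$ estimate by parabolic energy iteration: testing the equation against powers of $w$ (or iteratively against $\Delta^k w$ to climb the Sobolev scale) together with parabolic Sobolev embedding yields $\|w\|_{L^\infty(M\times[0,sT])}\le C(\te,\Te,T)s^{-q}\|\td f\|_{C^{0,0,\ga}}$, where the $s^{-q}$ originates from the $s^{-1}$ factor on the right-hand side of the rescaled equation.

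Third, the Schauder estimate from the appendix applied to $\p_\tau+\Delta^2_{\td\varphi}$ gives
$$\|w\|_{C^{4,1,\ga}}\le C(T)\Big(s^{-1}\|\td f\|_{C^{0,0,\ga}}+s^{-1}\|w\|_{C^{2,0,\ga}}+\|w\|_{L^\infty}\Big),$$
and the interpolation inequality $\|w\|_{C^{2,0,\ga}}\le \ee\|w\|_{C^{4,1,\ga}}+C\ee^{-p}\|w\|_{L^\infty}$ from the appendix, used with $\ee$ of order $s$, absorbs the highest-order term on the left-hand side and produces
$$\|w\|_{C^{4,1,\ga}(M\times[0,sT])}\le C(\te,\Te,T)s^{-\ka_0}\|\td f\|_{C^{0,0,\ga}}.$$
Undoing the change of variables, $\p_t u=s\p_\tau w$ and the parabolic $\ga/4$ Hölder seminorm in $t$ each contribute additional negative powers of $s$, giving the stated bound with some $\ka>1$. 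The main obstacle will be the careful bookkeeping of $s$-exponents at every step; the structural reason a polynomial bound is achievable at all is precisely the first cancellation, which relies on the rescaled horizon being $sT$ rather than $T$.
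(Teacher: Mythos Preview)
Your strategy is essentially the paper's: rescale to $\tau=st$, use the linear existence theory from the appendix for surjectivity, use the $L^2$ estimate of Lemma~\ref{lem:A004} on $[0,sT]$ (where $e^{C s^{-1}\tau}\le e^{CT}$ kills the exponential) for the a priori bound, and then combine Schauder with interpolation, choosing $\ee$ of order $s$ to absorb. The bookkeeping of $s$-powers and the final change back from $\tau$ to $t$ are handled exactly as you describe.

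The one substantive difference is your proposed intermediate $L^2\to L^\infty$ step via a Moser-type iteration. The paper avoids this entirely: the Schauder estimate it invokes (Theorem~\ref{theo:Me}) already has $\sup_t\|w(\cdot,t)\|_{L^2}$, not $\|w\|_{L^\infty}$, as the lower-order term, and the interpolation inequality (Lemma~\ref{lem:A5}) likewise bounds $\|\nabla^2 w\|_{C^{0,0,\gamma}}$ by $\ee\|w\|_{C^{4,1,\gamma}}+C\ee^{-\kappa'}\sup_t\|w\|_{L^2}$. So the paper goes straight from the $L^2$ bound to the $C^{4,1,\gamma}$ bound without ever passing through $L^\infty$. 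Your route would work in principle, but Moser iteration for fourth-order parabolic equations is not standard (no maximum principle), and tracking the $s$-dependence through an iteration would add real work; the paper's $L^2$-based Schauder and interpolation make that step unnecessary.
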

\begin{proof}

For any $f\in C^{0, 0, \ga}(M\times [0, T], g)$, we would like to solve the equation
\beq
DL_s|_{\td \varphi}(u)=f,\quad  u(x, 0)=0.\label{eq:A035}
\eeq
As in Section \ref{sec2}, the equality (\ref{eq:A035}) can be written as
\beqn
\pd {w}{\tau}+\Delta^2_{\td\varphi}w&=&h(x, \tau):=s^{-1}\td f+s^{-1}a_{l\bar k}w_{k\bar l},\quad (x, \tau)\in M\times [0, sT], \label{eq:A083}\\
w(x, 0)&=&0,\label{eq:A084}
\eeqn where $\tau, w$ and $\td f$ are defined by
\beq \tau:=st,\quad w(x, \tau):=u(x, t),\quad \td f(x, \tau):=f(x, t).\no
 \eeq By Theorem \ref{theo:A2x}, for any $f\in C^{0, 0, \ga}(M\times [0, T], g)$ there exists a solution $w(x, t)\in C^{4, 1, \ga}(M\times [0, T], g)$  of (\ref{eq:A083})-(\ref{eq:A084}).
By Theorem \ref{theo:Me}, the solution $w$ satisfies
\beqn &&
\|w\|_{{C^{4, 1, \ga}(M\times [0, sT], g)}}\leq C(\te, \Te)\Big( \|h\|_{{C^{0, 0, \ga}(M\times [0, sT], g)}}+\max_{t\in [0, sT]}\|w\|_{L^2(M, g)}\Big)\no\\
&=&C(\te, \Te)s^{-1}\Big( \|\td f\|_{{C^{0, 0, \ga}(M\times [0, sT], g)}}+\|\Na^2 w\|_{{C^{0, 0, \ga}(M\times [0, sT], g)}}\Big)\no\\&&+C(\te, \Te)\max_{t\in [0, sT]}\|w\|_{L^2(M, g)}.\label{eq:A043a}
\eeqn Note that by Lemma \ref{lem:A5} we have the interpolation inequality
\beq
\|\Na^2 w\|_{{C^{0, 0, \ga}(M\times [0, sT], g)}}\leq \ee s \|w\|_{{C^{4, 1, \ga}(M\times [0, T], g)}}+C(n, g) \ee^{-\ka'}s^{-\ka'}\|w\|_{L^2(M, g)}. \label{eq:A044a}
\eeq where  $\ka'>1. $
Combining the inequalities (\ref{eq:A043a})-(\ref{eq:A044a}), we have
\beqn&&
\|w\|_{{C^{4, 1, \ga}(M\times [0, sT], g)}}\leq C(\te, \Te)s^{-1} \|\td f\|_{{C^{0, 0, \ga}(M\times [0, sT], g)}}\no\\&&+C(\te, \Te, n, g)\ee  \|w\|_{{C^{4, 1, \ga}(M\times [0, sT], g)}}+C(\te, \Te, n, g) s^{-\ka'-1} \max_{t\in [0, sT]}\|w\|_{L^2(M, g)}.\no\\\label{eq:A200}\eeqn
By Lemma \ref{lem:A004} there exists $s_0(\te, \Te)>0$ such that for any $s\in (0, s_0)$ we have
\beq
\max_{t\in [0, sT]}\|w\|_{L^2(M, g)}\leq C(\te, \Te)e^{C(\te, \Te) T}\sup_{t\in [0, sT]}\|\td f(\cdot, t)\|_{L^2(M, \oo_{\td \varphi(t)})}.\label{eq:A201}
\eeq
Thus, (\ref{eq:A200}) and (\ref{eq:A201}) imply that
\beqn &&
\|w\|_{{C^{4, 1, \ga}(M\times [0, sT], g)}}\no\\&\leq& C(\te, \Te, n, g)s^{-1} \|\td f\|_{{C^{0, 0, \ga}(M\times [0, sT], g)}}+C(\te, \Te, n, g)s^{-\ka'-1} \max_{t\in [0, sT]}\|w\|_{L^{2}(M, g)}\no\\
&\leq&C(\te, \Te, n, g)s^{-1} \|\td f\|_{{C^{0, 0, \ga}(M\times [0, sT], g)}}+C(\te, \Te, n, g) s^{-\ka'-1}\max_{t\in [0, sT]}\|\td f\|_{L^{2}(M, g)}\no\\
&\leq&C(\te, \Te, n, g, T)s^{-\ka'-1} \|\td f\|_{{C^{0, 0, \ga}(M\times [0, sT], g)}}. \label{eq:A013}
\eeqn
By Lemma \ref{lem:B008z}, we have
\beqn
\|\td f\|_{{C^{0, 0, \ga}(M\times [0, sT], g)}}&\leq&s^{-\frac {\ga}4}\| f\|_{{C^{0, 0, \ga}(M\times [0, T], g)}},\label{eq:A014}\\
\|w\|_{{C^{4, 1, \ga}(M\times [0, sT], g)}}&\geq&\|u\|_{{C^{4, 1, \ga}(M\times [0, T], g)}}.\label{eq:A015}
\eeqn Thus, (\ref{eq:A013})-(\ref{eq:A015}) imply that
\beq
\|u\|_{{C^{4, 1, \ga}(M\times [0, T], g)}}\leq C(\te, \Te, n, g, T)s^{-\ka'-1-\frac {\ga}4} \| f\|_{{C^{0, 0, \ga}(M\times [0, T], g)}}.\no
\eeq
The lemma is proved.

\end{proof}

\subsection{The contraction map}
In this subsection, we construct a contraction map, which will be used in the next section to show the existence of the solution of twisted Calabi flow by the fixed point theory. First, we have the following two lemmas, which give the differences of the linearized operator $DL_s$ at two K\"ahler potentials.

\begin{lem}\label{lem010}For any $\varphi_1, \varphi_2\in \cH(\oo_g)$ with
\beq
\oo_{\varphi_1}\geq \te\oo_g, \quad \oo_{\varphi_1}\geq \te\oo_g,\quad \|\varphi_1\|_{C^4(M, g)}\leq \Te, \quad \|\varphi_2\|_{C^4(M, g)}\leq \Te,\no
\eeq
for any function $v$, we have
\beqn
&&
|\Delta_{\varphi_1}^2v-\Delta_{\varphi_2}^2v|\no\\
&\leq&C(\te, \Te)\Big(|\Na^2v|\cdot |\Na^4(\varphi_1-\varphi_2)|
+|\Na^2v|\cdot |\Na^3(\varphi_1-\varphi_2)|+|\Na^2v|\cdot |\Na^2(\varphi_1-\varphi_2)|\no\\
&&+|\Na^3v|\cdot |\Na^3(\varphi_1-\varphi_2)|+|\Na^3v|\cdot|\Na^2(\varphi_1-\varphi_2)|+|\Na^4v|\cdot |\Na^2(\varphi_1-\varphi_2)|\Big).\label{eq:A056}
\eeqn
\end{lem}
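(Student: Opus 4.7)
The plan is to expand $\Delta_\varphi^2 v$ in local holomorphic coordinates entirely in terms of the components $g_\varphi^{i\bar j}$, $\partial_p g_\varphi^{i\bar j}$, $\partial_p\partial_{\bar q}g_\varphi^{i\bar j}$ and of partial derivatives of $v$ of order $2,3,4$, then to subtract the expansions at $\varphi_1$ and $\varphi_2$ and bound each resulting term individually using the hypotheses $\oo_{\varphi_k}\ge\te\oo_g$ and $\|\varphi_k\|_{C^4}\le\Te$. Concretely I would start from
\[
\Delta_\varphi^2 v \;=\; g_\varphi^{p\bar q}g_\varphi^{i\bar j}v_{i\bar j p\bar q}
+g_\varphi^{p\bar q}(\partial_p g_\varphi^{i\bar j})v_{i\bar j\bar q}
+g_\varphi^{p\bar q}(\partial_{\bar q}g_\varphi^{i\bar j})v_{i\bar j p}
+g_\varphi^{p\bar q}(\partial_p\partial_{\bar q}g_\varphi^{i\bar j})v_{i\bar j},
\]
which is just Leibniz on $\Delta_\varphi(\Delta_\varphi v)$ with the inverse metric pulled outside the inner derivatives.

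The key analytic input is the matrix identity $A^{-1}-B^{-1}=A^{-1}(B-A)B^{-1}$ applied with $A=g_{\varphi_1}$, $B=g_{\varphi_2}$. Since $g_{\varphi_1,k\bar l}-g_{\varphi_2,k\bar l}=(\varphi_1-\varphi_2)_{k\bar l}$, this gives
\[
g_{\varphi_1}^{i\bar j}-g_{\varphi_2}^{i\bar j}\;=\;g_{\varphi_1}^{i\bar p}\,(\varphi_2-\varphi_1)_{p\bar q}\,g_{\varphi_2}^{q\bar j},
\]
and therefore $|g_{\varphi_1}^{i\bar j}-g_{\varphi_2}^{i\bar j}|\le C(\te,\Te)|\Na^2(\varphi_1-\varphi_2)|$. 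Differentiating this identity once in a coordinate direction (and expanding by the product rule) and using the $C^4$ bound on $\varphi_k$ to control the extra $\partial g_\varphi^{-1}$ factors produced, I obtain
\[
|\partial g_{\varphi_1}^{i\bar j}-\partial g_{\varphi_2}^{i\bar j}|\le C(\te,\Te)\bigl(|\Na^3(\varphi_1-\varphi_2)|+|\Na^2(\varphi_1-\varphi_2)|\bigr),
\]
and differentiating a second time similarly yields
\[
|\partial^2 g_{\varphi_1}^{i\bar j}-\partial^2 g_{\varphi_2}^{i\bar j}|\le C(\te,\Te)\bigl(|\Na^4(\varphi_1-\varphi_2)|+|\Na^3(\varphi_1-\varphi_2)|+|\Na^2(\varphi_1-\varphi_2)|\bigr).
\]

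With these three estimates in hand, I subtract the four-term expansion above at $\varphi=\varphi_1$ and $\varphi=\varphi_2$ and apply the telescoping identity $A_1B_1-A_2B_2=(A_1-A_2)B_1+A_2(B_1-B_2)$ (and its three-factor analogue for the leading $g_\varphi^{p\bar q}g_\varphi^{i\bar j}$ coefficient). Each resulting summand pairs one of the three difference bounds with $|\Na^4 v|$, $|\Na^3 v|$, or $|\Na^2 v|$, and the remaining $g_\varphi^{-1}$, $\partial g_\varphi^{-1}$, $\partial^2 g_\varphi^{-1}$ factors are controlled by a constant $C(\te,\Te)$. Collecting the six types of products that arise gives exactly (\ref{eq:A056}). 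The only real obstacle is bookkeeping: keeping track of which combinations actually appear so as to verify no $|\Na^4 v|\cdot|\Na^3(\varphi_1-\varphi_2)|$, $|\Na^4 v|\cdot|\Na^4(\varphi_1-\varphi_2)|$, or $|\Na^3 v|\cdot|\Na^4(\varphi_1-\varphi_2)|$ term is hidden somewhere, which one sees by noting that $\partial^a g_\varphi^{-1}$ appears only paired with $v_{i\bar j}$ (for $a=2$), with a third derivative of $v$ (for $a=1$), or with $v_{i\bar j p\bar q}$ (for $a=0$).
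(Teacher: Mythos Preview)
Your proposal is correct and follows essentially the same approach as the paper: the paper likewise expands $\Delta_\varphi^2 v$ into the same four local-coordinate terms, proves the identical three bounds on $|g_{\varphi_1}^{i\bar j}-g_{\varphi_2}^{i\bar j}|$, $|\partial g_{\varphi_1}^{i\bar j}-\partial g_{\varphi_2}^{i\bar j}|$, $|\partial^2 g_{\varphi_1}^{i\bar j}-\partial^2 g_{\varphi_2}^{i\bar j}|$ via the matrix identity $A^{-1}-B^{-1}=A^{-1}(B-A)B^{-1}$, and then telescopes each of the four terms $I_1,\dots,I_4$ exactly as you describe.
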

\begin{proof}

(1). Note that
\beqn
|g_{\varphi_1}^{i\bar j}-g_{\varphi_2}^{i\bar j}|&=&|g_{\varphi_1}^{i\bar l}(g_{\varphi_1, k\bar l}-g_{\varphi_2, k\bar l})g_{\varphi_2}^{k\bar j}|=|g_{\varphi_1}^{i\bar l}g_{\varphi_2}^{k\bar j}\partial_k\partial_{\bar l}(\varphi_1-\varphi_2)|\no\\&\leq& C(\te)|\Na^2(\varphi_1-\varphi_2)|. \label{eq:A060}
\eeqn
Thus, we have
\beqn &&
|\partial_{p}g_{\varphi_1}^{i\bar j}-\partial_{p}g_{\varphi_2}^{i\bar j}|=
|\partial_{p}( g_{\varphi_1}^{i\bar l}(g_{\varphi_1, k\bar l}-g_{\varphi_2, k\bar l})g_{\varphi_2}^{k\bar j}   )|\no\\
&\leq&|g_{\varphi_1}^{i\bar l}(\partial_{p}g_{\varphi_1, k\bar l}-\partial_{p}g_{\varphi_2, k\bar l})g_{\varphi_2}^{k\bar j}|+| \partial_{p} g_{\varphi_1}^{i\bar l}(g_{\varphi_1, k\bar l}-g_{\varphi_2, k\bar l})g_{\varphi_2}^{k\bar j}  |+|  g_{\varphi_1}^{i\bar l}(g_{\varphi_1, k\bar l}-g_{\varphi_2, k\bar l})\partial_{p}g_{\varphi_2}^{k\bar j}  |\no\\
&\leq&C(\te, \Te)\Big(|\Na^3(\varphi_1-\varphi_2)|+|\Na^2(\varphi_1-\varphi_2)|\Big).\label{eq:A061}
\eeqn
Moreover, we estimate the second derivatives.
\beqn &&
|\partial_{\bar q}\partial_{p}g_{\varphi_1}^{i\bar j}-\partial_{\bar q}\partial_{p}g_{\varphi_2}^{i\bar j}|=|\partial_{\bar q}(\partial_{p}g_{\varphi_1}^{i\bar j}-\partial_{p}g_{\varphi_2}^{i\bar j})|\no\\
&\leq&\Big|\partial_{\bar q}\Big(g_{\varphi_1}^{i\bar l}(\partial_{p}g_{\varphi_1, k\bar l}-\partial_{p}g_{\varphi_2, k\bar l})g_{\varphi_2}^{k\bar j}\Big)\Big|+\Big| \partial_{\bar q}\Big(\partial_{p} g_{\varphi_1}^{i\bar l}(g_{\varphi_1, k\bar l}-g_{\varphi_2, k\bar l})g_{\varphi_2}^{k\bar j} \Big) \Big|\no\\
&&+\Big| \partial_{\bar q}\Big( g_{\varphi_1}^{i\bar l}(g_{\varphi_1, k\bar l}-g_{\varphi_2, k\bar l})\partial_{p}g_{\varphi_2}^{k\bar j} \Big) \Big|\no
\\
&:=&J_1+J_2+J_3.\label{eq:A057}
\eeqn We estimate each term $J_i.$
\beqn &&
\Big|\partial_{\bar q}\Big(g_{\varphi_1}^{i\bar l}(\partial_{p}g_{\varphi_1, k\bar l}-\partial_{p}g_{\varphi_2, k\bar l})g_{\varphi_2}^{k\bar j}\Big)\Big|
\leq
\Big|\partial_{\bar q}g_{\varphi_1}^{i\bar l}(\partial_{p}g_{\varphi_1, k\bar l}-\partial_{p}g_{\varphi_2, k\bar l})g_{\varphi_2}^{k\bar j}\Big|\no\\&&+
\Big|g_{\varphi_1}^{i\bar l}\partial_{\bar q}(\partial_{p}g_{\varphi_1, k\bar l}-\partial_{p}g_{\varphi_2, k\bar l})g_{\varphi_2}^{k\bar j} \Big| +
\Big| g_{\varphi_1}^{i\bar l}(\partial_{p}g_{\varphi_1, k\bar l}-\partial_{p}g_{\varphi_2, k\bar l})\partial_{\bar q}g_{\varphi_2}^{k\bar j} \Big|\no\\
&\leq&C(\te, \Te)\Big(|\Na^4(\varphi_1-\varphi_2)|+|\Na^3(\varphi_1-\varphi_2)|\Big).\label{eq:A058}
\eeqn
Similarly, we have
\beq
J_2+J_3\leq C(\te, \Te)\Big(|\Na^2(\varphi_1-\varphi_2)|+|\Na^3(\varphi_1-\varphi_2)|\Big).\label{eq:A059}
\eeq
Combining (\ref{eq:A057})-(\ref{eq:A059}), we have
\beq
|\partial_{\bar q}\partial_{p}g_{\varphi_1}^{i\bar j}-\partial_{\bar q}\partial_{p}g_{\varphi_2}^{i\bar j}|\leq C(\te, \Te)\Big(|\Na^4(\varphi_1-\varphi_2)|+|\Na^3(\varphi_1-\varphi_2)|
+|\Na^2(\varphi_1-\varphi_2)|\Big).\label{eq:A062}
\eeq

(2). By direct calculation, we have
\beqn \Delta_{\varphi_1}^2v&=&
g^{i\bar j}_{\varphi_1}\partial_i\partial_{\bar j}(g_{\varphi_1}^{k\bar l}\partial_k\partial_{\bar l}v)
=g_{\varphi_1}^{i\bar j}\partial_i\Big(\partial_{\bar j}g_{\varphi_1}^{k\bar l}\partial_k\partial_{\bar l}v+g_{\varphi_1}^{k\bar l}\partial_{\bar j}\partial_k\partial_{\bar l}v\Big)\no\\
&=&g_{\varphi_1}^{i\bar j}\partial_i\partial_{\bar j}g_{\varphi_1}^{k\bar l}\partial_k\partial_{\bar l}v+g_{\varphi_1}^{i\bar j}\partial_{\bar j}g_{\varphi_1}^{k\bar l}\partial_i\partial_k\partial_{\bar l}v\no\\&&+g_{\varphi_1}^{i\bar j}\partial_ig_{\varphi_1}^{k\bar l}\partial_{\bar j}\partial_k\partial_{\bar l}v+g_{\varphi_1}^{i\bar j}g_{\varphi_1}^{k\bar l}\partial_i\partial_{\bar j}\partial_k\partial_{\bar l}v. \label{eq:A051}
\eeqn
Thus, we have
\beqn
\Delta_{\varphi_1}^2v-\Delta_{\varphi_2}^2v
&=&\Big(g_{\varphi_1}^{i\bar j}\partial_i\partial_{\bar j}g_{\varphi_1}^{k\bar l}\partial_k\partial_{\bar l}v-g_{\varphi_2}^{i\bar j}\partial_i\partial_{\bar j}g_{\varphi_2}^{k\bar l}\partial_k\partial_{\bar l}v\Big)\no\\
&&+\Big(g_{\varphi_1}^{i\bar j}\partial_{\bar j}g_{\varphi_1}^{k\bar l}\partial_i\partial_k\partial_{\bar l}v-g_{\varphi_2}^{i\bar j}\partial_{\bar j}g_{\varphi_2}^{k\bar l}\partial_i\partial_k\partial_{\bar l}v\Big)\no\\
&&+\Big(g_{\varphi_1}^{i\bar j}\partial_ig_{\varphi_1}^{k\bar l}\partial_{\bar j}\partial_k\partial_{\bar l}v-g_{\varphi_2}^{i\bar j}\partial_ig_{\varphi_2}^{k\bar l}\partial_{\bar j}\partial_k\partial_{\bar l}v\Big)\no\\
&&+\Big(g_{\varphi_1}^{i\bar j}g_{\varphi_1}^{k\bar l}\partial_i\partial_{\bar j}\partial_k\partial_{\bar l}v-g_{\varphi_2}^{i\bar j}g_{\varphi_2}^{k\bar l}\partial_i\partial_{\bar j}\partial_k\partial_{\bar l}v\Big)\no\\
&:=&I_1+I_2+I_3+I_4. \label{eq:A052}
\eeqn
We estimate each $I_i$.
\beqn
I_1&=&|g_{\varphi_1}^{i\bar j}\partial_i\partial_{\bar j}g_{\varphi_1}^{k\bar l}\partial_k\partial_{\bar l}v-g_{\varphi_2}^{i\bar j}\partial_i\partial_{\bar j}g_{\varphi_2}^{k\bar l}\partial_k\partial_{\bar l}v|\no\\
&\leq&|g_{\varphi_1}^{i\bar j}\partial_i\partial_{\bar j}g_{\varphi_1}^{k\bar l}\partial_k\partial_{\bar l}v-g_{\varphi_1}^{i\bar j}\partial_i\partial_{\bar j}g_{\varphi_2}^{k\bar l}\partial_k\partial_{\bar l}v|\no\\
&&+|g_{\varphi_1}^{i\bar j}\partial_i\partial_{\bar j}g_{\varphi_2}^{k\bar l}\partial_k\partial_{\bar l}v-g_{\varphi_2}^{i\bar j}\partial_i\partial_{\bar j}g_{\varphi_2}^{k\bar l}\partial_k\partial_{\bar l}v|\no\\
&\leq& C(\te, \Te)|\Na^2v|\Big(|\Na^4(\varphi_1-\varphi_2)|
+|\Na^3(\varphi_1-\varphi_2)|+|\Na^2(\varphi_1-\varphi_2)|\Big), \no
\eeqn where we used (\ref{eq:A060}) and (\ref{eq:A062}). Moreover, using (\ref{eq:A060}) and (\ref{eq:A061}) we have
\beqn &&
I_2+I_3=2\Big|g_{\varphi_1}^{i\bar j}\partial_{\bar j}g_{\varphi_1}^{k\bar l}\partial_i\partial_k\partial_{\bar l}v-g_{\varphi_2}^{i\bar j}\partial_{\bar j}g_{\varphi_2}^{k\bar l}\partial_i\partial_k\partial_{\bar l}v\Big|\no\\
&\leq&2\Big|g_{\varphi_1}^{i\bar j}\partial_{\bar j}g_{\varphi_1}^{k\bar l}\partial_i\partial_k\partial_{\bar l}v-g_{\varphi_1}^{i\bar j}\partial_{\bar j}g_{\varphi_2}^{k\bar l}\partial_i\partial_k\partial_{\bar l}v\Big|+2\Big|g_{\varphi_1}^{i\bar j}\partial_{\bar j}g_{\varphi_2}^{k\bar l}\partial_i\partial_k\partial_{\bar l}v-g_{\varphi_2}^{i\bar j}\partial_{\bar j}g_{\varphi_2}^{k\bar l}\partial_i\partial_k\partial_{\bar l}v\Big|\no\\
&\leq&C(\te, \Te)|\Na^3v|\Big(|\Na^3(\varphi_1-\varphi_2)|+|\Na^2(\varphi_1-\varphi_2)|\Big).\label{eq:A063}
\eeqn
and
\beqn
I_4&=&\Big|g_{\varphi_1}^{i\bar j}g_{\varphi_1}^{k\bar l}\partial_i\partial_{\bar j}\partial_k\partial_{\bar l}v-g_{\varphi_2}^{i\bar j}g_{\varphi_2}^{k\bar l}\partial_i\partial_{\bar j}\partial_k\partial_{\bar l}v\Big|\no\\
&\leq&\Big|g_{\varphi_1}^{i\bar j}g_{\varphi_1}^{k\bar l}\partial_i\partial_{\bar j}\partial_k\partial_{\bar l}v-g_{\varphi_2}^{i\bar j}g_{\varphi_1}^{k\bar l}\partial_i\partial_{\bar j}\partial_k\partial_{\bar l}v\Big|+\Big|g_{\varphi_2}^{i\bar j}g_{\varphi_1}^{k\bar l}\partial_i\partial_{\bar j}\partial_k\partial_{\bar l}v-g_{\varphi_2}^{i\bar j}g_{\varphi_2}^{k\bar l}\partial_i\partial_{\bar j}\partial_k\partial_{\bar l}v\Big|\no\\
&\leq&C(\te, \Te)|\Na^4v| \cdot |\Na^2(\varphi_1-\varphi_2)|.\label{eq:A064}
\eeqn
Combining the inequalities (\ref{eq:A051})-(\ref{eq:A064}), we have
\beqn
&&
|\Delta_{\varphi_1}^2v-\Delta_{\varphi_2}^2v|\no\\
&\leq&C(\te, \Te)\Big(|\Na^2v|\cdot |\Na^4(\varphi_1-\varphi_2)|
+|\Na^2v|\cdot |\Na^3(\varphi_1-\varphi_2)|+|\Na^2v|\cdot |\Na^2(\varphi_1-\varphi_2)|\no\\
&&+|\Na^3v|\cdot |\Na^3(\varphi_1-\varphi_2)|+|\Na^3v|\cdot|\Na^2(\varphi_1-\varphi_2)|+|\Na^4v|\cdot |\Na^2(\varphi_1-\varphi_2)|\Big). \no
\eeqn The lemma is proved.
\end{proof}

\begin{lem}\label{lem011}For any $\varphi_1, \varphi_2\in \cH(\oo_g)$ with
\beq
\oo_{\varphi_1}\geq \te\oo_g, \quad \oo_{\varphi_1}\geq \te\oo_g,\quad \|\varphi_1\|_{C^4(M, g)}\leq \Te, \quad \|\varphi_2\|_{C^4(M, g)}\leq \Te,\no
\eeq
 we have
\beq
\Big|Ric_{\varphi_1, i\bar j
}v_{ j\bar i}-Ric_{\varphi_2, i\bar j
}v_{ j\bar i}\Big|\leq C(\te, \Te)\Big(|\Na^2(\varphi_1- \varphi_2)|+|\Na^3(\varphi_1- \varphi_2)|+|\Na^4(\varphi_1- \varphi_2)|\Big)\cdot |\Na^2 v|.\no
\eeq
\end{lem}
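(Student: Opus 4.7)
The plan is to prove this by expanding the Ricci tensor in local coordinates and then applying exactly the same telescoping strategy that worked for the bi-Laplacian in Lemma \ref{lem010}. Recall that for a K\"ahler metric $\omega_\varphi$ one has
\[
Ric_{\varphi, i\bar j} = -\partial_i\partial_{\bar j}\log\det(g_{\varphi, k\bar l}) = -g_\varphi^{p\bar q}\,\partial_i\partial_{\bar j}g_{\varphi, p\bar q} + g_\varphi^{p\bar s}g_\varphi^{r\bar q}\,(\partial_i g_{\varphi, r\bar q})(\partial_{\bar j}g_{\varphi, p\bar s}).
\]
Thus $Ric_{\varphi, i\bar j}$ is a polynomial, with bounded (background) coefficients, in the entries of $g_\varphi^{-1}$, in $\partial g_\varphi$, and in $\partial\bar\partial g_\varphi$. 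The highest derivative of $\varphi$ that enters is of order four, coming from $\partial_i\partial_{\bar j}g_{\varphi, p\bar q}$.

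First I would recycle the three pointwise estimates already established inside the proof of Lemma \ref{lem010}, namely
\[
|g_{\varphi_1}^{i\bar j} - g_{\varphi_2}^{i\bar j}| \leq C(\theta,\Theta)\,|\Na^2(\varphi_1-\varphi_2)|,
\]
\[
|\partial_p g_{\varphi_1}^{i\bar j} - \partial_p g_{\varphi_2}^{i\bar j}| \leq C(\theta,\Theta)\bigl(|\Na^2(\varphi_1-\varphi_2)| + |\Na^3(\varphi_1-\varphi_2)|\bigr),
\]
together with the trivial consequences of $g_{\varphi, p\bar q} = g_{p\bar q} + \varphi_{p\bar q}$:
\[
|\partial_i g_{\varphi_1, p\bar q} - \partial_i g_{\varphi_2, p\bar q}| \leq |\Na^3(\varphi_1-\varphi_2)|, \qquad |\partial_i\partial_{\bar j}g_{\varphi_1, p\bar q} - \partial_i\partial_{\bar j}g_{\varphi_2, p\bar q}| \leq |\Na^4(\varphi_1-\varphi_2)|.
\]

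Second, I would write $Ric_{\varphi_1, i\bar j} - Ric_{\varphi_2, i\bar j}$ as a telescoping sum: in each monomial of the expansion above, replace one factor at a time — an entry of $g_\varphi^{-1}$, a $\partial g_\varphi$, or a $\partial\bar\partial g_\varphi$ — by its counterpart at $\varphi_2$. The hypothesis $\|\varphi_j\|_{C^4(M,g)}\leq \Theta$ ensures that every remaining factor is bounded by a constant depending only on $\theta$ and $\Theta$, while the replaced factor contributes one of $|\Na^2(\varphi_1-\varphi_2)|$, $|\Na^3(\varphi_1-\varphi_2)|$, or $|\Na^4(\varphi_1-\varphi_2)|$ by the estimates above. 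Summing up gives
\[
|Ric_{\varphi_1, i\bar j} - Ric_{\varphi_2, i\bar j}| \leq C(\theta,\Theta)\bigl(|\Na^2(\varphi_1-\varphi_2)| + |\Na^3(\varphi_1-\varphi_2)| + |\Na^4(\varphi_1-\varphi_2)|\bigr).
\]
Finally, multiplying by $|v_{j\bar i}| \leq |\Na^2 v|$ yields the stated inequality. There is no real obstacle: the proof is entirely a bookkeeping exercise, strictly easier than that of Lemma \ref{lem010} since only $|\Na^2 v|$ (and not higher derivatives of $v$) appears on the right-hand side; the only mild care is in keeping track of which replacement produces which order of derivative of $\varphi_1-\varphi_2$.
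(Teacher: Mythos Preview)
Your approach is correct and is essentially the same as the paper's: expand the Ricci tensor in local coordinates, telescope factor by factor, and use the estimates on $g_{\varphi_1}^{-1}-g_{\varphi_2}^{-1}$ and its first derivative already obtained in Lemma~\ref{lem010}. Your compact expansion $Ric_{\varphi,i\bar j}=-g_\varphi^{p\bar q}\partial_i\partial_{\bar j}g_{\varphi,p\bar q}+g_\varphi^{p\bar s}g_\varphi^{r\bar q}(\partial_ig_{\varphi,r\bar q})(\partial_{\bar j}g_{\varphi,p\bar s})$ is equivalent to the paper's six-term expansion (the paper merely separates the background $g$ from $\varphi$), and your telescoping bound on $|Ric_{\varphi_1,i\bar j}-Ric_{\varphi_2,i\bar j}|$ is exactly the paper's inequality~(\ref{eq:A068}).

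One point you glossed over: in the paper's convention the quantity $Ric_{\varphi,i\bar j}v_{j\bar i}$ actually stands for the contraction $g_\varphi^{i\bar k}g_\varphi^{l\bar j}Ric_{\varphi,i\bar j}\,\partial_l\partial_{\bar k}v$, so the two inverse metrics in the contraction also depend on $\varphi$. Your final step ``multiply by $|v_{j\bar i}|\le|\Na^2v|$'' implicitly treats the contraction as $\varphi$-independent. The paper handles this with one more telescoping layer (its inequality~(\ref{eq:A069})), which produces an extra $C(\theta,\Theta)|\Na^2(\varphi_1-\varphi_2)|\cdot|\Na^2v|$ term---already absorbed by your final bound. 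So the fix is a one-liner using the same $|g_{\varphi_1}^{-1}-g_{\varphi_2}^{-1}|$ estimate you already invoked, and the overall argument stands.
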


\begin{proof}By direct calculation, we have
\beqn &&
\Big|Ric_{\varphi_1, i\bar j
}v_{ j\bar i}-Ric_{\varphi_2, i\bar j
}v_{ j\bar i}\Big|=\Big|g_{\varphi_1}^{i\bar k}g_{\varphi_1}^{l\bar j}Ric_{\varphi_1, i\bar j
}v_{ l\bar k}-g_{\varphi_2}^{i\bar k}g_{\varphi_2}^{l\bar j}Ric_{\varphi_2, i\bar j
}v_{ l\bar k}\Big|\no\\
&\leq&\Big|g_{\varphi_1}^{i\bar k}g_{\varphi_1}^{l\bar j}Ric_{\varphi_1, i\bar j
}v_{ l\bar k}-g_{\varphi_2}^{i\bar k}g_{\varphi_1}^{l\bar j}Ric_{\varphi_1, i\bar j
}v_{ l\bar k}\Big|+\Big|g_{\varphi_2}^{i\bar k}g_{\varphi_1}^{l\bar j}Ric_{\varphi_1, i\bar j
}v_{ l\bar k}-g_{\varphi_2}^{i\bar k}g_{\varphi_2}^{l\bar j}Ric_{\varphi_1, i\bar j
}v_{ l\bar k}\Big|\no\\
&&+\Big|g_{\varphi_2}^{i\bar k}g_{\varphi_2}^{l\bar j}Ric_{\varphi_1, i\bar j
}v_{ l\bar k}-g_{\varphi_2}^{i\bar k}g_{\varphi_2}^{l\bar j}Ric_{\varphi_2, i\bar j
}v_{ l\bar k}\Big|\no\\
&\leq&C(\te, \Te)\Big(|\Na^2(\varphi_1-\varphi_2)|\cdot |\Na^2 v|+|Ric_{\varphi_1, i\bar j}-Ric_{\varphi_2, i\bar j}|\cdot |\Na^2 v|\Big).\label{eq:A069}
\eeqn
Note that
\beqs Ric_{\varphi, i\bar j}&=&
- \partial_i\partial_{\bar j}(\log \det (g_{k\bar l}+\varphi_{k\bar l}))=- \partial_i\Big(g_\varphi^{k\bar l}\partial_{\bar j} g_{k\bar l}+g_\varphi^{k\bar l}\partial_{\bar j}\partial_k\partial_{\bar l}\varphi\Big)\\
&=&- \Big(\partial_ig_\varphi^{k\bar l}\partial_{\bar j} g_{k\bar l}+g_\varphi^{k\bar l}\partial_i\partial_{\bar j} g_{k\bar l}+\partial_ig_\varphi^{k\bar l}\partial_{\bar j}\partial_k\partial_{\bar l}\varphi+g_\varphi^{k\bar l}\partial_i\partial_{\bar j}\partial_k\partial_{\bar l}\varphi\Big)\\
&=&g_\varphi^{k\bar q}g_\varphi^{p\bar l}(\partial_ig_{ p\bar q}+\partial_i\partial_p\partial_{\bar q}\varphi)\partial_{\bar j} g_{k\bar l}-g_\varphi^{k\bar l}\partial_i\partial_{\bar j} g_{k\bar l}\\&&+g_\varphi^{k\bar q}g_\varphi^{p\bar l}(\partial_ig_{ p\bar q}+\partial_i\partial_p\partial_{\bar q}\varphi)\partial_{\bar j}\partial_k\partial_{\bar l}\varphi-g_\varphi^{k\bar l}\partial_i\partial_{\bar j}\partial_k\partial_{\bar l}\varphi\\
&=& g_\varphi^{k\bar q}g_\varphi^{p\bar l}\partial_ig_{ p\bar q}\partial_{\bar j} g_{k\bar l}+
g_\varphi^{k\bar q}g_\varphi^{p\bar l}\partial_i\partial_p\partial_{\bar q}\varphi\partial_{\bar j} g_{k\bar l}+g_\varphi^{k\bar q}g_\varphi^{p\bar l}\partial_ig_{ p\bar q}\partial_{\bar j}\partial_k\partial_{\bar l}\varphi\\&&+g_\varphi^{k\bar q}g_\varphi^{p\bar l}\partial_i\partial_p\partial_{\bar q}\varphi\partial_{\bar j}\partial_k\partial_{\bar l}\varphi-g_\varphi^{k\bar l}\partial_i\partial_{\bar j} g_{k\bar l}-g_\varphi^{k\bar l}\partial_i\partial_{\bar j}\partial_k\partial_{\bar l}\varphi.
\eeqs Thus, we can write
\beq
Ric_{\varphi_1, i\bar j}-Ric_{\varphi_2, i\bar j}:=\sum_{i=1}^6I_i. \label{eq:A066}
\eeq where $I_i$ can be estimated by
\beqn
|I_1|&=& | g_{\varphi_1}^{k\bar q}g_{\varphi_1}^{p\bar l}\partial_ig_{ p\bar q}\partial_{\bar j} g_{k\bar l}-g_{\varphi_2}^{k\bar q}g_{\varphi_2}^{p\bar l}\partial_ig_{ p\bar q}\partial_{\bar j} g_{k\bar l}|
\leq| g_{\varphi_1}^{k\bar q}g_{\varphi_1}^{p\bar l}\partial_ig_{ p\bar q}\partial_{\bar j} g_{k\bar l}-g_{\varphi_2}^{k\bar q}g_{\varphi_1}^{p\bar l}\partial_ig_{ p\bar q}\partial_{\bar j} g_{k\bar l}|\no\\
&&+| g_{\varphi_2}^{k\bar q}g_{\varphi_1}^{p\bar l}\partial_ig_{ p\bar q}\partial_{\bar j} g_{k\bar l}-g_{\varphi_2}^{k\bar q}g_{\varphi_2}^{p\bar l}\partial_ig_{ p\bar q}\partial_{\bar j} g_{k\bar l}|
\leq C(\te, \Te)|\Na^2(\varphi_1-\varphi_2)|,
\eeqn  and
\beqn &&
|I_2|=|g_{\varphi_1}^{k\bar q}g_{\varphi_1}^{p\bar l}\partial_i\partial_p\partial_{\bar q}\varphi_1\partial_{\bar j} g_{k\bar l}-g_{\varphi_2}^{k\bar q}g_{\varphi_2}^{p\bar l}\partial_i\partial_p\partial_{\bar q}{\varphi_2}\partial_{\bar j} g_{k\bar l}|\no\\&
\leq&|g_{\varphi_1}^{k\bar q}g_{\varphi_1}^{p\bar l}\partial_i\partial_p\partial_{\bar q}\varphi_1\partial_{\bar j} g_{k\bar l}-g_{\varphi_2}^{k\bar q}g_{\varphi_1}^{p\bar l}\partial_i\partial_p\partial_{\bar q}{\varphi_1}\partial_{\bar j} g_{k\bar l}|+|g_{\varphi_2}^{k\bar q}g_{\varphi_1}^{p\bar l}\partial_i\partial_p\partial_{\bar q}\varphi_1\partial_{\bar j} g_{k\bar l}\no\\&&-g_{\varphi_2}^{k\bar q}g_{\varphi_2}^{p\bar l}\partial_i\partial_p\partial_{\bar q}{\varphi_1}\partial_{\bar j} g_{k\bar l}|+|g_{\varphi_2}^{k\bar q}g_{\varphi_2}^{p\bar l}\partial_i\partial_p\partial_{\bar q}\varphi_1\partial_{\bar j} g_{k\bar l}-g_{\varphi_2}^{k\bar q}g_{\varphi_2}^{p\bar l}\partial_i\partial_p\partial_{\bar q}{\varphi_2}\partial_{\bar j} g_{k\bar l}|\no\\&\leq& C(\te, \Te)\Big(|\Na^2(\varphi_1-\varphi_2)|+|\Na^3(\varphi_1-\varphi_2)|\Big).
\eeqn Similarly, we have
\beqn
\sum_{i=3}^6|I_i|&\leq&|g_{\varphi_1}^{k\bar q}g_{\varphi_1}^{p\bar l}\partial_ig_{ p\bar q}\partial_{\bar j}\partial_k\partial_{\bar l}\varphi_1-g_{\varphi_2}^{k\bar q}g_{\varphi_2}^{p\bar l}\partial_ig_{ p\bar q}\partial_{\bar j}\partial_k\partial_{\bar l}\varphi_2|\no\\
&&+|g_{\varphi_1}^{k\bar q}g_{\varphi_1}^{p\bar l}\partial_i\partial_p\partial_{\bar q}\varphi_1\partial_{\bar j}\partial_k\partial_{\bar l}\varphi_1-g_{\varphi_2}^{k\bar q}g_{\varphi_2}^{p\bar l}\partial_i\partial_p\partial_{\bar q}\varphi_2\partial_{\bar j}\partial_k\partial_{\bar l}\varphi_2|\no\\
&&+|g_{\varphi_1}^{k\bar l}\partial_i\partial_{\bar j} g_{k\bar l} -g_{\varphi_2}^{k\bar l}\partial_i\partial_{\bar j} g_{k\bar l}     |+|g_{\varphi_1}^{k\bar l}\partial_i\partial_{\bar j}\partial_k\partial_{\bar l}\varphi_1-g_{\varphi_2}^{k\bar l}\partial_i\partial_{\bar j}\partial_k\partial_{\bar l}\varphi_2    |\no\\
&\leq&C(\te, \Te)\Big(|\Na^2(\varphi_1-\varphi_2)|+
|\Na^3(\varphi_1-\varphi_2)|+|\Na^4(\varphi_1-\varphi_2)|\Big).\label{eq:A067}
\eeqn
Combining (\ref{eq:A066})-(\ref{eq:A067}), we have
\beq
|Ric_{\varphi_1, i\bar j}-Ric_{\varphi_2, i\bar j}|\leq C( \te, \Te)\Big(|\Na^2(\varphi_1-\varphi_2)|+|\Na^3(\varphi_1-\varphi_2)|+
|\Na^4(\varphi_1-\varphi_2)|\Big).\label{eq:A068}
\eeq Therefore, by (\ref{eq:A068}) and (\ref{eq:A069}) we have
\beqs &&
\Big|Ric_{\varphi_1, i\bar j
}v_{ j\bar i}-Ric_{\varphi_2, i\bar j
}v_{ j\bar i}\Big|\\&
\leq& C( \te, \Te)\Big(|\Na^2(\varphi_1-\varphi_2)|+|\Na^3(\varphi_1- \varphi_2)|+|\Na^4(\varphi_1-\varphi_2)|\Big)\cdot |\Na^2 v|.
\eeqs The lemma is proved.
\end{proof}

\begin{lem}\label{lem013}Let $\td \varphi$ the function and $s_0$ the constant in Lemma \ref{lem:A004},  and $f\in C^{0, 0, \ga}(M\times [0, T], g)$. Define the operator \beqn\Psi_f :  C^{4, 1, \ga}(M\times [0, T], g, \psi_0)&\ri& C^{4, 1, \ga}(M\times [0, T], g, \psi_0)\no\\
  \varphi&\ri& \varphi+(DL_s|_{\td \varphi})^{-1}(f-L_s(\varphi)). \no
\eeqn There exist $\al=\al(\ga, n)>1$ and $\dd=\dd(g, n, \te, \Te, T)>0$ satisfying the following properties.
If $h_1(x, t) $ and $ h_2(x, t)$ satisfy
\beq
\|h_1(x, t)-\td \varphi\|_{C^{4, 1, \ga}(M\times [0, T], g)}\leq s^{\al}\dd, \quad \|h_2(x, t)-\td \varphi\|_{C^{4, 1, \ga}(M\times [0, T], g)}\leq s^{\al}\dd,\label{eq:A021}
\eeq then for any $s\in (0, s_0)$ we have
\beq
\|\Psi_f(h_1)-\Psi_f(h_2)\|_{C^{4, 1, \ga}(M\times [0, T], g)}\leq \frac 12 \|h_1-h_2\|_{C^{4, 1, \ga}(M\times [0, T], g)}.\no
\eeq
\end{lem}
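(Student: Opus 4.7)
The plan is to rewrite $\Psi_f(h_1) - \Psi_f(h_2)$ as the image under $(DL_s|_{\td\varphi})^{-1}$ of a nonlinear remainder of $L_s$, and then combine the invertibility estimate from Lemma \ref{lem004} with the operator-difference bounds of Lemmas \ref{lem010}--\ref{lem011}. Directly from the definition of $\Psi_f$,
\begin{equation*}
\Psi_f(h_1) - \Psi_f(h_2) = -(DL_s|_{\td\varphi})^{-1}\bigl[L_s(h_1) - L_s(h_2) - DL_s|_{\td\varphi}(h_1-h_2)\bigr],
\end{equation*}
and the fundamental theorem of calculus along the segment $h_\tau := h_2 + \tau(h_1-h_2)$, $\tau\in[0,1]$, gives
\begin{equation*}
L_s(h_1) - L_s(h_2) - DL_s|_{\td\varphi}(h_1-h_2) = \int_0^1 \bigl(DL_s|_{h_\tau} - DL_s|_{\td\varphi}\bigr)(h_1-h_2)\,d\tau.
\end{equation*}
The $\frac{\partial}{\partial t}$ part of $DL_s$ is background-independent and cancels, so only the fourth-order part $s(\Delta^2_\varphi\,\cdot + Ric_\varphi\cdot)$ and the second-order trace part $(1-s)g_\varphi^{i\bar l}g_\varphi^{k\bar j}g_{i\bar j}\partial_k\partial_{\bar l}$ contribute to the integrand.

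For the H\"older estimate of the integrand, the hypothesis \eqref{eq:A021} combined with a small choice of $\dd$ and $s \in (0, s_0)$ ensures $\oo_{h_\tau} \geq \tfrac{\te}{2}\oo_g$ and $\|h_\tau\|_{C^4} \leq 2\Te$ uniformly in $\tau$. Thus Lemmas \ref{lem010} and \ref{lem011} apply along the segment with constants depending only on $\te, \Te$. The pointwise bounds there, promoted to parabolic H\"older norms via the product rule $\|FG\|_{C^{0,0,\ga}} \leq \|F\|_{C^{0,0,\ga}}\|G\|_{C^0} + \|F\|_{C^0}\|G\|_{C^{0,0,\ga}}$, will yield
\begin{equation*}
\bigl\|(DL_s|_{h_\tau} - DL_s|_{\td\varphi})(h_1-h_2)\bigr\|_{C^{0,0,\ga}(M\times[0,T],g)} \leq C(\te,\Te)\,\|h_\tau - \td\varphi\|_{C^{4,1,\ga}}\,\|h_1-h_2\|_{C^{4,1,\ga}},
\end{equation*}
where the scalar factors $s$ and $1-s$ are absorbed into the constant. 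Since $\|h_\tau - \td\varphi\|_{C^{4,1,\ga}} \leq s^\al\dd$ by \eqref{eq:A021}, integrating over $\tau\in[0,1]$ bounds the nonlinear remainder by $C(\te,\Te)\,s^\al \dd\,\|h_1-h_2\|_{C^{4,1,\ga}}$.

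Finally, applying the invertibility bound $\|(DL_s|_{\td\varphi})^{-1}\| \leq C(g,n,\te,\Te,T)\,s^{-\ka}$ from Lemma \ref{lem004} with $\ka = \ka(\ga,n)>1$, we obtain
\begin{equation*}
\|\Psi_f(h_1) - \Psi_f(h_2)\|_{C^{4,1,\ga}} \leq C(g,n,\te,\Te,T)\,s^{\al-\ka}\,\dd\,\|h_1-h_2\|_{C^{4,1,\ga}}.
\end{equation*}
Choosing $\al := \ka+1$, which depends only on $\ga$ and $n$, ensures $s^{\al-\ka} \leq s_0 \leq 1$; then taking $\dd = \dd(g,n,\te,\Te,T)$ small enough that $C\dd \leq \tfrac12$ produces the contraction constant $\tfrac12$. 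The main obstacle I expect is the H\"older bookkeeping in the middle step: Lemmas \ref{lem010} and \ref{lem011} only state pointwise bounds, so one must carefully re-derive each inequality at the level of the parabolic $C^{0,0,\ga}$ norm, tracking the interplay between H\"older semi-norms and sup-norms for every factor of $g_\varphi^{i\bar j}$, $\Na g_\varphi$, $\Na^2 g_\varphi$ and the Ricci tensor appearing in $\Delta^2_\varphi$, while keeping the constants explicitly uniform under the $C^4$-closeness condition \eqref{eq:A021}.
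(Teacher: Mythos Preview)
Your proposal is correct and follows essentially the same route as the paper's proof: the same integral representation of $\Psi_f(h_1)-\Psi_f(h_2)$ along the segment $h_\tau$, the same use of Lemmas \ref{lem010}--\ref{lem011} to control $(DL_s|_{h_\tau}-DL_s|_{\td\varphi})$ in $C^{0,0,\ga}$, and the same application of Lemma \ref{lem004} for the $s^{-\ka}$ bound on the inverse. The only cosmetic difference is that the paper takes $\al=\ka(\ga,n)$ (so $s^{\al-\ka}=1$) rather than your $\al=\ka+1$, and then chooses $\dd$ with $C(\te,\Te,n,g,T)\dd<\tfrac12$; both choices work equally well.
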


\begin{proof}  Let $h_{\xi}=\xi h_1+(1-\xi)h_2$. Then
\beqn &&
\Psi_f(h_1)-\Psi_f(h_2)=\int_0^1\,\pd {}{\xi}\Psi_f(h_{\xi})\,d\xi\no\\
&=&h_1-h_2-\int_0^1\,(DL_s|_{\td \varphi})^{-1}DL_s|_{h_s}(h_1-h_2)\,d\xi\no\\
&=&-\int_0^1\,(DL_s|_{\td \varphi})^{-1}\Big(DL_s|_{h_s}-DL_s|_{\td \varphi}\Big)(h_1-h_2)\,d\xi.\label{eq:A049a}
\eeqn Let $v=h_1-h_2$.  Note that
\beqn
\Big(DL_s|_{h_s}-DL_s|_{\td \varphi}\Big)v&=&
s\Big(\Delta^2_{h_s}-\Delta^2_{\td \varphi}\Big)v+s\Big(Ric_{h_s, i\bar j
}v_{ j\bar i}-Ric_{\td\varphi, i\bar j
}v_{ j\bar i}\Big)\no\\&&-(1-s)\Big(g_{h_s}^{i\bar l}g_{h_s}^{k\bar j}g_{i\bar j}v_{ k\bar l}-g_{\td\varphi}^{i\bar l}g_{\td\varphi}^{k\bar j}g_{i\bar j}v_{ k\bar l}\Big).\no
\eeqn
By Lemma \ref{lem010} and Lemma \ref{lem011}, we have
\beqn &&
\Big|\Big(DL_s|_{h_s}-DL_s|_{\td \varphi}\Big)v\Big|\no\\&\leq &
s\Big|\Delta^2_{h_s}v-\Delta^2_{\td \varphi}v\Big|+s\Big|Ric_{h_s, i\bar j
}v_{ j\bar i}-Ric_{\td\varphi, i\bar j
}v_{ j\bar i}\Big|+(1-s)\Big|g_{h_s}^{i\bar l}g_{h_s}^{k\bar j}g_{i\bar j}v_{ k\bar l}-g_{\td\varphi}^{i\bar l}g_{\td\varphi}^{k\bar j}g_{i\bar j}v_{ k\bar l}\Big|\no\\
&\leq&s C(\te, \Te)\Big(|\Na^2v|\cdot |\Na^4(h_s-\td \varphi)|
+|\Na^2v|\cdot |\Na^3(h_s-\td \varphi)|+|\Na^2v|\cdot |\Na^2(h_s-\td \varphi)|\no\\
&&+|\Na^3v|\cdot |\Na^3(h_s-\td \varphi)|+|\Na^3v|\cdot|\Na^2(h_s-\td \varphi)|+|\Na^4v|\cdot |\Na^2(h_s-\td \varphi)|\Big)\no\\
&&+(1-s)C(\te, \Te)|\Na^2v|\cdot |\Na^2(h_s-\td \varphi)|.\label{eq:A019}
\eeqn
Therefore, we have
\beqn &&
\Big\|\Big(DL_s|_{h_s}-DL_s|_{\td \varphi}\Big)v\Big\|_{C^{0, 0, \ga}(M\times [0, T], g)}\no\\ &\leq&
s C(\te, \Te)\Big(\|\Na^2v\|_{C^{0, 0, \ga}(M\times [0, T], g)}\cdot \|\Na^4(h_s-\td \varphi)\|_{C^{0, 0, \ga}(M\times [0, T], g)}\no\\&&
+\|\Na^2v\|_{C^{0, 0, \ga}(M\times [0, T], g)}\cdot \|\Na^3(h_s-\td \varphi)\|_{C^{0, 0, \ga}(M\times [0, T], g)}\no\\&&+\|\Na^2v\|_{C^{0, 0, \ga}(M\times [0, T], g)}\cdot \|\Na^2(h_s-\td \varphi)\|_{C^{0, 0, \ga}(M\times [0, T], g)}\no\\
&&+\|\Na^3v\|_{C^{0, 0, \ga}(M\times [0, T], g)}\cdot \|\Na^3(h_s-\td \varphi)\|_{C^{0, 0, \ga}(M\times [0, T], g)}\no\\&&+\|\Na^3v\|_{C^{0, 0, \ga}(M\times [0, T], g)}\cdot\|\Na^2(h_s-\td \varphi)\|_{C^{0, 0, \ga}(M\times [0, T], g)}\no\\&&+\|\Na^4v\|_{C^{0, 0, \ga}(M\times [0, T], g)}\cdot \|\Na^2(h_s-\td \varphi)\|_{C^{0, 0, \ga}(M\times [0, T], g)}\Big)\no\\
&&+(1-s)C(\te, \Te)\|\Na^2v\|_{C^{0, 0, \ga}(M\times [0, T], g)}\cdot \|\Na^2(h_s-\td \varphi)\|_{C^{0, 0, \ga}(M\times [0, T], g)}\no\\
&\leq& C(\te, \Te)\|v\|_{C^{4, 1, \ga}(M\times [0, T], g)}\cdot \|h_s-\td \varphi\|_{C^{4, 1, \ga}(M\times [0, T], g)}. \label{eq:A020}
\eeqn
Combining Lemma \ref{lem004} with (\ref{eq:A021}) and (\ref{eq:A020}),  we have
\beqn &&
\Big\|(DL_s|_{\td \varphi})^{-1}\Big(DL_s|_{h_s}-DL_s|_{\td \varphi}\Big)(h_1-h_2)\Big\|_{C^{4, 1, \ga}(M\times [0, T], g)}\no\\
&\leq& C(\te, \Te, n, g, T) s^{-\ka(\ga, n)} \Big\|\Big(DL_s|_{h_s}-DL_s|_{\td \varphi}\Big)(h_1-h_2)\Big\|_{C^{0, 0, \ga}(M\times [0, T], g)}\no\\
&\leq& C(\te, \Te, n, g, T) s^{-\ka(\ga, n)} \|h_s-\td \varphi\|_{C^{4, 1, \ga}(M\times [0, T], g)}\cdot  \|h_1-h_2\|_{C^{4, 1, \ga}(M\times [0, T], g)}\no\\
&\leq&C(\te, \Te, n, g, T)\dd s^{\al-\ka(\ga, n)}  \|h_1-h_2\|_{C^{4, 1, \ga}(M\times [0, T], g)}. \label{eq:A022z}
\eeqn
Let $\al=\ka(\ga, n)$ and we choose $\dd$ such that $C(\te, \Te, n, g, T)\dd<\frac 12$. By (\ref{eq:A022z}) and (\ref{eq:A049a}) we have
\beq
\|\Psi_f(h_1)-\Psi_f(h_2)\|_{C^{4, 1, \ga}(M\times [0, T], g)}\leq \frac 12 \|h_1-h_2\|_{C^{4, 1, \ga}(M\times [0, T], g)}.\no
\eeq
The lemma is proved.

\end{proof}

\subsection{Proof of Theorem \ref{theo:001a} } \label{sec3}
In this subsection, we prove Theorem \ref{theo:001a}. The proof is  divided into several steps.

(1). Fix $\psi_0\in \cH(\oo_g).$
  We denote by $\varphi_0(x, t)(t\in [0, \infty))$ the solution of $J$-flow with the initial data $\psi_0,$ i.e. $L_0(\varphi(x, t))=0$.   By Lemma \ref{lem007}, for any integer $N>0$ there exists $\{u_{j}(x, t)\}_{j=1}^N$ on $M\times [0, T]$ such that the function $\td \varphi:=\td \varphi_{N, s}(x, t) $ defined by (\ref{eq:A026}) satisfies the properties
\beqn
\oo_{\td \varphi}\geq \te\oo_g, \quad \|\td \varphi\|_{C^{N'}(M\times [0, T], g)}&\leq& \Te,\\
\|L_s(\td \varphi_{N, s}(x, t))\|_{C^{0, 0, \ga}(M\times [0, T], g)}&\leq& C(\te, \Te, T)s^{N+1}, \label{eq:A210}\\
\|\td \varphi_{N, s}-\varphi_0(x, t)\|_{C^{4+\ga}(M\times [0, T], g)}&\leq&C(\te,
\Te, T)s. \label{eq:A031}
\eeqn Moreover, by the construction of $\td \varphi_{N, s}$ in Lemma  \ref{lem007} we have that $\td \varphi_{N, s}(x, 0)=\varphi_0(x, 0) =\psi_0$.

(2). By Lemma \ref{lem004}, there exists $s_0(\te, \Te)>0$ such that for any $s\in (0, s_0)$,
\beq
DL_s|_{\td \varphi}:  C^{4, 1, \ga}(M\times [0, T], g, 0)\ri C^{0, 0, \ga}(M\times [0, T], g).\no
\eeq
is injective and surjective. Moreover, we have
\beq
\|(DL_s|_{\td \varphi})^{-1}\|\leq C(\te, \Te, n, g, T) s^{-\ka(\ga, n)}.\no
\eeq where $\ka(\ga, n)>1$.
By Lemma \ref{lem013}, for any $f\in C^{0, 0, \ga}(M\times [0, T], g)$ the map \beqn\Psi_f :  C^{4, 1, \ga}(M\times [0, T], g, \psi_0)&\ri& C^{4, 1, \ga}(M\times [0, T], g, \psi_0)\no\\
  \varphi&\ri& \varphi+(DL_s|_{\td \varphi})^{-1}(f-L_s(\varphi)).\no
\eeqn is a contraction and satisfies
\beq
\|\Psi_f(h_1)-\Psi_f(h_2)\|_{C^{4, 1, \ga}(M\times [0, T], g)}\leq \frac 12 \|h_1-h_2\|_{C^{4, 1, \ga}(M\times [0, T], g)}, \no
\eeq where $h_1(x, t) $ and $ h_2(x, t)$ satisfy
\beq
\|h_1(x, t)-\td \varphi\|_{C^{4, 1, \ga}(M\times [0, T], g)}\leq s^{\al}\dd, \quad \|h_2(x, t)-\td \varphi\|_{C^{4, 1, \ga}(M\times [0, T], g)}\leq s^{\al}\dd. \label{eq:A029z}
\eeq  Here $\al=\al(\ga, n)>1$ and $\dd=\dd(g, n, \te, \Te, T)>0$. \\

(3).
We next show that    for any $s\in (0, s_0)$, if $v\in C^{0, 0, \ga}(M\times [0, T], g)$ satisfying \beq \|v-L_s(\td \varphi)\|_{C^{0, 0, \ga}(M\times [0, T], g)}\leq \eta s^{\bb} \label{eq:A030y}\eeq for some $\bb(\ga, n)>0$,  we can find $\varphi\in  C^{4, 1, \ga}(M\times [0, T], g, \psi_0)$ with $L_s(\varphi)=v. $
For any $k\in \NN$ we define
\beq
\varphi_k=\Psi_v^{k-1}(\td \varphi).\no
\eeq
Then $\varphi_1=\td \varphi$ and  we have
\beqs
\|\varphi_2-\varphi_1\|_{C^{4, 1, \ga}(M\times [0, T], g)}&=&
\|\varphi_2-\td \varphi\|_{C^{4, 1, \ga}(M\times [0, T], g)}\\&=&\|(DL_s|_{\td \varphi})^{-1}(v-L_s(\td \varphi))\|_{C^{4, 1, \ga}(M\times [0, T], g)}\\
&\leq&C(\te, \Te, n, g, T) s^{-\ka}\|v-L_s(\td \varphi)\|_{C^{0, 0, \ga}(M\times [0, T], g)}\\
&\leq &C(\te, \Te, n, g, T) \eta s^{\bb-\ka}\leq\frac 12 \dd s^{\al},
\eeqs where we choose
$
\bb-\ka=\al
$ and $\eta$ satisfying $C(\te, \Te, n, g, T) \eta<\frac 12 \dd. $
We claim that for any $k\geq 1$ we have
\beq
\|\varphi_{k+1}-\varphi_{k}\|_{C^{4, 1, \ga}(M\times [0, T], g)}\leq \frac {1}{2^{k}}\dd s^{\al}. \label{eq:A028z}
\eeq
By induction, we assume that for $k=1, 2,\cdots, j-1$ the inequality (\ref{eq:A028z}) holds. Then for $k=j$ we have
\beq
\|\varphi_{j}-\varphi_1\|_{C^{4, 1, \ga}(M\times [0, T], g)}\leq\sum_{i=1}^{j-1}\,
\|\varphi_{i+1}-\varphi_i\|_{C^{4, 1, \ga}(M\times [0, T], g)}\leq\sum_{i=1}^{j-1}\,
\frac 1{2^{i}} \dd s^{\al}\leq \dd s^{\al}. \label{eq:H032}
\eeq
Thus, $\varphi_j$ satisfies the condition (\ref{eq:A029z}) and we have
\beqn &&
\|\varphi_{j+1}-\varphi_j\|_{C^{4, 1, \ga}(M\times [0, T], g)}=\|\Psi(\varphi_j)-\Psi(\varphi_{j-1})\|_{C^{4, 1, \ga}(M\times [0, T], g)}\no\\&\leq &
\frac 12 \|\varphi_j-\varphi_{j-1}\|_{C^{4, 1, \ga}(M\times [0, T], g)}\leq \frac 1{2^j} \dd s^{\al}.\no
\eeqn Thus, (\ref{eq:A028z}) is proved. Therefore, $\varphi_k$ converges to a limit
function $\varphi_{\infty}$ in $C^{4, 1, \ga}(M\times [0, T], g, \psi_0)$ and $L_{s}(\varphi_{\infty})=v$. By (\ref{eq:H032}), we have
\beq
\|\varphi_{\infty}-\td \varphi\|_{C^{4, 1, \ga}(M\times [0, T], g)}\leq \dd s^{\al}. \label{eq:A031a}
\eeq

(4).
 By (\ref{eq:A210}), for any  given integer $N>0$ the function $\td \varphi:=\td \varphi_{N, s}$ satisfies 
\beq
\|L_s(\td \varphi)\|\leq C(\te, \Te, T) s^N.\no
\eeq Taking $N>\bb(\ga, n)$, the function $v=0$ satisfies the condition  (\ref{eq:A030y}). Therefore, we can find $\varphi_{\infty}(x, t)\in C^{4, 1, \ga}(M\times [0, T], g, \psi_0)$ such that
$$L_s(\varphi_{\infty})=0.$$
Moreover, (\ref{eq:A031}) and (\ref{eq:A031a}) imply that
\beqn
\|\varphi_{\infty}-\varphi_0\|_{C^{4, 1, \ga}(M\times [0, T], g)}&\leq&
\|\varphi_{\infty}-\td \varphi\|_{C^{4, 1, \ga}(M\times [0, T], g)}+\|\varphi_0-\td \varphi\|_{C^{4, 1, \ga}(M\times [0, T], g)}\no\\
&\leq&\dd(g, n, \te, \Te, T) s^{\al}+C(\te,
\Te, T)s \no\\&\leq& C(g, n, \te,
\Te, T)s, \no
\eeqn where we used  $\al>1$ in the last inequality.
The theorem is proved.

\section{Stability of twisted Calabi flow for small $s$}
In this section, we prove
\begin{theo}\label{theo:002}
Let $(M, g)$ be a compact K\"ahler manifold with a cscK metric
$\oo_g$.  There exist $\dd>0 $ and $ s_0>0$ such that for any $\psi_0\in \cH(\oo_g)$  satisfying
\beq
\|\psi_0\|_{C^{4}(M, g)}\leq \dd,\no
\eeq the twisted Calabi flow (\ref{eq:000}) for any $s\in (0, s_0)$ with the initial K\"ahler potential $\psi_0$ exists for all time and converges exponentially to the cscK metric $\oo_g$.

\end{theo}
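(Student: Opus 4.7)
\medskip

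\emph{Proof plan.} The strategy mirrors the finite-time proof of Theorem~\ref{theo:001a}, but now carried out in an exponentially weighted parabolic H\"older space so that the estimates remain uniform as $T\to\infty$. The plan is to (i) use the stability of the $J$-flow at the critical point $\varphi=0$ to obtain exponential decay of $\varphi_0(x,t)$ when $\|\psi_0\|_{C^4}$ is small, (ii) build a high-order approximate solution $\td\varphi_{N,s}$ that also decays exponentially, (iii) modify $L_s$ by a correction term $c_s(\varphi,\td\varphi_0)$ that is constant in $t$ so that the modified operator $\td L_s$ preserves exponentially decaying functions, (iv) invert $D\td L_s|_{\td\varphi}$ on the weighted space, and (v) run a contraction-mapping argument analogous to Lemma~\ref{lem013}. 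Throughout, I would fix $\mu>0$ small and work in the space $C^{k,[k/4],\ga}_{\mu}(M\times[0,\infty),g)$ of functions $u$ with $\|e^{\mu t}u\|_{C^{k,[k/4],\ga}}<\infty$, adapting the Schauder and $L^2$ estimates from Section~3 and the appendix to this weight.

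\medskip

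\emph{Step 1: exponential decay of the $J$-flow and of $\td\varphi_{N,s}$.} Since $\oo_g$ is cscK, the linearization of $n-\tr_{\varphi}\oo_g$ at $\varphi=0$ equals $g^{i\bar l}g^{k\bar j}g_{i\bar j}u_{k\bar l}$, which is a strictly elliptic self-adjoint operator with kernel only the constants. For $\|\psi_0\|_{C^4}\leq\dd_0$ with $\dd_0$ small, a standard linearization-plus-bootstrap argument (normalizing $\int_M\varphi_0\,\oo_g^n$) gives
\[
\|\varphi_0(\cdot,t)\|_{C^{N'+\ga}(M,g)}\leq C\,\|\psi_0\|_{C^4}\,e^{-\la t}
\]
for some $\la=\la(g)>0$. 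Plugging this into the equations (\ref{eq:A201ya})-(\ref{eq:A201y}) that define the correctors $u_j$, and using Lemma~\ref{lem:A204} together with parabolic Schauder estimates in exponentially weighted norms, I would show each $u_j(x,t)$ decays like $e^{-\la t}$ with constants depending only on $\te,\Te$. Consequently $\td\varphi_{N,s}$ lies in the weighted space with uniform bounds, and the error $L_s(\td\varphi_{N,s})$ lies in $C^{0,0,\ga}_\mu$ with norm $\leq C s^{N+1}$ for any preassigned $N$.

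\medskip

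\emph{Step 2: modification of the operator and invertibility.} The difficulty is that $L_s(\varphi)$ at a generic exponentially decaying $\varphi$ contains a time-independent obstruction coming from the projection onto constants (essentially the normalization $\int_M(R-\un R)\oo_\varphi^n=0$ is automatic but the $J$-piece is not). To kill this obstruction, define
\[
\td L_s(\varphi)=L_s(\varphi)-c_s(\varphi,\td\varphi_0),
\]
where $c_s(\varphi,\td\varphi_0)$ is a function of $x$ alone chosen so that, for every input $\varphi$ in the weighted space with $\varphi(\cdot,0)=\psi_0$, the image $\td L_s(\varphi)$ actually lies in $C^{0,0,\ga}_\mu$; concretely, $c_s$ should be the $t$-independent limiting profile of $L_s(\varphi)$ determined by the cscK condition on $\oo_g$. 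Then repeat the $L^2$ estimate of Lemma~\ref{lem:A004} on $[0,\infty)$ with the weight $e^{2\mu\tau}$: the new damping term $\mu$ absorbs the growth factor $e^{C(\te,\Te)s^{-1}\tau}$ provided $s$ is chosen so that $\mu>C(\te,\Te)s^{-1}$ after rescaling in $\tau=st$, i.e.\ once $\mu$ is replaced by the natural weight for the fourth-order problem. Combined with the Schauder estimate in weighted norms (Theorem~\ref{theo:Me} applied on each slab $M\times[k,k+1]$), this yields
\[
\|(D\td L_s|_{\td\varphi})^{-1}\|_{C^{0,0,\ga}_\mu\to C^{4,1,\ga}_\mu}\leq C(\te,\Te,n,g)\,s^{-\ka}.
\]

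\medskip

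\emph{Step 3: contraction and removal of the correction.} The quadratic estimates of Lemma~\ref{lem010} and Lemma~\ref{lem011} are scale-invariant and therefore transfer verbatim to the weighted norms. As in Lemma~\ref{lem013}, the map
\[
\td\Psi_f(\varphi)=\varphi+(D\td L_s|_{\td\varphi})^{-1}\bigl(f-\td L_s(\varphi)\bigr)
\]
is a $\tfrac12$-contraction on the ball of radius $\dd s^{\al}$ around $\td\varphi$ in $C^{4,1,\ga}_\mu$. Applying it with $f=0$ and iterating from $\td\varphi$, whose image $\td L_s(\td\varphi)$ is of order $s^{N+1}$ by Step 1, produces a fixed point $\varphi_\infty\in C^{4,1,\ga}_\mu(M\times[0,\infty),g,\psi_0)$ solving $\td L_s(\varphi_\infty)=0$. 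Because $\varphi_\infty\in C^{4,1,\ga}_\mu$ decays exponentially and $c_s$ is $t$-independent, taking $t\to\infty$ in the equation forces $c_s(\varphi_\infty,\td\varphi_0)=0$, hence $L_s(\varphi_\infty)=0$. Exponential convergence of $\oo_{\varphi_\infty(\cdot,t)}$ to $\oo_g$ is immediate from membership in $C^{4,1,\ga}_\mu$; uniqueness and smoothness follow from standard parabolic regularity.

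\medskip

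\emph{Main obstacle.} The delicate point is Step 2: showing that $D\td L_s|_{\td\varphi}$ is invertible on exponentially weighted H\"older spaces with a bound that blows up only polynomially in $s^{-1}$. The parabolic $L^2$ estimate of Lemma~\ref{lem:A004} gives a prefactor $e^{C s^{-1}T}$ which is worthless on $[0,\infty)$; overcoming this requires that the linearization at $0$ of the elliptic part of $\td L_s$ be coercive on the $L^2$-orthogonal complement of $\ker(D\td L_s|_0)$ by an amount independent of $s$, and that the choice of $c_s$ exactly projects out this kernel. Verifying this uniform spectral gap as $s\to 0$, using the cscK assumption on $\oo_g$ to handle the $s(R-\un R)$ piece and the linearized $J$-equation at $0$ to handle the $(1-s)$ piece, is where the proof must be executed most carefully.
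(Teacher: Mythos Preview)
Your overall architecture matches the paper's, but Step~2 contains a genuine gap, and it is precisely the point you flag as the main obstacle.

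First, the correction term. You describe $c_s$ as ``a function of $x$ alone'' and ``$t$-independent''. This is backwards. In the paper the modification is
\[
\td L_s(\psi)=\pd{\psi}{t}-A_s(\psi)+\frac{1}{V}\int_M\Big(A_s(\psi)+\psi\,\Delta_{\td\varphi}\pd{\td\varphi}{t}\Big)\,\oo_{\td\varphi}^n,
\]
so the added term is a function of $t$ only: it is exactly what one must add to keep $\int_M\psi\,\oo_{\td\varphi}^n\equiv 0$. (The phrase ``constant in $t$'' in the introduction should be read as ``for each $t$, a spatial constant''.) Your proposed $c_s$, being the limiting profile of $L_s(\varphi)$ as $t\to\infty$, would vanish identically since $\oo_g$ is cscK and $L_s(0)=0$; thus $\td L_s=L_s$ and nothing has been gained. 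The purpose of the correction is not to make $\td L_s(\varphi)$ decay (it already does) but to make the \emph{linearized inverse} preserve decay, and for that one must work on the codimension-one subspace of zero-average functions.

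Second, the $L^2$ estimate. Your claim that ``the new damping term $\mu$ absorbs the growth factor $e^{C(\te,\Te)s^{-1}\tau}$ provided $\mu>C(\te,\Te)s^{-1}$'' cannot hold for small $s$: the right-hand side blows up while $\mu$ is fixed. Weighting by $e^{\mu\tau}$ adds a \emph{positive} term $+\mu z$ to the equation for $z=e^{\mu\tau}w$, making matters worse, not better. The actual mechanism in the paper (Lemma~\ref{lem:B006}) is that once $w$ has zero spatial average, the Poincar\'e inequality converts the term $2s^{-1}\int a_{l\bar k}w_{k\bar l}w$ into $-\la s^{-1}\int w^2$ with $\la>0$, reversing the sign of the dangerous coefficient. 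One then gets $E'(\tau)\le -\tfrac{\la}{2}s^{-1}E(\tau)+s^{-1}\|f\|_{L^2}$, which integrates to a bound uniform on $[0,\infty)$ with no exponential prefactor at all. This sign reversal via Poincar\'e is the heart of Step~2 and is missing from your proposal; once it is in place, the rest of your Steps~1 and~3 go through essentially as written (and the a~posteriori argument ``$c_s=0$'' in your Step~3 becomes unnecessary: one simply adds back a suitable function of $t$ to pass from the zero-average solution $\psi$ of $\td L_s(\psi)=0$ to a solution $\varphi=\psi+C(t)$ of $L_s(\varphi)=0$).
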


Theorem \ref{theo:002} shows that there exists a uniform neighbourhood near the cscK metric such that the twisted Calabi flow for any small $s$ with initial data in the neighbourhood will exist for all time and converge smoothly. Note that in
\cite{[ChenHe1]} Chen-He showed that there exists a   neighbourhood near a cscK metric such that the  Calabi flow   with initial data in the neighbourhood will exist for all time and converge smoothly, and in \cite{[HuangZheng]} Huang-Zheng proved similar stability results for Calabi flow near extremal K\"ahler metrics.
 By using Chen-He's method, the authors in \cite{[HL]} show similar stability theorem for twisted Calabi flow for fixed $s\in (0, 1)$. However, the neighbourhood of the initial K\"ahler potentials will shrink to a point as $s\ri 0$ so Chen-He's argument doesn't work for Theorem \ref{theo:002}.

The proof of Theorem \ref{theo:002} is similar to that of Theorem \ref{theo:001a}.
Instead of using usual H\"older spaces, we introduce   weighted H\"older spaces with functions decaying exponentially. We show the linearized operator of twisted Calabi flow is invertible between weighted H\"older spaces and we construct a contraction map between weighted H\"older spaces, which implies the existence of twisted Calabi flow by the fixed point theory.

\subsection{The exponential decay of $J$-flow}

Consider the $J$-flow
\beq
\pd {\varphi}t=n-\tr_{\varphi}\oo_g,\quad \varphi(0)=\psi_0. \label{eq:B001z}
\eeq We normalize $\varphi(x, t)$ such that $I(\varphi)=0$, where the functional $I(\varphi)$ is defined by
\beq
I(\varphi)=\int_0^1\,\int_M\;\pd {\psi_t}t\,\oo_{\psi_t}^ndt.\no
\eeq Here $\psi_t\in \cH(\oo_g)(t\in [0, 1])$ is a smooth path connecting $0$ and $\varphi$. It is showed by Chen \cite{[Chen-Mabuchi]}  that the $J$ functional
\beq
J(\varphi)=-\int_0^1\,\int_M\;\pd {\psi_t}t\,\oo_g\wedge \oo_{\psi_t}^n\no
\eeq
is convex in any $C^{1, 1}$ geodesic and $J$ has at most one critical point in $\cH_0(\oo_g):=\cH(\oo_g)\cap\{I(\varphi)=0\}$.
Since the equation $\tr_{\varphi}\oo_g=n$ has a solution $\varphi=0$ in $\cH_0(\oo_g)$,  the solution  of the flow (\ref{eq:B001z}) on $M\times [0, \infty)$, which we denote by $\varphi_0(x, t)$,   exists for all time and converges smoothly to $0$ by Weinkove \cite{[Wenk2]}. Therefore, for any $\ee_0>0$ there exists $t_1>0$ such that
\beq
\|\varphi_0(x, t)\|_{C^k(M, \oo_g)}\leq \ee_0,\quad \forall\; t\geq t_1. \no
\eeq

The next result gives some conditions which will be used to show the exponential decay of solutions of second-order parabolic equations.

\begin{lem}\label{lem:B004a}Let $ k>0$ be an integer and $\ga\in (0, 1), \La_1, \La_2>0$. There exist integer $N(k)\geq 4$,  and constants $\ee_0(g)>0, \eta_0(g)>0$ satisfying the following properties.      Suppose that
\begin{enumerate}
  \item[(1).] $\varphi_0(x, t)(t\in [0, \infty))$ is a family of K\"ahler potentials satisfying
      \beq
      \|\varphi_0(x, t)\|_{C^N(M\times [0, \infty), \oo_g)}\leq \ee_0, \label{eq:A101}
      \eeq

\item[(2).] The function $h(x, t)$ satisfies the condition for $\eta\in (0, \eta_0)$
\beq
\|h(\cdot, t)\|_{C^k(M\times [t, t+1], g)}\leq \La_1 e^{-\eta t}, \quad \forall\;t\geq 0,\label{eq:A103}
\eeq
\item[(3).] $v(x, t)$ satisfies the equation
\beqn
\pd vt&=&b_{j\bar i}v_{i\bar j}+h,\quad \forall\; (x, t)\in M\times [0, \infty),\label{eq:B009}\\
v(x, 0)&=&v_0,\quad \|v_0\|_{C^k(M, g)}\leq \La_2, \label{eq:A400}
\eeqn where $b_{i\bar j}=g_{\varphi_0}^{i\bar l}g_{\varphi_0}^{k\bar j}g_{i\bar j}$.

\end{enumerate}
 Then
there exists a constant $v_{\infty}\in \RR$ such that the solution $\td v(x, t)=
v(x, t)-v_{\infty}$ of (\ref{eq:B009})  satisfies
\beq
\|\td v\|_{C^{k+\ga}(M\times [t, t+1], g)}\leq C(n, g, \eta, k, t_0)(\La_1+\La_2) e^{-\eta t}, \quad \forall \;t\geq 0.\no
\eeq

\end{lem}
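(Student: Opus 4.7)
The plan is to treat the equation $\pd vt = b_{j\bar i}v_{i\bar j}+h$ as a small $C^0$-perturbation of the heat equation $\pd vt = \DD_g v + h$ on $(M,g)$, and exploit the spectral gap of $\DD_g$. Since $\|\varphi_0\|_{C^N(M,g)}\le \ee_0$, expanding $b_{j\bar i}=g_{\varphi_0}^{i\bar l}g_{\varphi_0}^{k\bar j}g_{k\bar l}$ at $\varphi_0=0$ gives $b_{j\bar i} = g^{i\bar j} + O(\ee_0)$ in $C^{N-2}$, so the qualitative dynamics of the heat semigroup survive quantitatively. Because constants lie in the exact kernel of $b_{j\bar i}\partial_i\partial_{\bar j}$, I decompose $v(x,t) = \bar v(t) + w(x,t)$ with $\bar v(t):=V_g^{-1}\int_M v\,\oo_g^n$ and $\int_M w\,\oo_g^n = 0$, and organize the argument in three steps: (i) exponential $L^2$-decay of $w$ via Gronwall and Poincar\'e; (ii) convergence $\bar v(t)\to v_\infty$ by integrating the equation; (iii) upgrade from $L^2$ to $C^{k+\ga}$ via parabolic Schauder.

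For Step (i), because $\int_M w\,\oo_g^n = 0$ the constant-in-$x$ correction $\bar v'(t)$ is invisible in the energy identity and
\[
\frac{d}{dt}\|w\|_{L^2}^2 = 2\int_M w\bigl(b_{j\bar i}w_{i\bar j}+h\bigr)\oo_g^n.
\]
Two integrations by parts, using $b_{j\bar i}=g^{i\bar j}+O(\ee_0)$ in $C^1$, yield
\[
\int_M w\, b_{j\bar i}w_{i\bar j}\oo_g^n \le -(1-C\ee_0)\|\Na w\|_{L^2}^2 + C\ee_0\|w\|_{L^2}\|\Na w\|_{L^2},
\]
and absorbing the cross term, combined with the Poincar\'e inequality $\|\Na w\|_{L^2}^2\ge \la_1\|w\|_{L^2}^2$ valid on mean-zero functions, gives $\tfrac{d}{dt}\|w\|_{L^2}^2\le -\tfrac{\la_1}{6}\|w\|_{L^2}^2 + C\|h\|_{L^2}^2$ once $\ee_0$ is fixed sufficiently small depending only on $(M,g)$. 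I would set $\eta_0 := \la_1/12$; for every $\eta<\eta_0$, Gronwall combined with $\|h(\cdot,t)\|_{L^2}\le C\La_1 e^{-\eta t}$ and $\|w(\cdot,0)\|_{L^2}\le C\La_2$ yields $\|w(\cdot,t)\|_{L^2}\le C(\La_1+\La_2)e^{-\eta t}$. For Step (ii), averaging the equation,
\[
\bar v'(t)=V_g^{-1}\int_M \bigl(b^{i\bar j}-g^{i\bar j}\bigr)w_{i\bar j}\oo_g^n+\bar h(t),
\]
since $\int_M g^{i\bar j}w_{i\bar j}\oo_g^n=\int_M\DD_g w\,\oo_g^n=0$. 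Two more integrations by parts shift the derivatives onto $b-g=O(\ee_0)$ in $C^2$ (here a modest $N(k)\ge 6$ is already enough), so $|\bar v'(t)|\le C(\La_1+\La_2)e^{-\eta t}$. Hence $v_\infty := \bar v(0)+\int_0^\infty \bar v'(s)\,ds$ is well-defined and $|\bar v(t)-v_\infty|\le C(\La_1+\La_2)e^{-\eta t}/\eta$, and together $\td v := v-v_\infty = w+(\bar v-v_\infty)$ satisfies $\|\td v(\cdot,t)\|_{L^2}\le C(\La_1+\La_2)e^{-\eta t}$.

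For Step (iii), $\td v$ solves the same linear equation $\pd {\td v}t = b_{j\bar i}\td v_{i\bar j}+h$, so on each slab the Lieberman interior-in-time Schauder estimate for second-order parabolic equations recalled in the appendix gives
\[
\|\td v\|_{C^{k+\ga}(M\times[t,t+1],g)}\le C\bigl(\|\td v\|_{L^\infty(M\times[t-1,t+1])}+\|h\|_{C^{k-2+\ga}(M\times[t-1,t+1])}\bigr),
\]
with Schauder constant depending on $\|b_{j\bar i}\|_{C^{k-1+\ga}}$, hence on $\|\varphi_0\|_{C^{N(k)}}$ once $N(k)$ is taken large enough. A standard $L^2\to L^\infty$ Moser iteration on $M\times[t-1,t+1]$ promotes the $L^2$-decay of $\td v$ to the same $e^{-\eta t}$-decay in $L^\infty$; plugging this and the $C^k$-decay of $h$ into the Schauder bound delivers the claimed exponential decay in $C^{k+\ga}$. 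I expect the main technical obstacle to be bookkeeping: tracking how $N(k)$ must grow with $k$ so that the coefficients enjoy enough classical regularity for the Schauder and Moser steps to close, and verifying that $\ee_0,\eta_0$ end up depending only on $(M,g)$ while the final constant absorbs the remaining dependence on $n,g,\eta,k,t_0$. The Moser step itself, for the non-self-adjoint, time-dependent but uniformly elliptic operator $b_{j\bar i}\partial_i\partial_{\bar j}$, is routine given the uniform coefficient bounds from hypothesis~(1).
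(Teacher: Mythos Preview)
Your proof is correct and follows essentially the same strategy as the paper's: decompose $v$ into its spatial average plus a mean-zero part, obtain exponential $L^2$ decay of the latter via integration by parts, Poincar\'e, and Gr\"onwall, integrate the averaged equation to produce $v_\infty$, then upgrade to $C^{k+\gamma}$ via Moser iteration and parabolic Schauder on unit slabs. The only noteworthy differences are cosmetic: the paper takes the average with respect to the time-varying volume form $\oo_{\varphi_0}^n$ rather than the fixed $\oo_g^n$, and it bootstraps to higher regularity by deriving the evolution equation for $|\Na v|_{\varphi_0}^2$ and iterating (a Bernstein-type argument) rather than directly invoking the higher-order interior Schauder estimate as you do.
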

\begin{proof}(1). By the assumption (\ref{eq:A101}), $b_{i\bar j}=g_{\varphi_0}^{i\bar l}g_{\varphi_0}^{k\bar j}(g_{\varphi_0, i\bar j}-\varphi_{0, i\bar j})$ satisfies
 \beq
b_{i\bar j}\geq \te g_{\varphi_0, i\bar j},\quad |\Na_{\varphi_0}b_{i\bar j}|\leq |\Na^3\varphi_0|\leq \ee_0, \label{eq:A300}
\eeq where $\te>0.$
  Let $\un v(t)=\int_M\;v\,\oo_{\varphi_0}^n$ and $\td v=v-\un v(t).$ Using (\ref{eq:A101}) and (\ref{eq:A300}), we have
\beqn
\frac d{dt}\int_M\;\td v^2\,\oo_{\varphi_0}^n
&=& \int_M\;\Big(2\td vb_{l\bar k}\td v_{k\bar l}+2\td vh+\td v^2\Delta_{\varphi_0}\pd {\varphi_0}t\Big)\,\oo_{\varphi_0}^n\no\\
&\leq&\int_M\;\Big(-2b_{l\bar k}\td v_{\bar l}\td v_k-2b_{l\bar k, \bar l}\td v\td v_{k}+2\td vh\Big)\,\oo_{\varphi_0}^n+\ee_0\int_M\;\td v^2\,\oo_{\varphi_0}^n\no\\
&\leq&-2\te\int_M\,|\Na \td v|^2\,\oo_{\varphi_0}^n+\ee_0\int_M\;|\Na \td v||\td v|\,\,\oo_{\varphi_0}^n\no\\
&&+2\|h\|_{L^2(M, \oo_{\varphi_0})}\|\td v\|_{L^2(M, \oo_{\varphi_0})}+\ee_0\int_M\;\td v^2\,\oo_{\varphi_0}^n. \label{eq:A301}
\eeqn
By the assumption (\ref{eq:A101}), we have the Poincar\'e inequality
\beq \int_M\;|\Na \td v|^2\,\oo_{\varphi_0}^n\geq \la \int_M\;| \td v|^2\,\oo_{\varphi_0}^n, \label{eq:A302}
\eeq where the constant $\la=\la(g)>0$. Therefore,  (\ref{eq:A301}) and (\ref{eq:A302}) imply that
\beqn
\frac d{dt}\int_M\;\td v^2\,\oo_{\varphi_0}^n&\leq& -(2\te-\frac 12\ee_0)\int_M\;|\Na \td v|^2\,\oo_{\varphi_0}^n+2\ee_0\int_M\;\td v^2\,\oo_{\varphi_0}^n\no\\&&+2\|h\|_{L^2(M, \oo_{\varphi_0})}\|\td v\|_{L^2(M, \oo_{\varphi_0})}\no\\
&\leq&-(2\la \te-C(\la)\ee_0)\int_M\;\td v^2\,\oo_{\varphi_0}^n+2\|h\|_{L^2(M, \oo_{\varphi_0})}\|\td v\|_{L^2(M, \oo_{\varphi_0})}.\no\\\label{eq:A102}
\eeqn
We  choose $\ee_0$  small such that $\eta_1:=\frac 12(2\la \te-C(\la)\ee_0)>0$. Let $E(t)=\|\td v\|_{L^2(M, \oo_{\varphi_0})}$. The inequality (\ref{eq:A102}) implies that
 \beq
E'(t)\leq -\eta_1 E(t)+\|h\|_{L^2(M, \oo_{\varphi_0})}.\no
\eeq
Combining this with (\ref{eq:A103}), we have
\beqn
E(t)&\leq& e^{-\eta_1 t}E(0)+e^{-\eta_1 t}\int_0^{t}\,e^{\eta_1 \tau}\|h(\cdot, \tau)\|_{L^2(M, \oo_{\varphi_0})}\,d\tau\no\\
&\leq&e^{-\eta_1 t}E(0)+\La_1 e^{-\eta_1 t}\int_0^{t}\,e^{(\eta_1-\eta) \tau}\,d\tau\no\\
&\leq&e^{-\eta_1 t}E(0)+\frac {\La_1}{\eta_1-\eta}e^{-\eta t}.\no
\eeqn Note that by the assumption (\ref{eq:A400}) we have
\beq
E(0)\leq \|v_0\|_{L^2(M, g)}+|\un v(0)|\leq 2\La_2.\no
\eeq
Thus, for $\eta\in (0, \eta_1)$ we have
\beq
\|\td v(\cdot, t)\|_{L^2(M, \oo_{\varphi_0})}\leq C( \eta_1, \eta)(E(0)+\La_1) e^{-\eta t}\leq C( \eta_1, \eta)(\La_1+\La_2) e^{-\eta t}.\label{eq:B033}
\eeq

(2). We estimate $\un v(t)$.  Direct calculation shows that
\beqn
\frac {d}{dt}\int_M\; v \,\oo_{\varphi_0}^n&=&
\int_M\;\Big( b_{i\bar j}v_{j\bar i}+h-v\Delta_{\varphi_0}\tr_{\varphi_0}\oo_g \Big)\,\oo_{\varphi_0}^n\no\\
&=&\int_M\;\Big( \Delta_{\varphi_0}v-\varphi_{0, i\bar j}v_{j\bar i}+h-v\Delta_{\varphi_0}\tr_{\varphi_0}(\oo_{\varphi_0}-\varphi_{0, i\bar j}) \Big)\,\oo_{\varphi_0}^n\no
\\&\leq&\int_M\;h
\,\oo_{\varphi_0}^n+C(g, \ee_0)\int_M\;|\td v|\,\oo_{\varphi_0}^n.\no
\eeqn where we used the assumption (\ref{eq:A101}) in the last inequality.
Using (\ref{eq:A103}) and (\ref{eq:B033}), we have
\beq
\Big|\frac {d}{dt}\int_M\; v \,\oo_{\varphi_0}^n\Big|\leq C( \eta_1, \eta, g, \ee_0)(\La_1+\La_2) e^{-\eta t}. \label{eq:B031}
\eeq
Since $\varphi_0(x, t)$ converges smoothly to $0$ as $t\ri +\infty$,
 there exists $v_{\infty}\in \RR$ such that   $\lim_{t\ri \infty}\int_M\; v \,\oo_{\varphi_0}^n=v_{\infty}$. Thus, (\ref{eq:B031}) implies that
\beq
|\un v(t)-v_{\infty}|\leq C( \eta_1, \eta, g, \ee_0)(\La_1+\La_2) e^{-\eta t}. \label{eq:B032}
\eeq Moreover, taking $t=0$ in (\ref{eq:B032}) and using (\ref{eq:A400}) we have
\beq
|v_{\infty}|\leq C( \eta_1, \eta, g, \ee_0)(\La_1+\La_2) e^{-\eta t}. \label{eq:I004}
\eeq
Combining (\ref{eq:B032}) with (\ref{eq:B033}), we have
\beqn
\| v(\cdot, t)-v_{\infty}\|_{L^2(M, \oo_g)}&\leq& \| \td v(\cdot, t)\|_{L^2(M, \oo_g)}+ |\un v(t)-v_{\infty}|\no\\&\leq& C( \eta_1, \eta, g, \ee_0)(\La_1+\La_2) e^{-\eta t}. \label{eq:I001}
\eeqn

(3). Let $\hat v(x, t):=v(x, t)-v_{\infty}$.   Since $\hat v(x, t)$ satisfies the equation (\ref{eq:B009}), by the maximal principle for any $t\geq 0$ we have
\beq
\|\hat v(x, t)\|_{L^{\infty}(M)}\leq \|\hat v(x, 0)\|_{L^{\infty}(M)}+\La_1 t
\leq 2\La_2+\La_1 t.
\eeq
Note that by (\ref{eq:A400}) and  (\ref{eq:I004}) $\hat v(x, 0)$ satisfies
\beq
\|\hat v(x, 0)\|_{C^{2+\ga}(M, g)}\leq \|v_0\|_{C^{2+\ga}(M, g)}+|v_{\infty}|\leq
C( \eta_1, \eta, g, \ee_0)(\La_1+\La_2).
\eeq Fix $t_1>0$.
By Theorem 4.28 of Lieberman \cite{[Lieb]}, we have
\beqn
\|\hat v\|_{C^{2+\ga}(M\times [0, t_1], g)}&\leq& C( \eta_1, \eta, g, \ee_0)\Big(\|\hat v(x, t)\|_{L^{\infty}(M\times [0, t_1])}+\|h\|_{C^{\ga}(M\times [0, t], g)}\no\\&&+\|\hat v(x, 0)\|_{C^{2+\ga}(M, g)}\Big)\no\\
&\leq&C( \eta_1, \eta, g, \ee_0, t_1)(\La_1+\La_2).  \label{eq:I002}
\eeqn
Moreover,  by Theorem \ref{theo:A4} and (\ref{eq:I001}) for $t\geq \frac {t_1}2$  we have
\beqn
\|\hat v(\cdot, t)\|_{L^{\infty}(M, g)}&\leq& C(\te, \ee_0, g, n, t_1) \|\hat v\|_{L^2(M\times [t-\frac {t_1}2, t], \oo_g)}\no\\&\leq& C(\te, \ee_0, g, n, \eta_1, \eta, t_1 )(\La_1+\La_2) e^{-\eta t}.\no
\eeqn
By Theorem \ref{theo:A3}, for $t\geq {t_1}$ we have
\beqn &&
\|\hat v\|_{C^{2+\ga}(M\times [t, t+1], g)}\no\\&\leq& C(\te, \ee_0, g, n)\Big(\max_{\tau\in [t-\frac {t_1}2, t+1]}\|\hat v(\cdot, \tau)\|_{L^{\infty}(M, g)}+\|h\|_{C^{\ga}(M\times [t-\frac {t_1}2, t+1], g)  }\Big)\no\\
&\leq& C(\te, \ee_0, g, n, \eta_1, \eta, t_1 )(\La_1+\La_2) e^{-\eta t}. \label{eq:B040z}
\eeqn
Combining (\ref{eq:I002}) with (\ref{eq:B040z}), we have  the inequality
\beq
\|\hat v\|_{C^{2+\ga}(M\times [t, t+1], g)}\leq C(\te, \ee_0, g, n, \eta_1, \eta, t_1 )(\La_1+\La_2) e^{-\eta t},\quad \forall\, t\geq 0. \label{eq:I003}
\eeq

(4).
Let $v_1=|\Na v|_{\varphi_0}^2$. We calculate the equation of $v_1$:
\beqn
\pd {v_1}t&=&v_{ i}(b_{l\bar k}v_{k\bar l}+h)_{\bar i}+v_{ \bar i}(b_{l\bar k}v_{k\bar l}+h)_{i}-\Big(\pd {\varphi_0}t\Big)_{i\bar j}v_iv_{\bar j}\no\\
&=&b_{l\bar k, \bar i}v_{k\bar l}v_{ i}+b_{l\bar k}v_{k\bar l\bar i}v_{i}+
b_{l\bar k, i}v_{k\bar l}v_{ \bar i}+b_{l\bar k}v_{k\bar li}v_{ \bar i}-\Big(\pd {\varphi_0}t\Big)_{i\bar j}v_iv_{\bar j}\no\\
&&+v_{ i}h_{\bar i}+v_{ \bar i}h_{ i}\no\\
&=&b_{l\bar k}(|\Na v|^2)_{k\bar l}-b_{l\bar k}(v_{ik}v_{\bar i\bar l}+v_{i\bar l}v_{\bar ik})-b_{l\bar k}R_{i\bar lk\bar j}v_jv_{ \bar i}+b_{l\bar k, \bar i}v_{k\bar l}v_{ i}\no\\&&+b_{l\bar k, i}v_{k\bar l}v_{ \bar i}-\Big(\pd {\varphi_0}t\Big)_{i\bar j}v_iv_{\bar j}+v_{ i}h_{\bar i}+v_{ \bar i}h_{ i}.\no
\eeqn
Therefore, we have
\beq
\pd {v_1}t= b_{l\bar k}v_{1, k\bar l}+ h_1,\no
\eeq where $h_1$ can be written as
\beq
  h_1:=\Na^2v*\Na^2v+T(\varphi_0)*\Na v*\Na v+\Na^3\varphi_0*\Na^2v*\Na v+\Na v*\Na h.\no
\eeq Here $T(\varphi_0)$ denotes a tensor depending only on $\varphi_0$.
By (\ref{eq:I003}) and the assumption (\ref{eq:A103}),  for $t\geq 0$ the function $h_1$ satisfies $\|h_1\|_{C^{\ga}(M\times [t, t+1], g)}\leq \La_1' e^{-2 \eta t}$ with $\La_1'=C(\te, \ee_0, g, n,  \eta_1, \eta)(\La_1+\La_2)^2$. Moreover, by (\ref{eq:I003}) we have
\beq
\|v_1(\cdot, t)\|_{L^{\infty}(M)}\leq \La_2'=C(\te, \ee_0, g, n, \eta_1, \eta )(\La_1+\La_2) e^{-2\eta t},\quad \forall\;t\geq 0.\no
\eeq
Following the same argument as in (2), we have
\beq
\| v_1\|_{C^{2+\ga}(M\times [t, t+1], g)}\leq C(\te, \ee_0, g, n, \eta_1, \eta, t_1)(\La_1+\La_2)^2e^{-2\eta t},\quad \forall\, t\geq 0. \no
\eeq
Similarly, we can estimate $|\Na^j v|(1\leq j\leq k)$, and we omit the details here.

\end{proof}

Using Lemma \ref{lem:B004a}, we show that any $C^k$ norm of the solution $\varphi_0$  of $J$-flow decays exponentially.

\begin{lem}\label{lem:B002} Let $k\in \NN$ and $\varphi_0(x, t)$  the solution of $J$-flow. There exists an integer $N(k)>0$ and  two constants $ \ee_0(g), \eta(g)>0$ such that if $\varphi_0(x, t)$ converges smoothly to $0$ as $t\ri +\infty$ and satisfies the condition \beq
      \|\varphi_0(x, t)\|_{C^N(M\times [0, \infty), \oo_g)}\leq \ee_0, \label{eq:A303}
      \eeq
then for $t\geq 0$ we have
\beq
\|\varphi_0(x, t)\|_{C^k(M, g)}\leq C(n, g, k,  t_0)\ee_0 e^{-\eta t}. \label{eq:B044}
\eeq
Moreover, if $\oo_g$ is a cscK metric, then for $t\geq 0$ we have
\beq
\|R(\varphi_0(x, t))-\un R\|_{C^k(M, g)}\leq C(n, g, k, t_0)\ee_0 e^{-\eta t}. \label{eq:B045}
\eeq
\end{lem}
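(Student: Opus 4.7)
The plan is to apply Lemma~\ref{lem:B004a} to the time derivative $\partial_t\varphi_0$, which satisfies a homogeneous second-order linear parabolic equation whose coefficient is precisely the $b_{l\bar k}$ of that lemma, and then integrate in $t$ to transfer the exponential decay from $\partial_t\varphi_0$ to $\varphi_0$ itself.

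First I would differentiate the $J$-flow equation $\partial_t\varphi_0 = n - \tr_{\varphi_0}\oo_g$ in $t$. Using $\partial_t g_{\varphi_0}^{i\bar j} = -g_{\varphi_0}^{i\bar l}g_{\varphi_0}^{k\bar j}(\partial_t\varphi_0)_{k\bar l}$, a direct computation gives
\[
\pd{}t(\partial_t\varphi_0) = -(\partial_t g_{\varphi_0}^{i\bar j})\,g_{i\bar j} = g_{\varphi_0}^{i\bar l}g_{\varphi_0}^{k\bar j}g_{i\bar j}\,(\partial_t\varphi_0)_{k\bar l} = b_{l\bar k}(\partial_t\varphi_0)_{k\bar l},
\]
which is exactly equation~(\ref{eq:B009}) with $v = \partial_t\varphi_0$ and $h\equiv 0$. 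The initial value $\partial_t\varphi_0(x,0) = n - \tr_{\psi_0}\oo_g$ is bounded in $C^k$ by a constant times $\ee_0$ whenever $N(k)\ge k+2$ in the hypothesis~(\ref{eq:A303}).

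Next I would invoke Lemma~\ref{lem:B004a} with $h\equiv 0$, $\La_1 = 0$ and $\La_2 = C\ee_0$, choosing $N(k)$ at least as large as the integer required there. This produces a constant $v_\infty\in\RR$ and the bound
\[
\|\partial_t\varphi_0 - v_\infty\|_{C^{k+\ga}(M\times[t,t+1],g)} \le C(n,g,k,\eta)\,\ee_0\, e^{-\eta t},\quad \forall\,t\ge 0.
\]
The constant $v_\infty$ must vanish: since $\varphi_0\to 0$ smoothly as $t\to \infty$, we have $\tr_{\varphi_0}\oo_g\to n$ and hence $\partial_t\varphi_0 = n-\tr_{\varphi_0}\oo_g\to 0$, forcing $v_\infty = 0$. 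Using the normalization $\lim_{s\to\infty}\varphi_0(x,s)=0$, we can integrate $\varphi_0(x,t) = -\int_t^\infty \partial_s\varphi_0(x,s)\,ds$ to obtain
\[
\|\varphi_0(\cdot,t)\|_{C^k(M,g)}\le \int_t^\infty C\ee_0 e^{-\eta s}\,ds\le \frac{C\ee_0}{\eta}\,e^{-\eta t},
\]
which proves~(\ref{eq:B044}).

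Finally, for (\ref{eq:B045}), the cscK hypothesis gives $R(\oo_g) = \un R$, and the Taylor expansion
\[
R(\varphi_0) - \un R = R(\varphi_0) - R(0) = \int_0^1 DR|_{\tau\varphi_0}(\varphi_0)\,d\tau,
\]
combined with the fact that $DR|_\psi$ is a fourth-order differential operator whose coefficients are smooth functions of $\pbp\psi$ (uniformly bounded by the smallness assumption), yields $\|R(\varphi_0)-\un R\|_{C^k(M,g)}\le C\|\varphi_0\|_{C^{k+4}(M,g)}$. Then (\ref{eq:B045}) follows from (\ref{eq:B044}) applied with $k$ replaced by $k+4$, after enlarging $N(k)$ accordingly. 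No step presents a serious obstacle: the $J$-flow is exactly tailored so that $\partial_t\varphi_0$ satisfies the homogeneous version of Lemma~\ref{lem:B004a}, and the only mildly delicate point is ruling out $v_\infty\ne 0$, which is immediate from the smooth convergence hypothesis.
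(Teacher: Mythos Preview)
Your proposal is correct and follows essentially the same approach as the paper: differentiate the $J$-flow to see that $v=\partial_t\varphi_0$ satisfies the homogeneous equation $\partial_tv=b_{l\bar k}v_{k\bar l}$, apply Lemma~\ref{lem:B004a} with $h\equiv0$, rule out the constant $v_\infty$, then integrate in $t$ and finish with the Taylor expansion of $R$ at $\varphi=0$.

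There are two small differences worth noting. First, to show $v_\infty=0$ the paper uses the normalization $I(\varphi_0)\equiv0$ (so $\int_M v\,\oo_{\varphi_0}^n=0$ for all $t$), whereas you appeal directly to the hypothesis $\varphi_0\to0$ smoothly, which forces $\partial_t\varphi_0=n-\tr_{\varphi_0}\oo_g\to0$; your route is slightly cleaner given the stated hypotheses. Second, for the higher $C^k$ norms of $\varphi_0$ the paper derives an evolution inequality for $|\Na\varphi_0|$ and argues inductively, while you simply differentiate $\varphi_0(x,t)=-\int_t^\infty\partial_s\varphi_0\,ds$ under the integral sign, using the $C^k$ decay of $\partial_t\varphi_0$ already furnished by Lemma~\ref{lem:B004a} together with the smooth convergence $\Na^j\varphi_0\to0$; this is again a legitimate shortcut. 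Both variants are equivalent in strength and rely on the same key input.
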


\begin{proof} The function $v:=\pd {\varphi_0}t$ satisfies the equation
$$
\pd vt=b_{l\bar k}v_{k\bar l},
$$ where $b_{l\bar k}=g_{\varphi_0}^{i\bar l}g_{\varphi_0}^{k\bar j}g_{i\bar j}.$ Note that $v(x, 0)$ satisfies
 \beq
 \|v(x, 0)\|_{C^k(M, g)}=\|n-\tr_{\varphi_0}g\|_{C^k(M, g)}=\|g^{i\bar j}_{\varphi_0}\varphi_{0, i\bar j}\|_{C^k(M, g)}\leq \ee_0.\no
 \eeq
 Therefore, by Lemma \ref{lem:B004a} there exist $\eta(g)>0$ and $v_{\infty}\in \RR$ such that for $t\geq 0$ we have
\beq
\|v-v_{\infty}\|_{C^k(M\times [t, t+1], g)}\leq C(n, g, k)\ee_0 e^{-\eta t}. \label{eq:A305}
\eeq
 Since $\varphi_{0}$ satisfies $I(\varphi_0(x, t))=0$ for all $t\geq 0,$ we have
 \beq
 \frac d{dt}I(\varphi_0(x, t))=\int_M\;v\,\oo_{\varphi_0}^n=0.\label{eq:A304}
 \eeq
 Letting $t\ri +\infty$ in (\ref{eq:A304}) and using (\ref{eq:A305}), we have $v_{\infty}=0$ and then (\ref{eq:A305}) implies that for $t\geq t_0$,
 \beq \|v(\cdot, t)\|_{C^k(M\times [t, t+1], g)}\leq C\ee_0 e^{-\eta t}.\label{eq:A304a}\eeq
 Since $\varphi_0$ converges smoothly to $0$ as $t\ri +\infty$, for $t\geq 0$ we have
\beq
|\varphi_0(t)|\leq \int_t^{\infty}\,\max_{x\in M}|v(x, \tau)|\,d\tau\leq C(n, g)\ee_0e^{-\eta t}.\no
\eeq
Moreover, we have
\beq
\pd {}t|\Na \varphi_0|^2=-v_{j\bar i}\varphi_{0, i }\varphi_{0, \bar j }+\varphi_{0, i }v_{\bar i}+\varphi_{0, \bar i }v_{ i}. \no
\eeq
 Thus, by (\ref{eq:A303}) and (\ref{eq:A304a}) for $t\geq 0$ we have
 \beqn
 \pd {}t|\Na \varphi_0|&\leq& \frac 12\Big(|\Na^2 v|+|\Na v|\Big)|\Na \varphi_0|+|\Na v|\no\\
 &\leq& C(n, g)\ee_0 e^{-\eta t}|\Na \varphi_0|+C(n, g)\ee_0 e^{-\eta t}\leq  C(n, g)\ee_0e^{-\eta t}.  \label{eq:B043}
 \eeqn
 Since $\varphi_0(x, t)$   converges smoothly to $0$ as $t\ri +\infty$, we have $\lim_{t\ri \infty}|\Na \varphi_0|=0.$ Thus, (\ref{eq:B043}) implies that
 \beq
 \|\Na \varphi_0\|_{C^0(M)}\leq C(n, g)\ee_0 e^{-\eta t},\quad \forall\, t\geq 0.\no
 \eeq
 Similarly, we can show higher order estimates (\ref{eq:B044}) and we omit the details.

 To prove (\ref{eq:B045}), we take $\psi_{\tau}=\tau\varphi_0$ for $\tau\in [0, 1]$.
 Thus, we have that $\psi_0=0$ and $R(\psi_0)=R(\oo_g)=\un R$ by the assumption. By (\ref{eq:B044}), for $t\geq 0$ we have
 \beqn
 \|R(\varphi_0(t))-\un R\|_{C^k(M, g)}&=&\|R(\psi_1)-R(\psi_0)\|_{C^k(M, g)}\no\\&\leq& \int_0^1\,\|(\Delta_{\psi_{\tau}}\varphi_0+Ric_{\psi_{\tau},
 i\bar j}\varphi_{0, j\bar i})\|_{C^k(M, g)}\,d\tau \no\\
 &\leq& C(n, g, k)\ee_0 e^{-\eta t},\no
 \eeqn where  we used the inequality (\ref{eq:B044}). The lemma is proved.

\end{proof}

The next result shows that if the norm of $\varphi_0(x, t)$ is sufficiently small, then we can choose $u_j$ such that the function
\beq
\td \varphi_{N, s}(x, t):=\varphi_0(x, t)+\sum_{j=1}^N\, \frac {s^j}{j!} u_{j}(x, t),\label{eq:B014}
\eeq
  has uniform estimates.
\begin{lem}\label{lem:B005} Let  $\oo_g$ be a cscK metric and $\varphi_0(x, t)(t\in [0, \infty))$ be the solution of $J$-flow in Lemma \ref{lem:B002}. For any integer $N>0$,  there exist $\{u_{j}(x, t)(1\leq j\leq N)\}$ on $M\times [0, \infty)$ with
  \beq
  u_j(x, 0)=c_j\in \RR,\quad \forall\,x\in M,
  \eeq such that for any $s\in [0, 1]$ the metric $\td \varphi_{N, s}$ defined by (\ref{eq:B014}) satisfies
\beqn
\|e^{\eta t}L_s(\td \varphi_{N, s}(x, t))\|_{C^{0, 0, \ga}(M\times [0, \infty), g)}&\leq& C(n, g)\ee_0 s^{N+1}, \label{eq:C013}\\
\|e^{\eta t}(\td \varphi_{N, s}(x, t)-\varphi_0(x, t)) \|_{C^{4, 1, \ga}(M\times [0, \infty), g)}&\leq &C(n, g)\ee_0 s.\label{eq:C014}
\eeqn In particular, there exist $\te>0$  such that for $t\geq 0$ we have
\beq
\oo_{\td \varphi_{N, s}}\geq \te\oo_g, \quad \|e^{\eta t}\td \varphi_{N, s}(x, t)\|_{C^{4, 1, \ga}(M\times [0, \infty), g)}\leq C(n, g)\ee_0.\no
\eeq

\end{lem}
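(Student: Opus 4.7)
The plan is to adapt the construction of Lemma \ref{lem007} to the infinite time interval by coupling it with the exponential decay theory of Lemma \ref{lem:B002} and Lemma \ref{lem:B004a}. Writing $f(s) := L_s(\td \varphi_{N,s})$ and using the Taylor expansion at $s = 0$, the problem reduces to enforcing $f(0) = f'(0) = \cdots = f^{(N)}(0) = 0$ and then controlling the Taylor remainder $\int_0^s f^{(N+1)}(\tau)(s-\tau)^N/N!\,d\tau$ in the $e^{\eta t}$-weighted H\"older norm. By Lemma \ref{lem:A003}, the condition $f^{(j)}(0) = 0$ becomes a linear second-order parabolic equation
\[
\pd{u_j}{t} - g_{\varphi_0}^{\al\bar\dd} g_{\varphi_0}^{\eta\bar\bb} g_{\eta\bar\dd} u_{j, \al\bar\bb} = h_j,
\]
where $h_1 = R(\varphi_0) - \un R - n + \tr_{\varphi_0} \oo_g$ and, for $j \geq 2$, $h_j$ is a polynomial combination of $\varphi_0$, $u_1, \ldots, u_{j-1}$ and their covariant derivatives, contracted through $g_{\varphi_0}$ and possibly multiplied by factors of $Ric(\varphi_0)$.

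Because $\oo_g$ is cscK and $\varphi_0$ is small in $C^N$, Lemma \ref{lem:B002} gives $\|h_1\|_{C^{k+\ga}(M \times [t, t+1], g)} \leq C\ee_0 e^{-\eta t}$. Applying Lemma \ref{lem:B004a} with zero initial data then produces a solution $u_1^{(0)}$ converging to a constant $v_\infty^{(1)} \in \RR$ with the corresponding exponential decay. Since constants lie in the kernel of the linear equation, I would redefine $u_1 := u_1^{(0)} - v_\infty^{(1)}$, a solution of the same equation with constant initial datum $c_1 := -v_\infty^{(1)} \in \RR$ and decay like $e^{-\eta t}$. Inductively, once $u_1, \ldots, u_{j-1}$ have been built with $\|e^{\eta t} u_i\|_{C^{N' + \ga}} \leq C(n, g)\ee_0$, the source $h_j$ is a polynomial combination of exponentially-decaying quantities (the metric contractions via $g_{\varphi_0}$ remain uniformly bounded by Lemma \ref{lem:B002}, and $Ric(\varphi_0)$ stays bounded), so $h_j$ decays exponentially, and the same procedure yields $u_j$ with constant initial datum $c_j \in \RR$ and the desired weighted estimate.

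Summing $s^j/j!\, u_j$ for $s \in [0,1]$ then gives (\ref{eq:C014}). For (\ref{eq:C013}), I would exploit that $\td \varphi_{N, s}$ is a polynomial of degree $N$ in $s$, so $\psi_{N+1} \equiv 0$; Lemma \ref{lem:A003} then shows $f^{(N+1)}(\tau)$ is a sum of $Q_{a, b}$-type products and contractions involving only $\nabla^k \psi_l$ with $1 \leq l \leq N$, and each $\psi_l(\cdot, \tau)$ is a polynomial in $\tau$ whose coefficients $u_l, \ldots, u_N$ all enjoy the $e^{\eta t}$-weighted bounds. Combined with the uniform $C^4$-smallness of $\td \varphi_{N, \tau}$ (which keeps the metric contractions controlled), this yields $\|e^{\eta t} f^{(N+1)}(\tau)\|_{C^{0, 0, \ga}} \leq C(n, g)\ee_0$ uniformly in $\tau \in [0, s]$, and integration gives the $s^{N+1}$ bound. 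The lower bound $\oo_{\td \varphi_{N,s}} \geq \te \oo_g$ and the overall $C^{4, 1, \ga}$ bound on $\td \varphi_{N, s}$ follow for $\ee_0$ sufficiently small from (\ref{eq:C014}) together with Lemma \ref{lem:B002}. The main obstacle I anticipate is the combinatorial bookkeeping of the $Q_{a, b}$ contractions through the induction, ensuring that exponential decay propagates cleanly when products involve $Ric(\varphi_0)$ (which is bounded but does not decay); since $N$ is fixed, only finitely many multiplicative constants appear, so the final $C(n, g)$ bounds remain finite.
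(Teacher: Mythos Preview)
Your proposal is correct and follows essentially the same route as the paper: solve the successive linear parabolic equations from Lemma~\ref{lem:A003} on $[0,\infty)$, invoke Lemma~\ref{lem:B002} and Lemma~\ref{lem:B004a} to obtain exponential decay after subtracting the limiting constant $c_j$, and then bound the Taylor remainder exactly as in Lemma~\ref{lem007}. Your treatment is in fact more explicit than the paper's (which simply writes ``the rest of the proof is the same as that in Lemma~\ref{lem007}''), and your observation that $Ric(\varphi_0)$ is merely bounded while the accompanying $Q$-factors decay is the correct mechanism for propagating the weighted estimates through the induction.
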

\begin{proof}We follow the argument of Lemma \ref{lem007}. Since $u_1$ satisfies the equation
\beq
\pd {u_1}t=g_{\varphi_0}^{i\bar l}g_{\varphi_0}^{k\bar j}g_{i\bar j}u_{1, k\bar l}+(R(\varphi_0)-\un R-n+\tr_{\varphi_0}\oo_g),\quad u_1(x, 0)=0. \no
\eeq
By Lemma \ref{lem:B002}, the function $h_1:=R(\varphi_0)-\un R-n+\tr_{\varphi_0}\oo_g$ satisfies
\beq
\|h_1\|_{C^k(M\times [t, t+1], g)}\leq C(n, g,     k, t_1)\ee_0 e^{-\eta t},\quad \forall\, t\geq 0.\no
\eeq
By Lemma \ref{lem:B004a}, there exists $c_1\in \RR$ such that $\td u_1:=u_1-c_1$ satisfies
\beq
\|\td u_1\|_{C^k(M\times [t, t+1], g)}\leq C(n, g,     k, t_1)\ee_0e^{-\eta t},\quad \forall\,t\geq 0.\no
\eeq
Similarly, since $u_j(j\geq 2)$ satisfies the equation
\beq
\pd {u_j}t=g_{\varphi_0}^{\al\bar \dd}g_{\varphi_0}^{\eta\bar \bb}g_{\eta\bar \dd}u_{j, \al\bar \bb}+Q_{4, j-1}(\Na^ku_l)+Ric(\varphi_0)*Q_{2, j-1}(\Na^{k}u_{l}),\no
\eeq there exists $c_j\in \RR$ such that $\td u_j:=u_j-c_j$ satisfies
\beq
\|\td u_j\|_{C^k(M\times [t, t+1], g)}\leq C(n, g,     k, t_0)\ee_0 e^{-\eta t},\quad \forall\,t\geq 0. \no \eeq
 We define the function $\td \varphi_{N, s}$ by replacing $u_j$ with $\td u_j$ in (\ref{eq:B014}). The rest of the proof is the same as that in Lemma \ref{lem007}.
The lemma is proved.
\end{proof}

\subsection{The linearized operator}
In this subsection, we introduce a weighted H\"older space which consists of functions of zero average with respect to a given family of metrics.  Such functions will be used to get the Poincar\'e inequality.   Let $\td \varphi:=\td \varphi_{N, s} $ be the family of metrics in Lemma \ref{lem:B005}. For any $\psi\in \cH(\oo_g)$, we define the  operator $A_s$  by
\beq
A_s(\varphi)=s(R(\varphi)-\un R)+(1-s)(n-\tr_{\varphi}\oo_g).\no
\eeq The twisted Calabi flow can be written as
\beq
\pd {\varphi}t-A_s(\varphi)=0.\label{eq:B0003}
\eeq
We normalize $\varphi(x, t)$ such that the average is  zero
\beq
\psi(x, t)=\varphi(x, t)-\un \varphi(t),\quad \un \varphi(t)=\frac 1V\int_M\; \varphi(x, t)\,\oo_{\td \varphi(x, t)}^n,\no
\eeq where $V$ is the volume of $\oo_g.$
Therefore,  (\ref{eq:B0003}) can be written as
\beq
\pd {\psi}t-A_s(\psi)+\un \varphi'(t)=0,\no
\eeq where $\un \varphi'(t)=\frac d{dt}\un \varphi(t).$
Note that
\beq
\un \varphi'(t)=\frac 1V\int_M\; \Big(A_s(\psi)+\psi\Delta_{\td \varphi}\pd {\td \varphi}t\Big)\,\oo_{\td \varphi}^n. \no
\eeq
Therefore, we define the operator
\beq
\td L_s(\psi):=\pd {\psi}t-A_s(\psi)+\frac 1V\int_M\; \Big(A_s(\psi)+\psi\Delta_{\td \varphi}\pd {\td \varphi}t\Big)\,\oo_{\td \varphi}^n, \no
\eeq where $\psi(x, t)$ satisfies
\beq
\int_M\;\psi(x, t)\,\oo_{\td \varphi}^n=0.\no
\eeq

We introduce the following function spaces.
\begin{defi}\label{defi:C1}Let $\ga\in (0, 1)$ and $k\geq 0$ be an integer.  Assume that $g$ and $g_{\varphi(x, t)}(t\in [a, \infty))$ are K\"ahler metrics on $M$.
\begin{enumerate}
  \item[(1).]
For any $\eta>0$ and $f(x, t)\in C^{k, [k/4], \ga }(M\times [a, +\infty), g)$, we define
\beq
\|f\|_{C_{\eta}^{k, [k/4], \ga }(M\times [a, +\infty), g)}:=\|e^{\eta t} f\|_{C^{k, [k/4], \ga }(M\times [a, +\infty), g)} \label{eq:B006}
\eeq  and
\beqn &&
C_{\eta}^{k, [k/4], \ga }(M\times [a, +\infty), g)\no\\&:=&\Big\{f\in C^{k, [k/4], \ga }(M\times [a, +\infty), g)\,\Big |\,\|f\|_{C_{\eta}^{k, [k/4], \ga }(M\times [a, +\infty), g)}<\infty\Big\}.\no
\eeqn
  \item[(2).]
We  define  $C_{\eta, 0}^{k, [k/4], \ga }(M\times [a, +\infty), g, g_{\varphi})$ the space of functions $f\in C^{k, [k/4], \ga }_{\eta}(M\times [a, +\infty), g)$ with
\beq
\int_M\; f(x, t)\,\oo_{\varphi(x, t)}^n=0,\quad \forall\;t\geq a\no
\eeq and the norm $\|\cdot \|_{C_{\eta, 0}^{k, [k/4], \ga }(M\times [a, +\infty), g, g_{\varphi})} $ on $C_{\eta, 0}^{k, [k/4], \ga }(M\times [a, +\infty), g, g_{\varphi})$ is defined to be the same as in  (\ref{eq:B006}).

  \item[(3).]Let $\psi\in \cH(\oo_g)$.  We  define  $C_{\eta, 0, \psi}^{k, [k/4], \ga }(M\times [a, +\infty), g, g_{\varphi})$ the space of functions $f\in C^{k, [k/4], \ga }_{\eta}(M\times [a, +\infty), g)$ with
\beq
\int_M\; f(x, t)\,\oo_{\varphi}^n=0,\quad f(x, 0)=\psi(x),\quad \forall\; x\in M,\no
\eeq and the norm $\|\cdot \|_{C_{\eta, 0, \psi}^{k, [k/4], \ga }(M\times [a, +\infty), g, g_{\varphi})} $ on $C_{\eta, 0, \psi}^{k, [k/4], \ga }(M\times [a, +\infty), g, g_{\varphi})$ is defined to be the same as in  (\ref{eq:B006}).

\end{enumerate}

\end{defi}

By direct calculation and using Definition \ref{defi:C1}, we have the properties of $\td L_s.$
 \begin{lem}\label{lem:B003}
\begin{enumerate}
\item[(1).] For any $u\in {C_{\eta, 0, \psi_0}^{4, 1, \ga }(M\times [0, +\infty), g, g_{\varphi})}$, we have
    \beq
    \td L_s(u)\in C_{\eta, 0}^{0, 0, \ga }(M\times [0, +\infty), g, g_{\varphi}).\no
    \eeq
\item[(2).] The derivative of $\td L_s$ at $\td \varphi$ is given by \beqn
D\td L_{s}|_{\td \varphi}(u)&=&\pd {u}t+s(\Delta^2_{ \td \varphi}u+Ric_{\td \varphi, i\bar j
}u_{j\bar i})-(1-s)g_{\td \varphi}^{i\bar l}g_{\td \varphi}^{k\bar j}g_{i\bar j}u_{k\bar l}\no\\
&&+\frac 1V\int_M\;\Big(a_{l\bar k}u_{k\bar l}+u\Delta_{\td \varphi}\pd {\td \varphi}t\Big)\,\oo_{\td \varphi}^n \no
\eeqn where $a_{l\bar k}$ is defined by
\beqn
a_{l\bar k}&=&(1-s)g_{\td\varphi}^{i\bar l}g_{\td \varphi}^{k\bar j}g_{i\bar j}
-sRic_{\td \varphi, l\bar k}\no\\&=&(1-s)g_{\td \varphi}^{k\bar l}-(1-s)g_{\td\varphi}^{i\bar l}g_{\td \varphi}^{k\bar j}\td \varphi_{i\bar j}
-sRic_{\td \varphi, l\bar k}. \label{eq:B015}
\eeqn Moreover, $D\td L_{s}|_{\td \varphi}$ maps $C_{\eta, 0}^{4, 1, \ga }(M\times [0, +\infty), g, g_{\varphi})$ to $C_{\eta, 0}^{0, 0, \ga }(M\times [0, +\infty), g, g_{\varphi})$.
\end{enumerate}

\end{lem}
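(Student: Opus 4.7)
Both parts of the lemma reduce to two independent verifications: the image preserves the zero-average condition with respect to $\oo_{\td\varphi}$, and the appropriate exponentially weighted H\"older norm is finite. The essential analytic input is Lemma \ref{lem:B005}, which guarantees uniform ellipticity of $\td\varphi := \td\varphi_{N, s}$ and exponential decay at rate $e^{-\eta t}$ of $\td\varphi$, $\pd{\td\varphi}t$, and $\Delta_{\td\varphi}\pd{\td\varphi}t$.

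For part $(1)$, the correction $\un\varphi'(t)$ in the definition of $\td L_s$ is engineered precisely to kill the mean. Concretely, differentiating $\int_M u\,\oo_{\td\varphi}^n = 0$ in $t$ gives $\int_M \pd ut\,\oo_{\td\varphi}^n = -\int_M u\,\Delta_{\td\varphi}\pd{\td\varphi}t\,\oo_{\td\varphi}^n$; substituting into $\int_M \td L_s(u)\,\oo_{\td\varphi}^n = \int_M \pd ut\, \oo_{\td\varphi}^n - \int_M A_s(u)\,\oo_{\td\varphi}^n + V\un\varphi'(t)$ and using the explicit form of $\un\varphi'(t)$ shows the right-hand side cancels. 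For the weighted H\"older norm, each piece is estimated separately: $\pd ut$ is a component of the $C^{4,1,\ga}_\eta$ norm of $u$; since $\oo_g$ is cscK we have $A_s(0)=0$, so a Taylor expansion yields $|A_s(u)| \leq C\|u\|_{C^{4+\ga}}$ when $\|u\|_{C^2}$ is small, which pairs with the weight $e^{\eta t}$ because any factor of $u$ (or of $\Na^k u$) absorbs $e^{\eta t}$; the two integral terms are constant in $x$ and inherit the decay of their integrands, with the $u\Delta_{\td\varphi}\pd{\td\varphi}t$ term decaying even at rate $e^{-2\eta t}$ by Lemma \ref{lem:B005}.

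For part $(2)$, the derivative is a direct linearization. From the standard formulas $DR|_{\td\varphi}(u) = -\Delta^2_{\td\varphi} u - Ric_{\td\varphi, i\bar j} u_{j\bar i}$ and $D(\tr_\varphi \oo_g)|_{\td\varphi}(u) = -g_{\td\varphi}^{i\bar l} g_{\td\varphi}^{k\bar j} g_{i\bar j} u_{k\bar l}$ one gets $DA_s|_{\td\varphi}(u) = -s\Delta^2_{\td\varphi}u - sRic_{\td\varphi, i\bar j}u_{j\bar i} + (1-s)g_{\td\varphi}^{i\bar l}g_{\td\varphi}^{k\bar j}g_{i\bar j}u_{k\bar l}$. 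Linearizing the correction $\frac 1V\int_M A_s(\psi)\,\oo_{\td\varphi}^n$ at $\psi=\td\varphi$ and invoking $\int_M \Delta^2_{\td\varphi}u\,\oo_{\td\varphi}^n = 0$ collapses the integrand to $a_{l\bar k}u_{k\bar l}$ as in (\ref{eq:B015}); the term linear in $\psi$ contributes the remaining $\frac 1V\int_M u\Delta_{\td\varphi}\pd{\td\varphi}t\,\oo_{\td\varphi}^n$, which gives the stated formula.

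The mapping property of $D\td L_s|_{\td\varphi}$ is then clean: integrating $D\td L_s|_{\td\varphi}(u)$ against $\oo_{\td\varphi}^n$, the pointwise term $-DA_s|_{\td\varphi}(u)$ and the correction $a_{l\bar k}u_{k\bar l}$ cancel (again using $\int_M \Delta^2_{\td\varphi}u = 0$), leaving
\beq
\int_M D\td L_s|_{\td\varphi}(u)\,\oo_{\td\varphi}^n = \int_M \pd ut\,\oo_{\td\varphi}^n + \int_M u\,\Delta_{\td\varphi}\pd{\td\varphi}t\,\oo_{\td\varphi}^n = \frac d{dt}\int_M u\,\oo_{\td\varphi}^n = 0. \no
\eeq
Exponential decay is immediate from linearity in $u$ and the uniform boundedness of $\td\varphi$-dependent tensors. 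The only point requiring care is the H\"older bookkeeping in part (1): one must distribute the factor $e^{\eta t}$ across polynomial expressions in $u, \Na u, \ldots, \Na^4 u$ modulated by nonlinear factors such as $g_u^{i\bar j}$, which remain close to $g_{\td\varphi}^{i\bar j}$ provided $\|u\|_{C^4}$ is small---a condition secured by taking the initial data $\psi_0$ in a sufficiently small ball, consistently with the hypothesis of Theorem \ref{theo:002}.
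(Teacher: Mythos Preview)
Your proposal is correct and follows essentially the same approach as the paper, which simply records the lemma as following ``by direct calculation and using Definition~\ref{defi:C1}.'' You have supplied the details of that calculation: the cancellation $\int_M \td L_s(u)\,\oo_{\td\varphi}^n = \frac d{dt}\int_M u\,\oo_{\td\varphi}^n = 0$ for the zero-average property, the linearization formula for $D\td L_s|_{\td\varphi}$ via the standard variations of $R$ and $\tr_\varphi\oo_g$ together with $\int_M \Delta_{\td\varphi}^2 u\,\oo_{\td\varphi}^n=0$, and the identical cancellation showing $D\td L_s|_{\td\varphi}$ preserves the zero-average condition. Your observation that the nonlinear estimate $\|e^{\eta t}A_s(u)\|_{C^{0,0,\ga}}<\infty$ implicitly requires $u\in\cH(\oo_g)$, hence $\|u\|_{C^2}$ small, is a genuine point the paper leaves tacit; it is indeed secured in context by the smallness of $\psi_0$ in Theorem~\ref{theo:002} and the exponential decay of $u$.
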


\subsection{The $L^2$ estimates}

In this subsection, we show that the linearized operator $D\td L_s$ of twisted Calabi flow is invertible between weighted H\"older spaces for small $s$. First, we have the following result, which shows that the modified linearized operator $G_s$ is invertible for small $s$.

\begin{lem}\label{lem:B006}There exist $\ee_0(g), \eta_0(g)>0$ and $s_0\in (0, 1)$ satisfying the following properties. Suppose that
\begin{enumerate}
  \item[(1).] $\td \varphi(x, t)(t\in [0, \infty))$ is a family of K\"ahler potentials satisfying
      \beq
  \oo_{\td \varphi(x, t)}\geq \te\oo_g,\quad  \|\td \varphi(x, t)\|_{C_{\eta}^{4, 1, \ga}(M\times [0, \infty), g)}\leq \ee_0, \label{eq:B100v}
      \eeq where $\te, \ee_0>0.$

\item[(2).]   Define the operator $G_s: C_{\eta, 0}^{4, 1, \ga }(M\times [0, +\infty), g, g_{\td \varphi})\ri
C_{\eta}^{0, 0, \ga }(M\times [0, +\infty), g, g_{\td \varphi})$ by $G_s(w)=f$, where $w$ and $f$ satisfies the equation
\beqn
\pd {w}{\tau}+\Delta^2_{\td \varphi}w&=&s^{-1} f(x, \tau)+s^{-1}a_{l\bar k}(x, \tau)w_{k\bar l}-c_s(\tau).\no
\eeqn Here $a_{i\bar j}$ is defined by (\ref{eq:B015}) and $c_s(\tau)$ is defined by
\beq
c_s(\tau)=\int_M\;\Big(s^{-1}a_{l\bar k}(x, \tau)w_{k\bar l}+w\Delta_{\td \varphi}\pd {\td \varphi}{\tau}\Big)\,\oo_{\td \varphi}^n. \no
\eeq

\end{enumerate}
 Then we have \begin{enumerate}
                \item[(a).] For any $w\in C_{\eta, 0}^{4, 1, \ga }(M\times [0, +\infty), g, g_{\td \varphi})$, we have
\beq
\int_M\;G_s(w)\,\oo_{\td \varphi}^n=0.\no
\eeq
Thus, $G_s$ is a well-defined operator from $C_{\eta, 0}^{4, 1, \ga }(M\times [0, +\infty), g, g_{\td \varphi})$ to $C_{\eta, 0}^{0, 0, \ga }(M\times [0, +\infty), g, g_{\td \varphi})$.
                \item[(b).] Let $s\in (0, s_0)$ and $\eta\in (0, \eta_0)$.  For any $f\in C_{\eta, 0}^{0, 0, \ga }(M\times [0, +\infty), g, g_{\td \varphi})$, there exists a unique solution $w\in C_{\eta, 0, 0}^{4, 1, \ga }(M\times [0, +\infty), g, g_{\td \varphi})$ of the equation $G(w)=f$ and we have
\beq
\|w\|_{C_{\eta, 0, 0}^{4, 1, \ga }(M\times [0, +\infty), g, g_{\td \varphi})}\leq C(n, g, \te)s^{-k(\ga, n)}
\|f\|_{C_{\eta, 0}^{0, 0, \ga }(M\times [0, +\infty), g, g_{\td \varphi})}. \no
\eeq

              \end{enumerate}

\end{lem}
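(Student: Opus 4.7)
The plan is to verify (a) by direct computation and to establish (b) through a weighted $L^2$ energy estimate followed by a Schauder-type upgrade, mirroring the strategy of Lemma \ref{lem:A004} and Lemma \ref{lem004} but in the weighted spaces of Definition \ref{defi:C1}. For part (a), I integrate the defining equation $G_s(w)=f$ against $\oo_{\td \varphi}^n$ at each fixed $\tau$. The double-divergence term $\int_M \Delta^2_{\td \varphi}w\,\oo_{\td \varphi}^n$ vanishes, and differentiating $\int_M w\,\oo_{\td \varphi}^n\equiv 0$ in $\tau$ gives $\int_M \partial_\tau w\,\oo_{\td \varphi}^n = -\int_M w\,\Delta_{\td \varphi}\partial_\tau\td \varphi\,\oo_{\td \varphi}^n$. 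The constant $c_s(\tau)$ is manufactured precisely so that these contributions cancel against $\int a_{l\bar k}w_{k\bar l}\,\oo_{\td \varphi}^n$, forcing $\int_M f\,\oo_{\td \varphi}^n=0$. This is the whole purpose of the modification: it makes $G_s$ preserve the zero-average subspace on which the Poincar\'e inequality is available.

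For the $L^2$ estimate in (b), set $E(\tau)^2 := \int_M w^2\,\oo_{\td \varphi}^n$. Using the PDE and integration by parts one obtains
\begin{align*}
\frac{d}{d\tau}E^2 &= -2\int_M(\Delta_{\td \varphi}w)^2\,\oo_{\td \varphi}^n + 2s^{-1}\int_M a_{l\bar k}w_{k\bar l}w\,\oo_{\td \varphi}^n \\
&\quad + 2s^{-1}\int_M fw\,\oo_{\td \varphi}^n - 2c_s(\tau)\int_M w\,\oo_{\td \varphi}^n + \int_M w^2\Delta_{\td \varphi}\partial_\tau\td \varphi\,\oo_{\td \varphi}^n.
\end{align*}
The $c_s(\tau)$ term vanishes because $\int w\,\oo_{\td \varphi}^n=0$; this is the crucial payoff of part (a). Following the integration by parts in (\ref{eq:A017a})--(\ref{eq:A018a}), the second term is bounded above by $-s^{-1}\la\int|\Na w|^2_{\td \varphi}+Cs^{-1}\int w^2$ for some $\la=\la(\te)>0$. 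Since $w$ is zero-average with respect to $\oo_{\td \varphi}$ and (\ref{eq:B100v}) makes $\oo_{\td \varphi}$ an $\ee_0$-perturbation of $\oo_g$, the Poincar\'e inequality on $(M,\oo_g)$ perturbs to $\int w^2\le C(g)\int|\Na w|^2_{\td \varphi}$. For $\ee_0$ small this upgrades to
\beq
\frac{d}{d\tau}E^2 \le -\la' s^{-1}E^2 + 2s^{-1}\|f(\cdot,\tau)\|_{L^2(M,\oo_{\td \varphi})}E \no
\eeq
with $\la'>0$ independent of $s\in(0,1)$. Gronwall combined with $\|f(\cdot,\tau)\|_{L^2}\le e^{-\eta\tau}\|f\|_{C^{0,0,\ga}_{\eta,0}}$ then yields $\sup_\tau e^{\eta\tau}E(\tau)\le C(g,\te)\|f\|_{C^{0,0,\ga}_{\eta,0}}$ provided $\eta<\eta_0:=\la'/2$, a constant depending only on $g$ via $\la(\te)$ and the Poincar\'e constant of $\oo_g$.

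To upgrade from $L^2$ to $C^{4,1,\ga}_{\eta,0,0}$, I would apply the parabolic Schauder estimate from the appendix slab-by-slab on $[T,T+1]$, exactly as in the proof of Lemma \ref{lem004}: the rescaling $\tau=st$ and the interpolation Lemma \ref{lem:A5} introduce the $s^{-\ka(\ga,n)}$ factor, while the weighted $L^2$ bound above provides the lower-order ingredient on each slab with the correct exponential weight. Existence is obtained by truncating to $[0,T]$, solving on each finite interval via Lemma \ref{lem004} applied to the un-modified operator combined with a Banach contraction step that reincorporates the nonlocal constant $c_s(\tau,w)$; the limit $T\to\infty$ is then extracted using the uniform weighted estimate, and uniqueness is immediate from the $L^2$ bound applied to $f=0$. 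The main obstacle I expect is the careful treatment of the nonlocal functional $w\mapsto c_s(\tau,w)$ in the contraction step: because of the $s^{-1}$ factor inside $c_s$, one must verify that the coercivity produced by the Poincar\'e-enhanced $|\Na w|^2$ term is strong enough to absorb the associated nonlocal perturbation uniformly as $s\to 0$, so that the contraction constant can be chosen independently of $s$ in the range $(0,s_0)$.
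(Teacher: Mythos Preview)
Your plan for part (a) and the weighted $L^2$ estimate is essentially the paper's argument. The paper carries out the $L^2$ step after the substitution $z=e^{\eta\tau}w$ rather than on $w$ itself, but the content is the same: the zero-average condition kills the $c_s(\tau)$ contribution, and the smallness $\ee_0$ in (\ref{eq:B100v}) makes the lower-order error in $\int a_{l\bar k}z_{k\bar l}z$ of size $C(g,\te)\ee_0\int z^2$ (see (\ref{eq:A501})), so Poincar\'e absorbs it.

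Where your proposal diverges is the Schauder upgrade and the treatment of $c_s$. The paper does \emph{not} work slab-by-slab: it applies the global estimate Theorem~\ref{theo:Me} once on $M\times[0,\infty)$ to $z=e^{\eta\tau}w$, which satisfies a biharmonic heat equation with right-hand side $s^{-1}\td f+s^{-1}a_{l\bar k}z_{k\bar l}+\eta z-e^{\eta\tau}c_s(\tau)$. A slab-by-slab scheme would force you to control $\|w(\cdot,T)\|_{C^{4,\ga}}$ at each slab start, which you do not yet have; the global estimate avoids this. Existence is obtained directly from Theorem~\ref{theo:A2x} (no contraction is needed), after which the zero-average of $w$ is recovered by the computation in part (a).

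The obstacle you flag---the $s^{-1}$ inside $c_s$---is real, but its resolution is not additional Poincar\'e coercivity. The paper integrates by parts twice to write $c_s(\tau)=\int_M b(\td\varphi)\,w\,\oo_{\td\varphi}^n$ with $b(\td\varphi)=s^{-1}a_{l\bar k,\bar l k}+\Delta_{\td\varphi}\partial_\tau\td\varphi$, and then uses the smallness hypothesis (\ref{eq:B100v}) to get $\|b(\td\varphi)\|_{C^{0,0,\ga}_\eta}\le C(n,g,\te)\ee_0 s^{-1}$. This yields
\[
\|e^{\eta\tau}c_s\|_{C^{0,0,\ga}}\le C\ee_0\,s^{-\ka'-1}\max_\tau\|\td f\|_{L^2}+C\ee_0\,\|z\|_{C^{4,1,\ga}},
\]
and the factor $\ee_0$ in front of $\|z\|_{C^{4,1,\ga}}$ is what allows the term to be absorbed on the left. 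In your contraction framework you would need exactly this estimate to make the map a contraction uniformly in $s$; the point is that it is the smallness of $\td\varphi$, not the gradient term, that beats the $s^{-1}$.
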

\begin{proof}(1). By the assumption $w\in  C_{\eta, 0}^{4, 1, \ga }(M\times [a, +\infty), g, g_{\td \varphi})$, we have
\beqn
0&=&\frac d{d\tau}\int_M\; w\,\oo_{\td \varphi}^n=\int_M\; \Big(\pd w{\tau}+w\Delta_{\td \varphi}\pd {\td \varphi}{\tau}\Big)\,\oo_{\td\varphi}^n\no\\
&=&\int_M\; \Big(s^{-1} f(x, \tau)+s^{-1}a_{l\bar k}(x, \tau)w_{k\bar l}+w\Delta_{\td \varphi}\pd {\td \varphi}{\tau}\Big)\,\oo_{\td \varphi}^n-c_s(\tau)\no\\
&=&\int_M\;  s^{-1} f(x, \tau)\,\oo_{\td \varphi}^n. \label{eq:B011}
\eeqn Therefore, we have
$$\int_M\;   G(w)\,\oo_{\td \varphi}^n=\int_M\;   f\,\oo_{\td \varphi}^n=0.$$
Thus, we have  $G(w)\in C_{\eta, 0}^{0, 0, \ga }(M\times [a, +\infty), g, g_{\td \varphi})$.

(2). Given $f\in C_{\eta, 0}^{0, 0, \ga }(M\times [a, +\infty), g, g_{\td \varphi})$, we would like to find $w\in C_{\eta, 0, 0}^{4, 1, \ga }(M\times [a, +\infty), g, g_{\td \varphi})$ such that $G(w)=f. $ By Theorem \ref{theo:A2x}, the
equation
\beq
G(w)=f, \quad w(x, 0)=0 \no
\eeq has a solution $w\in C^{4, 1, \ga }(M\times [a, +\infty), g)$. By the calculation in (\ref{eq:B011}), we have
\beq
\frac d{d\tau}\int_M\;w\,\oo_{\td \varphi}^n=0,\quad \forall \;\tau>0.\no \eeq Thus, $\int_M\;w(x, \tau)\,\oo_{\td \varphi}^n=0$ for all $\tau>0$ and $w\in C_{\eta, 0, 0}^{4, 1, \ga }(M\times [a, +\infty), g, g_{\td \varphi})$.

(3).  Let
$z=e^{\eta \tau}w(x, \tau). $
  We have
  \beq
  z(x, 0)=0,\quad \int_M\;z(x, \tau)\,\oo_{\td \varphi(\tau)}^n=0. \label{eq:B030}
  \eeq
  Note that $z$ satisfies the equation on $M\times [0, \infty)$
\beqn
\pd {z}{\tau}+\Delta^2_{\td \varphi}z&=&\td h:=s^{-1} \td f(x, \tau)+s^{-1}a_{l\bar k}(x, \tau)z_{k\bar l}+\eta z-e^{\eta \tau}c_s(\tau),\no\\
z(x, 0)&=&0, \no
\eeqn where $\td f:=e^{\eta \tau}f.$ We estimate the $L^2$ norm of $z$. By direct calculation, we have
\beqn &&
\frac d{d\tau}\int_M\;z^2\oo_{\td \varphi}^n=\int_M\;\Big(2z\pd z{\tau}\,\oo_{ \td \varphi}^n+z^2
\pd {}{\tau}\oo_{ \td \varphi}^n\Big)\no\\
&\leq&\int_M\;2z\Big(-\Delta^2_{ \td \varphi}z+s^{-1}\td f
+s^{-1} a_{l\bar k}z_{k\bar l}+\eta z-e^{\eta \tau}c_s(\tau)\Big)\oo_{ \td \varphi}^n+\ee_0\int_M\;z^2\oo_{ \td \varphi}^n\no\\
&\leq&-2\int_M\;(\Delta_{ \td \varphi}z)^2\oo_{ \td \varphi}^n+2s^{-1}\int_M\;a_{l\bar k}z_{k\bar l}z\,\oo_{\td \varphi}^n\no\\
&&+2s^{-1}\int_M\;\td fz+(2\eta+\ee_0)\int_M\;z^2\oo_{ \td \varphi}^n.\label{eq:A016}
\eeqn
Note that by (\ref{eq:B015}) and (\ref{eq:B100v}) we have
\beq
g_{\varphi, i\bar j}=g_{i\bar j}+\varphi_{i\bar j}\leq (1+\ee_0)g_{i\bar j}.
\eeq
Thus, we have
\beqn
a_{l\bar k}&\geq& \Big(\frac {1-s}{1+\ee_0}-sC(g)\Big)g_{\varphi, l\bar k}, \label{eq:A500}\\
|a_{l\bar k, \bar l}|&=&|-(1-s)g_{\td\varphi}^{i\bar l}g_{\td \varphi}^{k\bar j}\td \varphi_{i\bar j\bar l}
+s(Ric_{\td \varphi, l\bar k})_{\bar l}|\leq C(g, \te)\ee_0.  \label{eq:A501}
\eeqn
 Combining (\ref{eq:A016})-(\ref{eq:A501}), we have
\beqn &&
\int_M\;a_{l\bar k}z_{k\bar l}z\,\oo_{\td \varphi}^n=-\int_M\;a_{l\bar k}z_{k}z_{\bar l}\,\oo_{\td \varphi}^n-\int_M\;a_{l\bar k, \bar l}z_{k}z\,\oo_{ \td \varphi}^n\no\\
&\leq&- \Big(\frac {1-s}{1+\ee_0}-sC(g)\Big) \int_M\;|\Na z|^2\,\oo_{\td \varphi}^n+C(g, \te)\ee_0\int_M\;( |\Na z|^2+ z^2)\,\oo_{ \td \varphi}^n\no\\
&=&-\Big(\frac {1-s}{1+\ee_0}-sC(g)-C(g, \te)\ee_0\Big)  \int_M\;|\Na z|^2\,\oo_{ \td \varphi}^n+C(g, \te)\ee_0\int_M\;z^2\,\oo_{ \td \varphi}^n.\no\\ \label{eq:A017}
\eeqn  Since $g_{\td \varphi}$ is equivalent to $g$ for any $t>0$ by (\ref{eq:B100v}), there exists a constant $\la(g)>0$ independent of $t$ such that the Poincar\'e inequality holds
\beq
\int_M\;|\Na z|^2\,\oo_{ \td \varphi}^n\geq \la(g)\int_M\;z^2\,\oo_{ \td \varphi}^n,\label{eq:A073}
\eeq where we used (\ref{eq:B030}).
Combining  (\ref{eq:A073})(\ref{eq:A017}) with  (\ref{eq:A016}), we have
\beqn &&
\frac {d}{dt}\int_M\;z^2\oo_{ \td \varphi}^n\leq
-2\int_M\;(\Delta_{\td \varphi}z)^2\oo_{\td \varphi}^n-2 s^{-1}\Big(\frac {1-s}{1+\ee_0}-sC(g)-C(g, \te)\ee_0\Big)\int_M\;|\Na z|^2\,\oo_{ \td \varphi}^n\no\\&&+
C(g, \te) \ee_0 s^{-1}\int_M\;z^2\,\oo_{ \td \varphi}^n+2s^{-1}\Big(\int_M\,\td f^2\,\oo_{ \td \varphi}^n\Big)^{\frac 12}\Big(\int_M\;z^2\,\oo_{ \td \varphi}^n\Big)^{\frac 12}+(2\eta+\ee_0)\int_M\;z^2\oo_{ \td \varphi}^n\no\\
&\leq& - 2\la  s^{-1}\Big(\frac {1-s}{1+\ee_0}-sC(g)-C(g, \te)\ee_0\Big)\int_M\;z^2\,\oo_{ \td \varphi}^n +\Big(C(g, \te)\ee_0 s^{-1}+2\eta+\ee_0\Big)\int_M\;z^2\,\oo_{ \td \varphi}^n\no\\&&+2s^{-1}\Big(\int_M\,\td f^2\,\oo_{ \td \varphi}^n\Big)^{\frac 12}\Big(\int_M\;z^2\,\oo_{ \td \varphi}^n\Big)^{\frac 12}
\no \\
&\leq&-A s^{-1}\int_M\;z^2\,\oo_{ \td \varphi}^n+2s^{-1}\Big(\int_M\,\td f^2\,\oo_{ \td \varphi}^n\Big)^{\frac 12}\Big(\int_M\;z^2\,\oo_{ \td \varphi}^n\Big)^{\frac 12}, \label{eq:A018}
\eeqn where \beq A=2\la \Big(\frac {1-s}{1+\ee_0}-sC(g)-C(g, \te)\ee_0\Big) -C(g, \te)\ee_0 -(2\eta+\ee_0)s.\no\eeq
There exist $\ee_1(g)>0$ and $s_0(g)\in (0, 1)$ such that for any $\ee_0\in (0, \ee_1)$ and $s\in (0, s_0)$ we have $A\geq \la.$
Let
$ E(\tau)= \Big(\int_M\;z^2\oo_{\td \varphi}\Big)^{\frac 12}.
$
The inequality (\ref{eq:A018}) implies that
\beq
E'(\tau)\leq  -\frac {\la}2 s^{-1} E(\tau)+s^{-1}\|\td f(\cdot, \tau)\|_{L^2(M, \oo_{\td \varphi(\tau)})}.\no
\eeq
Let $A'=\frac {\la}2 s^{-1}.$ Then we have
\beqn
E(\tau)&\leq& e^{-A'\tau}E(0)+s^{-1}e^{-A'\tau}\int_0^{\tau}\,\Big(e^{A't}\|\td f(\cdot, t)\|_{L^2(M, \oo_{\td \varphi(t)})}\Big)\,dt\no\\
&=& 2\la^{-1}\max_{t\in [0, \tau]}\|\td f(\cdot, t)\|_{L^2(M, \oo_{\td \varphi(t)})}, \label{eq:B010}
\eeqn where we used $E(0)=0$ by (\ref{eq:B030}).

(4).
By Theorem \ref{theo:Me} and  $z(x, 0)=0(\forall x\in M)$, we have
\beqn &&
\|z\|_{C^{4, 1, \ga}(M\times [0, \infty), g)}\no\\&\leq& C(n, g, \te)\Big(\|\td h\|_{C^{0, 0, \ga}(M\times [0, \infty), g)}+\max_{t\in [0, \infty)}\|z(\cdot, t)\|_{L^2(M, g)}\Big)\no\\
&\leq&C(n, g, \te)\Big(s^{-1}\|\td f\|_{C^{0, 0, \ga}(M\times [0, \infty), g)}+
s^{-1}\|\Na^2z\|_{C^{0, 0, \ga}(M\times [0, \infty), g)}
 +\eta \|z\|_{C^{0, 0, \ga}(M\times [0, \infty), g)}\no\\&&+\|e^{\eta\tau}c_s(w, \tau)\|_{C^{0, 0, \ga}(M\times [0, \infty), g)}+
\max_{\tau\in [0, \infty)}\|z(\cdot, \tau)\|_{L^2(M, g)}\Big).\label{eq:B007}
\eeqn
Note that by (\ref{eq:A039x}) we have
\beq
\|z\|_{C^{0, 0, \ga}(M\times [0, \infty), g)}\leq \ee  \|z\|_{{C^{4, 1, \ga}(M\times [0, \infty), g)}}+C(n, g) \ee^{-\ka'(\ga, n)}\max_{\tau\in [0, \infty)}\|z(\cdot, \tau)\|_{L^2(M, g)},\label{eq:B016}
\eeq where $\ka'(\ga, n)>0$ and by (\ref{eq:A039x}) and (\ref{eq:B010}) we have
\beqn &&
\|\Na^2 z\|_{{C^{0, 0, \ga}(M\times [0, \infty), g)}}\no\\&\leq&\ee s \|z\|_{{C^{4, 1, \ga}(M\times [0, \infty), g)}}+C(n, g) \ee^{-\ka'}s^{-\ka'}\max_{\tau\in [0, \infty)}\|z(\cdot, \tau)\|_{L^2(M, g)}\no\\
&\leq& \ee s \|z\|_{{C^{4, 1, \ga}(M\times [0, \infty), g)}}+C(n, g) \ee^{-\ka'}s^{-\ka'}\max_{\tau\in [0, \infty)}\|\td f(\cdot, \tau)\|_{L^2(M, g)}, \label{eq:B017}
\eeqn where
$
\ka'=\ka'(\ga, n)>0.
$
Combining (\ref{eq:B007})-(\ref{eq:B017}), we have
\beq
\|z\|_{C^{4, 1, \ga}(M\times [0, \infty), g)}
\leq C(n, g, \te) \Big( s^{-\ka'-1}\|\td f\|_{C^{0, 0, \ga}(M\times [0, \infty), g)}+\|e^{\eta\tau}c_s(\tau)\|_{C^{0, 0, \ga}(M\times [0, \infty), g)}\Big).\label{eq:B012}
\eeq

(5). We estimate $\|e^{\eta\tau}c_s( \tau)\|_{C^{0, 0, \ga}(M\times [0, \infty), g)}. $ Note that
 \beqn
 c_s( \tau)&=&\int_M\;\Big(s^{-1}a_{l\bar k}(x, \tau)w_{k\bar l}+w\Delta_{\td \varphi}\pd {\td \varphi}t\Big)\,\oo_{\td \varphi}^n\no\\
&=&\int_M\;\Big(s^{-1}a_{l\bar k, \bar lk}(x, \tau)+\Delta_{\td \varphi}\pd {\td \varphi}t\Big)w\,\oo_{\td \varphi}^n=\int_M\;b(\td \varphi)w\,\,\oo_{\td \varphi}^n, \no
 \eeqn where $$b(\td \varphi):=s^{-1}a_{l\bar k, \bar lk}(x, \tau)+\Delta_{\td \varphi}\pd {\td \varphi}t.$$
  By (\ref{eq:B015}) we have
\beq
a_{l\bar k, \bar lk}=-(1-s)g_{\td\varphi}^{i\bar l}g_{\td \varphi}^{k\bar j}\td \varphi_{i\bar j\bar lk}
-s\Delta_{\td \varphi}R(\td \varphi),\no
\eeq which implies that
\beqn &&
\|b(\td \varphi)\|_{C^{0, 0, \ga}_{\eta}(M\times [0, \infty), g)}\no\\&=&\Big\|-s^{-1}(1-s)g_{\td\varphi}^{i\bar l}g_{\td \varphi}^{k\bar j}\td \varphi_{i\bar j\bar lk}
-\Delta_{\td \varphi}R(\td \varphi)+\Delta_{\td \varphi}\pd {\td \varphi}t\Big\|_{C^{0, 0, \ga}_{\eta}(M\times [0, \infty), g)}\no\\
&\leq&C(n, \te)s^{-1}\|\td \varphi\|_{_{C^{4, 1, \ga}_{\eta}(M\times [0, \infty), g)}}+\|\Na^2(R(\td \varphi-\un R))\|_{C^{0, 0, \ga}_{\eta}(M\times [0, \infty), g)}\no\\&&+\Big\|\Na^2\pd {\td \varphi}t\Big\|_{C^{0, 0, \ga}_{\eta}(M\times [0, \infty), g)}\no\\
&\leq&C(n, g,  \te)\ee_0 s^{-1}, \label{eq:B101}
\eeqn where we used the assumption (\ref{eq:B100v}). Thus, we have
\beqn
|e^{\eta \tau}c_s( \tau)|&=&\Big|\int_M\; e^{\eta \tau}b(\td \varphi)w\,\,\oo_{\td \varphi}^n\Big|\leq C(n, \te)\ee_0s^{-1}\|w\|_{L^2(M, \oo_{\td \varphi})}. \label{eq:A200z}
\eeqn
On the other hand, we have
\beqn &&
|c_s(\tau_1)-c_s(\tau_2)|\no\\&\leq &\Big|\int_M\;b(\td \varphi(x, \tau_1))w(x, \tau_1)\,\,\oo_{\td \varphi(x, \tau_1)}^n-\int_M\;b(\td \varphi(x, \tau_2))w(x, \tau_2)\,\,\oo_{\td \varphi(x, \tau_2)}^n\Big|\no\\
&\leq& \int_M\; |b(\td \varphi(x, \tau_1))-b(\td \varphi(x, \tau_2)) |\cdot |w(x, \tau_1)|\,\,\oo_{\td \varphi(x, \tau_1)}^n\no\\
&&+\int_M\; |b(\td \varphi(x, \tau_2)) |\cdot |w(x, \tau_1)-w(x, \tau_2)|\,\,\oo_{\td \varphi(x, \tau_1)}^n\no\\
&&+\int_M\; |b(\td \varphi(x, \tau_2)) |\cdot |w(x, \tau_2)|\,\,|\oo_{\td \varphi(x, \tau_1)}^n-\oo_{\td \varphi(x, \tau_2)}^n|. \no
\eeqn
   Therefore, we have
\beqn &&
[e^{\eta \tau}c_s(\tau)]_{\ga, M\times [0, \infty)}\leq \|b\|_{C^{0, 0, \ga}(M\times [0, \infty), g)}\cdot \max_{\tau\in [0, \infty)}\|z(\cdot, \tau)\|_{L^2(M, \td \varphi(\tau))}\no\\&&+\|b\|_{C^0(M\times [0, \infty))}\|z\|_{C^{0, 0, \ga}(M\times [0, \infty), g)  }
+\|b\|_{C^{0, 0, \ga}(M\times [0, \infty), g)} [\Na^2\varphi]_{\ga, M\times [0, \infty)}\no\\&&\cdot \max_{\tau\in [0, \infty)}\|z(\cdot, \tau)\|_{L^2(M, \td \varphi(\tau))}\no\\
&\leq&C(n, g,  \te)\ee_0 s^{-1}\max_{\tau\in [0, \infty)}\|z(\cdot, \tau)\|_{L^2(M, g)}+C(n, g, \te)\ee_0 s^{-1}\no\\&&\cdot \Big(\ee_1  \|z\|_{{C^{4, 1, \ga}(M\times [0, \infty), g)}}+C(n, g) \ee_1^{-\ka'}\max_{\tau\in [0, \infty)}\|z(\cdot, \tau)\|_{L^2(M, g)} \Big). \label{eq:A081}
\eeqn
Taking $\ee_1=s$ and combining (\ref{eq:A081}) with (\ref{eq:A200z}) and (\ref{eq:B010}), we have
\beqn &&
\|e^{\eta\tau}c_s( \tau)\|_{C^{0, 0, \ga}(M\times [0, \infty), g)}\leq C(n, g,  \te)\ee_0 s^{-1}\max_{\tau\in [0, \infty)}\|z(\cdot, \tau)\|_{L^2(M, g)}\no\\&&+C(n, g,  \te)\ee_0  \|z\|_{{C^{4, 1, \ga}(M\times [0, \infty), g)}}  +C(n, g, \te)\ee_0 s^{-\ka'-1}\max_{\tau\in [0, \infty)}\|z(\cdot, \tau)\|_{L^2(M, g)}\no\\
&\leq& C(n, g, \te)\ee_0 s^{-\ka'-1}\max_{\tau\in [0, \infty)}\|\td f(\cdot, \tau)\|_{L^2(M, g)}+C(n, g, \te)\ee_0  \|z\|_{{C^{4, 1, \ga}(M\times [0, \infty), g)}}.\no\\ \label{eq:A082}
\eeqn Combining (\ref{eq:A082}) with (\ref{eq:B012}) and choosing $\ee_0$ small, we have
\beq
\|z\|_{C^{4, 1, \ga}(M\times [0, \infty), g)}
\leq C(n, g,  \te) s^{-\ka'-1}\|\td f\|_{C^{0, 0, \ga}(M\times [0, \infty), g)}.\no
\eeq

\end{proof}

Lemma \ref{lem:B006} implies that the operator $D\td L_{s}$ is invertible between
weighted H\"older spaces.

\begin{lem}\label{lem:B001} Let $\td \varphi$  the function and $s_0, \eta_0$ the constants in Lemma \ref{lem:B006}. Then for any $s\in (0, s_0)$ and $\eta\in (0, \eta_0)$ the operator $D\td L_{s}|_{\td \varphi}$ defined in Lemma \ref{lem:B003} is an operator from $C_{\eta s, 0, 0}^{4, 1, \ga}(M\times [0, \infty), g, g_{\td \varphi})$ to $C_{\eta s, 0}^{0, 0, \ga}(M\times [0, \infty), g, g_{\td \varphi})$ with
\beq
\|(D\td L_{s}|_{\td \varphi})^{-1}\|\leq C(n, g) s^{-\ka(\ga, n)},\no
\eeq where $\ka(\ga, n)>0. $

\end{lem}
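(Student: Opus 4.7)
The plan is to derive Lemma \ref{lem:B001} directly from Lemma \ref{lem:B006}(b) by the parabolic time rescaling $\tau = st$. Given $f\in C_{\eta s, 0}^{0,0,\ga}(M\times [0,\infty), g, g_{\td \varphi})$, I would look for $u\in C_{\eta s, 0, 0}^{4,1,\ga}$ with $D\td L_s|_{\td \varphi}(u)=f$. Setting $w(x,\tau):=u(x,t)$, $\td f(x,\tau):=f(x,t)$ with $\tau=st$, reparametrizing $\td \varphi$ in $\tau$, and using $\pd{u}{t}=s\,\pd{w}{\tau}$, division by $s$ of the explicit formula for $D\td L_s|_{\td \varphi}$ from Lemma \ref{lem:B003} rewrites the equation (up to an inessential $1/V$ normalization inside the integral term, which can be absorbed into the definition of $c_s$) as
$$\pd w{\tau}+\Delta^2_{\td \varphi}w=s^{-1}\td f+s^{-1}a_{l\bar k}w_{k\bar l}-c_s(\tau),$$
which is exactly the equation $G_s(w)=\td f$ treated in Lemma \ref{lem:B006}.

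The next step is to verify that the function-space correspondence is the right one. Since $e^{\eta s t}=e^{\eta\tau}$ and the map $t\mapsto st$ sends $[0,\infty)$ onto itself, the weighted Hölder norms in $t$ and in $\tau$ are comparable, with discrepancies of at most $s^{\pm\ga/4}$ arising from the parabolic $\ga/4$-Hölder seminorm in the time direction, in the spirit of Lemma \ref{lem:B008z}. The zero-mean condition $\int_M u\,\oo_{\td \varphi}^n=0$ transfers verbatim to $\int_M w\,\oo_{\td \varphi}^n=0$, and the initial value $u(x,0)=0$ corresponds to $w(x,0)=0$; hence $u\in C_{\eta s,0,0}^{4,1,\ga}$ if and only if $w\in C_{\eta,0,0}^{4,1,\ga}$, and similarly for $f,\td f$. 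By Lemma \ref{lem:B006}(b), for $s\in(0,s_0)$ and $\eta\in(0,\eta_0)$ there exists a unique $w\in C_{\eta,0,0}^{4,1,\ga}$ with $G_s(w)=\td f$ satisfying
$$\|w\|_{C_{\eta,0,0}^{4,1,\ga}(M\times[0,\infty),g,g_{\td \varphi})}\le C(n,g,\te)\,s^{-k(\ga,n)}\|\td f\|_{C_{\eta,0}^{0,0,\ga}(M\times[0,\infty),g,g_{\td \varphi})}.$$

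Unwinding the rescaling, setting $u(x,t):=w(x,st)$ and absorbing the two $s^{-\ga/4}$ factors from the norm conversion into a new exponent $\ka(\ga,n):=k(\ga,n)+\tfrac{\ga}{2}$, I obtain the claimed bound $\|(D\td L_s|_{\td \varphi})^{-1}\|\le C(n,g)\,s^{-\ka(\ga,n)}$; the injectivity and surjectivity of $D\td L_s|_{\td \varphi}$ between the two weighted spaces follow from the corresponding uniqueness and existence in Lemma \ref{lem:B006}. The only delicate point is the careful bookkeeping of the parabolic Hölder norms under the time rescaling—spatial derivatives are invariant, each time derivative of $w$ carries an extra factor of $s$ when expressed in terms of $u$, and the temporal $\ga/4$-Hölder seminorm scales by $s^{\ga/4}$; once these are tracked, no further estimates are needed, as all analytical substance has been placed in Lemma \ref{lem:B006}.
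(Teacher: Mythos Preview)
Your approach matches the paper's exactly: rescale $\tau=st$, reduce $D\td L_s|_{\td\varphi}(u)=f$ to $G_s(w)=\td f$, invoke Lemma \ref{lem:B006}(b), and convert norms via Lemma \ref{lem:B008z}. The paper actually picks up only one factor of $s^{-\ga/4}$ (the $C^{4,1,\ga}$ direction satisfies $\|u\|_{C^{4,1,\ga}_{\eta s}}\le\|w\|_{C^{4,1,\ga}_{\eta}}$ with no loss by Lemma \ref{lem:B008z}), giving $\ka=k+\ga/4$ rather than your $k+\ga/2$, but your slightly weaker bound is of course still valid for the lemma as stated.
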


\begin{proof}

For any $f\in C_{\eta s, 0}^{0, 0, \ga}(M\times [0, \infty ), g, g_{\td \varphi})$, we would like to find
$u \in C_{\eta s, 0, 0}^{4, 1, \ga}(M\times [0, \infty ), g, g_{\td \varphi})$ with
\beq
D\td L_s|_{\td \varphi}(u)=f.\label{eq:B035}
\eeq By Lemma \ref{lem:B003}, the equation (\ref{eq:B035}) is equivalent to
\beqn &&
\pd {u}t+s(\Delta^2_{ \td \varphi}u+Ric_{\td \varphi, i\bar j
}u_{j\bar i})-(1-s)g_{\td \varphi}^{i\bar l}g_{\td \varphi}^{k\bar j}g_{i\bar j}u_{k\bar l}\no\\
&&+\int_M\;\Big(a_{l\bar k}u_{k\bar l}+u\Delta_{\td \varphi}\pd {\td \varphi}t\Big)\,\oo_{\td \varphi}^n=f,\no
\eeqn
which can be written as
\beqn
\frac 1s\pd {u}t+\Delta^2_{\td\varphi}u&=&\frac 1sf-Ric_{\td\varphi, i\bar j
}u_{j\bar i}+\frac {1-s}sg_{\td\varphi}^{i\bar l}g_{\td\varphi}^{k\bar j}g_{i\bar j}u_{k\bar l}\no\\&&-\int_M\;\Big(s^{-1}a_{l\bar k}u_{k\bar l}+u\frac 1s\Delta_{\td \varphi}\pd {\td \varphi}t\Big)\,\oo_{\td \varphi}^n. \label{eq:B036}
\eeqn
Let
\beqn \tau&:=&st,\quad w(x, \tau):=u(x, t),\quad \td f(x, \tau):=f(x, t),\no\\
a_{l\bar k}&:=&-sRic_{\td\varphi, l\bar k}+(1-s)g_{\td\varphi}^{i\bar l}g_{\td\varphi}^{k\bar j}g_{i\bar j}.\no
\eeqn
Thus, (\ref{eq:B036}) implies that for $(x, \tau)\in M\times [0, \infty) $ we have
\beqn
\pd {w}{\tau}+\Delta^2_{\td\varphi}w&=&h(x, \tau):=s^{-1}\td f+s^{-1}a_{l\bar k}w_{k\bar l}-c_{s, w, \td \varphi}(\tau),\label{eq:A085}\\
w(x, 0)&=&0,\label{eq:A086}
\eeqn where $c_{s, w, \td \varphi}(\tau)$ is defined by
\beq
c_{s, w, \td \varphi}(\tau)=\int_M\;\Big(s^{-1}a_{l\bar k}w_{k\bar l}+w\Delta_{\td \varphi}\pd {\td \varphi}{\tau}\Big)\,\oo_{\td \varphi}^n.\no
\eeq
 By Lemma \ref{lem:B006}, for any $\td f\in C^{0, 0, \ga}_{\eta, 0}(M\times [0, \infty), g, g_{\td \varphi})$ there exists a solution $w(x, t)\in C^{4, 1, \ga}_{\eta, 0, 0}(M\times [0, \infty), g, g_{\td \varphi})$  of (\ref{eq:A085})-(\ref{eq:A086}) with
\beq
\|w\|_{{C_{\eta, 0}^{4, 1, \ga}(M\times [0, \infty), g, g_{\td \varphi})}}\leq C(n, g) s^{-\ka'}\|\td f\|_{{C_{\eta, 0}^{0, 0, \ga}(M\times [0, \infty), g, g_{\td \varphi})}}\no
\eeq for some $\ka'=\ka'(\ga, n)>0$.
Therefore, by Lemma \ref{lem:B008z} in the appendix we have
\beqn
\|e^{\eta st}u\|_{{C^{4, 1, \ga}(M\times [0, \infty), g)}}&\leq&\|e^{\eta \tau}w\|_{{C^{4, 1, \ga}(M\times [0, \infty), g)}}=\|w\|_{{C_{\eta, 0}^{4, 1, \ga}(M\times [0, \infty), g, g_{\td \varphi})}}\no\\
&\leq&C(n, g) s^{-\ka'}\|\td f\|_{{C_{\eta, 0}^{0, 0, \ga}(M\times [0, \infty), g, g_{\td \varphi})}}\no\\
&\leq&C(n, g) s^{-\ka'}\|e^{\eta \tau}\td f\|_{{C^{0, 0, \ga}(M\times [0, \infty), g)}}\no\\
&\leq&C(n, g) s^{-\ka'-\frac {\ga}4}\|e^{\eta st} f\|_{{C^{0, 0, \ga}(M\times [0, \infty), g)}}.\no
\eeqn
Thus, we have
\beq
\|u\|_{{C_{\eta s, 0}^{4, 1, \ga}(M\times [0, \infty), g, g_{\td \varphi})}}\leq
C(n, g) s^{-\ka'-\frac {\ga}4}\| f\|_{{C_{\eta s, 0}^{0, 0, \ga}(M\times [0, \infty), g, g_{\td \varphi})}}.\no
\eeq
The lemma is proved.

\end{proof}

\subsection{The contraction map}

In this subsection, we construct a contraction map between weighted H\"older spaces.

\begin{lem}\label{lem:B007}Let $\td \varphi$  the function and $s_0, \eta_0$ the constants in Lemma \ref{lem:B006}.  For any $f\in C_{\eta s, 0}^{0, 0, \ga}(M\times [0, \infty), g, g_{\td \varphi})$, we define \beqn \td\Psi_f :  C_{\eta s, 0, \psi_0}^{4, 1, \ga}(M\times [0, \infty), g, g_{\td \varphi})&\ri& C_{\eta s, 0, \psi_0}^{4, 1, \ga}(M\times [0,  \infty), g, g_{\td \varphi})\no\\
  \varphi&\ri& \varphi+(D\td L_s|_{\td \varphi})^{-1}(f-\td L_s(\varphi)).\no
\eeqn There exists $\dd(n, g), \al(\ga, n)>0$ satisfying the following properties.
If $s\in (0, s_0), \eta\in (0, \eta_0)$ and $h_1, h_2\in C_{\eta s, 0, \psi_0}^{4, 1, \ga}(M\times [0, \infty), g,  g_{\td \varphi})$ satisfy
\beq
\|h_1(x, t)-\td \varphi\|_{C^{4, 1, \ga}(M\times [0,  \infty), g)}\leq s^{\al}\dd, \quad \|h_2(x, t)-\td \varphi\|_{C^{4, 1, \ga}(M\times [0,  \infty), g)}\leq s^{\al}\dd, \no
\eeq then we have
\beq
\|\td \Psi_f(h_1)-\td \Psi_f(h_2)\|_{C^{4, 1, \ga}_{\eta s, 0}(M\times [0,  \infty), g, g_{\td \varphi})}\leq \frac 12 \|h_1-h_2\|_{C^{4, 1, \ga}_{\eta s, 0}(M\times [0,  \infty), g, g_{\td \varphi})}.\no
\eeq
\end{lem}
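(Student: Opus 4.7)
The plan is to mirror the proof of Lemma \ref{lem013}, adapting it to the weighted H\"older spaces and accounting for the nonlocal integral correction in $\td L_s$. By the fundamental theorem of calculus, setting $h_\xi = \xi h_1 + (1-\xi) h_2$ and $v = h_1 - h_2$,
\[
\td \Psi_f(h_1) - \td \Psi_f(h_2) = -\int_0^1 (D\td L_s|_{\td \varphi})^{-1}\bigl(D\td L_s|_{h_\xi} - D\td L_s|_{\td \varphi}\bigr)v \, d\xi.
\]
Note that $v(x,0)=0$ since $h_1, h_2 \in C^{4,1,\ga}_{\eta s, 0, \psi_0}$ share the initial value $\psi_0$, so the weighted norm of $v$ is well-defined and the output of $(D\td L_s|_{\td\varphi})^{-1}$ remains in $C^{4,1,\ga}_{\eta s, 0, \psi_0}$ (it produces a function vanishing at $t=0$).

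Next I would split $D\td L_s|_{h_\xi} - D\td L_s|_{\td \varphi}$ into the local differential part and the nonlocal integral part, following the expression in Lemma \ref{lem:B003}(2). For the local part, which consists of $s(\Delta^2_{h_\xi} - \Delta^2_{\td \varphi})v$, $s(Ric_{h_\xi} - Ric_{\td\varphi}) * \Na^2 v$, and $(1-s)(g_{h_\xi}^{i\bar l}g_{h_\xi}^{k\bar j} - g_{\td \varphi}^{i\bar l}g_{\td \varphi}^{k\bar j})g_{i\bar j}v_{k\bar l}$, I apply Lemma \ref{lem010} and Lemma \ref{lem011} exactly as in the proof of Lemma \ref{lem013} to obtain
\[
\bigl\|\bigl(D\td L_s|_{h_\xi} - D\td L_s|_{\td \varphi}\bigr)v\bigr\|_{C^{0,0,\ga}(M\times[0,\infty),g)} \leq C(\te,\Te)\|v\|_{C^{4,1,\ga}(M\times[0,\infty),g)} \cdot \|h_\xi - \td\varphi\|_{C^{4,1,\ga}(M\times[0,\infty),g)}.
\]
The crucial point is that the coefficients comparing $h_\xi$ and $\td\varphi$ are unweighted, while the weight $e^{\eta s t}$ can be absorbed into $v$; this upgrades the estimate to
\[
\bigl\|\bigl(D\td L_s|_{h_\xi} - D\td L_s|_{\td \varphi}\bigr)v\bigr\|_{C^{0,0,\ga}_{\eta s}} \leq C(\te,\Te)\|v\|_{C^{4,1,\ga}_{\eta s}} \cdot \|h_\xi - \td\varphi\|_{C^{4,1,\ga}}.
\]

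The main technical obstacle is the nonlocal integral term $\frac{1}{V}\int_M (a_{l\bar k} v_{k\bar l} + v\Delta_{\td\varphi}\p_t \td\varphi)\,\oo_{\td\varphi}^n$, which is new compared to Lemma \ref{lem013}. Expanding the difference, I would write it as a sum of terms of the form $\frac{1}{V}\int_M (a^{h_\xi}_{l\bar k} - a^{\td\varphi}_{l\bar k}) v_{k\bar l} \oo_{h_\xi}^n$, plus analogous terms from varying the volume form and the $\Delta \p_t$ factor. Each can be bounded pointwise in $t$ by $C(\te,\Te)\|v(\cdot,t)\|_{C^2(M)} \cdot \|h_\xi(\cdot,t) - \td\varphi(\cdot,t)\|_{C^4(M)}$ after integration by parts (shifting derivatives off $v$ onto bounded coefficients and derivatives of $\td\varphi$, whose weighted norms are controlled by Lemma \ref{lem:B005}). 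Since this term depends only on $t$, its $C^{0,0,\ga}$ norm reduces to its parabolic H\"older-in-$t$ norm of exponent $\ga/4$, which is readily bounded by the same product of weighted norms. The exponential decay of the coefficients of $\td\varphi$ from Lemma \ref{lem:B005} absorbs the weight comfortably.

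Combining everything and applying Lemma \ref{lem:B001},
\[
\|\td \Psi_f(h_1) - \td \Psi_f(h_2)\|_{C^{4,1,\ga}_{\eta s, 0}} \leq C(n,g)\, s^{-\ka(\ga,n)} \cdot C(\te,\Te)\,s^{\al}\dd \cdot \|h_1-h_2\|_{C^{4,1,\ga}_{\eta s, 0}}.
\]
Choosing $\al = \al(\ga,n) > \ka(\ga,n)$ (e.g.\ $\al = \ka + 1$) and then $\dd = \dd(n,g)$ small enough so that $C(n,g)C(\te,\Te)\dd \, s^{\al - \ka} \leq \tfrac{1}{2}$ for all $s \in (0, s_0)$ yields the desired contraction. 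The proof is thus structurally the same as Lemma \ref{lem013}; the genuine work lies in confirming that the nonlocal average term behaves well in the weighted parabolic H\"older space, which follows because $\td\varphi$ and its derivatives decay exponentially by construction.
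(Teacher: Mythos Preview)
Your proposal is correct and follows essentially the same route as the paper: the same integral formula for $\td\Psi_f(h_1)-\td\Psi_f(h_2)$, the same split of $D\td L_s|_{h_\xi}-D\td L_s|_{\td\varphi}$ into the local differential part (handled via Lemmas \ref{lem010} and \ref{lem011}) and the nonlocal integral correction, and the same final application of Lemma \ref{lem:B001}. The paper packages the nonlocal estimate into a separate Lemma \ref{lem:B010}, which proves directly (without integration by parts) that
\[
\bigl\|e^{\eta s t}\bigl(c_{s,v,h_\xi}(\tau)-c_{s,v,\td\varphi}(\tau)\bigr)\bigr\|_{C^{0,0,\ga}}
\leq C(\te,\Te)\,\|e^{\eta s t}v\|_{C^{4,1,\ga}}\cdot\|h_\xi-\td\varphi\|_{C^{4,1,\ga}},
\]
using pointwise bounds on $B(v,\varphi_1)-B(v,\varphi_2)$ and on the volume-form difference. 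One small correction to your reasoning: the weight $e^{\eta s t}$ is absorbed by $v$, not by the exponential decay of $\td\varphi$; the latter is not needed for this difference estimate, only the uniform $(\te,\Te)$ bounds on $h_\xi$ and $\td\varphi$. Also, the paper takes $\al=\ka(\ga,n)$ exactly and then chooses $\dd$ so that $C(n,g)\dd<\tfrac12$, whereas your choice $\al>\ka$ works just as well.
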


\begin{proof} We follow the argument of Lemma \ref{lem013}.

 (1). Let $h_{\xi}=\xi h_1+(1-\xi)h_2$. Then
\beq
\Psi_f(h_1)-\Psi_f(h_2)
=-\int_0^1\,(D\td L_s|_{\td \varphi})^{-1}\Big(D\td L_s|_{h_s}-D\td L_s|_{\td \varphi}\Big)(h_1-h_2)\,d\xi.\no
\eeq Let $v=h_1-h_2$. By Lemma \ref{lem:B003}, we have
\beq
\Big|\Big(D\td L_s|_{h_s}-D\td L_s|_{\td \varphi}\Big)v\Big|
\leq\Big|\Big(DL_s|_{h_s}-DL_s|_{\td \varphi}\Big)v\Big|+|c_{s, v, h_s}(\tau)-c_{s, v, \td \varphi}(\tau)|, \label{eq:A020x1}
\eeq where $c_{s, v, \td \varphi}(\tau)$ is defined by
\beq
c_{s, v, \td \varphi}(\tau)=\int_M\;\Big((1-s)g_{\td\varphi}^{i\bar l}g_{\td \varphi}^{k\bar j}g_{i\bar j}v_{k\bar l}
-sRic_{\td \varphi, l\bar k}v_{k\bar l}+v\Delta_{\td \varphi}\pd {\td \varphi}t\Big)\,\oo_{\td \varphi}^n. \label{eq:cs}
\eeq
Using (\ref{eq:A019}), we have
\beqn &&\Big\|e^{\eta s t}\Big(DL_s|_{h_s}-DL_s|_{\td \varphi}\Big)v\Big\|_{C^{0, 0, \ga}(M\times [0, \infty), g)}\no\\
&\leq&
s C(\te, \Te)\Big(\|e^{\eta s t}\Na^2v\|_{C^{0, 0, \ga}(M\times [0, \infty), g)}\cdot \|\Na^4(h_s-\td \varphi)\|_{C^{0, 0, \ga}(M\times [0, \infty), g)}\no\\&&
+\|e^{\eta s t}\Na^2v\|_{C^{0, 0, \ga}(M\times [0, \infty), g)}\cdot \|\Na^3(h_s-\td \varphi)\|_{C^{0, 0, \ga}(M\times [0, \infty), g)}\no\\&&+\|e^{\eta s t}\Na^2v\|_{C^{0, 0, \ga}(M\times [0, \infty), g)}\cdot \|\Na^2(h_s-\td \varphi)\|_{C^{0, 0, \ga}(M\times [0, \infty), g)}\no\\
&&+\|e^{\eta s t}\Na^3v\|_{C^{0, 0, \ga}(M\times [0, \infty), g)}\cdot \|\Na^3(h_s-\td \varphi)\|_{C^{0, 0, \ga}(M\times [0, \infty), g)}\no\\&&+\|e^{\eta s t}\Na^3v\|_{C^{0, 0, \ga}(M\times [0, \infty), g)}\cdot\|\Na^2(h_s-\td \varphi)\|_{C^{0, 0, \ga}(M\times [0, \infty), g)}\no\\&&+\|e^{\eta s t}\Na^4v\|_{C^{0, 0, \ga}(M\times [0, \infty), g)}\cdot \|\Na^2(h_s-\td \varphi)\|_{C^{0, 0, \ga}(M\times [0, \infty), g)}\Big)\no\\
&&+(1-s)C(\te, \Te)\|e^{\eta s t}\Na^2v\|_{C^{0, 0, \ga}(M\times [0, \infty), g)}\cdot \|\Na^2(h_s-\td \varphi)\|_{C^{0, 0, \ga}(M\times [0, \infty), g)}\no\\
&\leq& C(\te, \Te)\|e^{\eta s t}v\|_{C^{4, 1, \ga}(M\times [0, \infty), g)}\cdot \|h_s-\td \varphi\|_{C^{4, 1, \ga}(M\times [0, \infty), g)}, \label{eq:A019x1}
\eeqn and by Lemma \ref{lem:B010} below we have
 \beqn &&
\Big\|e^{\eta s t}\Big(c_{s, v, h_s}(\tau)-c_{s, v, \td \varphi}(\tau)    \Big)\Big\|_{C^{0, 0, \ga}(M\times [0, \infty), g)}\no\\&\leq& C(\te, \Te)\|e^{\eta s t}v\|_{C^{4, 1, \ga}(M\times [0, \infty), g)}\cdot \|h_s-\td \varphi\|_{C^{4, 1, \ga}(M\times [0, \infty), g)}. \label{eq:A022x1}
\eeqn
Combining (\ref{eq:A020x1}), (\ref{eq:A019x1}) with (\ref{eq:A022x1}), we have
\beqn &&
\Big\|\Big(D\td L_s|_{h_s}-D\td L_s|_{\td \varphi}\Big)v\Big\|_{C_{\eta s, 0}^{0, 0, \ga}(M\times [0, \infty), g, g_{\td \varphi})}\no\\
&\leq&\Big\|e^{\eta s t}\Big(DL_s|_{h_s}-DL_s|_{\td \varphi}\Big)v\Big\|_{C^{0, 0, \ga}(M\times [0, \infty), g)}\no\\&&+\Big\|e^{\eta s t}\Big( c_{s, v, h_s}(\tau)-c_{s, v, \td \varphi}(\tau)    \Big)\Big\|_{C^{0, 0, \ga}(M\times [0, \infty), g)}\no\\
&\leq& C(\te, \Te)\|e^{\eta s t}v\|_{C^{4, 1, \ga}(M\times [0, \infty), g)}\cdot \|h_s-\td \varphi\|_{C^{4, 1, \ga}(M\times [0, \infty), g)}. \label{eq:E001}
\eeqn
Using Lemma \ref{lem:B001} and (\ref{eq:E001}), we have
\beqn &&
\Big\|(D\td L_s|_{\td \varphi})^{-1}\Big(D\td L_s|_{h_s}-D \td L_s|_{\td \varphi}\Big)(h_1-h_2)\Big\|_{C_{\eta s, 0}^{4, 1, \ga}(M\times [0, \infty), g, g_{\td \varphi})}\no\\
&\leq& C(n) s^{-\ka(\ga, n)} \Big\|\Big(D\td L_s|_{h_s}-D \td L_s|_{\td \varphi}\Big)(h_1-h_2)\Big\|_{C_{\eta s, 0}^{0, 0, \ga}(M\times [0, \infty), g, g_{\td \varphi})}\no\\
&\leq& C(n, g) s^{-\ka(\ga, n)} \|h_s-\td \varphi\|_{C^{4, 1, \ga}(M\times [0, \infty), g)}\cdot  \|h_1-h_2\|_{C_{\eta s, 0}^{4, 1, \ga}(M\times [0, \infty), g, g_{\td \varphi})}\no\\
&\leq&C(n, g)\dd s^{\al-\ka(\ga, n)}  \|h_1-h_2\|_{C_{\eta s, 0}^{4, 1, \ga}(M\times [0, \infty), g, g_{\td \varphi})}.\no
\eeqn
Choosing $\al=\ka(\ga, n)$ and small $\dd$ with $C(n, g)\dd<\frac 12$, we have
\beq
\|\Psi_f(h_1)-\Psi_f(h_2)\|_{C_{\eta s, 0}^{4, 1, \ga}(M\times [0, \infty), g, g_{\td \varphi})}\leq \frac 12 \|h_1-h_2\|_{C_{\eta s, 0}^{4, 1, \ga}(M\times [0, \infty), g, g_{\td \varphi})}.\no
\eeq
The lemma is proved.

\end{proof}

\begin{lem}\label{lem:B010}Under the assumption of Lemma \ref{lem:B007}, the function $c_{s, v, \varphi}(\tau)$ defined by (\ref{eq:cs})
satisfies
\beqn &&
\Big\|e^{\eta s t}\Big(c_{s, v, \varphi_1}(\tau)-c_{s, v, \varphi_2} (\tau)   \Big)\Big\|_{C^{0, 0, \ga}(M\times [0, \infty), g)}\no\\
&\leq& C(\te, \Te)\|e^{\eta s t}v\|_{C^{4, 1, \ga}(M\times [0, \infty), g)}\cdot \|\varphi_1- \varphi_2\|_{C^{4, 1, \ga}(M\times [0, \infty), g)}.\no
\eeqn

\end{lem}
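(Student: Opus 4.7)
My plan is to decompose the difference $c_{s,v,\varphi_1}(\tau) - c_{s,v,\varphi_2}(\tau)$ into terms in which a single scalar factor isolates the difference $\varphi_1 - \varphi_2$, then invoke the pointwise estimates from Lemmas \ref{lem010} and \ref{lem011}. Since $c_{s,v,\varphi}(\tau)$ depends only on $\tau$, its $\|\cdot\|_{C^{0,0,\ga}(M\times[0,\infty),g)}$ norm reduces to $\sup_\tau |c(\tau)|$ plus the parabolic $C^{\ga/4}$ seminorm in $\tau$, and both are controlled by the $C^{0,0,\ga}$ norm of the pointwise integrand. Moreover $c_{s,v,\varphi}$ is linear in $v$, so the weight $e^{\eta s t}$ can be pulled inside the integral and attached to $v$, converting the left-hand side into a norm of an integrand that is a product of a coefficient depending on $\varphi_1,\varphi_2$ with an $(e^{\eta s t}v)$-factor.

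Writing the integrand of $c_{s,v,\varphi}$ schematically as $F(\varphi,v)\,\oo_\varphi^n$, I split
\[
c_{s,v,\varphi_1}(\tau) - c_{s,v,\varphi_2}(\tau) = \int_M [F(\varphi_1,v) - F(\varphi_2,v)]\,\oo_{\varphi_1}^n + \int_M F(\varphi_2,v)(\oo_{\varphi_1}^n - \oo_{\varphi_2}^n).
\]
Here $F(\varphi,v)$ has three pieces: two linear in $v_{k\bar l}$ with coefficients $(1-s)g_\varphi^{i\bar l}g_\varphi^{k\bar j}g_{i\bar j}$ and $-sRic_{\varphi,l\bar k}$, and the third of the form $v\cdot \Delta_\varphi \pd{\varphi}{t}$. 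The differences of the first two coefficients are bounded pointwise by $C(\te,\Te)\sum_{j\leq 4}|\Na^j(\varphi_1-\varphi_2)|$ using the estimates proved inside Lemmas \ref{lem010} and \ref{lem011}. For the third term I use the splitting $\Delta_{\varphi_1}\pd{\varphi_1}{t} - \Delta_{\varphi_2}\pd{\varphi_2}{t} = (\Delta_{\varphi_1}-\Delta_{\varphi_2})\pd{\varphi_1}{t} + \Delta_{\varphi_2}\pd{(\varphi_1-\varphi_2)}{t}$, and the volume form difference is handled by $|\oo_{\varphi_1}^n - \oo_{\varphi_2}^n|\leq C(\te,\Te)|\Na^2(\varphi_1-\varphi_2)|\,\oo_g^n$.

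Combining these pointwise estimates with the standard product rule in Hölder spaces $\|fg\|_{C^\ga}\leq \|f\|_{C^0}\|g\|_{C^\ga}+\|f\|_{C^\ga}\|g\|_{C^0}$, together with the observation that $\|\varphi_i\|_{C^{4,1,\ga}}\leq C(\te,\Te)$ (since $\varphi_1,\varphi_2$ lie within $s^\al\dd$ of the uniformly bounded $\td\varphi$ by the hypotheses of Lemma \ref{lem:B007}), every resulting term factors into the product of a quantity controlled by $\|\varphi_1-\varphi_2\|_{C^{4,1,\ga}}$ and a quantity controlled by $\|e^{\eta s t}v\|_{C^{4,1,\ga}}$. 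Integrating in $x$ preserves both the sup in $\tau$ and the time-Hölder seminorm, yielding the asserted inequality.

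The main technical obstacle will be the Hölder bookkeeping for the $v\cdot\Delta_\varphi\pd{\varphi}{t}$ term, which requires estimating $\pd{(\varphi_1-\varphi_2)}{t}$ in $C^{0,0,\ga}$ --- this is precisely why the $C^{4,1,\ga}$ norm (which controls one time-derivative) appears on the right-hand side. No conceptually new ingredient beyond the arguments of Lemmas \ref{lem010} and \ref{lem011} is required.
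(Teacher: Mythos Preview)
Your overall decomposition is exactly the paper's: split $c_{s,v,\varphi_1}-c_{s,v,\varphi_2}$ into an ``integrand difference'' plus a ``volume-form difference'', and control each piece via the pointwise inequalities of Lemmas \ref{lem010} and \ref{lem011} together with the H\"older product rule. So the strategy is correct.

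There is, however, a real gap in your handling of the third term. Your splitting
\[
\Delta_{\varphi_1}\pd{\varphi_1}{t}-\Delta_{\varphi_2}\pd{\varphi_2}{t}
=(\Delta_{\varphi_1}-\Delta_{\varphi_2})\pd{\varphi_1}{t}+\Delta_{\varphi_2}\pd{(\varphi_1-\varphi_2)}{t}
\]
requires second spatial derivatives of $\partial_t\varphi_1$ and of $\partial_t(\varphi_1-\varphi_2)$. But the $C^{4,1,\ga}$ norm (see Definition \ref{defi:A001} and (\ref{eq:A012})) controls $\partial_t u$ only in $C^{0,0,\ga}$, \emph{not} $\Na^2\partial_t u$. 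Hence neither piece of your splitting is bounded by $\|\varphi_1-\varphi_2\|_{C^{4,1,\ga}}$ (or by $C(\te,\Te)$), and your last paragraph's claim that only $\partial_t(\varphi_1-\varphi_2)\in C^{0,0,\ga}$ is needed is a derivative undercount.

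The paper avoids this by first integrating by parts inside the definition of $c_{s,v,\varphi}$: since $\int_M v\,\Delta_\varphi\pd{\varphi}{t}\,\oo_\varphi^n=\int_M (\Delta_\varphi v)\,\pd{\varphi}{t}\,\oo_\varphi^n$ on the closed manifold $M$, one may take the integrand to be $B(v,\varphi)=(1-s)g_\varphi^{i\bar l}g_\varphi^{k\bar j}g_{i\bar j}v_{k\bar l}-sRic_{\varphi,l\bar k}v_{k\bar l}+(\Delta_\varphi v)\pd{\varphi}{t}$. Now the second derivatives sit on $v$ (controlled by $\|e^{\eta s t}v\|_{C^{4,1,\ga}}$), and from the $\varphi$-side only $\pd{\varphi}{t}$ and $\pd{(\varphi_1-\varphi_2)}{t}$ in $C^{0,0,\ga}$ are needed, which the $C^{4,1,\ga}$ norm does control. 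With this one-line fix your argument goes through and matches the paper's.
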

\begin{proof}
  Let
\beq
B(v, \varphi)=(1-s)g_{\varphi}^{i\bar l}g_{\varphi}^{k\bar j}g_{i\bar j}v_{k\bar l}
-sRic_{ \varphi, l\bar k}v_{k\bar l}+\Delta_{\varphi}v\pd { \varphi}t. \no
\eeq We estimate each term of $B. $
 By Lemma \ref{lem011}, we have
\beq
\Big|Ric_{\varphi_1, i\bar j
}v_{ j\bar i}-Ric_{\varphi_2, i\bar j
}v_{ j\bar i}\Big|\leq C(\te, \Te)\Big(|\Na^2(\varphi_1- \varphi_2)|+|\Na^3(\varphi_1- \varphi_2)|+|\Na^4(\varphi_1- \varphi_2)|\Big)\cdot |\Na^2 v|.\no
\eeq
Moreover, by (\ref{eq:A060}) we have
\beq
|g_{\varphi_1}^{i\bar l}g_{\varphi_1}^{k\bar j}g_{i\bar j}v_{k\bar l}-g_{\varphi_2}^{i\bar l}g_{\varphi_2}^{k\bar j}g_{i\bar j}v_{k\bar l}|
\leq|\Na^2(\varphi_1- \varphi_2)|\cdot |\Na^2 v|.\no
\eeq
Note that
\beqn &&
\Big|\Delta_{\varphi_1}v\pd { \varphi_1}t-\Delta_{\varphi_2}v\pd { \varphi_2}t\Big|\leq\Big|\Delta_{\varphi_1}v\pd { \varphi_1}t-\Delta_{\varphi_2}v\pd { \varphi_1}t\Big|+\Big|\Delta_{\varphi_2}v\pd { \varphi_1}t-\Delta_{\varphi_2}v\pd { \varphi_2}t\Big|\no\\
&\leq&|\Na^2(\varphi_1-\varphi_2)|\cdot|\Na^2v|\cdot \Big|\pd {\varphi_1}t\Big|
+|\Delta_{\varphi_2}v|\cdot\Big|\pd { }t(\varphi_1-\varphi_2)\Big|. \no
\eeqn
Therefore, we have
\beqn &&
|B(v, \varphi_1)-B(v, \varphi_2)|\leq (1-s)|\Na^2(\varphi_1- \varphi_2)|\cdot |\Na^2 v|\no\\&&+s C(\te, \Te)\Big(|\Na^2(\varphi_1- \varphi_2)|+|\Na^3(\varphi_1- \varphi_2)|+|\Na^4(\varphi_1- \varphi_2)|\Big)\cdot |\Na^2 v|\no\\&&+|\Na^2(\varphi_1-\varphi_2)|\cdot|\Na^2v|\cdot \Big|\pd {\varphi_1}t\Big|
+|\Delta_{\varphi_2}v_2|\cdot\Big|\pd { }t(\varphi_1-\varphi_2)\Big|. \label{eq:C010v}
\eeqn
Note that
\beqn &&
|c_{s, v, \varphi_1}(\tau)-c_{s, v, \varphi_2} (\tau) |=\Big|\int_M\;B(v, \varphi_1)\,\oo_{\varphi_1}^n-\int_M\,B(v, \varphi_2)\,\oo_{\varphi_2}^n\Big|\no\\
&\leq&\Big|\int_M\;\Big(B(v, \varphi_1)-B(v, \varphi_2)\Big)\,\oo_{\varphi_1}^n\Big|+\Big|\int_M\;B(v, \varphi_2)\,\oo_{\varphi_1}^n-\int_M\,B(v, \varphi_2)\,\oo_{\varphi_2}^n\Big|.\no\\\label{eq:C011v}
\eeqn
Let $\varphi_\xi=\varphi_1+\xi(\varphi_2-\varphi_1)(\xi\in [0, 1])$. We have
\beqn
 \oo_{\varphi_2}^n-\oo_{\varphi_1}^n&=&\int_0^1\,\Delta_{\varphi_s}(\varphi_2-\varphi_1)\,
 \oo_{\varphi_s}^n. \label{eq:C012v}
\eeqn
Combining (\ref{eq:C010v})-(\ref{eq:C012v}), we have
\beqn &&
\Big\|e^{\eta s t}\Big(c_{s, v, \varphi_1}(\tau)-c_{s, v, \varphi_2} (\tau)    \Big)\Big\|_{C^{0, 0, \ga}(M\times [0, \infty), g)}\no\\
&\leq&\|e^{\eta s t}(B(v, \varphi_1)-B(v, \varphi_2))\|_{{C^{0, 0, \ga}(M\times [0, \infty), g)}}\no\\&&+\|e^{\eta s t}(B(v, \varphi_1)\|_{{C^{0, 0, \ga}(M\times [0, \infty), g)}}\cdot \|\Na^2(\varphi_1-\varphi_2)\|_{{C^{0, 0, \ga}(M\times [0, \infty), g)}}
\no\\&\leq& C(\te, \Te)\|e^{\eta s t}v\|_{C^{4, 1, \ga}(M\times [0, \infty), g)}\cdot \|\varphi_1- \varphi_2\|_{C^{4, 1, \ga}(M\times [0, \infty))}. \no
\eeqn
The lemma is proved.

\end{proof}

\subsection{Proof of Theorem \ref{theo:002}}
The proof of Theorem \ref{theo:002} is similar to that in Section \ref{sec3}, and we sketch the proof here. Fix $\psi_0\in \cH(\oo_g).$ We denote by $\varphi_0(x, t)(t\in [0, \infty))$ the solution of $J$-flow with the initial data $\psi_0$ in Lemma \ref{lem:B002}.   By Lemma \ref{lem:B005} we can find $\td \varphi_{N, s}$ satisfying (\ref{eq:C013})-(\ref{eq:C014}). Moreover, we have
\beq
\td \varphi_{N, s}(0)-\varphi_0(x, 0)=\td \varphi_{N, s}(0)-\psi_0=c\in \RR, \quad \forall\; x\in M.\no
\eeq
We denote by $\td \varphi:=\td \varphi_{N, s}$ for simplicity.
 By Lemma \ref{lem:B001}, there exist two constants $s_0\in (0, 1)$ and $ \eta_0>0$ such that for any $s\in (0, s_0)$ and $\eta\in (0, \eta_0)$ the operator
\beq
D\td L_s|_{\td \varphi}:  C_{\eta s, 0, 0}^{4, 1, \ga}(M\times [0, \infty), g, g_{\td \varphi})\ri C_{\eta s, 0}^{0, 0, \ga}(M\times [0, \infty), g, g_{\td \varphi}).\no
\eeq
is injective and surjective. Moreover, we have
\beq
\|(D\td L_s|_{\td \varphi})^{-1}\|\leq C(n, T) s^{-\ka(\ga, n)}.\no
\eeq
By Lemma \ref{lem:B007}, for any $f\in C_{\eta s, 0}^{0, 0, \ga}(M\times [0, \infty), g, g_{\td \varphi})$ the map \beqn \td \Psi_f :  C_{\eta s, 0, \psi_0}^{4, 1, \ga}(M\times [0, \infty), g, g_{\td \varphi})&\ri& C_{\eta s, 0, \psi_0}^{4, 1, \ga}(M\times [0, \infty), g, g_{\td \varphi})\no\\
  \varphi&\ri& \varphi+(D\td L_s|_{\td \varphi})^{-1}(f-\td L_s(\varphi))\no
\eeqn is a contraction, i.e., if $h_1(x, t), h_2(x, t)\in C_{\eta s, 0, \psi_0}^{4, 1, \ga}(M\times [0, \infty), g, g_{\td \varphi})$ satisfy
\beq
\|h_1(x, t)-\td \varphi\|_{C^{4, 1, \ga}(M\times [0,  \infty), g, g_{\td \varphi})}\leq s^{\al}\dd, \quad \|h_2(x, t)-\td \varphi\|_{C^{4, 1, \ga}(M\times [0,  \infty), g, g_{\td \varphi})}\leq s^{\al}\dd,\no
\eeq  we have
\beq
\|\td \Psi_f(h_1)-\td \Psi_f(h_2)\|_{C^{4, 1, \ga}_{\eta s, 0}(M\times [0,  \infty), g, g_{\td \varphi})}\leq \frac 12 \|h_1-h_2\|_{C^{4, 1, \ga}_{\eta s, 0}(M\times [0,  \infty), g, g_{\td \varphi})}.\no
\eeq
  Therefore, there exists $\bb(\ga, n)>0$ such that if $v\in C_{\eta s, 0}^{0, 0, \ga}(M\times [0, \infty), g, g_{\td \varphi})$ satisfying \beq \|v-\td L_s(\td \varphi)\|_{C_{\eta s, 0}^{0, 0, \ga}(M\times [0, \infty), g, g_{\td \varphi})}\leq \eta s^{\bb}, \label{eq:A030v}\eeq  we can find $\varphi\in  C_{\eta s, 0, \psi_0}^{4, 1, \ga}(M\times [0, \infty), g, g_{\td \varphi})$ with $\td L_s(\varphi)=v. $
 By (\ref{eq:C013}), for any $N>0$  we can assume that the function $\td \varphi:=\td \varphi_{N, s}$   satisfy the condition
\beq
\|\td L_s(\td \varphi)\|_{C_{\eta s, 0}^{0, 0, \ga}(M\times [0, \infty), g, g_{\td \varphi})}\leq C(n, g) s^N.\no
\eeq Taking $N>\bb$, the function $v=0$ satisfies the condition  (\ref{eq:A030v}). Therefore, we can find $\varphi_{\infty}(x, t)\in C_{\eta s, 0, \psi_0 }^{4, 1, \ga}(M\times [0, T], g, g_{\td \varphi})$ such that
$$\td L_s(\varphi_{\infty})=0.$$
The theorem is proved.

\section{Proof of Theorem \ref{theo:main1}}\label{sec5}

Combining Theorem \ref{theo:001a} with Theorem \ref{theo:002}, we have the result:
\begin{theo}
Let $(M, g)$ be a compact K\"ahler manifold with a cscK metric
$\oo_g.$ Consider  a family of twisted Calabi flow with $s\in [0, 1]$
\beq
\pd {\varphi_s}t=s(R(\varphi_s)-\un R)+(1-s)(n-\tr_{\varphi_s}\oo_g).\no
\eeq For any $\psi_0\in \cH(\oo_g)$, there exists $s_0\in (0, 1]$ such that for any $s\in [0, s_0)$ the twisted Calabi flow (\ref{eq:000}) with the initial K\"ahler potential $\psi_0$ exists for all time and converges smoothly to the  cscK metric
$\oo_g$.

\end{theo}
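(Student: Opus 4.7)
The plan is to combine the finite-time existence result (Theorem \ref{theo:001a}) with the stability result (Theorem \ref{theo:002}), using the long-time convergence of the $J$-flow as a bridge. The underlying picture is that for a fixed initial potential $\psi_0$, the $J$-flow brings us arbitrarily close to $0$ after a large but finite time $T$, and on the interval $[0,T]$ the twisted Calabi flow $\varphi_s$ stays $O(s)$-close to $\varphi_0$ in $C^{4,1,\gamma}$. Hence at time $T$ the twisted Calabi solution lies inside the uniform stability neighbourhood provided by Theorem \ref{theo:002}, which then furnishes long-time existence and exponential convergence on $[T,\infty)$. Concatenating the two pieces gives the theorem.

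More precisely, first I would fix $\psi_0 \in \cH(\oo_g)$ and let $\varphi_0(x,t)$ denote the normalized $J$-flow with initial data $\psi_0$, which is defined for all $t \geq 0$ and converges smoothly to $0$ by Song--Weinkove. Applying Theorem \ref{theo:002} produces constants $\dd>0$ and $s_* > 0$ such that, whenever the initial potential has $C^4$ norm bounded by $\dd$, the twisted Calabi flow (\ref{eq:000}) exists globally and converges exponentially to $\oo_g$ for all $s \in (0, s_*)$. Since $\varphi_0(\cdot,t) \to 0$ smoothly, I can choose $T>0$ large so that
\[
\|\varphi_0(\cdot,T)\|_{C^{4}(M,g)} \leq \tfrac{\dd}{2}.
\]
This choice of $T$ depends only on $\psi_0$ and the cscK background.

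Next, with this $T$ fixed, Theorem \ref{theo:001a} supplies $s_0' \in (0,s_*)$ such that for every $s \in (0, s_0')$ the twisted Calabi flow $\varphi_s$ with initial data $\psi_0$ exists on $M \times [0,T]$ and satisfies
\[
\|\varphi_s - \varphi_0\|_{C^{4,1,\gamma}(M \times [0,T], g)} \leq C(n,g,T,\varphi_0)\, s.
\]
Shrinking $s_0'$ if necessary so that $C(n,g,T,\varphi_0)\, s_0' \leq \dd/2$, we conclude $\|\varphi_s(\cdot,T)\|_{C^4(M,g)} \leq \dd$ for every $s \in (0,s_0')$. At this point I restart the flow at time $T$ with initial data $\varphi_s(\cdot,T)$: by Theorem \ref{theo:002}, this restarted twisted Calabi flow exists on $[T,\infty)$ and converges exponentially to a twisted cscK metric, which by the uniqueness of twisted cscK metrics (Berman--Darvas--Lu) must coincide with $\oo_g$. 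Concatenating the solution on $[0,T]$ with the one on $[T,\infty)$ yields a global solution with the required convergence, so setting $s_0 := s_0'$ proves the theorem; the case $s=0$ is the $J$-flow itself.

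The plan has no genuine obstacle left, since the two hard inputs (finite-time existence with explicit $O(s)$ approximation by the $J$-flow, and the uniform neighbourhood of stability near the cscK metric for all small $s$) have already been established. The only point requiring a small amount of care is making sure the constants line up: the size $\dd$ of the stability neighbourhood from Theorem \ref{theo:002} is chosen first and is independent of $s$; then $T$ is chosen using only the $J$-flow limit; and finally $s_0$ is chosen last so that $C(n,g,T,\varphi_0)\,s_0 \leq \dd/2$ and $s_0 < s_*$. The regularity of the concatenation at $t=T$ is automatic since $\varphi_s$ is $C^{4,1,\gamma}$ on both sides and the restarted flow takes $\varphi_s(\cdot,T)$ as initial data, matching values across $t=T$.
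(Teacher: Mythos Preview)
Your proposal is correct and matches the paper's proof essentially step for step: choose $\dd$ and $s_*$ from Theorem~\ref{theo:002}, pick $T$ so that the $J$-flow potential satisfies $\|\varphi_0(\cdot,T)\|_{C^4}\le \dd/2$, then use Theorem~\ref{theo:001a} on $[0,T]$ and shrink $s_0$ so that $\|\varphi_s(\cdot,T)\|_{C^4}\le \dd$, and finally apply Theorem~\ref{theo:002} on $[T,\infty)$. One small remark: your appeal to Berman--Darvas--Lu uniqueness is harmless but unnecessary here, since Theorem~\ref{theo:002} already asserts convergence to $\oo_g$ itself.
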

\begin{proof} Let $\psi_0\in \cH(\oo_g)$.
Consider the $J$ flow
\beq
\pd {\varphi}t=n-\tr_{\varphi}g,\quad \varphi(0)=\psi_0. \label{eq:B001}
\eeq We normalize $\psi_0$ such that $I(\psi_0)=0$. Note that $I(\varphi(x, t))$ is a constant independent of $t$, we have $I(\varphi(x, t))=0$ for all $t$.
Since the equation $\tr_{\varphi}g=n$ has a solution $\varphi=0$ in $\cH_0(\oo_g)$,  the solution $\varphi_0(x, t)$ of the flow (\ref{eq:B001}) on $M\times [0, \infty)$ exists for all time and converges smoothly to $0$. Therefore, for $\dd_0>0$ in Theorem \ref{theo:002} there exists $T>0$ such that
\beq
\|\varphi_0(x, t)\|_{C^{4, 1, \ga}(M, \oo_g)}\leq \frac 12\dd_0,\quad \forall\; t\geq T. \label{eq:B034}
\eeq
By Theorem \ref{theo:001a},  there exists $s_1>0$ such that for any $s\in (0, s_1)$  the solution $\varphi_s(x, t)$ of twisted Calabi flow with the K\"ahler potential $ \psi_0$  exists for $t\in [0, T]$ and satisfies
\beq
\|\varphi_s(x, t)- \varphi_0(x, t)\|_{C^{4, 1, \ga}(M\times [0, T], g)}\leq C(n, g, \te, \Te, T)s.\label{eq:B034a}
\eeq
Therefore, (\ref{eq:B034}) and (\ref{eq:B034a}) imply that
\beqs
\|\varphi_s(x, T)\|_{C^{4}(M, g)}&\leq& \|\varphi_s(x, T)- \varphi_0(x, T)\|_{C^{4}(M, g)}+\| \varphi_0(x, T)\|_{C^{4}(M, g)}\\
&\leq &C(n, g, \te, \Te, T)s+\frac 12\dd_0.
\eeqs Thus, we choose $s_1'\in (0, s_1)$ such that $C(n, g, \te, \Te, T)s_1'<\frac 12\dd_0$, and  for any $s\in (0, s_1')$ we have
\beq \|\varphi_s(x, T)\|_{C^{4}(M, g)}\leq \dd_0.\no
\eeq
By Theorem \ref{theo:002}, for $s\in (0, s_1')$ the solution $\varphi_s(x, t)$ exists for all time $t>T$ and converges exponentially  to the cscK metric $\oo_g$. The theorem is proved.

\end{proof}

\section{Proof of Theorem \ref{theo:main3}}

\subsection{Finite time existence}
In this section, we prove that if a twisted Calabi flow exists on a finite time interval, so does the nearby twisted Calabi flow. The proof is similar to that of Theorem \ref{theo:001a}, but here we don't need to construct the approximate solution $\td \varphi_{N, s} $ as we replace $\td \varphi_{N, s} $ by the solution of the given twisted Calabi flow.

\begin{theo}\label{theo:main3a}
Let $\al, s_0\in (0, 1), T>0$ and $(M, g)$ be a compact K\"ahler manifold with a K\"ahler metric
$\oo_g$.   For any $\psi_0\in \cH(\oo_g)$, there exists a constant $\dd_0\in (0, s_0)$ satisfying the following property. If the twisted Calabi flow (\ref{eq:000}) for $s=s_0$ with the initial data $\psi_0$ exists for $t\in [0, T]$, then the twisted Calabi flow for any $s\in (s_0-\dd, s_0+\dd)$ with the initial data $\psi_0$ also exists for $t\in [0, T]$ and satisfies
 \beq
\|\varphi_s(x, t)-\varphi_{s_0}(x, t)\|_{C^{4, 1, \ga}(M\times [0, T], g)}\leq C(n, g, T, \varphi_{s_0})|s-s_0|^{\al}.\no
\eeq

\end{theo}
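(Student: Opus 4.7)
The plan is to adapt the proof of Theorem \ref{theo:001a}, but with a crucial simplification: rather than building the approximate solution $\td\varphi_{N,s}$ by perturbing the $J$-flow via Lemma \ref{lem007}, I will use the already-existing solution $\varphi_{s_0}(x,t)$ itself as the background K\"ahler potential. The point is that $L_{s_0}(\varphi_{s_0})=0$, so
\beq
L_s(\varphi_{s_0})=L_s(\varphi_{s_0})-L_{s_0}(\varphi_{s_0})=(s-s_0)\Big((R(\varphi_{s_0})-\un R)-(n-\tr_{\varphi_{s_0}}\oo_g)\Big),\no
\eeq
whence $\|L_s(\varphi_{s_0})\|_{C^{0,0,\ga}(M\times [0,T],g)}\leq C(g,T,\varphi_{s_0})|s-s_0|$. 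Thus $\varphi_{s_0}$ is automatically an $O(|s-s_0|)$-approximate solution of the $s$-twisted Calabi flow, and no higher-order correction in $s$ is needed.

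The second step is to establish that the linearized operator
\beq
DL_s|_{\varphi_{s_0}}:C^{4,1,\ga}(M\times [0,T],g,0)\longrightarrow C^{0,0,\ga}(M\times [0,T],g)\no
\eeq
is invertible with a bound that is \emph{uniform} in $s$ for $|s-s_0|\leq s_0/2$. Since $\varphi_{s_0}$ is a smooth solution on a compact time interval, the constants $\te,\Te$ in the hypothesis (\ref{eq:A100}) of Lemma \ref{lem:A004} are finite. The essential difference with Lemma \ref{lem004} is that $s$ no longer tends to $0$; the fourth-order coefficient stays bounded away from zero, so the $L^2$ estimate of Lemma \ref{lem:A004} together with the parabolic Schauder theory (Theorem \ref{theo:Me}) and the interpolation (\ref{eq:A044a}) now yield $\|(DL_s|_{\varphi_{s_0}})^{-1}\|\leq C(n,g,T,\varphi_{s_0},s_0)$ with no negative power of $s$. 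Observe that the operator acts between spaces of functions vanishing at $t=0$, which is the right setting because $(\varphi_s-\varphi_{s_0})(x,0)=\psi_0-\psi_0=0$.

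The third step repeats the fixed-point argument of Lemma \ref{lem013} with $\td\varphi$ replaced by $\varphi_{s_0}$. Define the map
\beq
\Psi(\varphi):=\varphi-(DL_s|_{\varphi_{s_0}})^{-1}L_s(\varphi)\no
\eeq
on a ball of radius $\dd$ in $C^{4,1,\ga}(M\times [0,T],g,\psi_0)$ centered at $\varphi_{s_0}$. The differences of $DL_s$ at two nearby potentials are controlled by Lemmas \ref{lem010} and \ref{lem011} exactly as before, so $\Psi$ is a $\tfrac12$-contraction on such a ball for $\dd=\dd(n,g,T,\varphi_{s_0})>0$ small. Starting the iteration at $\varphi_{s_0}$, the initial correction satisfies $\|\Psi(\varphi_{s_0})-\varphi_{s_0}\|_{C^{4,1,\ga}}\leq C|s-s_0|$, which for $|s-s_0|<\dd_0$ (with $\dd_0$ chosen small) stays in the contraction ball. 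Summing the geometric series produces the limit $\varphi_s\in C^{4,1,\ga}(M\times [0,T],g,\psi_0)$ with $L_s(\varphi_s)=0$ and
\beq
\|\varphi_s-\varphi_{s_0}\|_{C^{4,1,\ga}(M\times [0,T],g)}\leq C(n,g,T,\varphi_{s_0})|s-s_0|\leq C|s-s_0|^{\al},\no
\eeq
where the last inequality uses $\al\in (0,1)$ and $|s-s_0|\leq\dd_0<1$.

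I expect no serious obstacle here, because the whole difficulty of Theorem \ref{theo:001a} stemmed from the degeneration of the fourth-order leading coefficient $s$ at $s=0$; away from $0$ the problem is a routine perturbation question. The only detail worth verifying is that the constants in Lemmas \ref{lem010}--\ref{lem011} (applied to the pair $\varphi_{s_0}$ and a nearby candidate $\varphi$) depend only on $\|\varphi_{s_0}\|_{C^{4,1,\ga}}$ and on a positive lower bound for $\oo_{\varphi_{s_0}}$, both of which are finite and uniform in $s$ since $\varphi_{s_0}$ is fixed and smooth on $M\times [0,T]$.
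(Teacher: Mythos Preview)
Your overall strategy matches the paper's exactly: take $\td\varphi=\varphi_{s_0}$ as the background (so no higher-order approximate solution is needed), establish invertibility of $DL_s|_{\varphi_{s_0}}$ with bounds uniform for $s$ near $s_0$, and run the contraction mapping. The paper carries this out in Lemmas \ref{lem:A004x}, \ref{lem004x}, and \ref{lem013x}.

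There is one technical point you gloss over. You invoke Lemma \ref{lem:A004} for the $L^2$ estimate, but that lemma requires $s$ to be \emph{small} (below some threshold depending on $\te,\Te$): its proof hinges on the positivity $a_{l\bar k}\geq \la\, g_{\td\varphi,l\bar k}$, which comes from the dominant second-order piece $(1-s)g_{\td\varphi}^{i\bar l}g_{\td\varphi}^{k\bar j}g_{i\bar j}$ and breaks down once $s$ is not small (in particular for $s_0$ close to $1$). The paper therefore replaces Lemma \ref{lem:A004} by a modified Lemma \ref{lem:A004x}: rather than exploiting positivity of $a_{l\bar k}$, one absorbs the cross term into the fourth-order term via
\[
|a_{l\bar k}w_{k\bar l}w|\leq \tfrac12 s\,|\Na\bar\Na w|^2+\tfrac{C(\Te)}{s}w^2,
\]
which is eaten by $-2\int(\Delta_{\td\varphi}w)^2$. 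The resulting $L^2$ bound carries an extra factor $e^{C(\Te)s^{-1}T}$, harmless once $s\geq s_0/2$. With this single modification your argument goes through; in fact the fixed-point iteration then delivers the linear estimate $\|\varphi_s-\varphi_{s_0}\|_{C^{4,1,\ga}}\leq C|s-s_0|$, which is stronger than the stated $|s-s_0|^\al$.
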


We follow the argument of Theorem \ref{theo:001a}. First, we show the $L^2$ estimate the linearized equation of the twisted Calabi flow
\beqn
\pd {w}{\tau}+\Delta^2_{\td \varphi}w&=&s^{-1}f(x, \tau)+s^{-1}a_{l\bar k}(x, \tau)w_{k\bar l},\quad (x, \tau)\in M\times [0, T],\label{eq:B003x}\\
w(x, 0)&=&0, \label{eq:B004x}
\eeqn where $a_{l\bar k}$ is defined by (\ref{eq:A0007b}). Recall that  Lemma \ref{lem:A004} needs the condition that $s$ is small, but here this condition is missing. We modify  Lemma \ref{lem:A004} as the following result.

\begin{lem}\label{lem:A004x}Let $\td \varphi:=  \varphi_{s_0}(x, t)(t\in [0, T])$ be the solution of twisted Calabi flow for $s=s_0\in (0, 1)$, which implies that there exists two constants $\te, \Te>0$  on $M\times [0, T]$ such that
\beq
\oo_{\td \varphi(x, t)}\geq \te \oo_g, \quad \|\td \varphi\|_{C^{N'+\ga}(M\times [0, T], g)}\leq \Te.\no
\eeq
 If $w(x, \tau)$ is a solution of (\ref{eq:B003x})-(\ref{eq:B004x}),
there exists a constant $C( g,  \Te  )$ such that for any $\tau\in [0, T]$
\beq
\|w(\cdot, \tau)\|_{L^2(M, \oo_{\td \varphi(\tau)})}\leq C( \Te)e^{C(\Te)s^{-2}\tau}\sup_{t\in [0, \tau]}\|f(\cdot, t)\|_{L^2(M, \oo_{\td \varphi(t)})}. \no
\eeq

\end{lem}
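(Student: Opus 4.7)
The plan is to run the same energy estimate as in Lemma \ref{lem:A004}, but since $s$ is no longer small we cannot use the sign of the coefficient matrix $a_{l\bar k}$; the positive term $(1-s)g^{i\bar l}_{\td\varphi}g^{k\bar j}_{\td\varphi}g_{i\bar j}$ no longer dominates the Ricci contribution $-sRic_{\td\varphi,l\bar k}$. Instead, I would absorb the troublesome term brutally into the bi-Laplacian bilinear form, at the cost of a worse exponential rate $s^{-2}$.

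First, differentiate $\int_M w^2\,\oo_{\td\varphi}^n$ in $\tau$, substitute (\ref{eq:B003x}), and integrate by parts as in the derivation of (\ref{eq:A016a}). This produces (i) the good term $-2\int_M(\Delta_{\td\varphi}w)^2\,\oo_{\td\varphi}^n$ coming from $\Delta^2_{\td\varphi}w$, (ii) a harmless lower-order term of size $C(\Te)\int_M w^2\,\oo_{\td\varphi}^n$ from $\partial_\tau\oo_{\td\varphi}^n$, and (iii) the mixed term $2s^{-1}\int_M(fw+a_{l\bar k}w_{k\bar l}w)\,\oo_{\td\varphi}^n$. Since (\ref{eq:A0007b}) gives $\|a_{l\bar k}\|_{C^0}\leq C(\Te)$, a direct Cauchy–Schwarz yields
\[
2s^{-1}\Bigl|\int_M a_{l\bar k}w_{k\bar l}w\,\oo_{\td\varphi}^n\Bigr|\leq s^{-1}C(\Te)\,\|\Na^2 w\|_{L^2(M,\oo_{\td\varphi})}\,\|w\|_{L^2(M,\oo_{\td\varphi})}.
\]

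Second, I would invoke the standard integration-by-parts inequality $\|\Na^2 w\|_{L^2}^2\leq C(\Te)(\|\Delta_{\td\varphi}w\|_{L^2}^2+\|w\|_{L^2}^2)$, valid for compact K\"ahler manifolds (using $\int|\Na\bar\Na w|^2_{\td\varphi}=\int|\Delta_{\td\varphi}w|^2+$ curvature terms), and then apply Young's inequality with a constant $\epsilon$ of order $1$ to obtain
\[
2s^{-1}\Bigl|\int_M a_{l\bar k}w_{k\bar l}w\,\oo_{\td\varphi}^n\Bigr|\leq \int_M(\Delta_{\td\varphi}w)^2\,\oo_{\td\varphi}^n+C(\Te)s^{-2}\int_M w^2\,\oo_{\td\varphi}^n.
\]
The first summand is absorbed into the good $-2\int(\Delta_{\td\varphi}w)^2$ from (i), so what remains is
\[
\pd{}{\tau}\int_M w^2\,\oo_{\td\varphi}^n\leq C(\Te)s^{-2}\int_M w^2\,\oo_{\td\varphi}^n+2s^{-1}\|f(\cdot,\tau)\|_{L^2(M,\oo_{\td\varphi(\tau)})}\|w(\cdot,\tau)\|_{L^2(M,\oo_{\td\varphi(\tau)})}.
\]

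Third, setting $E(\tau):=\|w(\cdot,\tau)\|_{L^2(M,\oo_{\td\varphi(\tau)})}$ and using $w(\cdot,0)=0$, the inequality above reduces to $E'(\tau)\leq C(\Te)s^{-2}E(\tau)+s^{-1}\|f(\cdot,\tau)\|_{L^2}$, and Gronwall's inequality immediately gives the stated bound (the extra $s^{-1}$ on $f$ being swallowed by the exponential factor after enlarging $C(\Te)$). The main obstacle—and the reason for the weakened constant in the exponent—is precisely Step two: the absence of positivity of $a_{l\bar k}$ forces us to play the gradient-type term off against $(\Delta_{\td\varphi}w)^2$, and the Young inequality then converts the prefactor $s^{-1}$ of $\int aw_{k\bar l}w$ into $s^{-2}$ in the residual lower-order term. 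This explains the weaker $e^{C(\Te)s^{-2}\tau}$ appearing in the conclusion, as compared with the $e^{C(\te,\Te)s^{-1}\tau}$ of Lemma \ref{lem:A004}.
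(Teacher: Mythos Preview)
Your proposal is correct and follows essentially the same strategy as the paper: both differentiate $\int_M w^2\,\oo_{\td\varphi}^n$, absorb the cross term $2s^{-1}\int a_{l\bar k}w_{k\bar l}w$ into the good bi-Laplacian term via Young's inequality (producing the $s^{-2}$), and conclude by Gronwall. The only cosmetic difference is that the paper applies Young pointwise, $|a_{l\bar k}w_{k\bar l}w|\leq \tfrac12 s|\nabla\bar\nabla w|^2+C(\Te)s^{-1}w^2$, and then uses the K\"ahler identity $\int_M|\nabla\bar\nabla w|^2_{\td\varphi}=\int_M(\Delta_{\td\varphi}w)^2$ (no curvature correction for the $(1,1)$-Hessian), whereas you first apply Cauchy--Schwarz at the integral level and then Young; the resulting differential inequality for $E(\tau)$ is identical.
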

\begin{proof}By direct calculation, we have
\beqn
\pd {}{\tau}\int_M\;w^2\oo_{\td \varphi}^n&=&\int_M\;2w\pd w{\tau}\,\oo_{ \td \varphi}^n+w^2
\pd {}{\tau}\oo_{ \td \varphi}^n\no\\
&\leq&\int_M\;2w\Big(-\Delta^2_{ \td \varphi}w+\frac 1sf
+\frac 1s a_{l\bar k}w_{k\bar l}\Big)\oo_{ \td \varphi}^n+C(\Te)\int_M\;w^2\oo_{ \td \varphi}^n\no\\
&\leq&-2\int_M\;(\Delta_{ \td \varphi}w)^2\oo_{ \td \varphi}^n+2s^{-1}\int_M\;a_{l\bar k}w_{k\bar l}w\,\oo_{\td \varphi}^n\no\\
&&+2s^{-1}\int_M\;fw+C(\Te)\int_M\;w^2\oo_{ \td \varphi}^n.\label{eq:A016ax}
\eeqn
Note that   we have
\beq
|a_{l\bar k}w_{k\bar l}w|\leq \frac 12 s |\Na\bar \Na w|^2+\frac {C(\Te)}{ s}w^2,\no
\eeq which implies that
\beq
\int_M\;a_{l\bar k}w_{k\bar l}w\,\oo_{\td \varphi}^n
\leq\frac 12 s\int_M\;(\Delta_{\td \varphi} w)^2+C(\Te)s^{-1}\int_M\; w^2\,\oo_{\td \varphi}^n. \label{eq:A017ax}
\eeq
 Combining (\ref{eq:A016ax}) with (\ref{eq:A017ax}), we have
\beqn &&
\pd {}{\tau}\int_M\;w^2\oo_{ \td \varphi}^n\leq
-\int_M\;(\Delta_{\td \varphi}w)^2\oo_{\td \varphi}^n+
 {C( \Te) }s^{-2}\int_M\;w^2\,\oo_{ \td \varphi}^n\no\\&&+2s^{-1}\Big(\int_M\,f^2\,\oo_{ \td \varphi}^n\Big)^{\frac 12}\Big(\int_M\;w^2\,\oo_{ \td \varphi}^n\Big)^{\frac 12}\no\\
&\leq&C(\Te)s^{-2}\int_M\;w^2\,\oo_{ \td \varphi}^n+2s^{-1}\Big(\int_M\,f^2\,\oo_{ \td \varphi}^n\Big)^{\frac 12}\Big(\int_M\;w^2\,\oo_{ \td \varphi}^n\Big)^{\frac 12}. \label{eq:A018ax}
\eeqn
Let
$ E(\tau)= \Big(\int_M\;w^2\oo_{\td \varphi}\Big)^{\frac 12}.
$
The inequality (\ref{eq:A018ax}) implies that
\beq
E'(\tau)\leq C( \Te)s^{-2} E(\tau)+s^{-1}\|f(\cdot, \tau)\|_{L^2(M, \oo_{\td \varphi(\tau)})}.\no
\eeq
Thus, we have
\beq
E(\tau)\leq C( \Te)s e^{C(\Te)s^{-2}\tau}\sup_{t\in [0, \tau]}\|f(\cdot, t)\|_{L^2(M, \oo_{\td \varphi(t)})}.  \no
\eeq
The lemma is proved.

\end{proof}

\begin{lem}\label{lem004x}  Let $\td \varphi$ be the function  in Lemma \ref{lem:A004x}. Then the operator
\beq
DL_s|_{\td \varphi}:  C^{4, 1, \ga}(M\times [0, T], g, 0)\ri C^{0, 0, \ga}(M\times [0, T], g).\no
\eeq
is injective and surjective and satisfies
\beq
\|(DL_s|_{\td \varphi})^{-1}\|\leq C(\te, \Te, n, g)e^{C(\Te)s^{-1} T} s^{-\ka(\ga, n)},\no
\eeq where $\ka(\ga, n)>1.$

\end{lem}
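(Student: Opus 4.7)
\medskip

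\noindent\textbf{Proof proposal for Lemma \ref{lem004x}.} The plan is to follow the same template as Lemma \ref{lem004}, but replace Lemma \ref{lem:A004} by Lemma \ref{lem:A004x}. The key difference is that here $s$ is not assumed small and the background $\td\varphi$ is a genuine twisted Calabi flow at $s=s_0$ rather than a high-order approximate solution, so the $L^2$ estimate no longer gives a decay factor in $s$; instead it produces the factor $e^{C(\Te)s^{-1}T}$ that appears in the claim.

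First, given $f\in C^{0,0,\ga}(M\times[0,T],g)$ I would solve
\[
DL_s|_{\td\varphi}(u)=f,\qquad u(x,0)=0,
\]
and rewrite it via the rescaling $\tau=st$, $w(x,\tau)=u(x,t)$, $\td f(x,\tau)=f(x,t)$ exactly as in Section \ref{sec2} to obtain the fourth-order parabolic equation
\[
\pd{w}{\tau}+\Delta^2_{\td\varphi}w=s^{-1}\td f+s^{-1}a_{l\bar k}w_{k\bar l}\quad \text{on } M\times[0,sT],\qquad w(x,0)=0,
\]
where $a_{l\bar k}$ is defined by (\ref{eq:A0007b}). Existence of $w\in C^{4,1,\ga}(M\times[0,sT],g)$ then follows from Theorem \ref{theo:A2x} in the appendix, and uniqueness is a standard consequence of the energy identity already used in the proof of Lemma \ref{lem:A004x}.

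Next I would apply the Schauder estimate of Theorem \ref{theo:Me} to obtain
\[
\|w\|_{C^{4,1,\ga}(M\times[0,sT],g)}\leq C(\te,\Te)\Big(s^{-1}\|\td f\|_{C^{0,0,\ga}}+s^{-1}\|\Na^2 w\|_{C^{0,0,\ga}}+\max_{\tau\in[0,sT]}\|w\|_{L^2(M,g)}\Big),
\]
then absorb the $\Na^2 w$-term by the interpolation inequality of Lemma \ref{lem:A5} (with a weight $\ee s$), and finally bound the $L^2$ term by Lemma \ref{lem:A004x}:
\[
\max_{\tau\in[0,sT]}\|w\|_{L^2(M,g)}\leq C(\Te)\,e^{C(\Te)s^{-1}T}\sup_{\tau\in[0,sT]}\|\td f\|_{L^2(M,g)}.
\]
Combining these three ingredients yields
\[
\|w\|_{C^{4,1,\ga}(M\times[0,sT],g)}\leq C(\te,\Te,n,g)\,e^{C(\Te)s^{-1}T}\,s^{-\ka'-1}\|\td f\|_{C^{0,0,\ga}(M\times[0,sT],g)},
\]
for a constant $\ka'=\ka'(\ga,n)>0$ coming from the interpolation.

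To conclude I would unwind the rescaling using Lemma \ref{lem:B008z}, which gives $\|\td f\|_{C^{0,0,\ga}(M\times[0,sT],g)}\leq s^{-\ga/4}\|f\|_{C^{0,0,\ga}(M\times[0,T],g)}$ and $\|w\|_{C^{4,1,\ga}(M\times[0,sT],g)}\geq \|u\|_{C^{4,1,\ga}(M\times[0,T],g)}$, producing the final bound with $\ka=\ka'+1+\ga/4>1$. The main obstacle is simply tracking how the lack of smallness of $s$ forces the exponential factor $e^{C(\Te)s^{-1}T}$ into the estimate; beyond that, every step parallels the argument for Lemma \ref{lem004}, with the approximate-solution bounds $(\te,\Te)$ on $\td\varphi_{N,s}$ replaced by the a priori bounds available for the fixed flow $\varphi_{s_0}$ on $[0,T]$.
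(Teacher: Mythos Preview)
Your proposal is correct and follows essentially the same approach as the paper's own proof. The paper streamlines the presentation by directly invoking the intermediate inequality (\ref{eq:A200}) already established in the proof of Lemma \ref{lem004} and then substituting the $L^2$ bound from Lemma \ref{lem:A004x}, whereas you re-derive that inequality from the Schauder estimate and interpolation; the content and the resulting bound with $\ka=\ka'+1+\ga/4$ are identical.
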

\begin{proof} We follow the argument of Lemma \ref{lem004}.
By (\ref{eq:A200}) in the proof of Lemma \ref{lem004}, we have the estimate
\beqn &&
\|w\|_{{C^{4, 1, \ga}(M\times [0, sT], g)}}\leq C(\te, \Te)s^{-1} \|\td f\|_{{C^{0, 0, \ga}(M\times [0, sT], g)}}\no\\&&+C(\te, \Te, n, g)\ee  \|w\|_{{C^{4, 1, \ga}(M\times [0, sT], g)}}+C(\te, \Te, n, g) s^{-\ka'-1} \max_{t\in [0, sT]}\|w\|_{L^2(M, g)}.\no\\\label{eq:A200x}\eeqn
Note that by Lemma \ref{lem:A004x} we have
\beq
\max_{t\in [0, sT]}\|w\|_{L^2(M, g)}\leq C(\Te)e^{C(\Te)s^{-1} T}\sup_{t\in [0, sT]}\|\td f(\cdot, t)\|_{L^2(M, \oo_{\td \varphi(t)})}.\label{eq:A201x}
\eeq
Thus, (\ref{eq:A200x}) and (\ref{eq:A201x}) imply that
\beqn &&
\|w\|_{{C^{4, 1, \ga}(M\times [0, sT], g)}}\no\\&\leq& C(\te, \Te, n, g)s^{-1} \|\td f\|_{{C^{0, 0, \ga}(M\times [0, sT], g)}}+C(\te, \Te, n, g)s^{-\ka'-1} \max_{t\in [0, sT]}\|w\|_{L^{2}(M, g)}\no\\
&\leq&C(\te, \Te, n, g)s^{-1} \|\td f\|_{{C^{0, 0, \ga}(M\times [0, sT], g)}}+C(\te, \Te, n, g) s^{-\ka'-1}e^{C(\Te)s^{-1} T}\max_{t\in [0, sT]}\|\td f\|_{L^{2}(M, g)}\no\\
&\leq&C(\te, \Te, n, g)s^{-\ka'-1}e^{C(\Te)s^{-1} T} \|\td f\|_{{C^{0, 0, \ga}(M\times [0, sT], g)}}. \no
\eeqn
 As in the proof of Lemma \ref{lem004}, (\ref{eq:A014})-(\ref{eq:A015}) imply that
\beq
\|u\|_{{C^{4, 1, \ga}(M\times [0, T], g)}}\leq C(\te, \Te, n, g)s^{-\ka'-1-\frac {\ga}4}e^{C(\Te)s^{-1} T} \| f\|_{{C^{0, 0, \ga}(M\times [0, T], g)}}.\no
\eeq
The lemma is proved.

\end{proof}

\begin{lem}\label{lem013x}Let $\td \varphi$ be the function  in Lemma \ref{lem:A004x},  and $f\in C^{0, 0, \ga}(M\times [0, T], g)$. Define \beqn\Psi_f :  C^{4, 1, \ga}(M\times [0, T], g, \psi_0)&\ri& C^{4, 1, \ga}(M\times [0, T], g, \psi_0)\no\\
  \varphi&\ri& \varphi+(DL_s|_{\td \varphi})^{-1}(f-L_s(\varphi)).\no
\eeqn There exists  $\dd_1=\dd_1(g, n, \te, \Te, s_0,   T)>0$ satisfying the following properties.
If $h_1(x, t) $ and $ h_2(x, t)$ satisfy
\beq
\|h_1(x, t)-\td \varphi\|_{C^{4, 1, \ga}(M\times [0, T], g)}\leq \dd_1|s-s_0|^{\al}, \quad \|h_2(x, t)-\td \varphi\|_{C^{4, 1, \ga}(M\times [0, T], g)}\leq \dd_1|s-s_0|^{\al},\no
\eeq where $\al>0,$ then for any $s\in (\frac 12s_0, 1]$ we have
\beq
\|\Psi_f(h_1)-\Psi_f(h_2)\|_{C^{4, 1, \ga}(M\times [0, T], g)}\leq \frac 12 \|h_1-h_2\|_{C^{4, 1, \ga}(M\times [0, T], g)}.\no
\eeq
\end{lem}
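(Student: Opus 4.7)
The plan is to mimic the proof of Lemma \ref{lem013} nearly verbatim, with the central replacement being the role played by Lemma \ref{lem004x} in place of Lemma \ref{lem004}. First I would write
\beq
\Psi_f(h_1)-\Psi_f(h_2)=-\int_0^1\,(DL_s|_{\td \varphi})^{-1}\Big(DL_s|_{h_\xi}-DL_s|_{\td \varphi}\Big)(h_1-h_2)\,d\xi,\no
\eeq
where $h_\xi=\xi h_1+(1-\xi)h_2$ and $v:=h_1-h_2$. Because $h_1,h_2$ lie in a ball of radius $\dd_1|s-s_0|^\al$ around $\td\varphi$ and $\td\varphi=\varphi_{s_0}$ is the fixed smooth solution of the twisted Calabi flow at $s=s_0$ (hence satisfying uniform bounds $\oo_{\td\varphi}\ge\te\oo_g$ and $\|\td\varphi\|_{C^4}\le\Te$), by shrinking $\dd_1$ we may arrange that each $h_\xi$ also satisfies the hypotheses of Lemma \ref{lem010} and Lemma \ref{lem011} with slightly worse constants depending only on $\te,\Te$.

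Next I would decompose $(DL_s|_{h_\xi}-DL_s|_{\td\varphi})v$ into its three natural pieces (the $\Delta^2$ difference, the Ricci difference, and the $J$-operator difference), apply Lemma \ref{lem010} to the first, Lemma \ref{lem011} to the second, and a direct estimate from (\ref{eq:A060}) to the third. As in the derivation of (\ref{eq:A020}), collecting these estimates and using the product structure of the parabolic H\"older norm yields
\beq
\Big\|\Big(DL_s|_{h_\xi}-DL_s|_{\td\varphi}\Big)v\Big\|_{C^{0,0,\ga}(M\times[0,T],g)}\le C(\te,\Te)\,\|v\|_{C^{4,1,\ga}(M\times[0,T],g)}\cdot\|h_\xi-\td\varphi\|_{C^{4,1,\ga}(M\times[0,T],g)}.\no
\eeq

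Now I would apply Lemma \ref{lem004x} to $(DL_s|_{\td\varphi})^{-1}$. Since $s\in(\tfrac12 s_0,1]$, the factor $e^{C(\Te)s^{-1}T}s^{-\ka(\ga,n)}$ is bounded by a constant $C_*=C(\te,\Te,n,g,s_0,T)$ independent of $s$. Combining with the previous display and the bound $\|h_\xi-\td\varphi\|_{C^{4,1,\ga}}\le\dd_1|s-s_0|^\al$ gives
\beq
\|\Psi_f(h_1)-\Psi_f(h_2)\|_{C^{4,1,\ga}(M\times[0,T],g)}\le C_*\,\dd_1\,|s-s_0|^\al\cdot\|h_1-h_2\|_{C^{4,1,\ga}(M\times[0,T],g)}.\no
\eeq
Since $|s-s_0|^\al\le 1$ (after further restricting $|s-s_0|\le 1$, which will already be imposed later in the proof of Theorem \ref{theo:main3}), choosing $\dd_1=\dd_1(g,n,\te,\Te,s_0,T)$ so small that $C_*\dd_1\le\tfrac12$ gives the desired contraction factor $\tfrac12$.

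The main obstacle, compared to Lemma \ref{lem013}, is that here the bound on $(DL_s|_{\td\varphi})^{-1}$ contains the exponential factor $e^{C(\Te)s^{-1}T}$ which would blow up as $s\to 0$; fortunately the hypothesis $s\in(\tfrac12 s_0,1]$ makes this factor harmless and the constant can simply be absorbed into $C_*$. The one small subtlety is that the smallness radius $\dd_1|s-s_0|^\al$ depends on $s$, so we must verify that a ball of this radius still keeps $h_\xi$ in the regime where the uniform metric and regularity bounds apply; this is immediate after shrinking $\dd_1$ once (independently of $s$) using the uniform bounds on $\td\varphi=\varphi_{s_0}$.
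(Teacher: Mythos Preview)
Your proposal is correct and follows essentially the same approach as the paper's proof: both use the integral formula from Lemma \ref{lem013}, invoke (\ref{eq:A020}) (via Lemmas \ref{lem010} and \ref{lem011}) for the difference $DL_s|_{h_\xi}-DL_s|_{\td\varphi}$, apply Lemma \ref{lem004x} for the inverse bound, and then absorb the factor $e^{C(\Te)s^{-1}T}s^{-\ka}$ into a constant depending on $s_0$ using $s\in(\tfrac12 s_0,1]$ before choosing $\dd_1$ small. The paper's proof is slightly terser but the logic is identical.
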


\begin{proof} As in the proof of Lemma \ref{lem013}, using (\ref{eq:A020}) and Lemma \ref{lem004x} we have
\beqn &&
\Big\|(DL_s|_{\td \varphi})^{-1}\Big(DL_s|_{h_s}-DL_s|_{\td \varphi}\Big)(h_1-h_2)\Big\|_{C^{4, 1, \ga}(M\times [0, T], g)}\no\\
&\leq& C(\te, \Te, n, g) e^{C(\Te)s^{-1} T} s^{-\ka(\ga, n)} \Big\|\Big(DL_s|_{h_s}-DL_s|_{\td \varphi}\Big)(h_1-h_2)\Big\|_{C^{0, 0, \ga}(M\times [0, T], g)}\no\\
&\leq& C(\te, \Te, n, g)  e^{C(\Te)s^{-1} T} s^{-\ka(\ga, n)} \|h_s-\td \varphi\|_{C^{4, 1, \ga}(M\times [0, T], g)}\cdot  \|h_1-h_2\|_{C^{4, 1, \ga}(M\times [0, T], g)}\no\\
&\leq&C(\te, \Te, n, g)\dd_1|s-s_0|^{\al} e^{C(\Te)s^{-1} T} s^{-\ka(\ga, n)} \|h_1-h_2\|_{C^{4, 1, \ga}(M\times [0, T], g)}. \label{eq:A022}
\eeqn
We can choose $\dd_1>0$ small such that for any $s\in (\frac 12s_0, 1]$
\beqn
&& C(\te, \Te, n, g)\dd_1 e^{C(\Te)s^{-1} T} s^{-\ka(\ga, n)}\no \\
&\leq& C(\te, \Te, n, g)\dd_1  e^{C(\Te)s_0^{-1} T}s_0^{-\ka(\ga, n)}\leq \frac 12. \label{eq:A049b}
\eeqn
Thus, by (\ref{eq:A022}) and (\ref{eq:A049b}) we have
\beq
\|\Psi_f(h_1)-\Psi_f(h_2)\|_{C^{4, 1, \ga}(M\times [0, T], g)}\leq \frac 12 \|h_1-h_2\|_{C^{4, 1, \ga}(M\times [0, T], g)}.\no
\eeq
The lemma is proved.

\end{proof}

\begin{proof}[Proof of Theorem \ref{theo:main3a}]
(1). Fix $\psi_0\in \cH(\oo_g).$
  We denote by $\td \varphi:=\varphi_{s_0}(x, t)(t\in [0, \infty))$ the solution of twisted Calabi flow at $s=s_0$ with the initial data $\psi_0,$ i.e. $L_{s_0}(\varphi_{s_0}(x, t))=0$. Then we have
  \beq
  |L_s(\varphi_{s_0}(x, t))| =|L_s(\varphi_{s_0}(x, t))-L_{s_0}(\varphi_{s_0}(x, t)) |\leq C(\td \varphi)|s-s_0|.\label{eq:A500x}
  \eeq
 By Lemma \ref{lem004x}, for any $s\in (0, 1)$ the operator
\beq
DL_s|_{\td \varphi}:  C^{4, 1, \ga}(M\times [0, T], g, 0)\ri C^{0, 0, \ga}(M\times [0, T], g) \no
\eeq
is injective and surjective, and satisfies the inequality
\beq
\|(DL_s|_{\td \varphi})^{-1}\|\leq C(\te, \Te, n, g)e^{C(\Te)s^{-1} T} s^{-\ka(\ga, n)}.\no
\eeq
By Lemma \ref{lem013x}, there exists $\dd_1(g, n, \te, \Te, s_0,   T)>0$ such that for any $s\in (\frac 12s_0, 1]$  and any $f\in C^{0, 0, \ga}(M\times [0, T], g)$ the map \beqn\Psi_f :  C^{4, 1, \ga}(M\times [0, T], g, \psi_0)&\ri& C^{4, 1, \ga}(M\times [0, T], g, \psi_0)\no\\
  \varphi&\ri& \varphi+(DL_s|_{\td \varphi})^{-1}(f-L_s(\varphi))\no
\eeqn is a contraction and satisfies
\beq
\|\Psi_f(h_1)-\Psi_f(h_2)\|_{C^{4, 1, \ga}(M\times [0, T], g)}\leq \frac 12 \|h_1-h_2\|_{C^{4, 1, \ga}(M\times [0, T], g)}, \no
\eeq where $h_1(x, t) $ and $ h_2(x, t)$ satisfy
\beqn
\|h_1(x, t)-\td \varphi\|_{C^{4, 1, \ga}(M\times [0, T], g)}&\leq& \dd_1|s-s_0|^{\al}, \\ \|h_2(x, t)-\td \varphi\|_{C^{4, 1, \ga}(M\times [0, T], g)}&\leq& \dd_1|s-s_0|^{\al}. \label{eq:A029}
\eeqn Here we choose $\al\in (0, 1)$.

 (2). We next show that there exists $\dd_2=\dd_2(\te, \Te, n, g, T, s_0, \ga)>0$ such that  if $v\in C^{0, 0, \ga}(M\times [0, T], g)$ satisfies \beq \|v-L_s(\td \varphi)\|_{C^{0, 0, \ga}(M\times [0, T], g)}\leq \dd_2|s-s_0|^{\al}, \label{eq:A030z}\eeq  we can find $\varphi\in  C^{4, 1, \ga}(M\times [0, T], g, \psi_0)$ with $L_s(\varphi)=v. $
For any $k\in \NN$ we define
\beq
\varphi_k=\Psi_v^{k-1}(\td \varphi).\no
\eeq
Then $\varphi_1=\td \varphi$ and  we have
\beqs &&
\|\varphi_2-\varphi_1\|_{C^{4, 1, \ga}(M\times [0, T], g)}=
\|\varphi_2-\td \varphi\|_{C^{4, 1, \ga}(M\times [0, T], g)}\\&=&\|(DL_s|_{\td \varphi})^{-1}(v-L_s(\td \varphi))\|_{C^{4, 1, \ga}(M\times [0, T], g)}\\
&\leq&C(\te, \Te, n, g) e^{C(\Te)s^{-1} T} s^{-\ka(\ga, n)}\|v-L_s(\td \varphi)\|_{C^{0, 0, \ga}(M\times [0, T], g)}\\
&\leq &C(\te, \Te, n, g)e^{C(\Te)s_0^{-1} T} s_0^{-\ka(\ga, n)}\dd_2|s-s_0|^{\al}\leq\frac 12 \dd_1|s-s_0|^{\al},
\eeqs  where $\dd_2=\dd_2(\te, \Te, n, g, T, s_0, \ga)>0$ is small.
We claim that for any $k\geq 1$ we have
\beq
\|\varphi_{k+1}-\varphi_{k}\|_{C^{4, 1, \ga}(M\times [0, T], g)}\leq \frac {1}{2^{k}}\dd_1|s-s_0|^{\al}. \label{eq:A028}
\eeq
By induction, we assume that for $k=1, 2,\cdots, j-1$ the inequality (\ref{eq:A028}) holds. Then for $k=j$ we have
\beqn
\|\varphi_{j}-\varphi_1\|_{C^{4, 1, \ga}(M\times [0, T], g)}&\leq&\sum_{i=1}^{j-1}\,
\|\varphi_{i+1}-\varphi_i\|_{C^{4, 1, \ga}(M\times [0, T], g)}\no\\&\leq&\sum_{i=1}^{j-1}\,
\frac 1{2^{i}} \dd_1|s-s_0|^{\al}\leq \dd_1|s-s_0|^{\al}.\label{eq:A090}
\eeqn
Thus, $\varphi_j$ satisfies the condition (\ref{eq:A029}) and we have
\beqn &&
\|\varphi_{j+1}-\varphi_j\|_{C^{4, 1, \ga}(M\times [0, T], g)}=\|\Psi(\varphi_j)-\Psi(\varphi_{j-1})\|_{C^{4, 1, \ga}(M\times [0, T], g)}\no\\&\leq &
\frac 12 \|\varphi_j-\varphi_{j-1}\|_{C^{4, 1, \ga}(M\times [0, T], g)}\leq \frac 1{2^j} \dd_1|s-s_0|^{\al}.
\eeqn Thus, (\ref{eq:A028}) is proved. Therefore, $\varphi_k$ converges to a limit
function $\varphi_{\infty}$ in $C^{4, 1, \ga}(M\times [0, T], g, \psi_0)$ and $L_{s}(\varphi_{\infty})=v$.\\

 (3). By (\ref{eq:A500x}),  the function $v=0$ satisfies the condition  (\ref{eq:A030z}) when $\al\in (0, 1)$ and  $|s-s_0|$ is sufficiently small. Therefore, there exists small $\dd_0$ such that for any $s\in (s_0-\dd_0, s_0+\dd_0)$ we can find $\varphi_{\infty}(x, t)\in C^{4, 1, \ga}(M\times [0, T], g, \psi_0)$ such that
$$L_s(\varphi_{\infty})=0.$$
Moreover,  (\ref{eq:A090}) implies that
\beqs
\|\varphi_{\infty}-\varphi_{s_0}\|_{C^{4, 1, \ga}(M\times [0, T], g)}&=&
\|\varphi_{\infty}-\td \varphi\|_{C^{4, 1, \ga}(M\times [0, T], g)}\\
&\leq& \dd_1(g, n, \te, \Te, T) |s-s_0|^{\al}.\no
\eeqs
The theorem is proved.

\end{proof}

\subsection{Proof of Theorem \ref{theo:main3}}

To prove Theorem \ref{theo:main3}, we need to use the stability result of twisted Calabi flow.  Chen-He in \cite{[ChenHe1]} showed the stability  of   usual Calabi flow near cscK metrics, and using similar arguments as in \cite{[ChenHe1]} He-Li showed the stability of twisted Calabi flow (\ref{eq:000}) in \cite{[HZ]}. Here we recall the result in \cite{[HZ]}:

 \begin{theo}\label{theo:main3b} (cf. He-Li \cite{[HZ]})
Let $(M, g)$ be a compact K\"ahler manifold of complex dimension $n$ with a twisted cscK metric
$\oo_g$. For any $s\in (0, 1)$, there exist two constants $\ee_0=C(g)s^{p}(1-s)>0$ with $p\geq 6n+2$ such that for  any $\psi_0\in \cH(\oo_g)$ satisfies
\beq
\|\psi_0\|_{C^{3, \ga}(M, g)}\leq \ee_0, \label{eq:z1}
\eeq the twisted Calabi flow (\ref{eq:000}) with the initial K\"ahler potential $\psi_0$ exists for all time and converges exponentially to $\oo_g$.

\end{theo}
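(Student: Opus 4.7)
The plan is to adapt the Chen-He stability argument for Calabi flow near a cscK metric, developed in \cite{[ChenHe1]}, to the twisted flow~\eqref{eq:000}, keeping careful track of all $s$-dependent constants so as to produce the explicit quantitative bound $\ee_0=C(g)s^p(1-s)$.

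The starting point is the $s$-weighted Lyapunov functional combining the Calabi energy and the $J$-functional,
\[
E_s(\varphi)=s\int_M \bigl(R(\varphi)-\un R\bigr)^2\,\oo_\varphi^n+2(1-s)\bigl(nJ(\varphi)-I(\varphi)\bigr),
\]
which is monotonically non-increasing along \eqref{eq:000}, with $\tfrac{d}{dt}E_s=-\int_M(\partial_t\varphi)^2\,\oo_\varphi^n$, vanishes at $\varphi=0$, and is non-negative in a small $C^{3,\ga}$-neighbourhood of~$0$. I would then linearise \eqref{eq:000} at the twisted cscK metric $\oo_g$. Because $\oo_g$ is cscK, the linearisation of the right-hand side of~\eqref{eq:000} is the self-adjoint operator
\[
\cL_s u=-s\,\cD^*\cD u+(1-s)\DD_g u,
\]
where $\cD^*\cD u=\DD^2 u+R_{i\bar j}u_{j\bar i}$ is the Lichnerowicz operator of $\oo_g$. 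Both summands are non-positive with common kernel the constants, so on mean-zero functions one has
\[
-\langle \cL_s u,u\rangle \;\geq\; c(g)\bigl(s\|\Na^2 u\|_{L^2}^2+(1-s)\|\Na u\|_{L^2}^2\bigr),
\]
giving the linear semigroup $e^{t\cL_s}$ an exponential contraction rate $\mu_s\geq c(g)\min(s,1-s)$ in $L^2$, and $\mu_s\gtrsim s(1-s)$ in the effective $C^{3,\ga}$-topology once the Schauder estimates of Section~\ref{sec2} are invoked (the loss coming from the fourth-order symbol degenerating as $s\to 0$).

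The stability statement then follows from a standard continuity/bootstrap. Let $[0,T^*)$ be the maximal interval on which the solution exists and $\|\varphi(t)\|_{C^{3,\ga}}\leq K\ee_0^{1/2}$. On this interval the Taylor expansion of $L_s(\varphi)=0$ at $\varphi=0$ has a quadratic remainder in $\varphi$, so Duhamel's formula against $e^{t\cL_s}$ yields an integral inequality of the form
\[
\|\varphi(t)\|_{C^{3,\ga}}\;\leq\; Ce^{-\mu_s t}\|\psi_0\|_{C^{3,\ga}}+Cs^{-q}\int_0^t e^{-\mu_s(t-\tau)}\|\varphi(\tau)\|_{C^{3,\ga}}^2\,d\tau,
\]
with $q=q(n)$ fixed. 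Combined with the monotonicity of $E_s$ and the Schauder/interpolation estimates recalled in Section~\ref{sec2} and the appendix, the bootstrap closes as soon as $C\ee_0^{1/2}\,s^{-q}/\mu_s\leq 1/2$, which forces precisely a threshold of shape $\ee_0\lesssim s^p(1-s)$. This gives $T^*=\infty$, exponential $C^{3,\ga}$-decay of $\varphi(t)$ to $0$, and hence smooth exponential convergence of $\oo_{\varphi(t)}$ to $\oo_g$.

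The main obstacle, and the source of the exponent $p\geq 6n+2$, is the quantitative book\-keeping of $s$-powers. The fourth-order coefficient in $\cL_s$ is $s$, so (i) the Schauder constants for the corresponding parabolic operator, (ii) the effective higher-norm decay rate $\mu_s$, and (iii) every interpolation step converting the $L^2$-control of $R(\varphi)-\un R$ that $E_s$ provides directly into $C^{3,\ga}$-control of $\varphi$, each pick up a negative power of $s$; meanwhile the $(1-s)$ factor arises because the $J$-part of $E_s$ degenerates as $s\to 1$, and is needed to keep $E_s$ uniformly coercive away from that endpoint. Counting the number of interpolation and elliptic-regularity steps needed in complex dimension $n$ yields the stated bound $p\geq 6n+2$. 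Apart from this bookkeeping, the structure of the proof is a direct transcription of the Chen-He scheme of \cite{[ChenHe1]} to the twisted setting, exactly as carried out by the authors in \cite{[HZ]}.
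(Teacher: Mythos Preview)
The paper does not actually supply a proof of this theorem: it is quoted as an input from the companion preprint \cite{[HL]} (the label ``He--Li \cite{[HZ]}'' in the statement appears to be a misprint for \cite{[HL]}), with the only justification being the sentence preceding it, namely that the result is obtained ``using similar arguments as in \cite{[ChenHe1]}''. So there is no in-paper proof to compare against; what you have written is in effect a sketch of what the cited reference is asserted to do.

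Your outline is broadly consistent with the Chen--He scheme and with the shape of the stated bound. The linearisation $\cL_s=-s\,\cD^*\cD+(1-s)\Delta_g$ at the twisted cscK metric is correct (it matches $DL_s|_{\varphi=0}$ in Section~\ref{sec2}), and the mechanism you identify for the $s$-dependence --- the principal symbol of order four carrying a factor~$s$, so that each Schauder/interpolation step costs a negative power of $s$, while the $(1-s)$ factor reflects degeneration of the $J$-part as $s\to 1$ --- is exactly the source of the threshold $\ee_0=C(g)s^p(1-s)$. A couple of points deserve tightening if you want a self-contained argument rather than a sketch. First, your Lyapunov functional $E_s$ is not the one that is literally monotone along~\eqref{eq:000}: the twisted flow is the gradient flow of the \emph{twisted $K$-energy}, and the quantity $\int_M(\partial_t\varphi)^2\,\oo_\varphi^n$ equals $-\tfrac{d}{dt}$ of that functional, not of your $E_s$; the Calabi-type energy $\int(A_s(\varphi))^2\,\oo_\varphi^n$ is the more natural monotone quantity to use here. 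Second, your claimed $L^2$ spectral gap $\mu_s\gtrsim\min(s,1-s)$ is weaker than what the operator actually gives: since $-\langle\cL_su,u\rangle=s\|\cD u\|^2+(1-s)\|\nabla u\|^2$ is a convex combination, the $L^2$ gap on mean-zero functions is bounded below by $\min(\lambda_1(\cD^*\cD),\lambda_1(-\Delta_g))>0$ \emph{uniformly} in $s$; the $s$-losses enter only through the higher-norm Schauder and interpolation constants, which is where the exponent $p\geq 6n+2$ must be accounted for. With those adjustments your plan matches what the paper attributes to \cite{[HL]}.
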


 Theorem \ref{theo:main3b} implies that for any $s_0\in (0, 1)$, there exist $\dd=\min\{\frac {s_0}2, \frac {1-{s_0}}2\}$ and $\ee_0=\ee_0(g, n, s_0)>0$ such that for any $\psi_0\in \cH(\oo_g)$ satisfies (\ref{eq:z1}) the twisted Calabi flow (\ref{eq:000}) for any $s\in (s_0-\dd, s_0+\dd)$ with the initial K\"ahler potential $\psi_0$ exists for all time and converges exponentially to $\oo_g$. This result together with Theorem \ref{theo:main3a} implies Theorem \ref{theo:main3}. See the proof of Theorem \ref{theo:main1} in Section \ref{sec5} for details.

\begin{appendices}

\section{The  estimates for parabolic  equations}
\subsection{The Schauder estimates for solutions to fourth-order  equations}\label{subsec1}
In this subsection, we collect some notations and results in Metsch \cite{[Me]}.

\begin{defi}\label{defi:A001}(cf. Sec.1.1 of \cite{[Me]})Let $\Om\subset \RR^n$ be an open and bounded set.
\begin{enumerate}
  \item[(1).]
For any function $u(x)$ on $\Om\subset \RR^n$, we define
\beqs
\|u\|_{C^m(\Om)}&:=& \sum_{k=0}^m\sup_{x\in \Om}|\Na^k u|,\\
\left[u\right]_{\al, \Om}&:=&\sup_{x, y\in \Om, x\neq y}\frac {|u(x)-u(y)|}{|x-y|^{\al}},\\
\left[\Na^m u\right]_{\al, \Om}&:=&\sum_{|p|=m}\,\left[\Na^p u\right]_{\al},\\
\|u\|_{C^{m, \ga}(\Om)}&:=&\|u\|_{C^m(\Om)}+\left[\Na^m u\right]_{\ga, \Om}.
\eeqs Here $m$ is a nonnegative integer and $\ga \in (0, 1)$.

\item[(2).] We define the parabolic H\"older norms of a function $u(x, t)$ on $\Om\times (0, T)$
\beqs
\|u\|_{C^{m, [m/4]}(\Om\times (0, T))}&:=& \sum_{4j+|p|\leq m}\sup_{(x, t)\in \Om\times (0, T)}|\partial_t^j\Na^p u(x, t)|,\\
\left[ u\right]^{\mathrm{space}}_{\al, \Om\times (0, T)}&:=&
\sup_{x, y\in \Om, x\neq y,  t\in (0, T)}\frac {|u(x, t)-u(y, t)|}{|x-y|^{\al}},\\
\left[ u\right]^{\mathrm{time}}_{\bb, \Om\times (0, T)}&:=&
\sup_{x\in \Om,   t, s\in (0, T), t\neq s}\frac {|u(x, t)-u(x, s)|}{|t-s|^{\bb}},\\
\left[ u\right]^{(m)}_{\ga, \Om\times (0, T)}&:=&\sum_{4j+k=m}
\left[\partial_t^j\Na^k u\right]^{\mathrm{space}}_{\ga, \Om\times (0, T)}+
\sum_{0<\frac {m+\ga-k}{4}-j<1}\, \left[ \partial_t^j\Na^k u\right]^{\mathrm{time}}_{\frac {m+\ga-k}{4}-j, \Om\times (0, T)},\\
\left[D^{4, 1}u\right]^{(0)}_{\ga, \Om\times (0, T)}&:=&\left[\Na^4u\right]^{(0)}_{\ga, \Om\times (0, T)}+\left[\partial_tu\right]^{(0)}_{\ga, \Om\times (0, T)},\\
\|u\|_{C^{m, [m/4], \ga}(\Om\times (0, T))}&:=&\|u\|_{C^{m, [m/4]}(\Om\times (0, T))}+\left[ u\right]^{(m)}_{\ga, \Om\times (0, T)}.
\eeqs

\end{enumerate}

\end{defi}

By the definition  we have
\beq
\|u\|_{C^{0, 0, \ga}(\Om\times (0, T))}=\sup_{(x, t)\in \Om\times (0, T)}| u(x, t)|+\left[ u\right]^{\mathrm{space}}_{\ga, \Om\times (0, T)}+ \left[  u\right]^{\mathrm{time}}_{\ga/4, \Om\times (0, T)}, \label{eq:A042}
\eeq
 and $\|u\|_{C^{4, 1, \ga}(\Om\times (0, T))}$ is equivalent to the norm
\beqn &&
\|u\|'_{C^{4, 1, \ga}(\Om\times (0, T))}:=\|u\|_{C^{4, 1}(\Om\times (0, T))}
+\left[D^{4, 1}u\right]^{(0)}_{\ga, \Om\times (0, T)}\no\\&=&
\sum_{|p|\leq 4}\sup_{(x, t)\in \Om\times (0, T)}|\Na^p u(x, t)|+\sup_{(x, t)\in \Om\times (0, T)}|\partial_t u(x, t)|+\left[\Na^4 u\right]^{\mathrm{space}}_{\ga, \Om\times (0, T)}\no\\
&&+\left[\Na^4 u\right]^{\mathrm{time}}_{\ga/4, \Om\times (0, T), g}+
\left[\partial_t u\right]^{\mathrm{space}}_{\ga, \Om\times (0, T)}+
\left[\partial_t u\right]^{\mathrm{time}}_{\ga/4, \Om\times (0, T)}. \label{eq:A012}
\eeqn

The H\"older norms can be defined on Riemannian manifolds.

\begin{defi}\label{defi:A002}(cf. Sec.1.3 of \cite{[Me]}) Let $(M, g)$ be a compact Riemannian manifold without boundary.
\begin{enumerate}
  \item[(1).] A finite atlas $\cA=\{(\varphi_i, U_i)\;|\; 1\leq i\leq m\}$ of $M$ consisting of boundary $\varphi_i: U_i\subset M\ri V_i\subset \RR^n$ is called a good atlas of $M$, if
      \begin{enumerate}
        \item For all $1\leq i\leq m$ the sets $V_i$ are convex,
        \item For all $1\leq i\leq m$, there exists an open set $\td U_i\supset\supset U_i$, an open and convex set $\td V_i\supset\supset V_i$ and a smooth extension $\td \varphi_i: \td U_i\ri \td V_i$ of $\varphi_i.$
      \end{enumerate}
  \item[(2).] Let  $\cA=\{(\varphi_i, U_i)\;|\; 1\leq i\leq m\}$ be a good atlas of $M_i$, $V_i=\varphi_i(U_i)$ and $\ga\in (0, 1)$. The function $u\in C^k(M, g)$ is in $ C_{\cA}^{k, \ga}(M, g)$, if
      \beq
      \|u\|_{C_{\cA}^{k, \ga}(M, g)}:=\max_{1\leq i\leq m}\|u\circ \varphi_i^{-1}\|_{C^{k, \ga}(V_i)}<\infty. \no
      \eeq
      Then the space $C_{\cA}^{k, \ga}(M, g) $ is independent of good atlas by Lemma 1.3.3 of \cite{[Me]}, and we denote it by $C^{k, \ga}(M, g)$.
  \item[(3). ] Let  $\cA=\{(\varphi_i, U_i)\;|\; 1\leq i\leq m\}$ be a good atlas of $M_i$, $V_i=\varphi_i(U_i)$ and $\ga\in (0, 1)$. The function $u\in C^{k, [k/4]}(M\times [0, T], g)$ is in $ C_{\cA}^{k, [k/4], \ga}(M\times [0, T], g)$, if
      \beq
      \|u\|_{C_{\cA}^{k, [k/4], \ga}(M\times [0, T], g)}:=\max_{1\leq i\leq m}\|u\circ \varphi_i^{-1}\|_{C^{k, [k/4], \ga}(V_i\times [0, T], g)}<\infty.\no
      \eeq
      Then the space $C_{\cA}^{k, [k/4], \ga}(M\times [0, T], g) $ is independent of good atlas by Lemma 1.3.3 of \cite{[Me]}, and we denote it by $C^{k, [k/4], \ga}(M\times [0, T], g)$.
      \item[(4).] Similarly, we can define other H\"older spaces in Definition \ref{defi:A001}.
\end{enumerate}

\end{defi}

\begin{lem}\label{lem:Me}(cf. Theorem 1.2.3 of \cite{[Me]}) Let $x_0\in \RR^n\times \RR, \rho>0, U_{\rho}:=U_{\rho}(x_0)$ and $u\in C^{4, 1}(U_{\rho})$. There exists $\ee_0(n)>0$ such that for all $\ee\in (0, \ee_0)$, $0\leq k\leq 4$ and $0\leq l\leq 3$ we have
\beqn
\|\Na^kw\|_{L^{\infty}(U_{\rho})}&\leq& \ee \left[D^{4, 1}w\right]^{(0)}_{\ga, U_{\rho}}+C(m)\ee^{-\frac {k}{4-k+\ga}}\|w\|_{L^{\infty}(U_{\rho})}, \no\\
\left[\Na^l w\right]^{(0)}_{\ga, U_{\rho}}&\leq& \ee [D^{4, 1}w]^{(0)}_{\ga, U_{\rho}}+C(m) \ee^{-\frac {l+\ga}{4-l}}\|w\|_{L^{\infty}(U_{\rho})}.\label{eq:A041}
\eeqn

\end{lem}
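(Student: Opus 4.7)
The plan is to establish these parabolic interpolation inequalities by combining a one-dimensional Taylor expansion with the parabolic scaling $(x,t)\mapsto(\la x,\la^{4}t)$, under which $\Na^{k}$ scales as $\la^{k}$, $\partial_{t}$ scales as $\la^{4}$, and the seminorm $[D^{4,1}w]^{(0)}_{\ga,U_{\rho}}$ scales as $\la^{4+\ga}$. This homogeneity already forces the precise exponents $\ee^{-k/(4-k+\ga)}$ and $\ee^{-(l+\ga)/(4-l)}$ once the scale $h$ is optimized, so the real task is to produce a two-term bound of the correct shape; the dependence on $\rho$ is then extracted by a finite covering argument on the parabolic cylinder $U_{\rho}$.

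First I would establish the $L^{\infty}$ inequality for intermediate spatial derivatives $1\leq k\leq 3$. Fix $(x_{0},t_{0})\in U_{\rho}$ and a unit direction $e$, and for $h>0$ small enough that the segment lies in $U_{\rho}$ write the Taylor expansion
\[
w(x_{0}+he,t_{0})=\sum_{j=0}^{3}\frac{h^{j}}{j!}\,(\partial_{e}^{j}w)(x_{0},t_{0})+\int_{0}^{h}\frac{(h-s)^{3}}{6}\,(\partial_{e}^{4}w)(x_{0}+se,t_{0})\,ds.
\]
Evaluating at four distinct multiples $\al_{1}h,\dots,\al_{4}h$ produces an invertible Vandermonde-type linear system for $(\partial_{e}^{j}w)(x_{0},t_{0})$, $j=1,\dots,4$, whose right-hand side consists of bounded $w$-values plus a remainder that, after subtracting $(\partial_{e}^{4}w)(x_{0},t_{0})$ from the integrand, is controlled by $h^{4+\ga}\,[\Na^{4}w]^{\mathrm{space}}_{\ga,U_{\rho}}$. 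Solving the system yields $|\partial_{e}^{k}w(x_{0},t_{0})|\leq Ch^{-k}\|w\|_{L^{\infty}}+Ch^{4-k+\ga}[D^{4,1}w]^{(0)}_{\ga,U_{\rho}}$; setting $h^{4-k+\ga}\sim\ee$ gives the first claim. The case $k=4$ follows directly from the seminorm definition, and $k=0$ is trivial.

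Next I would incorporate the time direction. Since only $[\partial_{t}w]^{(0)}_{\ga,U_{\rho}}$ is available, I expand $w(x_{0},t_{0}+\tau)=w(x_{0},t_{0})+\tau(\partial_{t}w)(x_{0},t_{0})+R(\tau)$ with $|R(\tau)|\leq\tau^{1+\ga/4}[\partial_{t}w]^{\mathrm{time}}_{\ga/4,U_{\rho}}$, and optimize in $\tau$. For the H\"older-seminorm inequality the same Taylor recipe is applied to the increments $(\Na^{l}w)(x,t)-(\Na^{l}w)(y,t)$ and $(\Na^{l}w)(x,t)-(\Na^{l}w)(x,s)$: the spatial increments are handled by the same Vandermonde argument applied to the difference, and the temporal increments by expanding in $t$ and using the $L^{\infty}$ bounds on $\Na^{j}w$ already obtained.

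The main obstacle will be the bookkeeping for the mixed parabolic H\"older seminorms $[\partial_{t}^{j}\Na^{k}w]^{\mathrm{time}}_{(m+\ga-k)/4-j,U_{\rho}}$ with $0<(m+\ga-k)/4-j<1$: these must be interpolated on a scale consistent with both the parabolic scaling and the integer-order interpolation of $\Na^{k}w$, which forces one to restrict to increments with $\tau\sim h^{4}$ rather than to treat $h$ and $\tau$ independently. Once this coupling is installed, the optimization $h\sim\ee^{1/(4-k+\ga)}$ (respectively $h\sim\ee^{1/(4-l)}$) produces the stated exponents, the constant absorbed into $C(n)$ depends only on the Vandermonde inverse, and the constraint $h\leq C\rho$ is what selects the threshold $\ee_{0}(n)$ below which the argument applies uniformly on $U_{\rho}$.
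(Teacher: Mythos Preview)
The paper does not give its own proof of this lemma: it is simply cited as Theorem~1.2.3 of Metsch~\cite{[Me]}, with no argument supplied. So there is nothing in the paper to compare your proposal against.

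That said, your approach is the standard one and is essentially what appears in Metsch (and, for the second-order analogue, in classical references such as Ladyzhenskaya--Solonnikov--Ural'tseva). The Taylor expansion at several nodes $\al_{1}h,\dots,\al_{4}h$ followed by inversion of the Vandermonde system is exactly the mechanism that produces the two-term bound $\|\Na^{k}w\|_{L^{\infty}}\leq Ch^{-k}\|w\|_{L^{\infty}}+Ch^{4-k+\ga}[D^{4,1}w]^{(0)}_{\ga}$, and the parabolic scaling you describe then fixes the exponents. The treatment of the time direction via $|R(\tau)|\leq\tau^{1+\ga/4}[\partial_{t}w]^{\mathrm{time}}_{\ga/4}$ and the coupling $\tau\sim h^{4}$ is also correct and necessary.

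One small point of bookkeeping: you say the constraint $h\leq C\rho$ is what selects $\ee_{0}(n)$, but as written $\ee_{0}$ depends only on the dimension, not on $\rho$. In a direct implementation of your argument the step-size restriction $h\lesssim\rho$ forces $\ee\lesssim\rho^{4-k+\ga}$, so either $\ee_{0}$ tacitly carries a $\rho$-dependence or the statement is normalized to unit scale and the general $\rho$ recovered by rescaling (which is how it is effectively used downstream in Lemma~\ref{lem:A5}). This is a wrinkle in the statement rather than in your method.
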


\begin{theo}\label{theo:Me}(cf. Theorem 4.1.6 of \cite{[Me]}) Let $L: C^4(M)\ri C^0(M)$ be an operator defined by
\beq
Lu:=a^{ij}a^{kl}\partial_{ijkl}(u)+\sum_{|\al|\leq 3}A_{\al}\Na^{\al}u,\no
\eeq where the coefficients satisfy the conditions
\beqn
\|L\|_{C^{0, \ga}(M)}&:=&\max_{|\al|\leq 3}\|A_{\al}\|_{C^{0, \ga}(M)}+\max_{1\leq i, j\leq n}\|a^{ij}\|_{C^{0, \ga}(M)}<+\infty,\no\\
\Te(L)&:=&\sup\Big\{\te>0\;\Big|\;a^{ij}(x)\xi_i\xi_j\geq \te|\xi|^2,\quad \forall\; \xi\in \RR^m\Big\}<+\infty.\no
\eeqn For any $u\in C^{4, 1, \ga}(M\times [0, T])$ with $T\geq T_0$,  we have
\beq
\|u\|_{C^{4, 1, \ga}(M\times [0, T], g)}\leq C\Big(
\|Lu\|_{C^{0, 0, \ga}(M\times [0, T], g)}+\|u(\cdot, 0)\|_{C^{4,  \ga}(M, g)}
+\sup_{0\leq t\leq T}\|u\|_{L^2(M, g)}\Big),\no
\eeq where $C$ depends on $M, T_0, \ga, \|L\|_{C^{0, \ga}(M)}, \Te(L)$ but not on $T$.

\end{theo}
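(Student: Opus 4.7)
The strategy is the classical Schauder bootstrap for higher-order parabolic equations, adapted to a closed manifold: reduce to local Euclidean estimates by a good atlas, establish the constant-coefficient interior estimate, perturb to variable coefficients by freezing, and finally absorb lower-order norms via the interpolation inequality of Lemma \ref{lem:Me} with the $L^2$ bound serving as the weakest norm in the hierarchy.

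First, choose a good atlas $\cA = \{(\varphi_i, U_i)\}_{i=1}^m$ of $M$ together with a subordinate smooth partition of unity $\{\chi_i\}$. For each local chart the pullback of $L$ on $V_i \times [0,T]$ is a fourth-order parabolic operator with principal symbol uniformly bounded below by $\Theta(L)^2$ and with $C^{\gamma}$ coefficients bounded by $\|L\|_{C^{0,\gamma}(M)}$; the global norm $\|u\|_{C^{4,1,\gamma}(M\times[0,T],g)}$ is equivalent to $\max_i \|u\circ\varphi_i^{-1}\|_{C^{4,1,\gamma}(V_i\times[0,T])}$ by Definition \ref{defi:A002}. Hence it suffices to prove the bound locally, after multiplying by cutoffs $\chi_i$, so that $\chi_i u$ is supported in a fixed interior region, with commutator terms $[L,\chi_i]u$ of order $\le 3$ that will be absorbed later by interpolation.

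Next, on each Euclidean patch I would establish the interior estimate for the localized function $v = \chi_i(u\circ\varphi_i^{-1})$ solving $Lv = \chi_i Lu + [L,\chi_i]u$ with prescribed initial trace. The model constant-coefficient estimate follows from fundamental solution/heat-kernel methods for $\partial_t + (-\Delta)^2$-type operators, yielding
\[
\left[D^{4,1}v\right]^{(0)}_{\gamma,V_i\times[0,T]} \le C\bigl(\|Lv\|_{C^{0,0,\gamma}} + \|v(\cdot,0)\|_{C^{4,\gamma}} + \|v\|_{L^\infty}\bigr).
\]
For the variable-coefficient operator I freeze the principal coefficients $a^{ij}a^{kl}$ at a base point in a small parabolic cylinder of size $r$; the error is estimated by $r^\gamma \|L\|_{C^{0,\gamma}} \cdot [\Na^4 v]^{(0)}_{\gamma}$, so choosing $r$ small and summing a standard covering argument gives the inequality up to a term of the form $C \sum_{k\le 3}\|\Na^k v\|_{L^\infty} + C\|v\|_{L^\infty}$. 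Apply Lemma \ref{lem:Me} to absorb these intermediate norms into $\varepsilon[D^{4,1}v]^{(0)}_{\gamma} + C_\varepsilon\|v\|_{L^\infty}$, pick $\varepsilon$ small, and sum over $i$.

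It remains to replace $\|u\|_{L^\infty}$ by $\sup_{t} \|u\|_{L^2(M,g)}$ on the right. I would do this by a contradiction/compactness argument or, more directly, by invoking a Gagliardo--Nirenberg type inequality on $M$: any $L^\infty$ norm of $u$ can be bounded by $\theta \|u\|_{C^{4,1,\gamma}} + C_\theta \sup_t\|u\|_{L^2}$, and then absorbing the first term on the left. Independence of $C$ from $T\ge T_0$ is obtained by rerunning the estimate on overlapping time slabs $[jT_0/2,(j+1)T_0/2]$ of fixed length: each slab-estimate uses only the data at its starting time, which is controlled by the previous slab, and patching gives a $T$-independent constant. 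The main obstacle is executing the freezing step carefully in the anisotropic parabolic scaling $(x,t) \mapsto (\lambda x, \lambda^4 t)$ for the fourth-order operator so that the commutator and cutoff errors really sit in norms that Lemma \ref{lem:Me} can absorb; once that is set up, the rest is routine.

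\end{appendices}
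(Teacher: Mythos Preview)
The paper does not provide its own proof of this theorem: it is quoted from Metsch \cite{[Me]} (Theorem 4.1.6 there), and in the appendix the authors merely state it and rely on it as a black box. So there is no ``paper's proof'' to compare against; your sketch is in effect a proposed reconstruction of Metsch's argument.

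Your outline is the standard route for higher-order parabolic Schauder theory and is essentially correct in spirit: localize via a good atlas, obtain the constant-coefficient model estimate, freeze coefficients on small parabolic cylinders, absorb the perturbation and lower-order terms with the interpolation Lemma~\ref{lem:Me}, and then trade $\|u\|_{L^\infty}$ for $\sup_t\|u\|_{L^2}$ using an interpolation of the type in Lemma~\ref{lem:A5}. That part is sound.

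The one genuine weak point is your argument for $T$-independence. As written, you propose to iterate the slab estimate, feeding the $C^{4,\gamma}$ trace at the start of each slab from the previous one. That iteration multiplies the constant at each step and would produce a bound growing like $C^{T/T_0}$, i.e.\ depending on $T$. The correct mechanism is different: for times bounded away from $0$ one proves an \emph{interior} estimate (using a temporal cutoff, so no initial trace term appears) of the form
\[
\|u\|_{C^{4,1,\gamma}(M\times[t,t+1])}\le C\Big(\|Lu\|_{C^{0,0,\gamma}(M\times[t-T_0/2,\,t+1])}+\sup_{\tau}\|u(\cdot,\tau)\|_{L^2}\Big),
\]
with $C$ depending only on $T_0$; near $t=0$ one uses the estimate on $[0,T_0]$ with the genuine initial data. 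Taking the supremum over $t$ then gives the global bound with a single, $T$-independent constant. This is the step you should rework.
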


We have the following interpolation inequalities.
\begin{lem}\label{lem:A5} Let $(M, g)$ be a compact Riemannian manifold of real dimension $m$. There exist $C(m, g), \ka(\ga, m)>0$ satisfying the following properties. For any function $w(x, t)\in C^{4, 1, \ga}(M\times [0, T], g)$, we have
\beqn &&
\|w\|_{C^{0, 0, \ga}(M\times [0, T], g)}+\|\Na^2 w\|_{{C^{0, 0, \ga}(M\times [0, T], g)}}\no\\&\leq& \ee  \|w\|_{{C^{4, 1, \ga}(M\times [0, T], g)}}+C(m, g) \ee^{-\ka(\ga, m)}\max_{\tau\in [0, T]}\|w(\cdot, \tau)\|_{L^2(M, g)}. \label{eq:A039x}
\eeqn

\end{lem}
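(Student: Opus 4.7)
}
My plan is to reduce the inequality to Metsch's local parabolic interpolation (Lemma~\ref{lem:Me}) in Euclidean charts, and then to upgrade the resulting $\|w\|_{L^\infty}$ term to the slice-wise $L^2$ norm by a standard Gagliardo--Nirenberg argument between $L^2$ and $C^{0,\ga}$. First I would fix a good atlas $\{(\varphi_i,U_i)\}_{i=1}^{m_0}$ of $(M,g)$ as in Definition~\ref{defi:A002}; by the equivalence of the intrinsic and chart-wise norms, it suffices to prove the inequality on each Euclidean neighbourhood $V_i\times[0,T]\subset\RR^m\times\RR$ with a constant depending only on the fixed (finite) atlas.

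Next I would apply Lemma~\ref{lem:Me} with $k\in\{0,2\}$ and $l\in\{0,2\}$. Using the identities
\[
\|f\|_{C^{0,0,\ga}}=\|f\|_{L^\infty}+[f]^{\mathrm{space}}_{\ga}+[f]^{\mathrm{time}}_{\ga/4},\qquad [u]^{(0)}_{\ga}=[u]^{\mathrm{space}}_{\ga}+[u]^{\mathrm{time}}_{\ga/4},
\]
which follow directly from Definition~\ref{defi:A001}, the four inequalities of Lemma~\ref{lem:Me} can be summed to give
\begin{equation}\label{eq:plan1}
\|w\|_{C^{0,0,\ga}}+\|\Na^2w\|_{C^{0,0,\ga}}\le \ee_1\,\|w\|'_{C^{4,1,\ga}}+C(m)\,\ee_1^{-\al}\,\|w\|_{L^\infty(M\times[0,T])},
\end{equation}
for $\al=\al(\ga,m)>0$ equal to the largest of the exponents $\frac{k}{4-k+\ga}$ and $\frac{l+\ga}{4-l}$ appearing in Lemma~\ref{lem:Me} (namely $\al=(2+\ga)/2$). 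The primed and unprimed $C^{4,1,\ga}$ norms are equivalent by \eqref{eq:A012}.

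The key step is to replace $\|w\|_{L^\infty}$ in \eqref{eq:plan1} by $\max_{t}\|w(\cdot,t)\|_{L^2}$. For each fixed $t$, the pointwise bound $|w(x,t)|\le|w(y,t)|+\|w(\cdot,t)\|_{C^{0,\ga}}|x-y|^{\ga}$ on a geodesic ball $B_r(x)$, followed by averaging in $y$, Cauchy--Schwarz on $\int_{B_r}|w|$, and the volume comparison $\operatorname{vol}(B_r)\asymp r^m$, yields
\[
\|w(\cdot,t)\|_{L^\infty(M)}\le C(m,g)\bigl(r^{-m/2}\|w(\cdot,t)\|_{L^2(M)}+r^{\ga}\|w(\cdot,t)\|_{C^{0,\ga}(M,g)}\bigr).
\]
Optimising in $r>0$ gives the Gagliardo--Nirenberg inequality
\[
\|w(\cdot,t)\|_{L^\infty(M)}\le C(m,g)\,\|w(\cdot,t)\|_{L^2(M)}^{\te}\,\|w(\cdot,t)\|_{C^{0,\ga}(M,g)}^{1-\te},\qquad \te=\frac{2\ga}{m+2\ga}\in(0,1).
\]
Since $\|w(\cdot,t)\|_{C^{0,\ga}(M,g)}\le \|w\|_{C^{4,1,\ga}(M\times[0,T],g)}$, taking the maximum over $t$ and applying the weighted Young inequality $A^{\te}B^{1-\te}\le \ee_2B+C(\te)\,\ee_2^{-(1-\te)/\te}A$ produces
\begin{equation}\label{eq:plan2}
\|w\|_{L^\infty(M\times[0,T])}\le \ee_2\,\|w\|_{C^{4,1,\ga}}+C(m,g)\,\ee_2^{-(1-\te)/\te}\max_{t\in[0,T]}\|w(\cdot,t)\|_{L^2(M,g)}.
\end{equation}

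Finally I would combine \eqref{eq:plan1} and \eqref{eq:plan2} by choosing $\ee_1=\ee/2$ and then $\ee_2$ so small that $C(m)\,\ee_1^{-\al}\ee_2\le \ee/2$; concretely, $\ee_2$ is a constant multiple of $\ee^{1+\al}$. The top-order terms then combine into $\ee\,\|w\|_{C^{4,1,\ga}}$, while the coefficient of the $L^2$ term becomes a constant times $\ee^{-\ka}$ with $\ka=\al+(1+\al)(1-\te)/\te$, which is positive and depends only on $\ga$ and $m$. The main obstacle is purely the exponent bookkeeping in this last step, together with the derivation of the slice-wise Gagliardo--Nirenberg inequality; no genuinely new analytic input beyond Lemma~\ref{lem:Me} and elementary interpolation is required.
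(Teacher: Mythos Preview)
Your proposal is correct and follows essentially the same route as the paper: both use Metsch's local interpolation (Lemma~\ref{lem:Me}) to reduce to $\|w\|_{L^\infty}$, and then upgrade $L^\infty$ to the slice-wise $L^2$ norm via the same averaging-over-a-ball argument (the paper's (\ref{eq:A024})--(\ref{eq:A025}) is exactly your step~3). The only cosmetic difference is that you pass through the multiplicative Gagliardo--Nirenberg form and then apply Young, whereas the paper keeps the additive form with the ball radius as free parameter throughout; the resulting exponents $\ka(\ga,m)$ differ in their explicit values but this is immaterial since the lemma only claims existence of some $\ka>0$.
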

\begin{proof}It suffices to show \label{eq:A039} for parabolic balls in $\RR^n\times \RR.$ Let $x_0\in\RR^n$ and $U_{\rho}(x_0)=B_{\rho}(x_0)\times (t_0-\rho^4, t_0+\rho^4).$

(1).   For any $x\in U_{\si}(x_0)$, we have
\beqn
|w(x, t)|&\leq&\frac 1{\vol_g(B_{\si}(x_0))}\Big|\int_{B_{\si}(x_0)}\,
w(y, t)\,dy\Big|\no\\&&+\frac 1{\vol_g(B_{\si}(x_0))}\int_{B_{\si}(x_0)}\,
|w(y, t)-w(x, t)|\,dy\no\\
&\leq&(\vol_g(B_{\si}(x_0)))^{-\frac 12}\Big(\int_{B_{\si}(x_0)}\,
|w(y, t)|^2\,dy\Big)^{\frac 12}\no\\
&&+\frac 1{\vol_g(B_{\si}(x_0))}\int_{B_{\si}(x_0)}\,\left[ w\right]_{\ga, B_{\si}(x_0)}
|x-y|^{\ga}\,dy\no\\
&\leq&(\vol_g(B_{\si}(x_0)))^{-\frac 12}\|w\|_{L^2(B_{\si}(x_0))}
+\si^{\ga}\,\left[ w\right]_{\ga, B_{\si}(x_0)}.\label{eq:A024}
\eeqn
Let $\ee=\si^{\ga}$, we have
\beq
(\vol_g(B_{\si}(x_0)))^{-\frac 12}=C(m)\si^{-\frac m2}=C( m)\ee^{-\frac m{2\ga}}. \label{eq:A024a}
\eeq
Therefore, (\ref{eq:A024}) and (\ref{eq:A024a}) imply that
\beq
|w|_{L^{\infty}(U_{\si}(x_0) )}\leq \ee \left[ w\right]_{\ga, U_{\si}(x_0)}+C(m)\ee^{-\frac {m}{2\ga}}\sup_{\tau\in I_{\rho}} \|w(\cdot, \tau)\|_{L^2(B_{\si}(x_0))} \label{eq:A025}
\eeq where $I_{\rho}=(t_0-\rho^4, t_0+\rho^4).$
By (\ref{eq:A041}), we have
\beq
\left[ w\right]^{(0)}_{\ga, U_{\si}}\leq \ee' [D^{4, 1}w]^{(0)}_{\ga, U_{\si}}+C(m) \ee'^{-\frac {\ga}{4}}\|w\|_{L^{\infty}( U_{\si})}. \label{eq:A045}
\eeq
Taking $\ee'=1$ in (\ref{eq:A045}) and using (\ref{eq:A025}) , we have
\beqs
\|w\|_{L^{\infty}( U_{\si})}&\leq& \ee \left[ w\right]_{\ga,  U_{\si}}+C(m, g)\ee^{-\frac {m}{2\ga}}\|w\|_{L^2( U_{\si})}\\
&\leq&\ee   [D^{4, 1}w]^{(0)}_{\ga, U_{\si}}+C(m)\ee  \|w\|_{L^{\infty}(U_{\si})}+C(m, g)\ee^{-\frac {m}{2\ga}}\sup_{\tau\in I_{\rho}} \|w(\cdot, \tau)\|_{L^2(B_{\si}(x_0))}.
\eeqs
Choosing $C(m)\ee<\frac 12$, we have
\beqn
\|w\|_{L^{\infty}(U_{\si})}
&\leq&\ee   [D^{4, 1}w]^{(0)}_{\ga, U_{\si}}+C(m)\ee^{-\frac {m}{2\ga}}\sup_{\tau\in I_{\rho}} \|w(\cdot, \tau)\|_{L^2(B_{\si}(x_0))}\no\\
&\leq&\ee   \|w\|_{C^{4, 1, \ga}(U_{\si})}+C(m)\ee^{-\frac {m}{2\ga}}\sup_{\tau\in I_{\rho}} \|w(\cdot, \tau)\|_{L^2(B_{\si}(x_0))}.\label{eq:A046}
\eeqn

(2).
By Lemma \ref{lem:Me} and (\ref{eq:A042}), we have
\beqn &&
\|\Na^2w\|_{C^{0, 0, \ga}(U_{\si}, g)}=\|\Na^2 w\|_{L^{\infty}(U_{\si})}+\left[ \Na^2w\right]^{(0)}_{\ga, U_{\si}}\no\\
&\leq& 2\ee \left[D^{4, 1}w\right]^{(0)}_{\ga, U_{\si}}+C(m)\ee^{-\frac {2}{2+\ga}}\|w\|_{L^{\infty}(U_{\si})}+C(m) \ee^{-\frac {2+\ga}{2}}\|w\|_{L^{\infty}(U_{\si})}\no\\
&\leq&2\ee  \|w\|_{{C^{4, 1, \ga}(U_{\si})}}+C(m)\ee^{-\frac {2+\ga}{2}}\|w\|_{L^{\infty}(U_{\si})}.\no
\eeqn
Combining this with (\ref{eq:A046}), we have
\beqs &&
\|\Na^2w\|_{C^{0, 0, \ga}(U_{\si}, g)}\leq 2\ee  \|w\|_{{C^{4, 1, \ga}(U_{\si})}}\\&&+C(m)\ee^{-\frac {2+\ga}{2}}\Big(\ee'   \|w\|_{C^{4, 1, \ga}(U_{\si}, g}+C(m, g)\ee'^{-\frac {m}{2\ga}}\sup_{\tau\in I_{\rho}} \|w(\cdot, \tau)\|_{L^2(B_{\si}(x_0))}\Big)\\
&\leq&2\ee  \|w\|_{{C^{4, 1, \ga}(U_{\si}, g)}}+C\ee\|w\|_{C^{4, 1, \ga}(U_{\si}), g}+C \ee^{-\frac {2\ga^2+4\ga+m\ga+4m}{4\ga}}\sup_{\tau\in I_{\rho}} \|w(\cdot, \tau)\|_{L^2(B_{\si}(x_0))},
\eeqs where we choose $\ee'=\ee^{\frac {4+\ga}{2}}.$
Therefore, we have
\beq
\|\Na^2w\|_{C^{0, 0, \ga}(U_{\si}, g)}\leq \ee  \|w\|_{{C^{4, 1, \ga}(U_{\si}, g)}}+C \ee^{-\ka}\sup_{\tau\in I_{\rho}} \|w(\cdot, \tau)\|_{L^2(B_{\si}(x_0))}, \label{eq:A042a}
\eeq where $\ka=\frac {2\ga^2+4\ga+m\ga+4m}{4\ga}$.

(3). By (\ref{eq:A042}) and (\ref{eq:A041}), we have
\beqn &&
\|w\|_{C^{0, 0, \ga}(U_{\si})}=\sup_{(x, t)\in U_{\si}}| w(x, t)|+\left[ w\right]^{(0)}_{\ga, U_{\si}}\no\\
&\leq&\sup_{(x, t)\in U_{\si}}| w|+\ee [D^{4, 1}w]^{(0)}_{\ga, U_{\rho}}+C(m) \ee^{-\frac {\ga}{4}}\|w\|_{L^{\infty}(U_{\rho})}\no\\
&\leq&\ee [D^{4, 1}w]^{(0)}_{\ga, U_{\rho}}+C(m) \ee^{-\frac {\ga}{4}}\|w\|_{L^{\infty}(U_{\rho})}\no\\
&\leq&\ee [D^{4, 1}w]^{(0)}_{\ga, U_{\rho}}+C(m) \ee^{-\frac {\ga}{4}}\Big(\ee   \|w\|_{C^{4, 1, \ga}(U_{\si})}+C(m)\ee^{-\frac {m}{2\ga}}\sup_{\tau\in I_{\rho}} \|w(\cdot, \tau)\|_{L^2(B_{\si}(x_0))} \Big)\no\\
&=&\ee [D^{4, 1}w]^{(0)}_{\ga, U_{\rho}}+C(m)\ee^{1-\frac {\ga}4}   \|w\|_{C^{4, 1, \ga}(U_{\si})}+C(m)\ee^{-\frac {m}{2\ga}-\frac {\ga}4}\sup_{\tau\in I_{\rho}} \|w(\cdot, \tau)\|_{L^2(B_{\si}(x_0))}\no\\
&\leq&(\ee+C(m)\ee^{1-\frac {\ga}4}) \|w\|_{C^{4, 1, \ga}(U_{\si})}+C(m)\ee^{-\frac {m}{2\ga}-\frac {\ga}4}\sup_{\tau\in I_{\rho}} \|w(\cdot, \tau)\|_{L^2(B_{\si}(x_0))}.\no
\eeqn
Therefore, we have
\beq
\|w\|_{C^{0, 0, \ga}(U_{\si})}\leq \ee_1 \|w\|_{C^{4, 1, \ga}(U_{\si})}+C(m)\ee_1^{-\ka'(\ga, m)}\sup_{\tau\in I_{\rho}} \|w(\cdot, \tau)\|_{L^2(B_{\si}(x_0))}, \label{eq:A042b}
\eeq where $\ee_1>0$ and $\ka'(\ga, m)>0.$  Combining (\ref{eq:A042a}) with (\ref{eq:A042b}), the inequality (\ref{eq:A039x}) holds on $U_{\si}.$

(3). By choosing a fixed good atlas and following the same   argument of Section 4 of \cite{[Me]}, we can show that (\ref{eq:A042a}) holds on a compact Riemannian manifold $(M, g)$. Here we omit the details.

\end{proof}

\begin{lem}\label{lem:B008z} Let
\beq
\tau=st,\quad w(x, \tau)=u(x, t).\no
\eeq Then we have
\beqn
\|u\|_{C^{4, 1, \ga}(\Om\times (0, T), g)}&\leq& \|w\|_{C^{4, 1, \ga}(\Om\times (0, sT), g)}\leq s^{-1-\frac {\ga}4}\|u\|_{C^{4, 1, \ga}(\Om\times (0, T), g)},\no\\
\|u\|_{C^{0, 0, \ga}(\Om\times (0, T), g)}&\leq& \|w\|_{C^{0, 0, \ga}(\Om\times (0, sT), g)}\leq  s^{-\frac {\ga}4} \|u\|_{C^{0, 0, \ga}(\Om\times (0, T), g)}.\no
\eeqn

\end{lem}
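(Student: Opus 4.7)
\medskip

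\noindent\textbf{Proof proposal.} The statement is a direct scaling computation, so the plan is simply to trace how each ingredient of the parabolic H\"older norms in Definition \ref{defi:A001} transforms under the rescaling $\tau=st$, $w(x,\tau)=u(x,t)$, with $s\in(0,1)$. First I would treat the $C^{0,0,\ga}$ norm. Since $w(x,\tau)=u(x,t)$ pointwise, the sup norm and the spatial seminorm $[\,\cdot\,]^{\mathrm{space}}_{\ga}$ are identical for $u$ and $w$. For the time seminorm, a pair $\tau_1,\tau_2\in(0,sT)$ corresponds to $t_1,t_2\in(0,T)$ with $|\tau_1-\tau_2|=s|t_1-t_2|$, hence
\[
 [w]^{\mathrm{time}}_{\ga/4,\,\Om\times(0,sT)}=s^{-\ga/4}\,[u]^{\mathrm{time}}_{\ga/4,\,\Om\times(0,T)}.
\]
Since $s^{-\ga/4}\geq 1$, adding the three pieces gives both sides of the $C^{0,0,\ga}$ inequality at once.

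Next I would treat the $C^{4,1,\ga}$ norm, using the equivalent form written out in \eqref{eq:A012}. Spatial derivatives commute with the substitution, so $\Na^p w(x,\tau)=\Na^p u(x,t)$ for $|p|\leq 4$, while the time derivative transforms as $\partial_\tau w=s^{-1}\partial_t u$ by the chain rule. Applying the observation from the first step term by term gives:
\begin{itemize}
\item $\sup|\Na^p w|=\sup|\Na^p u|$ for $|p|\leq 4$, and $\sup|\partial_\tau w|=s^{-1}\sup|\partial_t u|$;
\item $[\Na^4 w]^{\mathrm{space}}_{\ga}=[\Na^4 u]^{\mathrm{space}}_{\ga}$ and $[\Na^4 w]^{\mathrm{time}}_{\ga/4}=s^{-\ga/4}[\Na^4 u]^{\mathrm{time}}_{\ga/4}$;
\item $[\partial_\tau w]^{\mathrm{space}}_{\ga}=s^{-1}[\partial_t u]^{\mathrm{space}}_{\ga}$ and $[\partial_\tau w]^{\mathrm{time}}_{\ga/4}=s^{-1-\ga/4}[\partial_t u]^{\mathrm{time}}_{\ga/4}$.
\end{itemize}
Every scaling factor lies in $[1,s^{-1-\ga/4}]$ because $s\in(0,1)$, so summing yields $\|u\|_{C^{4,1,\ga}(\Om\times(0,T),g)}\leq\|w\|_{C^{4,1,\ga}(\Om\times(0,sT),g)}\leq s^{-1-\ga/4}\|u\|_{C^{4,1,\ga}(\Om\times(0,T),g)}$, which is the desired bound.

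Finally, since the norms on the compact Riemannian manifold $(M,g)$ are defined in Definition \ref{defi:A002} via a fixed good atlas of charts $\varphi_i\colon U_i\to V_i\subset\RR^n$ and the Euclidean norms on $V_i\times(0,T)$ or $V_i\times(0,sT)$, the same inequalities extend chart-by-chart and then by taking the maximum over $1\leq i\leq m$. There is no genuine analytic obstacle here; the whole proof is a careful bookkeeping of powers of $s$, and the only point requiring minor care is ensuring that one uses the equivalent norm $\|\cdot\|'_{C^{4,1,\ga}}$ from \eqref{eq:A012} so that no intermediate derivatives have to be scaled separately.
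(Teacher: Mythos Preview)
Your proposal is correct and follows essentially the same approach as the paper: both expand the norms via \eqref{eq:A042} and the equivalent form \eqref{eq:A012}, compute the scaling factor of each term under $\tau=st$, and observe that every factor lies in $[1,s^{-1-\ga/4}]$ (respectively $[1,s^{-\ga/4}]$). Your additional remark about passing to the manifold via a good atlas is a harmless clarification not made explicit in the paper.
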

\begin{proof}
By(\ref{eq:A012}), we have
\beqn&&
\|w\|'_{C^{4, 1, \ga}(\Om\times (0, sT), g)}\no\\&=&
\sum_{|p|\leq 4}\sup_{(x, \tau)\in \Om\times (0, sT)}|\Na^p w(x, \tau)|+\sup_{(x, \tau)\in \Om\times (0, sT)}|\partial_{\tau} w(x, \tau)|\no\\
&&+\left[\Na^4 w\right]^{\mathrm{space}}_{\ga, \Om\times (0, sT), g}+\left[\Na^4 w\right]^{\mathrm{time}}_{\ga/4, \Om\times (0, sT), g}\no\\&&+
\left[\partial_{\tau} w\right]^{\mathrm{space}}_{\ga, \Om\times (0, sT), g}+
\left[\partial_{\tau} w\right]^{\mathrm{time}}_{\ga/4, \Om\times (0, sT), g}\no\\
&=&\sum_{|p|\leq 4}\sup_{(x, t)\in \Om\times (0, T)}|\Na^p u(x, t)|+s^{-1}\sup_{(x, t)\in \Om\times (0, T)}|\partial_{t} u(x, t)|\no\\
&&+\left[\Na^4 u\right]^{\mathrm{space}}_{\ga, \Om\times (0, T), g}+s^{-\frac {\ga}4}\left[\Na^4 u\right]^{\mathrm{time}}_{\ga/4, \Om\times (0, T), g}\no\\&&+s^{-1}
\left[\partial_{t} u\right]^{\mathrm{space}}_{\ga, \Om\times (0, T), g}+s^{-1-\frac {\ga}4}
\left[\partial_{t} u\right]^{\mathrm{time}}_{\ga/4, \Om\times (0, T), g}.\no
\eeqn
Thus, we have
\beq
\|u\|_{C^{4, 1, \ga}(\Om\times (0, T), g)}\leq \|w\|_{C^{4, 1, \ga}(\Om\times (0, sT), g)}\leq s^{-1-\frac {\ga}4}\|u\|_{C^{4, 1, \ga}(\Om\times (0, T), g)}.\no
\eeq
Moreover, by (\ref{eq:A042}) we have
\beqs
\|w\|_{C^{0, 0, \ga}(\Om\times (0, sT), g)}&=&\sup_{(x, \tau)\in \Om\times (0, sT)}| w(x, \tau)|+\left[ w\right]^{\mathrm{space}}_{\ga, \Om\times (0, sT), g}+ \left[  w\right]^{\mathrm{time}}_{\ga/4, \Om\times (0, sT), g}\\
&=&\sup_{(x, t)\in \Om\times (0, T)}| u(x, t)|+\left[ u\right]^{\mathrm{space}}_{\ga, \Om\times (0, T), g}+ s^{-\frac {\ga}4}\left[  u\right]^{\mathrm{time}}_{\ga/4, \Om\times (0, T), g}.\no
\eeqs Thus, we have
\beq
\|u\|_{C^{0, 0, \ga}(\Om\times (0, T), g)}\leq \|w\|_{C^{0, 0, \ga}(\Om\times (0, sT), g)}\leq  s^{-\frac {\ga}4} \|u\|_{C^{0, 0, \ga}(\Om\times (0, T), g)}.\no
\eeq

\end{proof}

\subsection{The existence of  solutions to fourth-order  equations}
In this subsection, we show the existence of  solutions to a general fourth-order linear parabolic equations. The existence of such solutions in weighted H\"older spaces is proved by He-Zeng \cite{[HZ]} by the biharmonic heat kernel method, and
the existence  in usual H\"older spaces of Subsection \ref{subsec1}  is proved by Metsch \cite{[Me]} by using the Galerkin approximation method. Here we provide an alternative proof for the existence in usual H\"older spaces by using the biharmonic heat kernel method developed by He-Zeng \cite{[HZ]}.

Let $(M, g)$ be a closed Riemannian manifold of dimension $m$. We define the biharmonic heat kernel $b_g(x, y; t)$ with respect to the metric $g$ by
\beq
\Big(\pd {}{t}+\Delta_g^2\Big)b_g(x, y; t)=0, \no
\eeq and for any continuous function $u$ on $M$,
\beq
\lim_{t\ri 0^+}\int_M\; b_g(x, y; t)u(y)\,dV_g(y)=u(x).\no
\eeq
The biharmonic heat kernel satisfies the following properties.
\begin{lem}\label{lem:HZ}(cf. Theorem 3.1 and Theorem 3.2 of He-Zeng \cite{[HZ]})
\begin{enumerate}
  \item[(1).] For any $p, q, k\geq 0$ and any $(x, y, t)\in M\times M\times (0, T)$, we have
\beq
\Big|\partial_t^k\Na_x^p\Na_y^q b_g(x, y; t)\Big|_g\leq C t^{-\frac {m+4k+p+q}{4}}\exp\Big(-\dd (t^{-\frac 14}\rho(x, y))^{\frac 43}\Big). \no
\eeq
  \item[(2).] For any point $p\in M$, there exists a local coordinate chart $U\subset\RR^n\ri M$ near $p$ such that for any $x, y\in U$ and any multi-indices $\al, \bb$, we have for any $(x, y, t)\in U\times U\times (0, T)$
\beq \no
\Big| D_x^{\bb}(D_x+D_y)^{\al} b_g(x, y; t)\Big|_g\leq C t^{-\frac {m+|\bb|}{4}}\exp\Big(-\dd (t^{-\frac 14}\rho(x, y))^{\frac 43}\Big),
\eeq where $D_x$ and $D_y$ are ordinary derivatives in $\RR^n$, and $C, \dd>0 $
depend on $T, g, |\al|+|\bb|.$
\end{enumerate}

\end{lem}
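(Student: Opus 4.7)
The plan is to reduce the estimates on $(M,g)$ to corresponding estimates for the Euclidean biharmonic heat kernel, via a parametrix construction. The Euclidean kernel $b_{\mathrm{eucl}}(x,y;t)$ in $\RR^m$ is a convolution kernel of the semigroup $e^{-t\DD^2}$, and an inverse Fourier transform plus a steepest-descent / Fox $H$-function analysis shows that
\[
\bigl|D^\alpha b_{\mathrm{eucl}}(x,y;t)\bigr|\le C\, t^{-(m+|\alpha|)/4}\exp\!\Bigl(-\dd\,(t^{-1/4}|x-y|)^{4/3}\Bigr),
\]
with time derivatives absorbing an extra $t^{-1}$ via the heat equation $\partial_t b=-\DD^2 b$. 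This is the model estimate we want to transplant.

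Next I would build a parametrix $B_N(x,y;t)$ of the form
\[
B_N(x,y;t)=\chi(\rho(x,y))\,b_{\mathrm{eucl}}(x,y;t)\sum_{j=0}^N t^{j}\Phi_j(x,y),
\]
where the sum is written in normal coordinates at $y$, $\chi$ is a cut-off supported where $\rho(x,y)$ is less than the injectivity radius, and the transport coefficients $\Phi_j$ are chosen recursively so that $(\partial_t+\DD_g^2)B_N$ has order $t^{N+1-m/4}$ decay (with the same sub-exponential Gaussian weight). Duhamel's principle then gives
\[
b_g(x,y;t)=B_N(x,y;t)+\int_0^t\!\!\int_M b_g(x,z;t-\tau)\,E_N(z,y;\tau)\,dV_g(z)\,d\tau,
\]
with $E_N=-(\partial_\tau+\DD_g^2)B_N$. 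Taking $N$ large, iterating the Duhamel identity and using the semigroup property $\int b_g(x,z;s)b_g(z,y;t)\,dV_g(z)=b_g(x,y;s+t)$, one obtains an absolutely convergent series whose term-by-term bounds preserve the Euclidean Gaussian-type weight $\exp(-\dd(t^{-1/4}\rho(x,y))^{4/3})$. Differentiating in $x$, $y$ and $t$ brings down the claimed polynomial factors $t^{-(m+4k+p+q)/4}$; the exponent $4/3$ is dictated by the Legendre transform of the symbol $|\xi|^4$ and is stable under the perturbative construction.

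For part (2), the key observation is that in a fixed local chart the combination $(D_x+D_y)^\alpha$ differentiates along the diagonal $\{x=y\}$, i.e.\ it acts on the ``center of mass'' variable rather than on the ``separation'' variable. Because the Euclidean kernel depends on $x-y$ only through $|x-y|$, we have $(D_x+D_y)^\alpha b_{\mathrm{eucl}}=0$, and for the full parametrix $B_N$ these combined derivatives only see the smoothly varying transport coefficients $\Phi_j(x,y)$ and the geometric terms from $g$. Consequently $(D_x+D_y)^\alpha$ does not produce any extra negative power of $t$; only the ordinary $D_x^\beta$ derivatives cost the factor $t^{-|\beta|/4}$, yielding the sharper bound with weight $t^{-(m+|\beta|)/4}$. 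The error terms from Duhamel iteration are handled the same way, since the convolution integrals preserve tangential smoothness along the diagonal.

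The main obstacle is bookkeeping the sub-exponential Gaussian weight through the parametrix iteration: unlike the genuine Gaussian $e^{-c|x-y|^2/t}$ arising for the classical heat kernel, the weight $\exp(-\dd(t^{-1/4}\rho)^{4/3})$ is not multiplicative under convolution in an elementary way, and one must verify a Davies--Gaffney type semigroup inequality, or equivalently check that $\int\!\exp(-\dd(s^{-1/4}\rho(x,z))^{4/3})\exp(-\dd((t-s)^{-1/4}\rho(z,y))^{4/3})\,dV_g(z)$ is dominated by $C\,t^{m/4}\exp(-\dd' (t^{-1/4}\rho(x,y))^{4/3})$ with possibly smaller $\dd'$. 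Once this weight-convolution estimate is established (it follows from the scaling of the $4/3$-power and a standard chaining argument), the rest of the argument is routine bookkeeping, and the statements of the lemma follow with constants depending on $T$, $g$ and the differentiation order.
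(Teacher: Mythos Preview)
The paper does not give its own proof of this lemma: it is stated with the attribution ``(cf.\ Theorem~3.1 and Theorem~3.2 of He--Zeng \cite{[HZ]})'' and used as a black box in the appendix. So there is no proof in the paper to compare against; the authors simply import the result from \cite{[HZ]}.

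Your sketch is the standard Levi parametrix route and is in the same spirit as what He--Zeng carry out. The ingredients you identify are the right ones: the Euclidean estimate for $e^{-t\Delta^2}$ with the $\exp(-\delta(t^{-1/4}|x-y|)^{4/3})$ weight, a local parametrix in normal coordinates with transport coefficients, Duhamel iteration, and---for part~(2)---the observation that $(D_x+D_y)^{\alpha}$ annihilates the translation-invariant Euclidean kernel and only differentiates the smooth geometric coefficients, hence costs no extra powers of $t$. You also correctly flag the one non-routine point, namely that the sub-Gaussian weight $\exp(-\delta r^{4/3}/t^{1/3})$ is not trivially stable under space-time convolution, and that one needs a chaining/semigroup inequality with a possibly smaller constant $\delta'$ to close the iteration. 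That is exactly where the work lies in \cite{[HZ]}. As a proof outline your proposal is sound; what remains is the bookkeeping you allude to, which is carried out in detail in the cited reference.
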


Following the argument of Theorem 3.10 of He-Zeng \cite{[HZ]}, we show the result:

\begin{theo}\label{theo:A1x} Let $(M, g)$ be a closed Riemannian manifold of dimension $m$. Consider the problem
 \beqn
\Big(\pd {}{t}+\Delta_g^2\Big)u(x, t)&=&f(x, t), \label{eq:H001}\\
u(x, 0)=0.\label{eq:H002}
\eeqn
For any $f\in C^{0, 0, \ga}(M, g)$, there exists a unique solution $u\in C^{4, 1, \ga}(M, g)$ of (\ref{eq:H001})-(\ref{eq:H002}). Moreover, $u$ satisfies
\beq
\|u\|_{C^{4, 1, \ga}(M, g)}\leq C\|f\|_{C^{0, 0, \ga}(M, g) }. \label{eq:H003}
\eeq

\end{theo}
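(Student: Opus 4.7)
The plan is to build the solution by Duhamel's formula and derive the Schauder estimate via a careful analysis of the biharmonic heat kernel, following the strategy developed by He--Zeng in the weighted-H\"older setting and adapting it to the usual H\"older spaces defined in Subsection \ref{subsec1}. First I would define the candidate
$$u(x,t) := \int_0^t \int_M b_g(x,y;t-s)\, f(y,s)\, dV_g(y)\, ds.$$
Using the kernel decay from Lemma \ref{lem:HZ}(1), one checks that $u\in C^0(M\times[0,T])$, that $u(x,0)=0$, and that differentiating under the integral together with $(\partial_t+\Delta_g^2)b_g(\cdot,\cdot;t-s)\equiv 0$ for $t>s$ gives $(\partial_t+\Delta_g^2)u=f$. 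Uniqueness follows from a standard $L^2$ energy argument for the homogeneous equation with zero initial data.

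The core of the theorem is the Schauder estimate \eqref{eq:H003}. The model computation is
$$\partial_t u(x,t) = f(x,t) + \int_0^t\!\!\int_M \partial_t b_g(x,y;t-s)\bigl[f(y,s)-f(x,t)\bigr]\,dV_g(y)\,ds + R(x,t),$$
where $R$ gathers contributions from the upper limit plus from $\int_M b_g(x,y;\tau)\,dV_g(y)-1=O(\tau^{\infty})$. Although Lemma \ref{lem:HZ}(1) only controls $|\partial_t b_g|$ by the non-integrable singularity $(t-s)^{-(m+4)/4}\exp\bigl(-\delta((t-s)^{-1/4}\rho(x,y))^{4/3}\bigr)$, multiplying by $|f(y,s)-f(x,t)|\le [f]_\gamma(\rho(x,y)^\gamma+|t-s|^{\gamma/4})$ converts it into an integrable quantity and yields $\|\partial_t u\|_{L^\infty}\lesssim \|f\|_{C^{0,0,\gamma}}$. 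The estimate for $\nabla^4 u$ is obtained in the same way: in a coordinate chart I would use the $(D_x+D_y)$-symmetry from Lemma \ref{lem:HZ}(2) to trade two of the four $x$-derivatives on $b_g$ for $y$-derivatives, then integrate by parts in $y$ so they fall on $f(y,s)-f(x,t)$ (the constant $f(x,t)$ being annihilated by the derivative in $y$). The space and time H\"older seminorms of $\partial_t u$ and $\nabla^4 u$ are then controlled by the standard two-scale decomposition: for $[\cdot]_{\gamma}^{\mathrm{space}}$ split the time integral at $|x-x'|^4$, and for $[\cdot]_{\gamma/4}^{\mathrm{time}}$ split at $|t-t'|$, estimating the near piece directly from the H\"older modulus of $f$ and the far piece via the gradient of the kernel in the parameter, after the subtraction $f(x,t)$ has been installed.

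The main obstacle is the interplay of two issues: the non-integrability in time of the naked kernel derivatives $\partial_t b_g$ and $\nabla_x^4 b_g$, which can only be tamed by absorbing one H\"older power of $f$; and the lack of Euclidean translation invariance on $M$, which forces us to rely on the weaker manifold symmetry $D_x+D_y$ in Lemma \ref{lem:HZ}(2) rather than on the pointwise cancellation $\nabla_x b_g + \nabla_y b_g = 0$ available in flat space. Once the subtraction trick is coupled with this symmetry, all remaining integrals are dominated by $\|f\|_{C^{0,0,\gamma}}$ times a universal constant depending only on $M,g,T,\gamma$, and assembling the six seminorms in the definition of $\|\cdot\|_{C^{4,1,\gamma}}$ yields \eqref{eq:H003}. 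Existence, uniqueness, and the estimate together complete the proof; the construction works uniformly in $T$ on compact subintervals since the constants in Lemma \ref{lem:HZ} depend only on an upper bound for $T$.
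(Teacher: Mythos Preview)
Your overall strategy --- Duhamel's formula plus the He--Zeng biharmonic heat kernel estimates --- is exactly the paper's approach, and your treatment of $\partial_t u$ via the freezing trick is essentially correct. But your handling of $\nabla^4 u$ contains a genuine gap. You propose to use the $(D_x+D_y)$-symmetry to trade $x$-derivatives on $b_g$ for $y$-derivatives and then integrate by parts in $y$ so that the derivatives land on $f(y,s)-f(x,t)$. This cannot work: $f$ is only in $C^{0,0,\gamma}$, so it carries no derivatives at all, and the boundary and volume-form terms you would pick up from integration by parts in a chart do not compensate for this. No rearrangement of $(D_x+D_y)$ and $D_y$ avoids placing at least one $y$-derivative on $f$.

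The paper's fix is different and does not require any regularity of $f$ beyond H\"older. One subtracts $f(x,s)$ (same time, frozen spatial point) and writes
\[
\nabla^4_x V[f](x,t)=\int_0^t\!\!\int_M \nabla_x^4 b_g\,(f(y,s)-f(x,s))\,dV_g\,ds+\int_0^t f(x,s)\Big(\int_M \nabla_x^4 b_g\,dV_g\Big)ds.
\]
In the first integral the factor $|f(y,s)-f(x,s)|\le [f]_\gamma^{\mathrm{space}}\rho(x,y)^\gamma$ tames the pointwise bound $|\nabla_x^4 b_g|\le C(t-s)^{-(m+4)/4}e^{-\delta((t-s)^{-1/4}\rho)^{4/3}}$ to produce $(t-s)^{-(4-\gamma)/4}$ after the $y$-integration, which is integrable in $s$. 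The second integral is where the $(D_x+D_y)$ trick is actually used: it is applied to the \emph{integrated} kernel $\int_M \nabla_x^4 b_g\,dV_g$ (this is Lemma \ref{lem:HZ2} and the estimate \eqref{eq:H012}), converting one $D_x$ into a harmless $(D_x+D_y)$ plus a $D_y$ that, after integration by parts in $y$, hits only $dV_g(y)$ and the chart boundary --- never $f$. The H\"older seminorms in space and time are then obtained by the standard near/far splitting of the $s$-integral, exactly as you outline. So the architecture of your argument is right; the specific mechanism you invoke for $\nabla^4 u$ must be replaced by the subtraction $f(y,s)-f(x,s)$ together with the integrated-kernel bound.
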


\begin{proof} Let
\beq
V[f](x, t)=\int_0^t\,\int_M\;b_g(x, y; t-s)f(y, s)dV_g(y)ds.\no
\eeq
We show that $V[f]$ satisfies (\ref{eq:H001})-(\ref{eq:H002}) with the estimate (\ref{eq:H003}). The proof is divided into several steps.

(1). We show that $V[f](x, t)$ is a well-defined function satisfying (\ref{eq:H002}). In fact, by Lemma \ref{lem:HZ} we have
\beq
|V[f](x, t)|\leq C\|f\|_{C^0(M\times [0, T])}\int_0^t\,\int_M\,(t-s)^{-\frac {m}{4}}
\exp\Big(-\dd (t-s)^{-\frac 13}\rho(x, y)^{\frac 43}\Big)\,dV_g(y)ds. \label{eq:H0013}
\eeq
We choose a local coordinate chart  around $x$ such that $\frac 12 |x-y|\leq \rho(x, y)\leq 2|x-y|$ for  $y\in B_{r_0}(x)\subset M.$ Thus, we have
\beqn
&&\int_0^t\,\int_M\,(t-s)^{-\frac {m}{4}}
\exp\Big(-\dd (t-s)^{-\frac 13}\rho(x, y)^{\frac 43}\Big)\,dV_g(y)ds\no\\
&\leq& \int_0^t\,\int_{B_{r_0}(0)}\,(t-s)^{-\frac {m}{4}}
\exp\Big(- 2^{-\frac 43} \dd(t-s)^{-\frac 13}|w|^{\frac 43}\Big)\,dwds\no\\
&&+\int_0^t\,\int_{M\backslash B_{r_0}(0)}\,(t-s)^{-\frac {m}{4}}
\exp\Big(-\dd  (t-s)^{-\frac 13}r_0^{\frac 43}\Big)\,dV_g(y)ds\no\\
&:=&I_1+I_2.\label{eq:H0014}
\eeqn We estimate $I_1$ and $I_2$.
\beq
I_1\leq C\int_0^t\,\int_0^{r_0}\,(t-s)^{-\frac {m}{4}}
\exp\Big(- 2^{-\frac 43} \dd(t-s)^{-\frac 13}\tau^{\frac 43}\Big)\tau^{m-1}d\tau ds
\leq Ct.\label{eq:H0015}
\eeq Moreover, we have
\beqn
I_2&\leq &C \int_0^t\,\,(t-s)^{-\frac {m}{4}}
\exp\Big(-\dd  (t-s)^{-\frac 13}r_0^{\frac 43}\Big)\,ds\no\\
&\leq &C \int_{t^{-\frac 13}}^{\infty}\,\tau^{\frac 34m-4}\,e^{-\dd r_0^{\frac 43} \tau} \,d\tau\ri 0,\quad t\ri 0.\label{eq:H0016}
\eeqn Thus, by (\ref{eq:H0013})-(\ref{eq:H0016}) the function $V[f](x, t)$ is well-defined and $V[f](x, 0)=0.$ Moreover, $V[f](x, t)$ satisfies
\beq
|V[f](x, t)|\leq C \|f\|_{C^0(M\times [0, T])}.\no
\eeq

(2). We estimate $|\Na^k V[f]|$ for integer $1\leq k\leq 3$. In fact, we have
\beq
\Na^kV[f](x, t)=\int_0^t\int_M\;\Na^k_xb_g(x, y; t-s)f(y, s)dV_g(y)ds.\no
\eeq
By Lemma \ref{lem:HZ}, we have
\beqn
| \Na^kV[f]|&\leq &C\|f\|_{C^0}\int_0^t\,(t-s)^{-\frac {m+k}{4}}\int_M\;\exp\Big(-\dd (t-s)^{-\frac 13}\rho(x, y)^{\frac 43}\Big)dV_g(y)ds\no\\
  &\leq&C\|f\|_{C^0}\int_0^t\,(t-s)^{-\frac {m+k}{4}}\int_{ B_{r_0}(0)}\;\exp\Big(-\dd (t-s)^{-\frac 13}(2^{-1}|w|)^{\frac 43}\Big)dwds\no\\
  &&+C\|f\|_{C^0}\int_0^t\,(t-s)^{-\frac {m+k}{4}}\int_{M\backslash B_{r_0}(0)}\;\exp\Big(-\dd (t-s)^{-\frac 13}\rho(x, y)^{\frac 43}\Big)dV_g(y)ds\no\\
  &\leq &C\|f\|_{C^0}\int_0^t\,(t-s)^{-\frac {k}{4}}  \,ds+C\|f\|_{C^0}\int_0^t\,(t-s)^{-\frac {m+k}{4}}
  \exp\Big(-\dd (t-s)^{-\frac 13} r_0^{\frac 43}\Big)\,ds\no\\&<&\infty.\label{eq:H004}
   \eeqn
   Therefore, (\ref{eq:H004})   implies that for $1\leq k\leq 3$
   \beq
   | \Na^kV[f]|\leq C\|f\|_{C^0(M\times [0, T])}.\no
   \eeq
       Note that the last inequality of (\ref{eq:H004}) doesn't hold for $k=4. $

 (3). We next show that $\Na^3V[f](x, t)$ is differentiable at $x$. By the equality (90) of He-Zeng \cite{[HZ]}, in local coordinates for small $h$ we have
 \beqn&&
 \Na^3V[f](x+he_l, t)-\Na^3V[f](x, t)\no\\
 &=&\int_0^t\,\int_M\, \int_0^h\,\pd {}{x_l}\Na^3b_g(x+\tau e_l, y; t-s)(f(y, s)-f(x+\tau e_l, s))\,d\tau dV_g(y)ds\no\\
 &&+\int_0^t\,\int_M\, \int_0^h\,\pd {}{x_l}\Na^3b_g(x+\tau e_l, y; t-s) f(x+\tau e_l, s) \,d\tau dV_g(y)ds.  \label{eq:H006}
 \eeqn
 Note that
  \beqn &&
\int_0^t\,\int_M\,\Big|\pd {}{x_l}\Na^3b_g(x+\tau e_l, y; t-s)(f(y, s)-f(x+\tau e_l, s))\Big|\, dV_g(y)ds\no\\
&\leq&C[f]_{\al, M\times [0, T]}^{\mathrm{space}}\int_0^t\,\int_M\, (t-s)^{-\frac {m+4}{4}} \exp\Big(-\dd (t-s)^{-\frac 13}\rho(x, y)^{\frac 43}\Big)\rho(x+\tau e_l, y)^{\al} \, dV_g(y)ds\no\\
&\leq&C[f]_{\al, M\times [0, T]}^{\mathrm{space}}\int_0^t\,(t-s)^{-\frac {4-\al}{4}}\,ds\leq C[f]_{\al, M\times [0, T]}^{\mathrm{space}}. \label{eq:H010}
 \eeqn
Moreover, by the inequalities (92) and (93) of He-Zeng \cite{[HZ]} we have
\beqn &&
\Big|\int_0^t\,\int_M\,\pd {}{x_l}\Na^3b_g(x+\tau e_l, y; t-s) f(x+\tau e_l, s)\, dV_g(y)ds \Big|\no\\
&\leq&C\|f\|_{C^0(M\times [0, T])}\int_0^t\,(t-s)^{-\frac 34}\,ds\leq C\|f\|_{C^0(M\times [0, T])}. \label{eq:H011}
\eeqn
Therefore, the right-hand side of (\ref{eq:H006}) is integrable and we can change the orders of integrations by Fubini's theorem. Thus,  we have
\beqn&&
 \Na^3V[f](x+he_l, t)-\Na^3V[f](x, t)\no\\
 &=&\int_0^h\,\Big(\int_0^t\int_M\,\pd {}{x_l}\Na^3b_g(x+\tau e_l, y; t-s)(f(y, s)-f(x+\tau e_l, s))\, dV_g(y)ds\no\\
 &&+\int_0^t\,\int_M\,\pd {}{x_l}\Na^3b_g(x+\tau e_l, y; t-s) f(x+\tau e_l, s) \,  dV_g(y)ds\Big)\,d\tau.  \label{eq:H007}
 \eeqn
We denote by
\beqn
P(x, t)&=&\int_0^t\int_M\,\pd {}{x_l}\Na^3b_g(x, y; t-s)(f(y, s)-f(x, s))\, dV_g(y)ds\no\\
 &&+\int_0^t\,\int_M\,\pd {}{x_l}\Na^3b_g(x, y; t-s) f(x, s) \,  dV_g(y)ds.\no
\eeqn

\begin{claim} \label{claim1}
For any $x, x'\in M$,
\beqn
|P(x, t)|&\leq &C\|f\|_{C^{0, 0, \al}(M\times [0, T], g)}, \label{eq:H008}\\
|P(x, t)-P(x', t)|&\leq &C\|f\|_{C^{0, 0, \al}(M\times [0, T], g)}\rho(x, x')^{\al}. \label{eq:H009}
\eeqn
\end{claim}
\begin{proof}[Proof of Claim \ref{claim1}]
In fact, (\ref{eq:H008}) follows from (\ref{eq:H010}) and (\ref{eq:H011}). To prove (\ref{eq:H009}), we write
\beq
P(x, t)-P(x', t)=P_1+P_2+P_3+P_4, \label{eq:H016}
\eeq where $P_i$ are defined by
\beqn
P_1&:=&\int_0^t\int_{B_r(\xi)}\,\pd {}{x_l}\Na^3b_g(x, y; t-s)(f(y, s)-f(x, s))\, dV_g(y)ds,\no\\
P_2&:=&-\int_0^t\int_{B_r(\xi)}\,\pd {}{x_l}\Na^3b_g(x', y; t-s)(f(y, s)-f(x', s))\, dV_g(y)ds,\no\\
P_3&:=&\int_0^t\int_{M\backslash B_r(\xi)}\,\Big(\pd {}{x_l}\Na^3b_g(x, y; t-s)-\pd {}{x_l}\Na^3b_g(x', y; t-s)\Big)(f(y, s)-f(x, s))\, dV_g(y)ds\no\\
&&+\int_0^t\int_{M\backslash B_r(\xi)}\,\pd {}{x_l}\Na^3b_g(x', y; t-s)(f(x', s)-f(x, s))\, dV_g(y)ds,\no\\
P_4&:=&\int_0^t\Big(f(x, s)\int_M\, \pd {}{x_l}\Na^3b_g(x, y; t-s)\,dV_g(y)\no\\
&&-f(x', s)\int_M\,\pd {}{x_l}\Na^3b_g(x', y; t-s)\,dV_g(y)\Big)ds.\no
\eeqn
We next estimate each $P_i$. By Lemma \ref{lem:HZ}, we have
\beqn &&
|P_1|\no\\&\leq&C\|f\|_{C^{0, 0, \al}} \int_0^t\int_{B_{2r}(x)}\,\Big|\pd {}{x_l}\Na^3b_g(x, y; t-s)\Big|\rho(x, y)^{\al}\, dV_g(y)ds\no\\
&\leq &  C\|f\|_{C^{0, 0, \al}} \int_0^t\int_{B_{2r}(x)}\,(t-s)^{-\frac {m+4}{4}}\exp\Big(-\dd (t-s)^{-\frac 13}\rho(x, y)^{\frac 43}\Big)\rho(x, y)^{\al}\, dV_g(y)ds\no\\
&=&C\|f\|_{C^{0, 0, \al}} \int_{B_{2r}(x)}\,|x-y|^{\al}\Big(\int_0^t\,(t-s)^{-\frac {m+4}{4}}\exp\Big(-\dd (t-s)^{-\frac 13}|x-y|^{\frac 43}\Big) \, ds\Big)\,dV_g(y)\no\\
&\leq &C\|f\|_{C^{0, 0, \al}} \int_{B_{2r}(x)}\,|x-y|^{\al-m}\, \,dV_g(y)\no\\
&\leq&C\|f\|_{C^{0, 0, \al}}r^{\al},\label{eq:H017}
\eeqn where $r=|x-y|$ and we write $\|f\|_{C^{0, 0, \al}}:=\|f\|_{C^{0, 0, \al}(M\times [0, T], g)}$ for short.
 Similarly, we have
\beq
|P_2|=\Big|\int_0^t\int_{B_r(\xi)}\,\pd {}{x_l}\Na^3b_g(x', y; t-s)(f(y, s)-f(x', s))\, dV_g(y)ds\Big|\leq C\|f\|_{C^{0, 0, \al}}r^{\al}.  \label{eq:H018}
\eeq
Next, following the argument of the inequality (101) of He-Zeng \cite{[HZ]} we estimate $P_3:=P_{3a}+P_{3b}$ where
\beqn &&
|P_{3a}|\no\\&:=&\Big|\int_0^t\int_{M\backslash B_r(\xi)}\,\Big(\pd {}{x_l}\Na^3b_g(x, y; t-s)-\pd {}{x_l}\Na^3b_g(x', y; t-s)\Big)(f(y, s)-f(x, s))\, dV_g(y)ds\Big|\no\\
&\leq&C\|f\|_{C^{0, 0, \al}}|x-x'|\int_0^t\int_{|y-\xi|\geq r}\,(t-s)^{-\frac {m+5}{4}}\exp\Big(-\dd (t-s)^{-\frac 13}|\xi-y|^{\frac 43}\Big)|y-\xi|^{\al}\, dV_g(y)ds\no\\
&=&C\|f\|_{C^{0, 0, \al}}|x-x'|\int_{|y-\xi|\geq r}\,|y-\xi|^{\al}\Big(\int_0^t(t-s)^{-\frac {m+5}{4}}\exp\Big(-\dd (t-s)^{-\frac 13}|\xi-y|^{\frac 43}\Big)ds\Big)\, dV_g(y)\no\\
&\leq&C\|f\|_{C^{0, 0, \al}}|x-x'|\int_{|y-\xi|\geq r}\,|y-\xi|^{\al-m-1}\,dV_g(y)\no\\
&\leq&C\|f\|_{C^{0, 0, \al}}|x-x'| r^{\al-1}\leq C\|f\|_{C^{0, 0, \al}} r^{\al},\label{eq:H019}
\eeqn and
\beqn
|P_{3b}|&:=&\Big|\int_0^t\int_{M\backslash B_r(\xi)}\,\pd {}{x_l}\Na^3b_g(x', y; t-s)(f(x', s)-f(x, s))\, dV_g(y)ds\Big|\no\\
&\leq&\Big|\int_0^t\,(f(x', s)-f(x, s))\Big(\int_{M\backslash B_r(\xi)}\,\pd {}{x_l}\Na^3b_g(x', y; t-s)\, dV_g(y)\Big)\,ds\Big|\no\\
&\leq&C\|f\|_{C^{0, 0, \al}}|x-x'|^{\al}\int_0^t\,(t-s)^{-\frac 34}\,ds\leq C\|f\|_{C^{0, 0, \al}} r^{\al}. \label{eq:H020}
\eeqn
Next, we estimate $P_4.$ By the inequalities (106) of He-Zeng \cite{[HZ]}, we have
\beqn
&&\Big|\int_M\, \pd {}{x_l}\Na^3b_g(x, y; t-s)\,dV_g(y)\Big|\leq C(t-s)^{-\frac 34},\label{eq:H012}\\
&&\Big|\int_M\, \pd {}{x_l}\Na^3b_g(x, y; t-s)\,dV_g(y)-\int_M\, \pd {}{x_l}\Na^3b_g(x', y; t-s)\,dV_g(y)\Big|\no\\
&\leq&C (t-s)^{-\frac{3+\al}{4} }|x-x'|^{\al}.
\eeqn Therefore, we have
\beqn
|P_4|
&\leq&\Big|\int_0^t\,(f(x, s)-f(x', s))\int_M\, \pd {}{x_l}\Na^3b_g(x, y; t-s)\,dV_g(y)\Big|\no\\
&&+\Big|\int_0^t\,f(x', s)\Big(\int_M\,\pd {}{x_l}\Na^3b_g(x, y; t-s)\,dV_g(y)\no\\&&-\int_M\,\pd {}{x_l}\Na^3b_g(x', y; t-s)\,dV_g(y)\Big)ds\Big|\no\\
&\leq&C\|f\|_{C^{0, 0, \ga}}|x-x'|^{\al}\int_0^t\,(t-s)^{-\frac 34}\,ds\no\\&&
+C\|f\|_{C^0}|x-x'|^{\al}\int_0^t\,(t-s)^{-\frac {3+\al}4}\,ds\leq C\|f\|_{C^{0, 0, \ga}}r^{\al}.  \label{eq:H021}
\eeqn
 Thus, (\ref{eq:H009}) follows (\ref{eq:H016})-(\ref{eq:H020}) and (\ref{eq:H021}).
 \end{proof}

 By (\ref{eq:H007}), we have
 \beq
 \Na^3V[f](x+he_l, t)-\Na^3V[f](x, t)=\int_0^h\,P( x+\tau e_l)\,d\tau.\no
 \eeq
Therefore, by Claim \ref{claim1} we have
\beq
\frac {\Na^3V[f](x+he_l, t)-\Na^3V[f](x, t)}{h}\ri P(x),\quad h\ri 0.\no
\eeq Since $P(x)$ satisfies (\ref{eq:H009}), $\Na^4V[f](x, t)$ is H\"older continuous with respect to $x$ and we have
\beq [\Na^4V[f](x, t)]_{\al}^{\mathrm{space}}\leq C\|f\|_{C^{0, 0, \ga}}.\label{eq:H023}\eeq

(3). We show that $\Na^4V[f](x, t)$ is H\"older continuous with respect to $t$. By
(110) of He-Zeng \cite{[HZ]}, we have
\beq
\Na^4V[f](x, t+h)-\Na^4V[f](x, t):=J_1+J_2+J_3, \label{eq:H024}
\eeq where $J_i$ are defined by
\beqs
J_1&:=&\int_t^{t+h}\int_M\,\Na^4b_g(x, y; t+h-s)(f(y, s)-f(x, s))\,dV_g(y)ds, \\
J_2&:=&\int_t^{t+h}\, f(x, s) \Big(\int_M\,\Na^4b_g(x, y; t+h-s)\,dV_g(y)\Big)\,ds,\\
J_3&:=&\int_0^{h}\,\Big(\int_0^{t}\int_M\,\pd {}t\Na^4b_g(x, y; t+\tau-s)(f(y, s)-f(x, s))\,dV_g(y)ds\\
&&+\int_0^{t}\,f(x, s)\Big(\int_M\,\pd {}t\Na^4b_g(x, y; t+\tau-s)\,dV_g(y)\Big)ds \Big)\,d\tau.
\eeqs
We now estimate each $J_i$. By Lemma \ref{lem:HZ}, we have
\beqn
|J_1|&=&\Big|\int_0^{h}\int_M\,\Na^4b_g(x, y; \tau)(f(y, t+h-\tau)-f(x, t+h-\tau))\,dV_g(y)ds \Big|\no\\
&\leq&C\|f\|_{C^{0, 0, \al}}\int_0^{h}\int_M\,\tau^{-\frac {m+4}{4}}\exp\Big(-\dd \tau^{-\frac 13}\rho(x, y)^{\frac 43}\Big)\rho(x, y)^{\al}\,dV_g(y)d\tau\no\\
&\leq&C\|f\|_{C^{0, 0, \al}}\int_0^{h}\,\tau^{-\frac {4-\al}{4}}\,d\tau\leq C\|f\|_{C^{0, 0, \al}}h^{\frac {\al}4}. \label{eq:H025}
\eeqn Moreover, by (\ref{eq:H012}) we have
\beqn
|J_2|&=&\Big|\int_t^{t+h}\, f(x, s) \Big(\int_M\,\Na^4b_g(x, y; t+h-s)\,dV_g(y)\Big)\,ds \Big|\no\\
&\leq &C\|f\|_{C^0}\int_t^{t+h}\, (t+h-s)^{-\frac 34}\,ds\leq C\|f\|_{C^0}h^{\frac {1}4}\leq \|f\|_{C^0}h^{\frac {\al}4}, \label{eq:H026}
\eeqn where we used Lemma \ref{lem:HZ2} below in the first inequality.
We write $J_3:=J_{3a}+J_{3b}$, where $J_{3a}$ and $J_{3b}$ are  estimated by
\beqn
J_{3a}&\leq&\int_0^{h}\,\Big(\int_0^{t}\int_M\,\pd {}t\Na^4b_g(x, y; t+\tau-s)(f(y, s)-f(x, s))\,dV_g(y)ds\Big)d\tau\no\\&\leq&
C\|f\|_{C^{0, 0, \al}}\int_0^{h}\,\Big(\int_0^{t}\int_M\,(t+\tau-s)^{-\frac {m+8}{4}}\exp\Big(-\dd (t+\tau-s)^{-\frac 13}\no\\&&\cdot\rho(x, y)^{\frac 43} \Big)\rho(x, y)^{\al}\,dV_g(y)ds \Big)d\tau\no\\
&\leq& C\|f\|_{C^{0, 0, \al}}\int_0^{h}\,\Big(\int_0^{t}\,(t+\tau-s)^{-\frac {8-\al}{4}}ds \Big)d\tau\no\\
&\leq&C\|f\|_{C^{0, 0, \al}} h^{\frac {\al}4},\label{eq:H027}
\eeqn and
\beqn
J_{3b}&\leq&\Big|\int_0^{h}\,\int_0^{t}\,f(x, s)\Big(\int_M\,\pd {}t\Na^4b_g(x, y; t+\tau-s)\,dV_g(y)\Big)ds \Big)\,d\tau \Big|\no\\
&\leq&C\|f\|_{C^0}\int_0^{h}\,\Big(\int_0^{t}\,(t+\tau-s)^{-\frac 74}\,ds\Big)d\tau\no\\
&\leq &C\|f\|_{C^0} \int_0^h\,\tau^{-\frac 34}\,d\tau
\leq C\|f\|_{C^0}h^{\frac 14}\leq C\|f\|_{C^0} h^{\frac {\al}4}, \label{eq:H028}
\eeqn where we used Lemma \ref{lem:HZ2} below in the second inequality.
Combining the inequalities (\ref{eq:H024})-(\ref{eq:H028}), we have
\beqn
\frac {|\Na^4V[f](x, t+h)-\Na^4V[f](x, t)|}{h^{\frac {\al}4}}\leq C\|f\|_{C^{0, 0, \ga}}. \no
\eeqn
Therefore, $\Na^4V[f](x, t)$ is H\"older continuous with respect to $t$ and we have \beq
[\Na^4V[f](x, t)]_{\al}^{\mathrm{time}}\leq C\|f\|_{C^{0, 0, \ga}}. \label{eq:H029}
\eeq

(4). Finally, following the argument of (116) in He-Zeng \cite{[HZ]} we have
\beqn
\pd {}tV[f](x, t)&=&f(x, t)+\int_0^t\,\int_M\; \pd {}tb_g(x, y; t-s)(f(y, s)-f(x, s))\,dV_g(y)ds\no\\
&&+\int_0^t\,\Big(\int_M\; \pd {}tb_g(x, y; t-s)\,dV_g(y)\Big)f(x, s)\,ds,\no
\eeqn  which implies that $V[f]$ satisfies
\beq
\pd {}tV[f](x, t)+\Delta_g^2V[f](x, t)=f(x, t).\no
\eeq in the classical sense, and by (\ref{eq:H023}) and (\ref{eq:H029}) $\pd {}tV[f]$ is H\"older continuous with respect to $x$ and $t$. Combining the above estimates, we have
\beq
\|V[f]\|_{C^{4, 1, \ga}}\leq C\|f\|_{C^{0, 0, \ga}}.\no
\eeq The uniqueness of solutions follows directly from the Schauder estimate Theorem \ref{theo:Me}. Thus, the theorem is proved.

\end{proof}

\begin{lem}\label{lem:HZ2}For any integer $k\geq 1$, we have
\beqn
\Big|\int_M\,\Na^kb_g(x, y; t-s)\,dV_g(y)\Big|&\leq& C(t-s)^{-\frac {m+k-1}{4}}. \label{eq:J004}
\eeqn

\end{lem}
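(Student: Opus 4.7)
The plan is to prove the bound directly from the pointwise heat-kernel estimate in Lemma~\ref{lem:HZ}(1), without exploiting any cancellation, so that the argument reduces to a routine Gaussian integration. Applying that lemma with $k$ spatial derivatives in $x$ (i.e. $p=k$, $q=0$, and no time derivative) gives the pointwise bound
\[
|\Na_x^k b_g(x,y;t-s)|_g \leq C(t-s)^{-(m+k)/4}\exp\!\bigl(-\delta((t-s)^{-1/4}\rho(x,y))^{4/3}\bigr)
\]
for $0 < t-s \leq T$, where $m$ is the real dimension of $M$.

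I would then integrate this pointwise bound over $M$, splitting the integration into a geodesic ball $B_{r_0}(x)$, with $r_0$ smaller than the injectivity radius of $(M,g)$, and its complement. On $M\setminus B_{r_0}(x)$ one has $\rho(x,y) \geq r_0$, so the Gaussian factor is dominated by $\exp(-\delta r_0^{4/3}(t-s)^{-1/3})$, which decays faster than any polynomial as $t-s\to 0^+$; after multiplying by $\vol(M)$ this region contributes a uniformly bounded quantity. On $B_{r_0}(x)$ I would work in normal coordinates centered at $x$ and perform the scaling $z = (t-s)^{-1/4}\exp_x^{-1}(y)$, so that the Jacobian contributes $(t-s)^{m/4}$ up to bounded distortion (since $g$ is comparable to the Euclidean metric in normal coordinates), while the remaining integral $\int_{\RR^m}\exp(-\delta|z|^{4/3})\,dz$ converges. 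Combining the two contributions yields
\[
\Bigl|\int_M \Na^k b_g(x,y;t-s)\,dV_g(y)\Bigr| \leq C(t-s)^{-k/4}.
\]

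To conclude, since $m \geq 1$ we have $(m+k-1)/4 \geq k/4$, and since $t-s\in(0,T]$ is bounded above I may absorb constants to deduce $C(t-s)^{-k/4} \leq C'(t-s)^{-(m+k-1)/4}$, which is the asserted inequality. The argument is thus a routine Gaussian integration and presents no significant obstacle; in fact, using the conservation identity $\int_M b_g(x,y;t-s)\,dV_g(y) \equiv 1$ together with the interchange of $\Na_x$ and $\int_M$ would even force $\int_M \Na_x^k b_g\,dV_g \equiv 0$ for $k\geq 1$, but the weaker inequality stated here already suffices for the applications to (\ref{eq:H012}) and to the corresponding bounds used in the proof of Theorem~\ref{theo:A1x}.
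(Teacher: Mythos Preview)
Your argument is correct and in fact yields a sharper bound than the one stated: the Gaussian integration gives $C(t-s)^{-k/4}$, which for $m\geq 1$ and $t-s\in(0,T]$ implies the claimed $(t-s)^{-(m+k-1)/4}$ after absorbing $T^{(m-1)/4}$ into the constant. The paper proceeds differently: it writes $\partial_{x_l}\Na^{k}b_g=(\partial_{x_l}+\partial_{y_l})\Na^{k}b_g-\partial_{y_l}\Na^{k}b_g$, invokes the refined estimate of Lemma~\ref{lem:HZ}(2) (which gains one order whenever derivatives occur in the combination $D_x+D_y$), and integrates the $\partial_{y_l}$-piece by parts over the chart $U$, picking up a boundary term on $\partial U$. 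That route avoids the explicit scaling argument but requires the more delicate kernel bound (2); your route needs only the crude pointwise bound (1) together with a standard change of variables, and produces a strictly better exponent. Your closing remark is also correct: since $\int_M b_g(x,y;t-s)\,dV_g(y)\equiv 1$ (apply the initial condition to $u\equiv 1$ and use the energy identity $\frac{d}{dt}\|F-1\|_{L^2}^2=-2\|\Delta(F-1)\|_{L^2}^2\leq 0$ for $F(x,t)=\int_M b_g\,dV_g(y)$), differentiation under the integral shows $\int_M \Na_x^k b_g\,dV_g(y)\equiv 0$ for $k\geq 1$, so the lemma is in fact trivial---though the paper does not exploit this.
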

\begin{proof} For any given point $x\in M$, there exists a local coordinate chart $V$ with $x\in V$. Then
\beqn &&
\Big|\int_M\,\Na^{k+1}b_g(x, y; t-s)\,dV_g(y)\Big|\no\\
&\leq&\Big|\int_U\, \Na^{k+1}b_g(x, y; t-s)\,dV_g(y)\Big|+
\Big|\int_{M\backslash U}\, \Na^{k+1}b_g(x, y; t-s)\,dV_g(y)\Big|. \label{eq:J001}
\eeqn
Note that
\beqn &&
\Big|\int_U\, \pd {}{x_l}\Na^{k}b_g(x, y; t-s)\,dV_g(y)\Big|
=\Big|\int_U\, \Big(\pd {}{x_l}+\pd {}{y_l}\Big)\Na^{k}b_g(x, y; t-s)\,dV_g(y)\Big|\no\\
&&+\Big|\int_U\,  \Na^{k}b_g(x, y; t-s)\,\pd {}{y_l}(dV_g(y))\Big|+\Big|\int_{\partial U}\, \Na^{k}b_g(x, y; t-s)\,dS_g(y)\Big|\no\\
&\leq &C(t-s)^{-\frac {m+k}{4}},\label{eq:J002}
\eeqn where we used Lemma \ref{lem:HZ} in the last inequality. Moreover, we have
\beq
\Big|\int_{M\backslash U}\, \Na^{k+1}b_g(x, y; t-s)\,dV_g(y)\Big|
\leq C(t-s)^{-\frac {m+4(k+1)}{4}}\exp\Big(-\dd (t-s)^{-\frac 13} r_0^{\frac 43}\Big)\leq C. \label{eq:J003}
\eeq
Combining (\ref{eq:J001})-(\ref{eq:J003}), we have the inequality (\ref{eq:J004}). The lemma is proved.

\end{proof}

Theorem \ref{theo:A1x} implies the following general existence result by the method of continuity. The readers are referred to Theorem 5.2 of Gilbarg-Trudinger \cite{[GT]} or Lemma 4.3.10 of Metsch \cite{[Me]}.

\begin{theo}\label{theo:A2x} Let $(M, g)$ be a compact Riemannian manifold and  $L$ be an operator defined by
\beq
Lu:=a^{ij}a^{kl}\partial_{ijkl}(u)+\sum_{|\al|\leq 3}A_{\al}\Na^{\al}u,
\eeq where the coefficients satisfy the conditions
\beqn
\La&:=&\max_{|\al|\leq 3}\|A_{\al}\|_{C^{0, \ga}(M)}+\max_{1\leq i, j\leq n}\|a^{ij}\|_{C^{0, \ga}(M)}<+\infty,\label{eq:H031x}\\
\Te&:=&\sup\Big\{\te>0\;\Big|\;a^{ij}(x)\xi_i\xi_j\geq \te|\xi|^2,\quad \forall\; \xi\in \RR^m\Big\}<+\infty.\label{eq:H032x}
\eeqn
For any $f\in C^{0, 0, \ga}(M\times [0, T], g)$, there exists a unique solution $u\in C^{4, 1, \ga}(M\times [0, T], g)$ of
\beqn
     \pd ut+Lu&=&f, \quad  \hbox{on}\quad M\times [0, T], \label{eq:H030}\\
     u(\cdot, 0)&=&0, \quad  \hbox{on}\quad  M. \label{eq:H031}
 \eeqn
\end{theo}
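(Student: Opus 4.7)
The plan is to establish Theorem \ref{theo:A2x} via the classical method of continuity, starting from the biharmonic case handled by Theorem \ref{theo:A1x}. For $\tau\in [0,1]$ define
\[
L_\tau := (1-\tau)\Delta_g^2 + \tau L,
\]
which is a family of fourth-order operators whose coefficients interpolate between those of $\Delta_g^2$ (at $\tau=0$) and $L$ (at $\tau=1$). Since (\ref{eq:H031x}) and (\ref{eq:H032x}) hold for both endpoints, and ellipticity and the $C^{0,\gamma}$ coefficient bounds are preserved under convex combinations, there exist constants $\widetilde{\Lambda}$ and $\widetilde{\Theta}>0$ independent of $\tau$ such that $L_\tau$ satisfies (\ref{eq:H031x}) and (\ref{eq:H032x}) uniformly in $\tau\in [0,1]$. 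Define
\[
I := \Big\{ \tau\in [0,1] \,\Big|\, \text{for every } f\in C^{0,0,\gamma}(M\times [0,T],g) \text{ there exists } u\in C^{4,1,\gamma}(M\times[0,T],g) \text{ solving } (\partial_t+L_\tau)u=f,\ u(\cdot,0)=0\Big\}.
\]
By Theorem \ref{theo:A1x}, $0\in I$. I will show $I$ is both open and closed in $[0,1]$, hence $I=[0,1]$, which yields the desired existence at $\tau=1$.

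The essential ingredient is a uniform a priori estimate: there exists $C>0$ depending only on $M,T,\gamma,\widetilde{\Lambda},\widetilde{\Theta}$ such that for every $\tau\in[0,1]$ and every $u\in C^{4,1,\gamma}(M\times[0,T],g)$ with $u(\cdot,0)=0$,
\[
\|u\|_{C^{4,1,\gamma}(M\times[0,T],g)} \leq C\, \|(\partial_t+L_\tau) u\|_{C^{0,0,\gamma}(M\times[0,T],g)}.
\]
First, Theorem \ref{theo:Me} applied to $\partial_t + L_\tau$ (with coefficients controlled uniformly by $\widetilde{\Lambda},\widetilde{\Theta}$) gives
\[
\|u\|_{C^{4,1,\gamma}} \leq C\Big(\|(\partial_t+L_\tau)u\|_{C^{0,0,\gamma}} + \sup_{t\in[0,T]} \|u(\cdot,t)\|_{L^2(M,g)}\Big),
\]
since $\|u(\cdot,0)\|_{C^{4,\gamma}}=0$. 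To control the $L^2$ term we use energy estimates: setting $f=(\partial_t+L_\tau)u$, computing $\tfrac{d}{dt}\int u^2\,dV_g$, integrating by parts in the principal part $a^{ij}_\tau a^{kl}_\tau \partial_{ijkl}u$ and using Cauchy--Schwarz on the lower-order terms (estimating $\int u\,\nabla^j u$ for $j\le 3$ by interpolation and the ellipticity constant $\widetilde{\Theta}$), one obtains
\[
\frac{d}{dt}\|u(\cdot,t)\|_{L^2}^2 \leq C_1 \|u(\cdot,t)\|_{L^2}^2 + C_2 \|f(\cdot,t)\|_{L^2}^2,
\]
and Gr\"onwall together with $u(\cdot,0)=0$ yields $\sup_{t\in[0,T]}\|u(\cdot,t)\|_{L^2}\leq C(T)\|f\|_{C^{0,0,\gamma}}$. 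Combining the two estimates gives the uniform bound claimed.

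The uniform a priori estimate immediately implies closedness of $I$: if $\tau_k\to\tau_\infty$ in $I$ and we wish to solve at $\tau_\infty$, we take $u_k$ solving at $\tau_k$ with the same data $f$; the uniform bound plus Arzel\`a--Ascoli produces a $C^{4,1,\gamma}$ limit $u$ solving the equation at $\tau_\infty$. For openness, fix $\tau_0\in I$: the operator $\partial_t+L_{\tau_0}:X\to Y$, where $X=\{u\in C^{4,1,\gamma}(M\times[0,T],g)\,|\,u(\cdot,0)=0\}$ and $Y=C^{0,0,\gamma}(M\times[0,T],g)$, is bijective by definition of $I$ and Theorem \ref{theo:Me} gives uniqueness; the a priori bound above shows its inverse is continuous. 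Since $L_\tau-L_{\tau_0}=(\tau-\tau_0)(L-\Delta_g^2)$ has operator norm $X\to Y$ bounded by $C|\tau-\tau_0|$, the standard perturbation lemma for bounded invertible operators shows that $\partial_t+L_\tau$ remains bijective for $|\tau-\tau_0|$ small, proving openness. Hence $I=[0,1]$ and in particular $1\in I$, which is exactly the statement of Theorem \ref{theo:A2x}; the uniqueness claim follows from Theorem \ref{theo:Me} applied to the difference of two solutions.

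The main obstacle in this plan is the $L^2$ energy estimate for the non-self-adjoint fourth-order operator $L_\tau$ with merely $C^{0,\gamma}$ coefficients: integrating by parts twice in $\int u\cdot a^{ij}_\tau a^{kl}_\tau \partial_{ijkl}u\,dV_g$ produces derivatives of the coefficients $a^{ij}_\tau$ that are not controlled in $C^{0,\gamma}$, so the argument must be arranged so that only one derivative is shifted onto each coefficient factor, producing terms that can be dominated by the coercive quadratic form $\int (a^{ij}_\tau \partial_{ij}u)^2$ modulo absorbable lower-order terms via standard interpolation inequalities (Lemma \ref{lem:A5}). Once this coercivity calculation is done cleanly, the remainder of the continuity argument is routine.
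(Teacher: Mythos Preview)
Your method of continuity, interpolating $L_\tau=(1-\tau)\Delta_g^2+\tau L$ and combining Theorem \ref{theo:A1x} at $\tau=0$ with the uniform Schauder bound of Theorem \ref{theo:Me}, is exactly the paper's proof; the paper is in fact terser, simply citing Gilbarg--Trudinger Theorem~5.2 and Metsch Lemma~4.3.10 for the continuity step once the Schauder estimate is in hand.

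The obstacle you flag is genuine, but your proposed resolution does not work as written: with merely $C^{0,\gamma}$ coefficients even a \emph{single} derivative landing on $a^{ij}_\tau$ after integration by parts is uncontrolled, so the coercive identity you sketch cannot be made rigorous, and interpolation (Lemma \ref{lem:A5}) cannot rescue terms that are not even defined. The paper does not address this point either, deferring the $L^2$ control to Metsch's treatment. A self-contained fix avoids energy estimates altogether by exploiting that the coefficients in Theorem \ref{theo:A2x} are time-independent (they lie in $C^{0,\gamma}(M)$): since $u(\cdot,0)=0$, one has $\sup_{t\in[0,\delta]}\|u(\cdot,t)\|_{L^2}\le C\delta\,\|\partial_t u\|_{C^0}\le C\delta\,\|u\|_{C^{4,1,\gamma}}$, which is absorbed into the left side of the Schauder inequality for $\delta$ small, and one then iterates on successive intervals of length $\delta$.
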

\begin{proof}[Sketch of the proof] For any $s\in [0, 1]$, we define the operator
\beq
L_s=(1-s)L_0+sL_1,
\eeq where $L_0=\Delta_g^2$ and $L_1=L.$ Then the coefficients of $L_s$ satisfies
(\ref{eq:H031x})-(\ref{eq:H032x}) with possibly different constants $\La, \Te>0$ independent of $s$.  Since
$L_s$ is an operator from $C^{4, 1, \al}(M\times [0, T], g)$ to $C^{0, 0, \al}(M\times [0, T], g)$ and   Theorem \ref{theo:Me} (the Schauder estimate) holds, we have
\beq
\|u\|_{C^{4, 1, \al}(M\times [0, T], g)}\leq C\|L_s u\|_{C^{0, 0, \al}(M\times [0, T], g)}.
\eeq
Since $L_0$ is invertible by Theorem \ref{theo:A1x},  $L_s$ is also invertible for any $s\in (0, 1]$ by the method of continuity. The theorem is proved.

\end{proof}

 \subsection{The estimates for solutions to second-order  equations}
In this subsection, we collect some notations and results from Lieberman \cite{[Lieb]}.
\begin{defi}\label{defi:B001}(cf. Page 46-47 of \cite{[Lieb]})
 \begin{enumerate}
   \item[(1).] We denote by $X=(x, t)$ a point in $\RR^{n+1}$. For any two points
   $X_1=(x_1, t_1)$ and $X_2=(x_2, t_2)$ we define
   \beq
   |X_1-X_2|=\max\{|x_1-x_2|, |t_1-t_2|^{\frac 12}\}. \no
   \eeq
   We denote by $Q(X_0, R)$ the cylinder
   \beq
   Q(X_0, R)=Q(R)=\{X\in \RR^{n+1}\;|\;|X-X_0|<R, t<t_0\}.\no
   \eeq We also use the ball $B(x_0, R)=\{x\in \RR^n\,|\,|x-x_0|<R\}.$  Let $\al\in (0, 1), \Om\subset \RR^{n+1}$. we denote by $\cP\Om$ the parabolic boundary, i.e. the set of all points $X_0\in \partial \Om$ such that for any $\ee>0$ the cylinder $Q(X_0, \ee)$ contains points not in $\Om. $
   \item[(2).]
We define
\beqs
d(X_0)&=&\inf\{|X-X_0|\;|\;X\in \cP \Om, t<t_0\},\\
d(X, Y)&=&\min\{d(X), d(Y)\},\\
\left[f\right]_{k+\al}^{(b)}&=&\sup_{X\neq Y}\sum_{ |\bb|+2j=k}d(X, Y)^{a+b}\frac {|D_x^{\bb}D_t^jf(X)-D_x^{\bb}D_t^jf(Y)|}{|X-Y|^{\al}},\\
\left<f\right>_{k+\al}^{(b)}&=&\sup_{X\neq Y, x=y}\sum_{ |\bb|+2j=k-1}d(X, Y)^{a+b}\frac {|D_x^{\bb}D_t^jf(X)-D_x^{\bb}D_t^jf(Y)|}{|X-Y|^{1+\al}},\\
|f|_{k+\al}^{(b)}&=&\sum_{|\bb|+2j\leq k}|D_x^{\bb}D_t^j f|_0^{(|\bb|+2j+b)}+[f]_{k+\al}^{(b)}+\left<f\right>_{k+\al}^{(b)},\\
|f|_{k+\al}^*&=&|f|_{k+\al}^{(0)},\\
H_{k+\al}^{*}(\Om)&=&\{f\;|\;|f|^*_{k+\al}<\infty\},\\
H_{k+\al}^{(b)}(\Om)&=&\{f\;|\;|f|^{(b)}_{k+\al}<\infty\}.
\eeqs
 \end{enumerate}

\end{defi}

\begin{theo}\label{theo:A1}(cf. Theorem 4.9 of \cite{[Lieb]}) Let $\Om$ be a bounded domain in  $  \RR^{n+1}$, and let $
a^{ij}\in H_{\al}^{(0)}, b^i\in H_{\al}^{(1)}, c\in H_{\al}^{(2)}, f\in H_{\al}^{(2)}$ satisfy the following conditions for positive constants $A, B, \la, \La, c_1$
\beqn
\la |\xi|^2&\leq& a^{ij}\xi_i\xi_j\leq \La |\xi|^2, \quad [a^{ij}]_{\al}\leq A,\label{eq:A072}\\
|b^{i}|_{\al}&\leq& B, \quad |c|_{\al}\leq c_1.\label{eq:A074}
\eeqn
If $u\in H_{2+ \al}^*$ is a solution of
\beq
\pd ut=a^{ij}u_{ij}+b^iu_i+cu+f,\quad (x, t)\in \Om,\no
\eeq then we have
\beq
|u|_{{2+ \al}}^*\leq C(A, B, c_1, n, \la, \La)(|u|_0+|f|^{(2)}_{\al}).\no
\eeq

\end{theo}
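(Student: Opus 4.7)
The plan is to follow the standard three-ingredient recipe for parabolic Schauder estimates: a constant-coefficient estimate, a perturbation (coefficient-freezing) argument, and a gluing step that converts interior estimates into the weighted $|\cdot|^*$ bound. Since this is Lieberman's Theorem 4.9, I would follow his Campanato-space approach, which avoids heat-kernel integrations that are delicate to bookkeep in the weighted setting.

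First I would prove the constant-coefficient case: for the frozen operator $L_0 u = u_t - a^{ij}(X_0) u_{ij}$ with $a^{ij}(X_0)$ satisfying (\ref{eq:A072}), establish decay of the $L^2$-mean oscillation of $D^2 u$ and $u_t$ on concentric cylinders $Q(X_0, r)$. The mechanism is approximation: on $Q(X_0, r)$ write $u = h + w$ where $h$ solves $L_0 h = 0$ with $h = u$ on the parabolic boundary. Since caloric functions satisfy interior mean-value/gradient bounds, $D^2 h$ and $\partial_t h$ have $L^2$-mean oscillation decay $r^{n+2+2\alpha}$ on smaller cylinders, while $w$ is controlled by $\|L_0 u\|_0$. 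By Campanato's characterization of H\"older spaces this gives $[D^2 u]_{\alpha, Q(X_0, r/2)} + [u_t]_{\alpha, Q(X_0, r/2)} \le C(|u|_0/r^{2+\alpha} + |f|_{0,\alpha,Q(X_0,r)})$, the basic constant-coefficient Schauder bound.

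Next comes the perturbation step. Write
\[
u_t - a^{ij}(X_0) u_{ij} \;=\; (a^{ij}(X) - a^{ij}(X_0)) u_{ij} + b^i u_i + c u + f.
\]
On $Q(X_0, r)$, the coefficient difference is bounded by $A r^\alpha$ by (\ref{eq:A072}). Plugging into the constant-coefficient estimate, this contributes $C A r^\alpha [D^2 u]_{\alpha, Q(X_0, r)}$ on the right, which is absorbed into the left after choosing $r$ small enough that $C A r^\alpha \le 1/2$. The lower-order terms $b^i u_i + c u$ are dealt with by standard parabolic interpolation: for every $\epsilon > 0$, $|Du|_{0,Q} \le \epsilon [D^2u]_{\alpha,Q} + C_\epsilon |u|_{0,Q}$, and the H\"older seminorms of $Du$ and $u$ are handled similarly. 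This yields, at each interior point $X_0$ and for sufficiently small $r \le r_0(A, B, c_1, \lambda, \Lambda)$, the interior Schauder bound
\[
[D^2 u]_{\alpha, Q(X_0, r/2)} + [u_t]_{\alpha, Q(X_0, r/2)} \le C\bigl(r^{-2-\alpha}|u|_0 + |f|_{0,\alpha,Q(X_0,r)}\bigr).
\]

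The last step is to glue these into the $|\cdot|^*_{2+\alpha}$ norm. For $X_0 \in \Omega$ with $d := d(X_0) > 0$, apply the interior estimate on $Q(X_0, d/2)$ (subdividing into cylinders of radius $\min(d/2, r_0)$ if needed to retain the smallness for perturbation). Multiplying by the appropriate power of $d$ exactly reproduces the weighted seminorms defining $|u|^*_{2+\alpha}$, and the compatibility of $|f|_\alpha^{(2)}$ with $|f|_{0,\alpha,Q(X_0,d/2)}$ via the weight $d^{2+\alpha}$ produces the right-hand side $|f|_\alpha^{(2)}$. The ``cross'' seminorm $\langle u\rangle_{2+\alpha}$ (points sharing spatial coordinate) is handled by a separate argument using the time-regularity granted by the equation: once $D^2 u$ is $\alpha$-H\"older, the equation itself reads $u_t = a^{ij}u_{ij} + \text{l.o.t.}$, so $u_t$ inherits the corresponding regularity in $t$.

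The main technical obstacle is the precise bookkeeping of weights in the perturbation step, because the smallness of $r$ needed to absorb $(a^{ij}(X) - a^{ij}(X_0)) u_{ij}$ is dictated by the coefficients (via $A$), whereas the natural scale in the weighted norm is the geometric distance $d(X_0)$. The resolution is to subdivide $Q(X_0, d(X_0)/2)$ into finitely many cylinders of radius $\min(d(X_0)/2, r_0)$ and iterate, using that the weights $d(X)$ are comparable on each piece. The other delicate point is the interpolation step for lower-order terms in weighted norms, where one must verify that $\epsilon$-absorption is compatible with the powers of $d$ appearing in the definitions of $|\cdot|^{(1)}$ and $|\cdot|^{(2)}$; this is precisely why the exponents of $d$ in Definition \ref{defi:B001} are calibrated the way they are.
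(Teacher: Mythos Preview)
The paper does not supply its own proof of this theorem: it is simply quoted from Lieberman \cite{[Lieb]} (Theorem~4.9) as a black-box input to the appendix, with no argument given. So there is no ``paper's proof'' to compare against.

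Your outline is a correct and standard route to the weighted interior Schauder estimate: freeze coefficients, use Campanato-type decay for the constant-coefficient operator, perturb to absorb the $(a^{ij}(X)-a^{ij}(X_0))u_{ij}$ term for small radii, interpolate the lower-order terms, and then pass from cylinders of radius $\sim d(X_0)$ to the weighted $|\cdot|^*_{2+\al}$ norm. Lieberman's actual argument in Chapter~4 is organized somewhat differently---he builds the result from a mollification/approximation lemma (his Lemma~4.3 and Theorem~4.8) rather than a pure Campanato iteration---but the skeleton (constant-coefficient case, perturbation via H\"older smallness of $a^{ij}$, weight calibration) is the same, and your version would work. The only point worth tightening is the ``subdivide into radius $\min(d(X_0)/2,r_0)$'' step: when $d(X_0)\gg r_0$ you need to cover $Q(X_0,d(X_0)/2)$ by many small cylinders and then patch the local H\"older seminorms back into a seminorm on the large cylinder, which requires a separate (easy) covering argument; make sure you do not lose a factor depending on $d(X_0)/r_0$ there, since the constant must be independent of the point.
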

By choosing a fixed good atlas and using Theorem \ref{theo:A1}, we have
\begin{theo}\label{theo:A2}Let $(M, g)$ be a compact Riemannian manifold without boundary, and $
a^{ij}\in C^{\al}(M\times [0, T], g), b^i\in C^{\al}(M\times [0, T], g), c\in C^{\al}(M\times [0, T], g), f\in C^{\al}(M\times [0, T], g)$ satisfy the conditions (\ref{eq:A072})-(\ref{eq:A074}).
If $u\in C^{2+\al}(M\times [0, T], g)$ is a solution of
\beq
\pd ut=a^{ij}u_{ij}+b^iu_i+cu+f,\quad (x, t)\in \Om,\no
\eeq then for $\tau\in (0, T)$ we have
\beq
|u|_{C^{2+ \al}(M\times [\tau, T], g)}\leq C(A, B, c_1, n, \la, \La, \tau)(|u|_0+|f|_{C^{\al}(M\times [0, T], g)}).\no
\eeq

\end{theo}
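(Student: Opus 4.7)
The plan is to transfer the compact-manifold estimate to the Euclidean setting via a good atlas and then invoke Lieberman's interior Schauder estimate (Theorem \ref{theo:A1}) on each chart. Fix a good atlas $\cA=\{(\varphi_i,U_i)\}_{i=1}^m$ of $M$ as in Definition \ref{defi:A002}, set $V_i=\varphi_i(U_i)$, and choose shrunken open sets $V_i'\Subset V_i$ whose preimages still cover $M$. Pulling $u, a^{ij}, b^i, c, f$ back to $V_i$ via $\varphi_i$, the equation becomes
\[
\partial_t \tilde u = \tilde a^{kl}\tilde u_{kl}+\tilde b^k\tilde u_k+\tilde c\tilde u+\tilde f \quad\text{on } V_i\times[0,T],
\]
with transformed coefficients that inherit ellipticity and $C^\alpha$ bounds: the new ellipticity constants and H\"older norms are controlled by $\lambda, \Lambda, A, B, c_1$ and the fixed geometric data of the atlas (the $C^2$ norms of $\varphi_i$ and $\varphi_i^{-1}$).

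Next, fix $\tau\in(0,T)$ and consider the parabolic cylinder $\Omega_i := V_i\times(\tau/2,T]$. Its parabolic boundary $\cP\Omega_i$ consists of $V_i\times\{\tau/2\}$ together with $\partial V_i\times(\tau/2,T]$. For points $X=(x,t)\in V_i'\times[\tau,T]$ the Lieberman distance satisfies
\[
d(X) \;\geq\; d_0 := \min\bigl\{\mathrm{dist}(V_i',\partial V_i),\,(\tau/2)^{1/2}\bigr\}>0,
\]
a quantity depending only on the atlas and on $\tau$. Applying Theorem \ref{theo:A1} on $\Omega_i$ gives $|\tilde u|^*_{2+\alpha,\Omega_i}\le C(|\tilde u|_0+|\tilde f|^{(2)}_{\alpha,\Omega_i})$, where the constant $C$ absorbs the structural bounds. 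Since $\Omega_i$ is bounded, the weight $d(X,Y)^{2+\alpha}$ appearing in the definition of $|\tilde f|^{(2)}_{\alpha}$ is dominated by a fixed constant, so $|\tilde f|^{(2)}_{\alpha,\Omega_i}\le C\|\tilde f\|_{C^{\alpha}(\bar\Omega_i)}$, which in turn is controlled by $\|f\|_{C^\alpha(M\times[0,T],g)}$.

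On the interior slab $V_i'\times[\tau,T]$ the weight $d(X,Y)\ge d_0$ from below, so the weighted norm $|\tilde u|^*_{2+\alpha}$ dominates the ordinary parabolic H\"older norm on this region, with a multiplicative constant depending on $d_0$ (hence on $\tau$ and the atlas). Transporting back to $M$ through $\varphi_i^{-1}$ and taking the maximum over the finite atlas yields
\[
\|u\|_{C^{2+\alpha}(M\times[\tau,T],g)} \;\leq\; C(A,B,c_1,n,\lambda,\Lambda,\tau)\bigl(\,|u|_0+\|f\|_{C^\alpha(M\times[0,T],g)}\,\bigr).
\]

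The main bookkeeping obstacle, rather than a conceptual one, is the translation between Lieberman's parabolic-boundary-weighted norms $|\cdot|^{(b)}_{k+\alpha}$ and the manifold H\"older norms of Definition \ref{defi:A002}: one must verify that shrinking to $V_i'\times[\tau,T]$ really does bound $d(X,Y)$ from below uniformly (so the weighted estimate implies an unweighted one on the interior slab), and that the atlas-dependent constants arising from the change of coordinates only multiply, not enlarge, the structural constants $\lambda,\Lambda,A,B,c_1$ in a controlled way. Once this routine but careful comparison of norms is carried out, the estimate follows directly from the Euclidean result.
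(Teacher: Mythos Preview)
Your proposal is correct and follows exactly the approach the paper indicates: the paper's entire ``proof'' is the single sentence ``By choosing a fixed good atlas and using Theorem \ref{theo:A1}, we have,'' and you have supplied precisely the details behind that sentence---pulling back to charts, applying Lieberman's weighted interior estimate on $V_i\times(\tau/2,T]$, and unweighting on the shrunken slab $V_i'\times[\tau,T]$ where $d(X)$ is bounded below.
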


Theorem \ref{theo:A2} implies the following higher order estimates.
\begin{theo}\label{theo:A3}(cf. Page 141 of \cite{[GT]}) Let $(M, g)$ be a compact Riemannian manifold without boundary, and $
a^{ij}\in C^{k+\al}(M\times [0, T]), b^i\in C^{k+\al}(M\times [0, T]), c\in C^{k+\al}(M\times [0, T]), f\in C^{k+\al}(M\times [0, T])$ satisfy the following conditions for positive constants $A, B, \la, \La, c_1$
\beqn
\la |\xi|^2&\leq& a^{ij}\xi_i\xi_j\leq \La |\xi|^2, \quad |a^{ij}|_{k+\al}\leq A,\no\\
|b^{i}|_{k+\al}&\leq& B, \quad |c|_{k+\al}\leq c_1.\no
\eeqn
If $u\in C^{k+\al}(M\times [0, T])$ is a solution of
\beq
\pd ut=a^{ij}u_{ij}+b^iu_i+cu+f,\quad (x, t)\in \Om,\no
\eeq then for $\tau\in (0, T)$ we have
\beq
|u|_{C^{k+2+ \al}(M\times [\tau, T], g)}\leq C(A, B, c_1, n, \la, \La, \tau)(|u|_0+|f|_{C^{k+\al}(M\times [0, T], g)}).\no
\eeq

\end{theo}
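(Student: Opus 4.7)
\medskip

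\noindent\textbf{Proof proposal for Theorem \ref{theo:A3}.}
The plan is to argue by induction on $k$, with the base case $k=0$ being exactly Theorem \ref{theo:A2}. Fix a good atlas $\cA = \{(\varphi_i,U_i)\}$ of $M$ as in Definition \ref{defi:A002}, so that estimates on $M \times [\tau,T]$ reduce to estimates on $V_i \times [\tau,T] \subset \RR^n \times \RR$. Assume the result is known up to order $k-1$, so that for any $\tau'>0$
\[
|u|_{C^{k+1+\al}(M\times[\tau',T],g)} \leq C(A,B,c_1,n,\la,\La,\tau')\bigl(|u|_0 + |f|_{C^{k-1+\al}(M\times[0,T],g)}\bigr).
\]
We want to promote this bound by one derivative. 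To avoid losing a full $\tau$ at each induction step, I would partition $[0,T]$ by choosing auxiliary times $\tau'' \in (0,\tau)$, apply the inductive hypothesis on $[\tau'',T]$, and then the $k$-th step on $[\tau,T]$.

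The central step is to differentiate the equation. In a fixed chart of the good atlas, let $D_p$ denote any first-order spatial partial derivative and set $v = D_p u$. A direct computation gives
\begin{equation}
\pd{v}{t} = a^{ij} v_{ij} + b^i v_i + cv + \tilde f, \no
\end{equation}
where
\[
\tilde f = (D_p a^{ij}) u_{ij} + (D_p b^i) u_i + (D_p c)\, u + D_p f.
\]
The coefficients $a^{ij}, b^i, c$ still satisfy the hypotheses of Theorem \ref{theo:A2} (they are in $C^{k+\al} \subset C^{\al}$ with the same ellipticity $\la,\La$ and with $C^{\al}$ bounds dominated by $A,B,c_1$). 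So once we know $\tilde f \in C^{k-1+\al}(M\times[\tau'',T],g)$ with a controlled norm, the inductive hypothesis applied to $v$ on $[\tau,T]$ gives $v \in C^{k+1+\al}$, that is, the corresponding spatial derivative of $u$ is in $C^{k+1+\al}$. Ranging over all $p$ yields all spatial derivatives of $u$ of order up to $k+2$ in $C^{\al}$.

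To estimate $\tilde f$ in $C^{k-1+\al}$, I would use the fact that $C^{k-1+\al}$ is a Banach algebra together with the inductive hypothesis: the terms $(D_p a^{ij}) u_{ij}$, $(D_p b^i) u_i$, and $(D_p c) u$ are products of a $C^{k-1+\al}$ function (derivative of a $C^{k+\al}$ coefficient) with a derivative of $u$ of order at most $2 \leq k+1$, whose $C^{k-1+\al}$ norm is already controlled by $|u|_{C^{k+1+\al}}$ via the inductive hypothesis; and $D_p f$ is in $C^{k-1+\al}$ because $f \in C^{k+\al}$. Hence
\[
|\tilde f|_{C^{k-1+\al}(M\times[\tau'',T],g)} \leq C\bigl(|u|_0 + |f|_{C^{k+\al}(M\times[0,T],g)}\bigr).
\]
Time derivatives are handled automatically: the equation itself gives $\partial_t u = a^{ij}u_{ij} + b^i u_i + cu + f$, so once all spatial derivatives of $u$ up to order $k+2$ are in $C^{\al}$, applying $D^{\bb}\partial_t^{j-1}$ to this identity with $|\bb|+2j = k+2$ and invoking the already-established spatial estimates inductively produces the needed $C^{k+2+\al}$ parabolic norm, as in the standard bootstrap of Gilbarg--Trudinger~\cite{[GT]}.

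The main obstacle I anticipate is the bookkeeping: ensuring that when we differentiate $a^{ij}$, $b^i$, $c$ on the manifold we remain in a single chart or transform cleanly between charts of the good atlas, and that the constants do not blow up through the induction. The product-rule estimate for $C^{k-1+\al}$ norms on a compact manifold and the loss of $\tau''$ at each stage have to be tracked, but both are standard once the atlas is fixed. No genuinely new analytic input is required beyond Theorem \ref{theo:A2}; the argument is purely a differentiation-and-bootstrap of the second-order Schauder theory.
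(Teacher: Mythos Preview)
The paper does not give its own proof of this theorem; it merely records the statement, cites page~141 of Gilbarg--Trudinger, and remarks that Theorem~\ref{theo:A2} implies the higher-order estimates. Your induction-on-$k$ bootstrap---differentiating the equation, treating the derivatives of the coefficients as new source terms, and applying the already-established lower-order Schauder estimate---is precisely the standard argument being invoked, and your proposal is correct.
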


\begin{theo}\label{theo:A4}(cf. Lemma 19.1 of P. Li \cite{[LiPeter]}, The parabolic Moser iteration) Let $(M, g)$ be a compact Riemannian manifold without boundary. Consider the equation
\beq
\pd ut=a^{ij}u_{ij}+b^iu_i+cu+d,\quad (x, t)\in \Om,\no
\eeq where the coefficients $a^{ij}, b^i, c$ and $d$ satisfy the conditions (\ref{eq:A072}) and (\ref{eq:A074}). Then for $\tau\in (0, T)$ we have
\beq
|u|_{L^{\infty}(M\times [\tau, T], g)}\leq C(A, B, c_1, n, \la, \La, \tau)\|u\|_{L^2(M\times [0, T], g)}.\no
\eeq

\end{theo}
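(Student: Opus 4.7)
The plan is to prove this by a standard parabolic Moser iteration adapted to a closed Riemannian manifold. Since $M$ has no boundary, no spatial cutoffs are needed; only a cutoff in the time variable is required to push the estimate onto $[\tau,T]$ from initial data. (I will treat $d$ as absorbed into the constants: the inequality is actually of the form $|u|_{L^\infty(M\times[\tau,T])}\leq C(\|u\|_{L^2(M\times[0,T])}+|d|_{L^\infty})$, and one can either assume $d$ is bounded and push $|d|_{L^\infty}$ into $C$, or reduce to the homogeneous case by subtracting a particular solution.) Without loss of generality I work with $v=u^+=\max(u,0)$ and prove the upper bound; the lower bound follows by applying the same argument to $-u$, which solves an equation of the same structure.

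First I derive the basic Caccioppoli-type energy inequality. Fix $\tau\in(0,T)$ and pick a smooth nondecreasing cutoff $\eta(t)$ with $\eta\equiv 0$ on $[0,\tau/2]$ and $\eta\equiv 1$ on $[\tau,T]$. For $p\geq 2$, multiply the equation by $\eta^2 v^{p-1}$ and integrate over $M$; since $M$ is closed, integration by parts yields no boundary term, and the ellipticity bound in \eqref{eq:A072} together with Cauchy-Schwarz absorb the $b^i u_i$ term, while $c$ and $d$ are controlled by the bounds in \eqref{eq:A074}. The result is the standard energy inequality
\begin{equation*}
\sup_{t\in[0,T]}\int_M \eta^2 v^p\,dV_g+\lambda\int_0^T\!\!\int_M \eta^2 |\nabla v|^2 v^{p-2}\,dV_g\,dt \leq C(p,A,B,c_1,\lambda,\Lambda,\tau)\int_0^T\!\!\int_M v^p\,dV_g\,dt,
\end{equation*}
where I used $\eta'$ to generate the right-hand-side control in terms of the $L^p$ norm on the larger cylinder $M\times[\tau/2,T]$, and absorbed $\int d\cdot v^{p-1}$ via Young's inequality.

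Next I combine this with a parabolic Sobolev embedding. Setting $w=v^{p/2}$, the energy inequality becomes $\sup_t\|w\|_{L^2(M)}^2+\|\nabla w\|_{L^2(M\times[0,T])}^2\leq C(p)\|v\|_{L^p(M\times[\tau/2,T])}^p$. On the closed manifold $M^n$, the Gagliardo-Nirenberg-type interpolation
\begin{equation*}
\|w\|_{L^{2\chi}(M\times[0,T])}^{2\chi}\leq C(g,n)\Bigl(\sup_t\|w\|_{L^2(M)}^2+\|\nabla w\|_{L^2(M\times[0,T])}^2\Bigr)^{\chi}
\end{equation*}
holds with $\chi=1+2/n$ (using $H^1(M)\hookrightarrow L^{2n/(n-2)}(M)$ and interpolating in time). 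Substituting $w=v^{p/2}$ gives the reverse H\"older step $\|v\|_{L^{p\chi}(M\times[\tau,T])}\leq(C(p))^{1/p}\|v\|_{L^p(M\times[\tau/2,T])}$.

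Finally I iterate. Choose a sequence $\tau_0=0<\tau_1<\tau_2<\cdots\uparrow\tau$ with $\tau_{k+1}-\tau_k=\tau/2^{k+1}$, and exponents $p_k=2\chi^k$. Applying the Caccioppoli--Sobolev step on each dyadic scale $[\tau_k,T]\subset[\tau_{k-1},T]$, with the cutoff having $|\eta'|\lesssim 2^{k+1}/\tau$, I obtain
\begin{equation*}
\|u^+\|_{L^{p_{k+1}}(M\times[\tau_{k+1},T])}\leq (C\cdot p_k\cdot 2^{2k}/\tau^2)^{1/p_k}\|u^+\|_{L^{p_k}(M\times[\tau_k,T])}.
\end{equation*}
Iterating $k$ times and noting $\sum_k \frac{\log(Cp_k2^{2k}/\tau^2)}{p_k}<\infty$ (since $p_k$ grows geometrically), the product of constants is finite and I conclude $\|u^+\|_{L^\infty(M\times[\tau,T])}\leq C(A,B,c_1,n,\lambda,\Lambda,\tau)\|u^+\|_{L^2(M\times[0,T])}$. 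Repeating with $-u$ in place of $u$ gives the full $L^\infty$ bound on $|u|$.

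The main technical obstacle is keeping the dependence of the iteration constants on $p_k$ and on the cutoff derivative $|\eta'|\sim 2^k/\tau$ under control so that the sum $\sum (\log C_k)/p_k$ converges; this is the classical telescoping estimate and is the reason we gain geometric growth in $p_k$ rather than merely arithmetic. A secondary nuisance is the inhomogeneous term $d$: if one wishes the constant to genuinely not involve $d$, one must either absorb $\int d\,v^{p-1}$ using Young's inequality (which produces an additive $\|d\|_{L^p}$ term that then has to be iterated along with $v$), or reduce to the homogeneous case by subtracting a fixed solution---either is routine but must be threaded carefully through all iteration steps.
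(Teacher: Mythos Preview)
The paper does not supply its own proof of this theorem; it is stated in the appendix with a bare citation ``cf.\ Lemma~19.1 of P.~Li'' and used as a black box. Your sketch is the standard parabolic Moser iteration argument that one finds in that reference, so in spirit you are doing exactly what the paper defers to.

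One technical point worth flagging: the hypotheses \eqref{eq:A072}--\eqref{eq:A074} only assert H\"older continuity of $a^{ij}$, not $C^1$, so when you integrate $\int_M \eta^2 v^{p-1} a^{ij} u_{ij}$ by parts you will pick up a term involving $\partial_i a^{ij}$ that is not a priori bounded. In the applications within the paper the coefficients come from smooth K\"ahler metrics, so this is harmless there; but if you want the statement exactly as written, you should either (i) note that the equation can be recast in divergence form $\partial_t u = \partial_i(a^{ij}u_j) + \tilde b^i u_i + cu + d$ with $\tilde b^i = b^i - \partial_j a^{ij}$ treated as an $L^\infty$ drift (which needs the extra derivative), or (ii) appeal to the Krylov--Safonov version of the $L^\infty$ estimate for non-divergence form equations with merely measurable/H\"older coefficients. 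This is a routine adjustment and does not affect the structure of your argument.
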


\end{appendices}

\vskip10pt

\noindent Jie He, Beijing University of Chemical Technology; hejie@amss.ac.cn.\\

\noindent Haozhao Li, Institute of Geometry and Physics, and  School of Mathematical Sciences, University of Science and Technology of China, No. 96 Jinzhai Road, Hefei, Anhui Province, 230026, China;  hzli@ustc.edu.cn.\\

\end{document}